\DeclareMathSymbol{:}{\mathord}{operators}{"3A}
\newtheorem{theorem}{Theorem}[section]
\newtheorem{remark}{Remark}[section]
\newtheorem{lemma}[theorem]{Lemma}
\newtheorem{proposition}[theorem]{Proposition}
\newtheorem{define}{Definition}[section]
\begin{document}
\title[Non-uniqueness in law for Boussinesq system]{Non-uniqueness in law for Boussinesq system forced by random noise}
 
\author{Kazuo Yamazaki}  
\address{Texas Tech University, Department of Mathematics and Statistics, Lubbock, TX, 79409-1042, U.S.A.; Phone: 806-834-6112; Fax: 806-742-1112; E-mail: (kyamazak@ttu.edu)}
\date{}
\maketitle

\begin{abstract}
Non-uniqueness in law for three-dimensional Navier-Stokes equations forced by random noise was established recently in Hofmanov$\acute{\mathrm{a}}$ et al. (2019, arXiv:1912.11841 [math.PR]). The purpose of this work is to prove non-uniqueness in law for the Boussinesq system forced by random noise. Diffusion within the equation of its temperature scalar field has a full Laplacian and the temperature scalar field can be initially smooth. 
\vspace{5mm}

\textbf{Keywords: Boussinesq system; convex integration; Navier-Stokes equations; non-uniqueness; random noise.}
\end{abstract}
\footnote{2010MSC : 35A02; 35Q30; 35R60}

\section{Introduction} 

\subsection{Motivation from physics and applications}\label{Motivation from physics and applications}
Ocean circulation is turbulent as motions on a wide range of scales from a few centimeters up to thousands of kilometers interact continuously with one another while atmospheric turbulence refers to small-scale irregular air motions that vary in both speed and direction due to wind. At certain scales in the atmosphere and oceans, fluid dynamics can be seen to be governed by the interaction of gravity and rotation of the earth with density variations about a reference state (e.g., \cite[Cha. 1.1]{M03}), and Boussinesq system that couples the Navier-Stokes (NS) equations with another equation of temperature scalar field is said to be the most appropriate model on these scales (e.g., \cite[Sec. 14.2]{T88}). Subtracting pure conduction solution from the temperature scalar field transforms the Boussinesq system to B$\acute{\mathrm{e}}$nard problem of thermohydraulics (e.g., \cite[p. 133]{T97}) while two-dimensional (2D) Boussinesq system is also famous for its correspondence with three-dimensional (3D) axisymmetric swirling flows (e.g., \cite[Sec. 5.4.1]{MB02}). Hereafter, we denote ``$n$-dimensional'' by $n$D for $n\in\mathbb{N}$. These partial differential equations (PDEs) under random force have also been studied for more than half a century as an effective approach to investigate turbulence (e.g., \cite{N65}). Encouraged by numerical conjectures (e.g., \cite{ES94, PS92}), rigorous proofs of well- or ill-posedness of PDEs in fluid dynamics have received special attention for many decades in both deterministic and stochastic cases, some of which we review next.  

\subsection{Previous results concerning uniqueness}
We take $x \in \mathbb{T}^{n}$ for $n \in \mathbb{N}$ although much of subsequent discussions apply to the case $x \in \mathbb{R}^{n}$. We represent velocity, pressure, and temperature fields respectively by $u: \hspace{0.5mm} \mathbb{R}_{+} \times \mathbb{T}^{n} \mapsto \mathbb{R}^{n}$, $\pi: \hspace{0.5mm}  \mathbb{R}_{+} \times \mathbb{T}^{n} \mapsto \mathbb{R}$, and $\theta:  \hspace{0.5mm}  \mathbb{R}_{+} \times \mathbb{T}^{n} \mapsto \mathbb{R}$, viscous and thermal diffusivity respectively by $\nu \geq 0$ and $\kappa \geq 0$, and $j$-th component of standard basis of $\mathbb{R}^{n}$ by $e^{j}$. Hereafter, we denote a $k$-th component of any vector $v$ by $v^{k}$ and $\partial_{t} \triangleq \frac{\partial}{\partial t}$. Under such notations, a system of our main concern may be written as 
\begin{subequations}\label{1}
\begin{align}
& \partial_{t} u + (u\cdot\nabla) u + \nabla \pi + \nu (-\Delta)^{m} u =  \theta e^{n}, \hspace{1mm} \nabla\cdot u = 0, \hspace{3mm} t > 0, \hspace{3mm} u(x,0) = u^{\text{in}} (x), \label{1a}\\
& \partial_{t} \theta + (u\cdot\nabla) \theta + \kappa (-\Delta)^{l} \theta = 0,  \hspace{31mm} t > 0, \hspace{3mm} \theta(x,0) = \theta^{\text{in}}(x), \label{1b}
\end{align}
\end{subequations} 
where $m, l > 0,$ and $(-\Delta)^{\alpha}$ for general $\alpha \in \mathbb{R}$ is defined by 
\begin{equation}\label{2}
(-\Delta)^{\alpha} f (x) \triangleq \sum_{k \in \mathbb{Z}^{n}}  \lvert k \rvert^{2\alpha} \hat{f}(k) e^{ik\cdot x}  
\end{equation} 
(e.g., \cite{CC04}). The case $\theta \equiv 0$ and $m = 1$ reduces to the NS equations and additionally considering $\nu = 0$ leads to Euler equations. We recall that $v \in C_{\text{weak}}^{0} ([0,T]; L_{x}^{2}) \cap L^{2}([0,T]; \dot{H}_{x}^{1})$ is called a Leray-Hopf weak solution of the NS equations if $v(t, \cdot)$ is weakly divergence-free, mean-zero, satisfies both  \eqref{1a} with $\theta \equiv 0$ distributionally and an energy inequality of $\lVert v(t) \rVert_{L_{x}^{2}}^{2} + 2 \nu \lVert v \rVert_{L_{t}^{2}\dot{H}_{x}^{1}}^{2} \leq \lVert v(0) \rVert_{L_{x}^{2}}^{2}$ for any $t \in [0,T]$. On the other hand, $v \in C_{t}^{0} L_{x}^{2}$ is called a weak solution of the NS equations if $v(t, \cdot)$ is weakly divergence-free, mean-zero, and satisfies \eqref{1a} with $\theta \equiv 0$ distributionally for any $t \in [0,T]$ (see \cite[Def. 3.5 and 3.6]{BV19b}). 

In case $\theta \equiv 0$, \eqref{1a} was introduced in \cite[Rem. 8.1]{L59} by Lions who subsequently claimed the uniqueness of its Leray-Hopf weak solution when $m \geq \frac{1}{2} + \frac{n}{4}$ (\cite[Equ. (6.164)]{L69}). Hereafter, we  refer to \eqref{1a} with $\theta \equiv 0$ as the generalized NS (GNS) equations while \eqref{1} as the generalized Boussinesq system. It is well-known that the GNS equations have a rescaling property such that if $u(t,x)$ is its solution, then so is $u_{\lambda}(t,x) \triangleq \lambda^{2m-1} u(\lambda^{2m} t, \lambda x)$ for any $\lambda \in \mathbb{R}_{+}$ that satisfies $\lVert u_{\lambda}  (t) \rVert_{L^{2}(\mathbb{R}^{n})}^{2} = \lambda^{4m-2-n} \lVert u(\lambda^{2m} t) \rVert_{L^{2}(\mathbb{R}^{n})}^{2}$; considering that $4m - 2 - n = 0$ when $m = \frac{1}{2} + \frac{n}{4}$, we say that the GNS equations is $L_{x}^{2}$-norm subcritical, critical, and supercritical when $m > \frac{1}{2} + \frac{n}{4}, m = \frac{1}{2} + \frac{n}{4},$ and $m < \frac{1}{2} + \frac{n}{4}$, respectively. Such a classification clarifies that the GNS equations of which Leray-Hopf weak solutions were shown to be unique by Lions were $L_{x}^{2}$-norm subcritical or critical; to this day, only a logarithmic improvement by Tao \cite{T09} has been made in the supercritical regime. In case $n = 2$, one can show that $\nabla \times u (t) \in L^{p} (\mathbb{R}^{2})$ for all $p \in [1,\infty]$ and $t \geq 0$ if $\nabla \times u^{\text{in}} \in L^{1}(\mathbb{R}^{2}) \cap L^{\infty} (\mathbb{R}^{2})$ and prove global well-posedness of the 2D Euler equations (\cite{Y63}). However, this phenomenon for the 2D Euler equations is no longer valid if its initial data is rougher, e.g., $u^{\text{in}} \in L_{x}^{2}$. 

Analogous classification for the generalized Boussinesq system is more subtle. Only when $m = l$, it has a rescaling property with $u_{\lambda} (t,x) \triangleq \lambda^{2m-1} u(\lambda^{2m} t, \lambda x,)$ and $\theta_{\lambda}(t,x) \triangleq \lambda^{4m-1} \theta(\lambda^{2m} t, \lambda x)$. Starting from smooth initial data has led to much developments in the 2D case (e.g., \cite{C06, HK09, HL05, JMWZ14}); in particular, \cite{HKR10, HKR10b} established global well-posedness when $\nu > 0, m = \frac{1}{2}$, $\kappa = 0$ and $\nu = 0, \kappa > 0$, $l = \frac{1}{2}$, respectively. In the 3D case, \cite{Y15, Y15a} showed that \eqref{1} is globally well-posed starting from smooth initial data if $\nu > 0, m = \frac{1}{2} + \frac{n}{4}$ and $\kappa = 0$, extending Lions' result with zero thermal diffusion.  

Concerning uniqueness in the stochastic case when \eqref{1a} with $\theta \equiv 0$ is forced by noise of form $F(u) dB$ where $F$ is a certain operator and $B$ is a Brownian motion, we recall that uniqueness in law holds if for any solution $(u, B)$ and $(\tilde{u}, \tilde{B})$ with same initial distributions, the law of $u$ coincides with that of $\tilde{u}$ while path-wise uniqueness holds if for any solutions $(u, B)$ and $(\tilde{u}, B)$ with common initial data defined on same probability space, $u(t) = \tilde{u}(t)$ for all $t$ with probability one. Classical Yamada-Watanabe theorem states that path-wise uniqueness implies the uniqueness in law while its converse is false (see \cite[Exa. 3.5 in Sec. 5.3]{KS91} for a counterexample). Global existence of Leray-Hopf type weak solutions to the stochastic NS equations was shown by Flandoli and Gatarek \cite{FG95} via a stochastic analogue of Galerkin approximation; we chose to call their solution ``Leray-Hopf type'' because their solution lies in $L_{T}^{2} \dot{H}_{x}^{1}$ (see \cite[Def. 3.1]{FG95}).  Stochastic Boussinesq system has also caught much attention: well-posedness \cite{BM14, XB11, Y16}; ergodicity \cite{F97, LW04}; large deviation \cite{CM10, DM09}. Up to this point, the general consensus was that path-wise uniqueness for 3D stochastic NS equations, which is not difficult to prove in the 2D case (e.g., \cite{CM10}), seems to be as difficult as the deterministic case while verifying its uniqueness in law may be feasible (e.g., \cite[p. 878--879]{DD03}). Next, we review developments of convex integration technique that has played the role of a game-changer concerning the non-uniqueness of these PDEs. 

In 1954, Nash \cite{N54} proved a breakthrough $C^{1}$ isometric embedding theorem in differential geometry. Gromov considered such a result as a primary example of $h$-principle and initiated convex integration technique \cite[Par. 2.4]{G86}. M$\ddot{\mathrm{u}}$ller and $\check{\mathrm{S}}$ver$\acute{\mathrm{a}}$k extended this technique to Lipschitz mappings and obtained unexpected solutions to some Euler-Lagrange equations \cite{MS98, MS03}. Motivated by these works, De Lellis and Sz$\acute{\mathrm{e}}$kelyhidi Jr. \cite{DS09} wrote $n$D Euler equations as a differential inclusion and proved   existence of its weak solution $u \in L^{\infty} (\mathbb{R}_{+} \times \mathbb{R}^{n})$ with compact support, extending previous works by Scheffer \cite{S93} and Shnirelman \cite{S97} which required $n = 2$ and that $u \in L_{t,x}^{2}$. These developments attracted much attention toward the resolution of Onsager's conjecture \cite{O49} as well, specifically that every weak solution $u \in C_{x}^{\alpha}$ to the Euler equations conserves energy if $\alpha > \frac{1}{3}$ while  if $\alpha \leq \frac{1}{3}$, then there exists a weak solution $u \in C_{x}^{\alpha}$ that does not conserve energy. The case $\alpha > \frac{1}{3}$ was settled relatively earlier in \cite{CET94, E94} while the case $\alpha < \frac{1}{3}$ required many extensions and new ideas beyond the technique from \cite{DS09} (e.g., \cite{BDIS15, DS10, DS13}); eventually, Isett \cite{I18} using Mikado flows settled the case $\alpha < \frac{1}{3}$ if $n \geq 3$. 

An important extension of convex integration applicability from the Euler equations to the NS equations was made by Buckmaster and Vicol \cite{BV19a} who proved non-uniqueness of weak solutions to the 3D NS equations, solving an open problem from \cite[p. 88]{S63} whether a non-constant solution to the 3D NS equations can come to rest in finite time. While non-uniqueness of Leray-Hopf weak solutions remains unknown (\cite{GS17} for numerical conjecture), various extensions of \cite{BV19a} followed: non-uniqueness of weak solutions to the 3D GNS equations with $m \in [1, \frac{5}{4})$ \cite{LT20}; the set of singular times of the solutions to the 3D GNS equations has Hausdorff dimension strictly less than one \cite{BCV18}; non-uniqueness of weak solutions to the 2D GNS equations with $m \in [0, 1)$ \cite{LQ20}; non-uniqueness of weak solutions to the 2D generalized Boussinesq system with $m \in [0,1), l = 1$ \cite{LTZ20}. These developments on the deterministic NS equations incited new results in the stochastic case as well; in particular, Hofmanov$\acute{\mathrm{a}}$ et al. \cite{HZZ19} proved non-uniqueness in law of the 3D stochastic NS equations (see cases $n = 3$, $m \in (\frac{13}{20}, \frac{5}{4})$ and $n = 2$, $m \in (0, 1)$ respectively in \cite{Y20a, Y20c}). We also refer to \cite{BFH20, CFF19, HZZ20} for further applications of convex integration in the stochastic case. 

\section{Statement of main results}
In the deterministic case, taking $\theta \equiv 0$ reduces \eqref{2} to the NS equations; thus, non-uniqueness for the NS equations actually implies that of \eqref{2}. The stochastic case is interesting because even if we take $\theta^{\text{in}} \equiv 0$ on $\mathbb{T}^{n}$, in sharp contrast to the deterministic case, a zero temperature field would not be a solution to the stochastic Boussinesq system due to its random force. Thus, to claim non-uniqueness in law for the stochastic Boussinesq system from analogous results on the stochastic GNS equations in \cite{HZZ19, Y20a, Y20c}, we must not only take $\theta^{\text{in}} \equiv 0$ on $\mathbb{T}^{n}$ but also consider zero noise on the equation of the temperature field, and then rely on the zero temperature solution. Attaining the same result with general data $\theta^{\text{in}}$ and non-zero random force is far from trivial. In order to investigate such a case, we study the following stochastic Boussinesq system with general data $\theta^{\text{in}}$: 
\begin{subequations}\label{3}
\begin{align}
&du + [(-\Delta)^{m} u + \text{div} (u\otimes u) + \nabla \pi - \theta e^{n}]dt = F_{1}(u)dB_{1}, \hspace{3mm} \nabla\cdot u =0, \hspace{3mm} t > 0, \label{3a} \\
&d\theta + [- \Delta \theta + \text{div} (u\theta) ]dt = F_{2}(\theta)dB_{2}, \hspace{45mm} t > 0, \label{3b} 
\end{align}
\end{subequations} 
\begin{equation}\label{4}
\text{where } \hspace{1mm} m \in (0,1) \text{ if } n = 2 \hspace{1mm} \text{ while } \hspace{1mm} m \in (\frac{13}{20}, \frac{5}{4}) \text{ if } n = 3. 
\end{equation} 
We let $(\mathcal{F}_{t})_{t\geq 0}$ be the canonical filtration of $(B_{1}, B_{2})$ augmented by all the $\textbf{P}$-negligible sets.

\begin{theorem}\label{Theorem 2.1}
Suppose that \eqref{4} holds, $F_{k} \equiv 1$, $B_{k}$ is a $G_{k}G_{k}^{\ast}$-Wiener process for both $k \in \{1,2\}$, 
and 
\begin{equation}\label{5}
\text{Tr} ( (-\Delta)^{\max \{\frac{n}{2} + 2 \sigma, \frac{n+2}{2} - m + 2 \sigma \}} G_{1} G_{1}^{\ast}) < \infty \hspace{2mm} \text{ and } \hspace{2mm} \text{Tr} ((-\Delta)^{\frac{n}{2}+ 2\sigma} G_{2}G_{2}^{\ast} ) < \infty 
\end{equation} 
for some $\sigma > 0$. Then, given $T> 0, K > 1$, and $\kappa \in (0,1)$, there exist $\gamma \in (0,1)$ and a $\textbf{P}$-almost surely (a.s.) strictly positive stopping time $\mathfrak{t}$ such that 
\begin{equation}\label{6}
\textbf{P} (\{ \mathfrak{t} \geq T \} ) > \kappa 
\end{equation} 
and the following is additionally satisfied. There exist $(\mathcal{F}_{t})_{t\geq 0}$-adapted processes $(u,\theta)$ that is a weak solution of \eqref{3} starting from a deterministic initial condition$(u^{\text{in}}, \theta^{\text{in}})$, satisfies for all $p \in [1, \infty)$, 
\begin{equation}\label{estimate 17}
\text{esssup}_{\omega \in \Omega} \lVert u(\omega) \rVert_{C_{\mathfrak{t}}\dot{H}_{x}^{\gamma}} < \infty, \hspace{5mm}  
\mathbb{E}^{\textbf{P}}[ \lVert \theta \rVert_{C_{\mathfrak{t}} L_{x}^{p}}^{p} + \lVert \theta \rVert_{L_{\mathfrak{t}}^{2} \dot{H}_{x}^{1}}^{p} ] < \infty,  
\end{equation} 
\begin{equation}
\mathbb{E}^{\textbf{P}} [ \lVert \theta(t\wedge \mathfrak{t} ) \rVert_{L_{x}^{2}}^{2} + 2 \int_{0}^{t \wedge \mathfrak{t}} \lVert \theta \rVert_{\dot{H}_{x}^{1}}^{2} dr] \leq \lVert \theta^{\text{in}} \rVert_{L_{x}^{2}}^{2} + \mathbb{E}^{\textbf{P}}[(t\wedge \mathfrak{t}) \text{Tr} (G_{2}G_{2}^{\ast})], 
\end{equation} 
and on a set $\{\mathfrak{t} \geq T \}$,  
\begin{equation}\label{[Equ. (4), Y20c]}
\lVert u(T) \rVert_{L_{x}^{2}} > K e^{\frac{T}{2}} ( \lVert u^{\text{in}} \rVert_{L_{x}^{2}} + \lVert \theta^{\text{in}} \rVert_{L_{x}^{2}} + \sum_{l=1}^{2} \sqrt{  \text{Tr} (G_{l}G_{l}^{\ast})}).
\end{equation} 
\end{theorem}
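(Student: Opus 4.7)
The plan is to convert the SPDE \eqref{3} into a pathwise random PDE via subtraction of stochastic convolutions. Let $z_1$ solve $dz_1 + (-\Delta)^m z_1 \, dt = dB_1$ with $z_1(0)=0$, and $z_2$ solve $dz_2 - \Delta z_2 \, dt = dB_2$ with $z_2(0)=0$. The trace hypotheses \eqref{5} combined with factorization and Burkholder arguments yield $z_1 \in C_t \dot H_x^{\frac{n+2}{2} - m + \sigma} \cap C_t \dot H_x^{\frac{n}{2} + \sigma}$ and $z_2 \in C_t \dot H_x^{\frac{n}{2} + \sigma}$ $\textbf{P}$-a.s.\ with finite moments of every order. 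For a parameter $L \geq 1$ I set
\[
\mathfrak{t} := \inf\bigl\{t \geq 0 : \|z_1(t)\|_{\dot H_x^{\gamma'}} + \|z_2(t)\|_{\dot H_x^{\gamma'}} > L\bigr\} \wedge L,
\]
for a suitable $\gamma' > 1$. Chebyshev's inequality together with the moment bounds delivers $\textbf{P}(\mathfrak{t} \geq T) > \kappa$ once $L$ is large, establishing \eqref{6}.

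Setting $v := u - z_1$ and $\eta := \theta - z_2$, the pair $(v,\eta)$ satisfies a random PDE on $[0,\mathfrak{t}]$ with no explicit It\^o differential but with quadratic cross-terms involving $z_1,z_2$. Because the $\theta$-equation carries a full Laplacian, it is $L_x^2$-subcritical: given any $v \in L_t^2 \dot H_x^\gamma$, the scalar advection--diffusion problem for $\eta$ is uniquely solvable in $C_t L_x^p \cap L_t^2 \dot H_x^1$ by standard parabolic theory, and the bounds in \eqref{estimate 17} together with the stated energy identity for $\theta$ then follow from It\^o's formula applied to $\|\theta\|_{L_x^2}^2$. The core work therefore concentrates in the $v$-equation. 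Following the schemes in \cite{HZZ19, Y20a, Y20c, LT20, LQ20, LTZ20}, I construct iteratively a sequence $(v_q, \mathring R_q)$ solving
\[
\partial_t v_q + (-\Delta)^m v_q + \mathrm{div}(v_q \otimes v_q) + \nabla \pi_q = \mathrm{div}\, \mathring R_q - (\eta_q + z_2)e^n - \mathrm{div}\bigl(v_q \otimes z_1 + z_1 \otimes v_q + z_1 \otimes z_1\bigr),
\]
with $\eta_q$ obtained at each stage from the linear scalar equation driven by $v_q + z_1$. The perturbations $w_{q+1} := v_{q+1} - v_q$ are Mikado-type flows in 3D as in \cite{LT20} or intermittent jets in 2D as in \cite{LQ20}, cut off in time so that their supports lie inside $[0,\mathfrak{t}]$; they are tuned to cancel the principal oscillation error $w_{q+1} \otimes w_{q+1}$, while an inverse-divergence operator absorbs the low-frequency terms $(\eta_q + z_2)e^n$ and the $z_1$-cross-terms into $\mathring R_{q+1}$.

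Parallel to \cite[Sec.\ 4]{Y20c}, I prescribe an energy profile $e(t)$ with $e(T)$ so large that \eqref{[Equ. (4), Y20c]} will hold for $u = v + z_1$ on $\{\mathfrak{t} \geq T\}$; the induction is arranged so that $\|v_q(t)\|_{L_x^2}^2 \to e(t)$ uniformly while $\mathring R_q \to 0$ in $L_{t,x}^1$, yielding in the limit $v \in C_t \dot H_x^\gamma$ with the prescribed terminal energy. Adaptedness of $(u,\theta)$ to $(\mathcal F_t)$ is a consequence of the pathwise nature of the construction once $(z_1,z_2)$ is fixed, together with the measurable-selection arguments of \cite[Sec.\ 5]{HZZ19}.

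The principal obstacle is the coupling term $\theta e^n$ in \eqref{3a}: in contrast to the Reynolds stress it is a macroscopic, low-frequency forcing that cannot be eliminated by oscillation cancellation. My approach mirrors the deterministic treatment in \cite{LTZ20}, advancing $(v_q, \eta_q)$ simultaneously and absorbing $(\eta_q + z_2)e^n$ into the stress at each iterate via a suitable antidivergence --- subcritical parabolic smoothing in the $\theta$-equation keeps $\|\eta_q\|_{L^p}$ under control. Keeping the stochastic cross-terms $v_q \otimes z_1$, $\eta_q z_1$ uniformly bounded up to $\mathfrak{t}$ is then handled as in \cite{HZZ19, Y20a} via the stopping-time truncation of $\|z_1\|_{\dot H_x^{\gamma'}}$, while the admissible range \eqref{4} of $m$ reflects the balance between the dissipation exponent and the intermittency parameters of the building blocks, inherited from \cite{LT20, LQ20}.
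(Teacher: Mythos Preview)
Your framework---subtract stochastic convolutions $z_1, z_2$, define a stopping time from their norms, and run convex integration on the resulting random PDE---matches the paper's. However, several features of your sketch diverge from the paper's argument and at least one of them conceals the main difficulty.

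First, you propose prescribing an energy profile $e(t)$ and driving $\|v_q(t)\|_{L^2}^2 \to e(t)$. The paper does \emph{not} do this: it fixes a specific sinusoidal $v_0$ (e.g., $v_0(t,x) = \tfrac{L^2 e^{2Lt}}{2\pi}(\sin x^2, 0)^T$ in 2D) whose $L^2$-norm equals $M_0(t)^{1/2}/\sqrt{2}$ with $M_0(t)=L^4e^{4Lt}$, and arranges each perturbation to satisfy $\|w_{q+1}\|_{C_t L^2_x} \leq M_0(t)^{1/2}\delta_{q+1}^{1/2}$; the terminal-energy inequality \eqref{[Equ. (4), Y20c]} then follows from $\|v(T)\|_{L^2} \geq \|v_0(T)\|_{L^2} - \sum_q M_0(T)^{1/2}\delta_{q+1}^{1/2}$ and tuning $L$. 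No energy profile is prescribed. The paper explicitly remarks that the choice $v_0\equiv 0$ (as in \cite{LTZ20}) is inadequate here precisely because of the derivation of \eqref{[Equ. (4), Y20c]}.

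Second, and more seriously, you write that ``subcritical parabolic smoothing in the $\theta$-equation keeps $\|\eta_q\|_{L^p}$ under control'' and that $(\eta_q + z_2)e^n$ is absorbed into the stress ``via a suitable antidivergence.'' This glosses over the central new estimate. The Reynolds stress at level $q+1$ contains $\mathcal{R}\bigl((\theta_l - \theta_{q+1})e^n\bigr)$, which must be bounded in $C_t L_x^{p^*}$ by $c_R M_0(t)\delta_{q+2}$ for some $p^* \in (1,2)$. This forces a bound on $\|\theta_{q+1} - \theta_q\|_{C_t L_x^{p^*}}$, and the paper's key observation (its Remarks~4.2 and~4.4) is that the obvious routes fail: one cannot afford $\|\theta_q\|_{L^\infty_x}$, and a naive $L^2$ Cauchy bound yields only $\delta_{q+1}$, not $\delta_{q+2}$. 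The paper instead derives $\|\theta_{q+1}-\theta_q\|_{C_tL^{p^*}_x} \leq \|v_{q+1}-v_q\|_{C_t L^{p^*}_x}\int_0^t\|\theta_q\|_{\dot W^{1,\infty}_x}\,dr$, and then controls $\int_0^t\|\theta_q\|_{\dot W^{1,\infty}_x}$ via an $H^2$ energy estimate on $\Theta_q = \theta_q - z_2$; crucially, this $H^2$ bound grows like $\lambda_q^8$ (not $\lambda_{q+1}^8$), because the advecting velocity is $v_q$, not $v_{q+1}$---the ordering in which one writes $(v_{q+1}+z_1)\cdot\nabla(\theta_{q+1}-\theta_q)+(v_{q+1}-v_q)\cdot\nabla\theta_q$ is essential. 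Your sketch does not indicate this mechanism, and without it the stress estimate does not close.

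Minor points: your building-block labels are reversed (the paper uses intermittent jets in 3D and 2D intermittent stationary flows in 2D), and no time cutoffs of the perturbations are used---the construction is carried out pathwise on $[0,T_L]$ directly.
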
 

\begin{theorem}\label{Theorem 2.2} 
Suppose that \eqref{4} holds, $F_{k} \equiv 1$, $B_{k}$ is a $G_{k}G_{k}^{\ast}$-Wiener process for both $k \in \{1,2\}$, and \eqref{5} holds for some $\sigma > 0$. Then non-uniqueness in law holds for \eqref{3} on $[0,\infty)$. Moreover, for all $T > 0$ fixed, non-uniqueness in law holds for \eqref{3} on $[0,T]$. 
\end{theorem}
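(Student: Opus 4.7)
The plan is to deduce Theorem~\ref{Theorem 2.2} from Theorem~\ref{Theorem 2.1} by exhibiting two weak solutions of \eqref{3} that start from the same deterministic initial datum $(u^{\text{in}},\theta^{\text{in}})$ but whose laws on the natural path space disagree. One solution will be the convex-integration solution of Theorem~\ref{Theorem 2.1}, whose time-$T$ velocity has unusually large $L_{x}^{2}$-norm on $\{\mathfrak{t}\geq T\}$ by \eqref{[Equ. (4), Y20c]}. The other will be a standard probabilistically weak solution $(\tilde u,\tilde\theta)$ built by a stochastic Galerkin scheme in the spirit of Flandoli-Gatarek \cite{FG95}, whose role is to satisfy an energy inequality that the convex-integration solution violates with positive probability once $K$ is large.

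To construct $(\tilde u,\tilde\theta)$, I start from finite-dimensional spectral Galerkin truncations of \eqref{3}, which are globally well-posed SDEs because the noise is additive ($F_{k}\equiv 1$). Applying Ito's formula to $\lVert\tilde u\rVert_{L_{x}^{2}}^{2}+\lVert\tilde\theta\rVert_{L_{x}^{2}}^{2}$, using the cancellation of the transport nonlinearities, handling the buoyancy coupling $\theta e^{n}\cdot u$ by Young's inequality, and invoking Gronwall, I obtain bounds uniform in the truncation parameter, with the Ito correction finite thanks to \eqref{5}. A Jakubowski-Skorokhod representation and passage to the limit then yield a probabilistically weak solution of \eqref{3} in the same sense used for Theorem~\ref{Theorem 2.1}, satisfying
\begin{equation*}
\mathbb{E}^{\textbf{P}}\bigl[\lVert\tilde u(T)\rVert_{L_{x}^{2}}^{2}\bigr]\leq e^{T}\bigl(\lVert u^{\text{in}}\rVert_{L_{x}^{2}}^{2}+\lVert\theta^{\text{in}}\rVert_{L_{x}^{2}}^{2}+T\,\text{Tr}(G_{1}G_{1}^{\ast})+T\,\text{Tr}(G_{2}G_{2}^{\ast})\bigr).
\end{equation*}
Chebyshev's inequality then bounds by $C_{0}/K^{2}$ the probability that $\lVert\tilde u(T)\rVert_{L_{x}^{2}}$ exceeds the right-hand side of \eqref{[Equ. (4), Y20c]}, while Theorem~\ref{Theorem 2.1} produces a solution $u$ for which the same event has probability at least $\kappa$. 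Because $\kappa$ and $K$ in Theorem~\ref{Theorem 2.1} can be chosen independently, I fix them so that $\kappa>C_{0}/K^{2}$; the time-$T$ marginals of $u$ and $\tilde u$ then differ, and so do the laws of $u$ and $\tilde u$ on $C([0,T];H_{x}^{-1})$. This establishes non-uniqueness in law on $[0,T]$.

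For the $[0,\infty)$ statement I glue forward in time. The convex-integration solution exists at least up to the stopping time $\mathfrak{t}$; past $\mathfrak{t}\wedge T$ I continue it to all of $[0,\infty)$ by solving \eqref{3} on $[\mathfrak{t}\wedge T,\infty)$ with random initial data $(u(\mathfrak{t}\wedge T),\theta(\mathfrak{t}\wedge T))$ using the same Galerkin construction, while $(\tilde u,\tilde\theta)$ is already global. Since the two extensions differ in law at the deterministic time $T$, they differ in law as processes on $[0,\infty)$.

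The main obstacle I expect is not the Chebyshev comparison but verifying that the Galerkin limit $(\tilde u,\tilde\theta)$ lies in the very same class of weak solutions to which the counterexample of Theorem~\ref{Theorem 2.1} belongs: this forces a common martingale-problem formulation, identification of the stochastic integral in the limit, and just enough compactness from the trace regularity \eqref{5} to pass to the limit in $\text{div}(u\otimes u)$ and $\text{div}(u\theta)$. The coupling between \eqref{3a} and \eqref{3b} through $\theta e^{n}$ is the only genuinely new feature relative to \cite{HZZ19,Y20a,Y20c}, but it is absorbed at the level of the energy estimate by Gronwall and therefore does not require new machinery.
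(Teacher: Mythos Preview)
Your approach is essentially the paper's, but two points deserve comment.

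First, the paper avoids the Chebyshev detour: once the convex-integration solution is extended to a global martingale solution $P\otimes_{\tau_L}R$ (your gluing), it simply compares second moments,
\[
\mathbb{E}^{P\otimes_{\tau_L}R}[\lVert\xi(T)\rVert_{L_x^2}^{2}]>\kappa K^{2}e^{T}\bigl[\lVert\xi_{1}^{\text{in}}\rVert_{L_x^2}^{2}+\lVert\xi_{2}^{\text{in}}\rVert_{L_x^2}^{2}+\textstyle\sum_{l}\text{Tr}(G_{l}G_{l}^{\ast})\bigr]
\geq \mathbb{E}^{\mathcal{Q}}[\lVert\xi(T)\rVert_{L_x^2}^{2}],
\]
using only $\kappa K^{2}\geq 1$. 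This is cleaner than tracking a Chebyshev constant $C_{0}$ and then arranging $\kappa>C_{0}/K^{2}$.

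Second, be aware that the extension is needed already for the $[0,T]$ statement, not only for $[0,\infty)$: the solution from Theorem~\ref{Theorem 2.1} lives on $[0,\mathfrak t]$, and $\{\mathfrak t<T\}$ has positive probability in general, so ``the time-$T$ marginal of $u$'' is undefined without gluing. The paper formalizes your gluing as the measure $P\otimes_{\tau_L}R$ via Lemmas~\ref{Lemma 4.2}--\ref{Lemma 4.3} and Propositions~\ref{Proposition 4.5}--\ref{Proposition 4.6}; the nontrivial step there (the one your informal description hides) is verifying \eqref{[Equ. (22), Y20a]}, i.e.\ that the stopping time $\tau_L$ is $Q_\omega$-a.s.\ constant, which uses the specific definition of $\tau_L$ through the Ornstein--Uhlenbeck processes $Z_k$ rather than through $(u,\theta)$ directly.
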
 

\begin{theorem}\label{Theorem 2.3}
Suppose that \eqref{4} holds, $F_{1}(u) = u, F_{2}(\theta) = \theta$, and $B_{k}$ is an $\mathbb{R}$-valued Wiener process on $(\Omega, \mathcal{F}, \textbf{P})$ for both $k \in \{1,2\}$. Then, given $T > 0, K > 1$, and $\kappa \in (0,1)$, there exist $\gamma \in (0,1)$ and a $\textbf{P}$-a.s. strictly positive stopping time $\mathfrak{t}$ such that \eqref{6} holds and the following is additionally satisfied. There exist $(\mathcal{F}_{t})_{t\geq 0}$-adapted processes $(u,\theta)$ that is a weak solution to \eqref{3} starting from a deterministic initial condition $(u^{\text{in}}, \theta^{\text{in}})$, satisfies for all $p \in [1,\infty)$, 
\begin{equation}\label{estimate 94}
\text{esssup}_{\omega \in \Omega} \lVert u(\omega) \rVert_{C_{\mathfrak{t}} \dot{H}_{x}^{\gamma}} < \infty, \hspace{3mm} \text{esssup}_{\omega \in \Omega} [\lVert \theta (\omega) \rVert_{C_{\mathfrak{t}} L_{x}^{p}} + \lVert \theta(\omega) \rVert_{L_{\mathfrak{t}}^{2} \dot{H}_{x}^{1}}] < \infty, 
\end{equation}
\begin{equation}
\mathbb{E}^{\textbf{P}}[ \lVert \theta(t \wedge \mathfrak{t}) \rVert_{L_{x}^{2}}^{2} + 2 \int_{0}^{t \wedge \mathfrak{t}} \lVert \theta \rVert_{\dot{H}_{x}^{1}}^{2} dr] \leq  \lVert \theta^{\text{in}} \rVert_{L_{x}^{2}}^{2} + \mathbb{E}^{\textbf{P}}[ \int_{0}^{t \wedge \mathfrak{t}} \lVert \theta \rVert_{L_{x}^{2}}^{2} dr], 
\end{equation} 
and on a set $\{\mathfrak{t} \geq T \}$, 
\begin{equation}\label{[Equ. (6), Y20c]}
\lVert u(T) \rVert_{L_{x}^{2}} > K e^{T} [ \lVert u^{\text{in}} \rVert_{L_{x}^{2}} + \lVert \theta^{\text{in}} \rVert_{L_{x}^{2}}]. 
\end{equation} 
\end{theorem}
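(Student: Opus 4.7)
\textbf{Proof plan for Theorem \ref{Theorem 2.3}.} My plan is to reduce the stochastic system \eqref{3} with linear multiplicative noise to a pathwise random PDE via an exponential change of variables, then run a convex integration scheme on a random time interval, in the spirit of Theorem \ref{Theorem 2.1} but with the random coefficients absorbed into the construction.

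Since $B_{1}, B_{2}$ are scalar-valued Wiener processes, define $v := e^{-B_{1}} u$ and $\eta := e^{-B_{2}} \theta$. It\^{o}'s formula together with $u = e^{B_{1}} v$, $\theta = e^{B_{2}} \eta$ turns \eqref{3} into the pathwise system
\begin{equation*}
\partial_{t} v + (-\Delta)^{m} v + e^{B_{1}} \mathrm{div}(v \otimes v) + e^{-B_{1}} \nabla P - e^{B_{2} - B_{1}} \eta e^{n} + \tfrac{1}{2} v = 0, \quad \nabla \cdot v = 0,
\end{equation*}
\begin{equation*}
\partial_{t} \eta - \Delta \eta + e^{B_{1}} \mathrm{div}(v \eta) + \tfrac{1}{2} \eta = 0,
\end{equation*}
containing no stochastic integral and whose coefficients $e^{B_{1}}, e^{B_{2}-B_{1}}$ are continuous in $t$ for each $\omega$. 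To control these coefficients uniformly, introduce a stopping time of the form $\mathfrak{t} := \inf\{t \geq 0 : \lvert B_{1}(t) \rvert \vee \lvert B_{2}(t) \rvert \geq L \} \wedge L$, with $L = L(T, K, \kappa)$ chosen through standard Gaussian tail estimates so that $\textbf{P}(\mathfrak{t} \geq T) > \kappa$, delivering \eqref{6}; on $[0, \mathfrak{t}]$ the factors $e^{\pm B_{j}}$ and $e^{B_{2} - B_{1}}$ are deterministically bounded by $e^{2L}$.

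On the random interval $[0, \mathfrak{t}]$, I would iteratively construct $(v_{q}, \eta_{q}, \mathring{R}_{q}, R_{q}^{\theta})_{q \geq 0}$ solving the associated Boussinesq--Reynolds system, with velocity perturbations built from intermittent building blocks as in \cite{Y20a, Y20c} in the range \eqref{4} and temperature perturbations adapted to the full Laplacian as in \cite{LTZ20}. The $\omega$-dependent coefficients are absorbed into the amplitudes of the building blocks; the buoyancy coupling $e^{B_{2}-B_{1}} \eta e^{n}$ is absorbed at each stage into the new Reynolds stress and cancelled by a dedicated corrector, exactly as in the deterministic Boussinesq scheme. Adaptedness is inherited automatically because each iteration depends measurably on the Brownian paths up to the current time. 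Reversing the substitution produces $(\mathcal{F}_{t})$-adapted $(u, \theta) := (e^{B_{1}} v, e^{B_{2}} \eta)$ solving \eqref{3}, and the estimates in \eqref{estimate 94} together with the energy inequality for $\theta$ transfer from the corresponding pathwise bounds on $(v, \eta)$ thanks to the uniform boundedness of the exponentials on $[0, \mathfrak{t}]$. The initial iterate $(v_{0}, \eta_{0})$ is chosen so that $\lVert v(T) \rVert_{L_{x}^{2}}$ exceeds the target on $\{\mathfrak{t} \geq T\}$, which after the transformation yields \eqref{[Equ. (6), Y20c]}.

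The main obstacle will be carrying out the convex integration iteration uniformly in $\omega$ on $[0, \mathfrak{t}]$ while preserving geometric decay of both $\mathring{R}_{q}$ and $R_{q}^{\theta}$ in the presence of the random time-dependent coefficients and the buoyancy coupling. The intermittency parameters of the building blocks must beat the fractional dissipation $(-\Delta)^{m}$ in the regime \eqref{4} on the velocity side while respecting the full Laplacian on the temperature side without destroying the $L_{\mathfrak{t}}^{2} \dot{H}_{x}^{1}$ regularity of $\theta$ demanded by \eqref{estimate 94}. This balance, already delicate in the deterministic Boussinesq scheme of \cite{LTZ20} and in the additive-noise analogue underlying Theorem \ref{Theorem 2.1}, is where the construction is most technical and dictates both the regularity exponent $\gamma$ and the parameter range \eqref{4}.
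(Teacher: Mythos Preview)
Your exponential transformation $v=e^{-B_{1}}u$, $\eta=e^{-B_{2}}\theta$ and the resulting random PDE system are exactly what the paper uses (its $\Upsilon_{k}=e^{B_{k}}$, $\Theta=\Upsilon_{2}^{-1}\theta$; see \eqref{estimate 76}--\eqref{[Equ. (148), Y20c]}). The stopping time idea is also right, though the paper's $T_{L}$ in \eqref{[Equ. (146), Y20c]} additionally controls the H\"older seminorm $\lVert B_{k}\rVert_{C_{t}^{1/2-2\delta}}$, which is needed later for the commutator estimates after time mollification (see \eqref{[Equ. (151) and (152), Y20c]} and the treatment of $R_{\text{com1}}$, $R_{\text{com2}}$).

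The substantive divergence is in the iteration structure. You propose a two-field scheme $(v_{q},\eta_{q},\mathring{R}_{q},R_{q}^{\theta})$ with genuine temperature perturbations and a temperature stress $R_{q}^{\theta}$, in the style of \cite{LTZ20}. The paper does \emph{not} perturb the temperature by intermittent building blocks at all: at each step $v_{q}$ is built by convex integration, and then $\Theta_{q}$ is the \emph{exact} solution of the linear random transport--diffusion equation \eqref{estimate 72} with drift $\Upsilon_{1}v_{q}$. There is no $R_{q}^{\theta}$; the only error lives in the velocity equation, and the buoyancy enters $R_{\text{lin}}$ through the mismatch $\mathcal{R}\bigl(((\Upsilon_{1}^{-1}\Upsilon_{2}\Theta_{q}e^{n})\ast\phi_{l})\ast\varphi_{l}-\Upsilon_{1}^{-1}\Upsilon_{2}\Theta_{q+1}e^{n}\bigr)$, which is then shown to be small via the Cauchy estimate \eqref{estimate 75} and an $\dot{H}^{2}$--$\dot{H}^{3}$ bootstrap on $\Theta_{q}$ (see \eqref{estimate 80}--\eqref{estimate 90}).

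This difference is not cosmetic. Because each $\Theta_{q}$ is a genuine solution of a parabolic equation with smooth drift, one gets for free the $L^{p}$ maximum principle and the $L^{2}_{t}\dot{H}^{1}_{x}$ energy identity in \eqref{estimate 70}, and the limit $\Theta$ inherits both; after multiplying by $e^{B_{2}}$ this yields the esssup bounds in \eqref{estimate 94} and the energy inequality for $\theta$. If instead you add intermittent perturbations to the temperature as in \cite{LTZ20}, the sequence $\eta_{q}$ will typically converge only in $C_{t}L^{2}_{x}$ (or some $C_{t}\dot{H}^{\gamma'}_{x}$ with $\gamma'<1$), and there is no mechanism to recover $\theta\in L^{2}_{\mathfrak t}\dot H^{1}_{x}$ or the $L^{2}$ energy inequality; those statements are part of Theorem~\ref{Theorem 2.3} and are later used in the martingale framework. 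Your outline therefore has a real gap at the point ``the estimates in \eqref{estimate 94} together with the energy inequality for $\theta$ transfer from the corresponding pathwise bounds'': with temperature convex integration those pathwise bounds are not available. The fix is precisely the paper's device---drop $R_{q}^{\theta}$, solve the temperature exactly at each level, and push all the error into the velocity Reynolds stress.
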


\begin{theorem}\label{Theorem 2.4} 
Suppose that \eqref{4} holds, $F_{1}(u) = u, F_{2}(\theta) = \theta$, and $B_{k}$ is an $\mathbb{R}$-valued Wiener process on $(\Omega, \mathcal{F}, \textbf{P})$ for both $k \in \{1,2\}$. Then non-uniqueness in law holds for \eqref{3} on $[0,\infty)$. Moreover, for all $T > 0$ fixed, non-uniqueness in law holds for \eqref{3} on $[0,T]$. 
\end{theorem}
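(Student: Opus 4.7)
The plan is to deduce Theorem \ref{Theorem 2.4} from Theorem \ref{Theorem 2.3} by exhibiting, for identical deterministic initial data $(u^{\text{in}}, \theta^{\text{in}})$, two probabilistically weak solutions of \eqref{3} whose laws on path space disagree. This follows the template established in \cite{HZZ19} for the stochastic NS equations and adapted in \cite{Y20a, Y20c}, with the coupling term $\theta e^{n}$ absorbed via Young's inequality.

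First, I would construct a ``reference'' weak solution $(\bar{u}, \bar{\theta})$ of \eqref{3} on $[0,\infty)$ with the given deterministic initial data by a standard stochastic Galerkin scheme. Because the noise is linear multiplicative, an It$\hat{\mathrm{o}}$ expansion of $\lVert \bar{u}(t)\rVert_{L_{x}^{2}}^{2}$ and $\lVert \bar{\theta}(t)\rVert_{L_{x}^{2}}^{2}$, combined with the cancellations $\langle (\bar{u}\cdot\nabla)\bar{u},\bar{u}\rangle = 0$ and $\langle(\bar{u}\cdot\nabla)\bar{\theta},\bar{\theta}\rangle = 0$, yields a Gr$\ddot{\mathrm{o}}$nwall estimate of the schematic form
\begin{equation*}
\mathbb{E}^{\textbf{P}}[\lVert \bar{u}(T)\rVert_{L_{x}^{2}}^{2} + \lVert \bar{\theta}(T)\rVert_{L_{x}^{2}}^{2}] \leq C_{0} e^{C_{1} T}(\lVert u^{\text{in}} \rVert_{L_{x}^{2}}^{2} + \lVert \theta^{\text{in}} \rVert_{L_{x}^{2}}^{2}),
\end{equation*}
with $C_{0}, C_{1}$ depending only on the equation parameters and not on any convex-integration construction.

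Next, for a fixed $T > 0$ and $\kappa \in (0,1)$, I would apply Theorem \ref{Theorem 2.3} with $K > 1$ chosen large enough (depending on $C_{0}, C_{1}, T, \kappa$) so that the lower bound \eqref{[Equ. (6), Y20c]} forces
\begin{equation*}
\mathbb{E}^{\textbf{P}}[\lVert u(T) \rVert_{L_{x}^{2}}^{2} \mathbf{1}_{\{\mathfrak{t} \geq T\}}] > K^{2} e^{2T} \kappa (\lVert u^{\text{in}}\rVert_{L_{x}^{2}} + \lVert \theta^{\text{in}}\rVert_{L_{x}^{2}})^{2} > \mathbb{E}^{\textbf{P}}[\lVert \bar{u}(T)\rVert_{L_{x}^{2}}^{2}].
\end{equation*}
Since $u(T)$ and $\bar{u}(T)$ are both $L_{x}^{2}$-valued random variables whose second moments are determined by the respective path laws, this quantitative gap forbids the two laws on $C([0,T]; H^{-\ell})$ (for a suitable $\ell$) from coinciding, yielding non-uniqueness in law on $[0,T]$.

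The main obstacle is the upgrade from $[0,T]$ to $[0,\infty)$, because the convex-integration solution from Theorem \ref{Theorem 2.3} is only controlled up to the strictly positive stopping time $\mathfrak{t}$, while the Galerkin solution is global. I would resolve this in the manner of \cite[Sec. 5]{HZZ19} and \cite{Y20c}: extend $(u,\theta)$ past $\mathfrak{t}$ by concatenating, via a measurable selection on the martingale-problem level, any globally defined probabilistically weak solution of \eqref{3} starting from the random data $(u(\mathfrak{t}), \theta(\mathfrak{t}))$. One then verifies that the concatenated process is still an $(\mathcal{F}_{t})$-adapted weak solution of \eqref{3} on $[0,\infty)$ and that its law on $[0,T]$ restricted to $\{\mathfrak{t} \geq T\}$ inherits the quantitative $L_{x}^{2}$ lower bound, so it still differs from the law of $(\bar{u}, \bar{\theta})$. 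The technical heart of this final step is the adaptedness and measurability of the concatenation, together with checking that the buoyancy coupling and linear multiplicative noise structure are preserved under the gluing.
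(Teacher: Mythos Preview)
Your proposal is correct and follows essentially the same route as the paper: construct a Galerkin reference solution satisfying an $e^{2T}$-type energy bound, extend the convex-integration solution from Theorem \ref{Theorem 2.3} globally by the measurable-selection/concatenation machinery of \cite[Sec.~5]{HZZ19} (which in the paper is packaged as Propositions \ref{Proposition 5.4}--\ref{Proposition 5.5} via Lemmas \ref{Lemma 5.2}--\ref{Lemma 5.3}), and compare second moments at time $T$ to separate the two laws. The only cosmetic addition in the paper is an explicit appeal to Cherny's theorem (\cite[Lem.~C.1]{HZZ19}) to pass from non-joint-uniqueness to non-uniqueness in law, but since your separating functional $\lVert \xi(T)\rVert_{L_x^2}^2$ depends only on the solution component, that step is already implicit in your argument.
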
 

\begin{remark}\label{Remark 2.2}
To the best of the author's knowledge, this is the first instance of non-uniqueness in law for a system of stochastic PDEs. It will be an interesting future work to try to extend Theorems \ref{Theorem 2.1}-\ref{Theorem 2.4} to the thermal diffusion of the form $(-\Delta)^{l} \theta$ with $l \in (0, 1)$. In fact, \cite[Lem. 2.5]{CC04} in case $n = 2$ (and \cite[Lem. 2.1]{CCGO09} in case $n = 3$) gives a positivity of a fractional Laplacian; i.e., $\int_{\mathbb{T}^{n}} (-\Delta)^{l} \theta \lvert \theta \rvert^{p-2} \theta dx \geq 0$ for any $p \geq 1$ and $l \in [0,1]$. Thus, we can certainly extend the $L^{p}$-estimate of $\theta$ with $(-\Delta)^{l} \theta$ in \eqref{estimate 114}, \eqref{estimate 113}, \eqref{estimate 70}, and \eqref{estimate 81}; however, the proofs of Cauchy property in $L_{x}^{2}$-norm in \eqref{estimate 38}, \eqref{estimate 0}, and \eqref{estimate 23} seem to require the full Laplacian. 
\end{remark} 

\begin{remark}
The proofs are inspired by \cite{HZZ19, LTZ20, Y20a, Y20c}. We emphasize one particular difference. E.g., in \cite[p. 3740]{LTZ20}, for the initial step of convex integration technique on 2D deterministic Boussinesq system, the authors take zero velocity, zero temperature, and zero Reynolds stress. In our proof, we choose specific forms instead (see \eqref{[Equ. (50), Y20c]}-\eqref{estimate 106}, \eqref{[Equ. (41), Y20a]}-\eqref{estimate 107}, \eqref{estimate 108}-\eqref{estimate 110}, \eqref{estimate 111}-\eqref{estimate 112}). This is due to technicality that arises in deriving \eqref{[Equ. (4), Y20c]} and \eqref{[Equ. (6), Y20c]}. Some of the major new challenges include Cauchy estimates of the temperature equation (e.g., \eqref{estimate 27}, \eqref{estimate 0}, \eqref{estimate 80}) and the additional estimates due to $\theta e^{n}$ in the Reynolds stress estimate (e.g., \eqref{estimate 63}, \eqref{estimate 68}, \eqref{estimate 77}, \eqref{estimate 357}) which will be elaborated in Remarks \ref{Remark 4.2}, \ref{another remark}, \ref{Remark 4.4}. 
\end{remark}
In what follows, we describe notations and preliminaries, and thereafter prove Theorems \ref{Theorem 2.1}-\ref{Theorem 2.4}; we intend to make these proofs as complete and self-contained as possible. 

\section{Notations and preliminaries}\label{Section 3}
For convenience, we denote $\mathbb{N}_{0} \triangleq \mathbb{N} \cup \{0\}$ and write $A \overset{(\cdot)}{\lesssim}_{a,b} B$ and $A \overset{(\cdot)}{\approx}_{a,b} B$ to indicate respectively the existence of a constant $C = C(a,b) \geq 0$ such that $A \leq CB$ and $A = CB$ due to $(\cdot)$. We denote by $\mathring{\otimes}$ a trace-free tensor product. While we reserve $\mathbb{P}$ for the Leray projection operator, we define $\mathbb{P}_{<r}$ to be a Fourier operator with a Fourier symbol $1_{\{ \lvert \xi \rvert < r \}} (\xi)$ and $\mathbb{P}_{\geq r} \triangleq \text{Id} - \mathbb{P}_{<r}$.  We write for $p \in [1,\infty]$, 
\begin{equation}\label{C-t,x}
\lVert g \rVert_{L^{p}} \triangleq \lVert g \rVert_{L_{t}^{\infty} L_{x}^{p}}, \hspace{2mm} \lVert g \rVert_{C^{N}} \triangleq \lVert g \rVert_{L_{t}^{\infty} C_{x}^{N}} \triangleq \sum_{0\leq \lvert \alpha \rvert \leq N} \lVert D^{\alpha} g \rVert_{L^{\infty}}, \hspace{2mm}  \lVert g \rVert_{C_{t,x}^{N}} \triangleq \sum_{0\leq k + \lvert \alpha \rvert \leq N} \lVert \partial_{t}^{k} D^{\alpha} g\rVert_{L^{\infty}}.
\end{equation} 
Next, we define 
\begin{align*}
&\mathcal{V}_{1} \triangleq \{v \in C^{\infty} (\mathbb{T}^{n}): \hspace{0.5mm}  v \text{ is } \mathbb{R}^{n}\text{-valued, periodic}, \int_{\mathbb{T}^{n}} v dx = 0, \text{ and } \nabla\cdot v = 0 \}, \\
&\mathcal{V}_{2} \triangleq \{\Theta \in C^{\infty} (\mathbb{T}^{n}): \hspace{0.5mm}  \Theta \text{ is } \mathbb{R}\text{-valued, periodic, and } \int_{\mathbb{T}^{n}} \Theta dx = 0 \}, 
\end{align*} 
$L_{\sigma}^{2}$ and $\mathring{L}^{2}$ respectively to be the closures of $\mathcal{V}_{1}$ and $\mathcal{V}_{2}$ in $L^{2}(\mathbb{T}^{n})$. For any Polish space $H$, we write $\mathcal{B}(H)$ to denote the $\sigma$-algebra of Borel sets in $H$. We denote an expectation with respect to (w.r.t.) any probability measure $P$ by $\mathbb{E}^{P}$ and law of a random variable $X$ by $\mathcal{L}(X)$. We denote by $\langle \cdot, \cdot \rangle$ an $L^{2}(\mathbb{T}^{n})$-inner product, $\langle \langle A, B \rangle \rangle$ a quadratic variation of $A$ and $B$, while $\langle \langle A \rangle \rangle \triangleq \langle \langle A, A \rangle \rangle$. We let
\begin{equation}\label{estimate 4}
\Omega_{0} \triangleq C([0,\infty); H^{-3} (\mathbb{T}^{n})) \cap L_{\text{loc}}^{\infty} ([0,\infty); L_{\sigma}^{2}) \times C([0,\infty); H^{-n} (\mathbb{T}^{n})) \cap L_{\text{loc}}^{\infty} ([0,\infty); \mathring{L}^{2}).
\end{equation} 
We define $\xi \triangleq (\xi_{1}, \xi_{2}): \hspace{0.5mm}  \Omega_{0} \mapsto H^{-3} (\mathbb{T}^{n}) \times H^{-n}(\mathbb{T}^{n})$ the canonical process by $\xi_{t} (\omega) \triangleq \omega(t)$. We also denote by $\mathcal{P} (\Omega_{0})$ the set of all probability measures on $(\Omega_{0}, \mathcal{B})$ where $\mathcal{B}$ is the Borel $\sigma$-algebra of $\Omega_{0}$ from the topology of locally uniform convergence on $\Omega_{0}$. Similarly, for any $t \geq 0$, we define 
\begin{equation}
\Omega_{t} \triangleq C([t,\infty); H^{-3} (\mathbb{T}^{n})) \cap L_{\text{loc}}^{\infty} ([t,\infty); L_{\sigma}^{2}) \times C([t,\infty); H^{-n} (\mathbb{T}^{n})) \cap L_{\text{loc}}^{\infty} ([t,\infty); \mathring{L}^{2}),  
\end{equation} 
equipped with Borel $\sigma$-algebra $\mathcal{B}^{t} \triangleq \sigma \{ \xi(s) : \hspace{0.5mm} s \geq t \}$. Furthermore, we define $\mathcal{B}_{t}^{0} \triangleq \sigma \{ \xi(s): \hspace{0.5mm} s \leq t \}$ and $\mathcal{B}_{t} \triangleq \cap_{s > t} \mathcal{B}_{s}^{0}$ for $t \geq 0$. For any Hilbert spaces $U_{1}$ and $U_{2}$, we denote by $L_{2}(U_{1}, L_{\sigma}^{2})$ and $L_{2}(U_{2}, \mathring{L}^{2})$ the spaces of all Hilbert-Schmidt operators from $U_{1}$ to $L_{\sigma}^{2}$ and from $U_{2}$ to $\mathring{L}^{2}$ with norms $\lVert \cdot \rVert_{L_{2}(U_{1}, L_{\sigma}^{2})}$ and $\lVert \cdot \rVert_{L_{2}(U_{2}, \mathring{L}^{2})}$, respectively. We impose on $G_{1}: \hspace{0.5mm} L_{\sigma}^{2} \mapsto L_{2}(U_{1}, L_{\sigma}^{2})$ and $G_{2}: \hspace{0.5mm}  \mathring{L}^{2} \mapsto L_{2}(U_{2}, \mathring{L}^{2})$ to be $\mathcal{B}(L_{\sigma}^{2})/\mathcal{B}(L_{2}(U_{1}, L_{\sigma}^{2}))$-measurable and $\mathcal{B}(\mathring{L}^{2})/ \mathcal{B} (L_{2}(U_{2}, \mathring{L}^{2}))$-measurable, respectively. They must also satisfy 
\begin{subequations}
\begin{align}
&\lVert G_{1} (\phi) \rVert_{L_{2} (U_{1}, L_{\sigma}^{2})} \leq C(1+ \lVert \phi \rVert_{L_{x}^{2}}), \hspace{5mm} 
 \lVert G_{2} (\phi) \rVert_{L_{2} (U_{2}, \mathring{L}^{2})} \leq C(1+ \lVert \phi \rVert_{L_{x}^{2}}), \label{[Equ. (11), Y20a]}\\
&\lim_{l\to\infty} \lVert G_{1} (\psi_{l})^{\ast} \phi - G_{1} (\psi)^{\ast} \phi \rVert_{U_{1}} = 0, \hspace{3mm} \lim_{l\to\infty} \lVert G_{2} (\psi_{l})^{\ast} \phi - G_{2} (\psi)^{\ast} \phi \rVert_{U_{2}} = 0, \label{[Equ. (12), Y20a]} 
\end{align} 
\end{subequations} 
for all $\phi, \psi_{l}, \psi \in C^{\infty} (\mathbb{T}^{n}) \cap L_{\sigma}^{2}$ in case of $G_{1}$ while $C^{\infty} (\mathbb{T}^{n}) \cap \mathring{L}^{2}$ in case of $G_{2}$ such that $\lim_{l\to\infty}\lVert \psi_{l} - \psi \rVert_{L_{x}^{2}} = 0$. We also assume the existence of Hilbert spaces $\tilde{U}_{1}, \tilde{U}_{2}$ such that the embeddings $U_{1} \hookrightarrow \tilde{U}_{1}$ and $U_{2}\hookrightarrow \tilde{U}_{2}$ are Hilbert-Schmidt. We also define 
\begin{align}
\bar{\Omega} \triangleq& C([0, \infty), H^{-3} (\mathbb{T}^{n}) \times \tilde{U}_{1}) \cap L_{\text{loc}}^{\infty} ([0,\infty); L_{\sigma}^{2} \times \tilde{U}_{1}) \nonumber \\
 \times& C([0,\infty); H^{-n} (\mathbb{T}^{n}) \times \tilde{U}_{2}) \cap L_{\text{loc}}^{\infty} ([0,\infty); \mathring{L}^{2} \times \tilde{U}_{2}) 
\end{align} 
and $\mathcal{P} (\bar{\Omega})$ to be the set of all probability measures on $(\bar{\Omega}, \bar{\mathcal{B}})$ where $\bar{\mathcal{B}}$ is the Borel $\sigma$-algebra of $\bar{\Omega}$. Analogously, we define the canonical process on $\bar{\Omega}$ by $(\xi, \zeta): \hspace{0.5mm} \bar{\Omega}:\hspace{0.5mm}  \mapsto H^{-3} (\mathbb{T}^{n}) \times \tilde{U}_{1} \times H^{-n} (\mathbb{T}^{n}) \times \tilde{U}_{2}$ by $(\xi_{t} (\omega), \zeta_{t}(\omega)) \triangleq \omega(t)$. Finally, we define 
\begin{equation}
\bar{\mathcal{B}}^{t} \triangleq \sigma  \{ (\xi(s), \zeta(s)): \hspace{0.5mm} s \geq t \}, \hspace{1mm} \bar{\mathcal{B}}_{t}^{0} \triangleq \sigma  \{ (\xi(s), \zeta(s)): \hspace{0.5mm} s \leq t \},  \hspace{1mm} \text{ and } \hspace{1mm} \bar{\mathcal{B}}_{t} \triangleq \cap_{s > t} \bar{\mathcal{B}}_{s}^{0} \text{ for } t \geq 0. 
\end{equation} 

\section{Proofs of Theorems \ref{Theorem 2.1}-\ref{Theorem 2.2}}
We start with formal definition of solutions to \eqref{3}. 
\begin{define}\label{Definition 4.1}
Let $s \geq 0$ and $\xi^{\text{in}} = (\xi_{1}^{\text{in}}, \xi_{2}^{\text{in}}) \in L_{\sigma}^{2} \times \mathring{L}^{2}$. Then $P \in \mathcal{P} (\Omega_{0})$ is a martingale solution to \eqref{3} with initial condition $\xi^{\text{in}}$ at initial time $s$ if 
\begin{enumerate}
\item [](M1) $P ( \{ \xi(t) = \xi^{\text{in}} \hspace{1mm} \forall \hspace{1mm} t \in [0,s] \}) = 1$ and for all $l \in \mathbb{N}$, 
\begin{equation}
P ( \{ \xi \in \Omega_{0}: \hspace{0.5mm} \int_{0}^{l} \lVert G_{1} (\xi_{1} (r)) \rVert_{L_{2} (U_{1}, L_{\sigma}^{2})}^{2} + \lVert G_{2} (\xi_{2} (r)) \rVert_{L_{2} (U_{2}, \mathring{L}^{2} )}^{2} dr < \infty \} ) = 1, 
\end{equation} 
\item [](M2) for every $\psi_{i} = (\psi_{i}^{1}, \psi_{i}^{2}) \in C^{\infty} (\mathbb{T}^{n}) \cap L_{\sigma}^{2} \times C^{\infty} (\mathbb{T}^{n}) \cap \mathring{L}^{2}$ and $t \geq s$, the processes 
\begin{subequations}\label{[Equ. (14), Y20a]}
\begin{align}
M_{1,t,s}^{i} \triangleq& \langle \xi_{1}(t) - \xi_{1}(s), \psi_{i}^{1} \rangle + \int_{s}^{t} \langle \text{div} (\xi_{1}(r) \otimes \xi_{1}(r)) + (-\Delta)^{m} \xi_{1}(r) - \xi_{2}(r) e^{n}, \psi_{i}^{1} \rangle dr, \\
M_{2,t,s}^{i} \triangleq& \langle \xi_{2}(t) - \xi_{2}(s), \psi_{i}^{2} \rangle + \int_{s}^{t} \langle \text{div} (\xi_{1}(r)\xi_{2}(r)) -\Delta \xi_{2}(r), \psi_{i}^{2} \rangle dr,  
\end{align} 
\end{subequations}
are continuous, square-integrable $(\mathcal{B}_{t})_{t\geq s}$-martingales under $P$ such that  
\begin{equation}\label{estimate 302}
\langle \langle M_{k,t,s}^{i} \rangle \rangle = \int_{s}^{t} \lVert G_{k} (\xi_{k} (r))^{\ast} \psi_{i}^{k} \rVert_{U_{k}}^{2} dr, \hspace{5mm} k \in \{1,2\},
\end{equation} 
\item [](M3) for any $q \in \mathbb{N}$, there exists a function $t \mapsto C_{t,q} \in \mathbb{R}_{+}$ such that for all $t \geq s$, 
\begin{align}
&\mathbb{E}^{P} [ \sup_{r \in [0,t]} \lVert \xi_{1}(r) \rVert_{L_{x}^{2}}^{2q} + \int_{s}^{t} \lVert \xi_{1}(r) \rVert_{\dot{H}_{x}^{\gamma}}^{2} dr \nonumber \\
& \hspace{2mm} + \sup_{r \in [0,t]} \lVert \xi_{2}(r) \rVert_{L_{x}^{2}}^{2q} + \int_{s}^{t} \lVert \xi_{2} (r) \rVert_{\dot{H}_{x}^{1}}^{2} dr] \leq C_{t,q} (1+ \lVert \xi_{1}^{\text{in}} \rVert_{L_{x}^{2}}^{2q} + \lVert \xi_{2}^{\text{in}} \rVert_{L_{x}^{2}}^{2q}). \label{estimate 24}
\end{align}  
\end{enumerate} 
The set of all such martingale solutions with the same constant $C_{t,q}$ in \eqref{estimate 24} for every $q \in \mathbb{N}$ and $t \geq s$ will be denoted by $\mathcal{C} ( s, \xi^{\text{in}}, \{C_{t,q} \}_{q \in \mathbb{N}, t \geq s } )$. 
\end{define}
 
If $\{ \psi_{j}^{1}\}_{j=1}^{\infty}$ and $\{\psi_{j}^{2} \}_{j=1}^{\infty}$ are complete orthonormal systems of $L_{\sigma}^{2}$ and $\mathring{L}^{2}$ that consist of eigenvectors of $G_{1}G_{1}^{\ast}$ and $G_{2}G_{2}^{\ast}$, then 
\begin{equation}\label{estimate 19}
M_{k,t,s} \triangleq \sum_{j=1}^{\infty} M_{k,t,s}^{j} \psi_{j}^{k}, \hspace{5mm} k \in \{1,2\}, 
\end{equation}
becomes a $G_{k}G_{k}^{\ast}$-Wiener process starting from initial time $s$ w.r.t. $(\mathcal{B}_{t})_{t\geq s}$ under $P$, respectively. In order to define a martingale solution up to a stopping time $\tau: \hspace{0.5mm}  \Omega_{0} \mapsto [0,\infty]$, we define the space of trajectories stopped at time $\tau$ by 
\begin{equation}\label{estimate 305}
\Omega_{0,\tau} \triangleq \{ \omega (\cdot \wedge \tau(\omega)): \hspace{0.5mm} \omega \in \Omega_{0} \} = \{ \omega \in \Omega_{0}:\hspace{0.5mm}  \xi(t,\omega) = \xi(t\wedge \tau(\omega), \omega) \hspace{1mm} \forall \hspace{1mm} t \geq 0 \}. 
\end{equation} 

\begin{define}\label{Definition 4.2}
Let $s \geq 0, \xi^{\text{in}}= (\xi_{1}^{\text{in}}, \xi_{2}^{\text{in}}) \in L_{\sigma}^{2} \times \mathring{L}^{2}$ and $\tau \geq s$ be a stopping time of $(\mathcal{B}_{t})_{t\geq s}$. Then $P \in \mathcal{P} (\Omega_{0, \tau})$ is a martingale solution to \eqref{3} on $[s, \tau]$ with initial condition $\xi^{\text{in}}$ at initial time $s$ if 
\begin{enumerate}
\item [](M1) $P(\{ \xi(t) = \xi^{\text{in}} \hspace{1mm} \forall \hspace{1mm} t \in [0, s] \}) = 1$ and for all $l \in \mathbb{N}$, 
\begin{equation}
P( \{ \xi \in \Omega_{0}:\hspace{0.5mm}  \int_{0}^{l \wedge \tau} \lVert G_{1} (\xi_{1} (r)) \rVert_{L_{2} (U_{1}, L_{\sigma}^{2})}^{2} + \lVert G_{2} (\xi_{2} (r)) \rVert_{L_{2} (U_{2}, \mathring{L}^{2})}^{2} dr < \infty \}) = 1, 
\end{equation} 
\item [](M2) for every $\psi_{i} = (\psi_{i}^{1}, \psi_{i}^{2}) \in C^{\infty} (\mathbb{T}^{n}) \cap L_{\sigma}^{2} \times C^{\infty} (\mathbb{T}^{n}) \cap \mathring{L}^{2}$ and $t \geq s$, the processes 
\begin{align*}
M_{1, t\wedge \tau, s}^{i} \triangleq& \langle \xi_{1}(t\wedge \tau) - \xi_{1}^{\text{in}}, \psi_{i}^{1} \rangle + \int_{s}^{t \wedge \tau} \langle \text{div} (\xi_{1} (r) \otimes \xi_{1}(r)) + (-\Delta)^{m} \xi_{1}(r) - \xi_{2}(r) e^{n}, \psi_{i}^{1} \rangle dr, \\
M_{2, t \wedge \tau, s}^{i} \triangleq& \langle \xi_{2}(t\wedge \tau) - \xi_{2}^{\text{in}}, \psi_{i}^{2} \rangle + \int_{s}^{t\wedge \tau} \langle \text{div} (\xi_{1}(r) \xi_{2}(r)) - \Delta \xi_{2}(r), \psi_{i}^{2} \rangle dr,  
\end{align*} 
are continuous, square-integrable $(\mathcal{B}_{t})_{t\geq s}$-martingales under $P$ such that  
\begin{equation}
\langle \langle M_{k,t\wedge \tau,s}^{i} \rangle \rangle = \int_{s}^{t\wedge \tau} \lVert G_{k} (\xi_{k} (r))^{\ast} \psi_{i}^{k} \rVert_{U_{k}}^{2} dr, \hspace{5mm} k \in \{1,2\}, 
\end{equation} 
\item [](M3) for any $q \in \mathbb{N}$, there exists a function $t \mapsto C_{t,q} \in \mathbb{R}_{+}$ such that for all $t \geq s$ 
\begin{align}
&\mathbb{E}^{P} [ \sup_{r \in [0,t\wedge \tau]} \lVert \xi_{1}(r) \rVert_{L_{x}^{2}}^{2q} + \int_{s}^{t\wedge \tau} \lVert \xi_{1}(r) \rVert_{\dot{H}_{x}^{\gamma}}^{2} dr \nonumber \\
& \hspace{2mm} + \sup_{r \in [0,t\wedge \tau]} \lVert \xi_{2}(r) \rVert_{L_{x}^{2}}^{2q} + \int_{s}^{t\wedge \tau} \lVert \xi_{2} (r) \rVert_{\dot{H}_{x}^{1}}^{2} dr] \leq C_{t,q} (1+ \lVert \xi_{1}^{\text{in}} \rVert_{L_{x}^{2}}^{2q} + \lVert \xi_{2}^{\text{in}} \rVert_{L_{x}^{2}}^{2q}). 
\end{align}  
\end{enumerate} 
\end{define} 

First result concerns existence and stability of martingale solutions according to Definition \ref{Definition 4.1}. 
\begin{proposition}\label{Proposition 4.1}
For any $(s, \xi^{\text{in}}) \in [0,\infty) \times L_{\sigma}^{2} \times \mathring{L}^{2}$, there exists a martingale solution $P \in \mathcal{P} (\Omega_{0})$ to \eqref{3} with initial condition $\xi^{\text{in}}$ at initial time $s$ that satisfies Definition \ref{Definition 4.1}. Moreover, if there exists a family $\{ (s_{l}, \xi_{l}) \}_{l\in\mathbb{N}} \subset [0,\infty) \times L_{\sigma}^{2} \times \mathring{L}^{2}$ such that $\lim_{l\to\infty} \lVert (s_{l}, \xi_{l}) - (s, \xi^{\text{in}}) \rVert_{\mathbb{R} \times L_{\sigma}^{2}\times \mathring{L}^{2}} = 0$ and $P_{l} \in \mathcal{C} (s_{l}, \xi_{l}, \{C_{t,q} \}_{q\in\mathbb{N}, t \geq s_{l}})$ is the martingale solution corresponding to $(s_{l}, \xi_{l})$, then there exists a subsequence $\{P_{l_{k}} \}_{k \in \mathbb{N}}$ that converges weakly to some $P \in \mathcal{C} ( s,\xi^{\text{in}}, \{C_{t,q}\}_{q \in \mathbb{N}, t \geq s })$.  
\end{proposition}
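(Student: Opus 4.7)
The plan is to follow the classical Stroock--Varadhan martingale problem strategy adapted to SPDEs, treating existence via Galerkin approximation and stability by a compactness argument on the already-constructed martingale solutions. For existence, I would first project the system \eqref{3} onto the span of the first $N$ eigenfunctions of $-\Delta$ in both $L_{\sigma}^{2}$ and $\mathring{L}^{2}$, obtaining a finite-dimensional SDE with locally Lipschitz coefficients (the bilinear terms are smooth on finite-dimensional subspaces and the diffusions satisfy \eqref{[Equ. (11), Y20a]}). Local existence is standard; to extend globally I would apply It\^o's formula to $\lVert u_{N} \rVert_{L_{x}^{2}}^{2}$ and $\lVert \theta_{N} \rVert_{L_{x}^{2}}^{2}$. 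Crucially, $(u_{N} \cdot \nabla) u_{N}$ and $(u_{N}\cdot \nabla) \theta_{N}$ vanish in their natural energy inner products because of $\nabla\cdot u_{N} = 0$, the coupling $\theta_{N} e^{n}$ is handled by Cauchy--Schwarz and Young's inequality against $\lVert u_{N} \rVert_{L_{x}^{2}}^{2}$, and the noise contributions are controlled by \eqref{[Equ. (11), Y20a]} and the hypothesis \eqref{5}. Using the Burkholder--Davis--Gundy inequality and Gr\"onwall, one obtains the $2q$-th moment bound \eqref{estimate 24} uniformly in $N$, with the dissipation terms $\int_{s}^{t} \lVert u_{N} \rVert_{\dot{H}_{x}^{m}}^{2} dr$ and $\int_{s}^{t} \lVert \theta_{N} \rVert_{\dot{H}_{x}^{1}}^{2} dr$; since $\gamma < m$ may be chosen, the $\dot{H}_{x}^{\gamma}$ norm is dominated.

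Next, to pass to the limit $N \to \infty$, I would establish tightness of the laws $\{\mathcal{L} (u_{N}, \theta_{N}, B_{1}, B_{2}) \}$ on $\Omega_{0} \times C([0,\infty); \tilde{U}_{1}) \times C([0,\infty); \tilde{U}_{2})$. The spatial bounds come from the energy estimates while temporal H\"older regularity in $H^{-3}$ and $H^{-n}$ follows from bounding the drift using the nonlinear term $\text{div}(u_{N} \otimes u_{N})$ through Sobolev embedding into those negative order spaces, the stochastic integral via BDG, and dissipation by \eqref{5}. With Aubin--Lions giving compact embedding of $L_{t}^{\infty} L_{x}^{2} \cap C_{t}^{\alpha} H_{x}^{-3}$ into $C_{t}H_{x}^{-3} \cap L_{\text{loc}}^{2} L_{x}^{2}$ (and similarly for $\theta$), tightness follows. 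Applying Jakubowski's version of Skorokhod's theorem on the non-metrizable Polish-like setting $\Omega_{0}$, I would extract a subsequence converging a.s. on a new probability space, identify the limit $(\xi_{1}, \xi_{2})$ as satisfying \eqref{[Equ. (14), Y20a]} distributionally with the quadratic variation \eqref{estimate 302}, and verify (M1)--(M3) by standard arguments involving continuity of the nonlinearities in the strong $L_{t,x}^{2}$ topology combined with the martingale convergence of the stochastic integrals (using \eqref{[Equ. (12), Y20a]} for the $\omega$-wise identification of the noise).

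For stability, I would start from the uniform bound \eqref{estimate 24} valid for each $P_{l}$ with the common constants $\{C_{t,q}\}$. Since $\lim_{l} \lVert \xi_{l} \rVert_{L_{x}^{2}} = \lVert \xi^{\text{in}} \rVert_{L_{x}^{2}}$, the right-hand side of \eqref{estimate 24} is uniformly bounded in $l$, so the same tightness argument as above applies directly to the family $\{P_{l}\}$ on $\Omega_{0}$ (considering the canonical augmentation with Wiener processes $M_{k,\cdot,s_{l}}$ defined by \eqref{estimate 19}). Extracting a weakly convergent subsequence $P_{l_{k}} \to P$, I would then verify that $P$ lies in $\mathcal{C}(s, \xi^{\text{in}}, \{C_{t,q}\})$: the initial condition passes by continuity of $\xi(t)$ in $H^{-3}$ and the convergence $s_{l} \to s$, $\xi_{l} \to \xi^{\text{in}}$; the martingale property \eqref{[Equ. (14), Y20a]} and quadratic variation \eqref{estimate 302} are preserved in the weak limit via the standard test-function argument (the nonlinear drifts converge because weak-in-$L_{t}^{\infty}L_{x}^{2}$ plus strong-in-$C_{t} H_{x}^{-3}$ convergence suffices); and \eqref{estimate 24} is preserved via Fatou's lemma and lower semicontinuity of the relevant functionals.

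The main obstacle I anticipate is the rigorous identification, on the new Skorokhod probability space, of the martingale structure in the limit, particularly handling the coupling term $\xi_{2}(r) e^{n}$ in $M_{1,t,s}^{i}$ and the advection $\text{div}(\xi_{1}(r)\xi_{2}(r))$ in $M_{2,t,s}^{i}$ simultaneously. The former is linear and benign, but the latter requires strong $L_{\text{loc}}^{2} L_{x}^{2}$ convergence of both $\xi_{1}^{(l_{k})}$ and $\xi_{2}^{(l_{k})}$ in order to identify the nonlinear bilinear limit; since the $\theta$-equation only provides $\dot{H}_{x}^{1}$ dissipation and $u$ only $\dot{H}_{x}^{\gamma}$ with $\gamma < m$ possibly less than one, one must carefully check that the corresponding Aubin--Lions compactness still gives strong convergence of the product in a distributional topology. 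Equally delicate is the verification of \eqref{estimate 302} using \eqref{[Equ. (12), Y20a]}, because $G_{k}$ is only sequentially strongly continuous on $L_{x}^{2}$, so one must exploit the a.s.\ pointwise-in-time convergence delivered by Skorokhod to conclude convergence of the quadratic variations along the extracted subsequence.
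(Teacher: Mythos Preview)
Your proposal is correct and follows essentially the same route as the paper: existence via Galerkin approximation with uniform energy estimates, and stability via a tightness argument (the paper's Proposition \ref{Proposition 6.10}) in $C_{\text{loc}}H^{-3}_x \cap L^2_{\text{loc}}L^2_x \times C_{\text{loc}}H^{-n}_x \cap L^2_{\text{loc}}L^2_x$ followed by Skorokhod and identification of the limit as a martingale solution. The only cosmetic differences are that the paper works directly within the martingale-problem formulation on $\Omega_0$ without augmenting by the driving Wiener processes, and it invokes the standard Skorokhod theorem (the space $\mathbb{S}$ in \eqref{estimate 1} is Polish, so Jakubowski's extension is unnecessary); your anticipated obstacle concerning $\text{div}(\xi_1\xi_2)$ is resolved exactly as you suggest, via the strong $L^2_{\text{loc}}L^2_x$ convergence delivered by Skorokhod.
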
 
 
\begin{proof}[Proof of Proposition \ref{Proposition 4.1}]
For completeness, we sketch its proof in the Subsection \ref{Subsection 6.1} of Appendix. 
\end{proof} 
 
Proposition \ref{Proposition 4.1} leads to the following two results, which are only slight modifications of \cite[Pro. 3.2 and 3.4]{HZZ19} to which we refer interested readers for details. 
 
\begin{lemma}\label{Lemma 4.2}
(cf. \cite[Pro. 3.2]{HZZ19}) Let $\tau$ be a bounded stopping time of $(\mathcal{B}_{t})_{t\geq 0}$. Then, for every $\omega \in \Omega_{0}$, there exists $Q_{\omega} \triangleq \delta_{\omega} \otimes_{\tau(\omega)} R_{\tau(\omega), \xi(\tau(\omega), \omega)} \in \mathcal{P} (\Omega_{0})$ where $\delta_{\omega}$ is a point-mass at $\omega$ such that 
\begin{subequations}
\begin{align} 
& Q_{\omega} ( \{ \omega' \in \Omega_{0}:\hspace{0.5mm}  \xi(t, \omega') = \omega(t) \hspace{1mm} \forall \hspace{1mm} t \in [0, \tau(\omega) ] \}) = 1, \label{[Equ. (20a), Y20a]} \\
& Q_{\omega}(A) = R_{\tau (\omega), \xi(\tau(\omega), \omega)} (A) \hspace{1mm} \forall \hspace{1mm} A \in \mathcal{B}^{\tau(\omega)}, \label{[Equ. (20b), Y20a]}
\end{align}
\end{subequations} 
where $R_{\tau(\omega), \xi(\tau(\omega), \omega)} \in \mathcal{P}(\Omega_{0})$ is a martingale solution to \eqref{3} with initial condition $\xi(\tau(\omega), \omega)$ at initial time $\tau(\omega)$, and the mapping $\omega \mapsto Q_{\omega}(B)$ is $\mathcal{B}_{\tau}$-measurable for every $B \in \mathcal{B}$. 
\end{lemma}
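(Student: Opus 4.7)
The plan is to follow the Stroock--Varadhan disintegration argument exactly in the form used for the Navier--Stokes system in \cite{HZZ19}, verifying that each step survives the passage to the coupled velocity--temperature setting of Definition \ref{Definition 4.1}. The construction has three components: a Borel measurable selection of initial data to martingale solutions, a pathwise pasting operation, and a monotone class verification of measurability of the resulting kernel.

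First I would construct a Borel measurable map $(s,\xi^{\text{in}}) \mapsto R_{s,\xi^{\text{in}}} \in \mathcal{P}(\Omega_{0})$ such that each $R_{s,\xi^{\text{in}}}$ is a martingale solution starting at $(s,\xi^{\text{in}})$. Existence of the set-valued map is supplied by Proposition \ref{Proposition 4.1}, and the stability half of that proposition says the graph
\[
\{((s,\xi^{\text{in}}),P) : P \in \mathcal{C}(s,\xi^{\text{in}},\{C_{t,q}\})\}
\]
is closed in $[0,\infty)\times L_{\sigma}^{2}\times \mathring{L}^{2}\times \mathcal{P}(\Omega_{0})$ (with the weak topology on the last factor). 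Since the fibers are nonempty and $\mathcal{P}(\Omega_{0})$ is Polish, the Kuratowski--Ryll-Nardzewski selection theorem furnishes a Borel measurable selection, exactly as in \cite[Pro.~3.2]{HZZ19}. The coupling between the two components of $\xi=(\xi_{1},\xi_{2})$ plays no role in this step since the martingale problem is phrased jointly.

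Next I would introduce the pathwise concatenation operator: for $\omega \in \Omega_{0}$, set
\[
(\omega \oplus_{\tau(\omega)} \omega')(t) \triangleq \begin{cases} \omega(t), & t \leq \tau(\omega), \\ \omega'(t), & t \geq \tau(\omega), \end{cases}
\]
for any $\omega' \in \Omega_{0}$ with $\omega'(\tau(\omega)) = \xi(\tau(\omega),\omega)$, which is well-defined and in $\Omega_{0}$ by matching values at $\tau(\omega)$. Then $Q_{\omega}$ is defined as the pushforward of $R_{\tau(\omega),\xi(\tau(\omega),\omega)}$ under $\omega' \mapsto \omega \oplus_{\tau(\omega)} \omega'$. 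Properties \eqref{[Equ. (20a), Y20a]} and \eqref{[Equ. (20b), Y20a]} are then immediate by construction: the first because the concatenation reproduces $\omega$ on $[0,\tau(\omega)]$, and the second because the concatenation acts as the identity when the path is read on $[\tau(\omega),\infty)$, so the pushforward coincides with $R_{\tau(\omega),\xi(\tau(\omega),\omega)}$ on $\mathcal{B}^{\tau(\omega)}$.

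Finally, the $\mathcal{B}_{\tau}$-measurability of $\omega \mapsto Q_{\omega}(B)$ is proved by a monotone class argument. The collection of $B \in \mathcal{B}$ for which measurability holds is a Dynkin system, so it suffices to check it on the $\pi$-system of finite-dimensional cylinders $\{\xi(t_{1}) \in A_{1},\dots,\xi(t_{k}) \in A_{k}\}$. For such $B$ the value $Q_{\omega}(B)$ splits into a product: the coordinates with $t_{i} \leq \tau(\omega)$ contribute indicator functions evaluated at $\omega$, which are $\mathcal{B}_{\tau}$-measurable since $\xi$ is adapted and $\tau$ is a stopping time; the coordinates with $t_{i} > \tau(\omega)$ contribute $R_{\tau(\omega),\xi(\tau(\omega),\omega)}$-probabilities of cylinder events, which are Borel functions of $(\tau(\omega),\xi(\tau(\omega),\omega))$ by the selection's measurability. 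Composition and products yield the claim.

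The only genuinely nontrivial step is the measurable selection, and it is the same step as in \cite[Pro.~3.2]{HZZ19}; it relies on stability of martingale solutions under weak convergence, which is precisely the statement of Proposition \ref{Proposition 4.1}. The additional estimate on $\xi_{2}$ in \eqref{estimate 24} and the extra term $-\xi_{2}(r)e^{n}$ in \eqref{[Equ. (14), Y20a]} do not affect either the closedness of the graph or the pasting construction, so no new ideas are required beyond the Navier--Stokes template; hence the proof reduces to invoking \cite[Pro.~3.2]{HZZ19} mutatis mutandis.
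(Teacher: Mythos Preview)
Your proposal is correct and matches the paper's approach: the paper does not give an independent proof but simply states that Lemma \ref{Lemma 4.2} is a slight modification of \cite[Pro.~3.2]{HZZ19}, with Proposition \ref{Proposition 4.1} supplying the needed existence and stability input. Your outline of the Stroock--Varadhan disintegration (measurable selection via Kuratowski--Ryll-Nardzewski from the closed-graph stability, pathwise concatenation, monotone class for measurability) is exactly the content of that reference, and your observation that the coupling to $\xi_{2}$ introduces no new obstacle is precisely the ``slight modification'' the paper alludes to.
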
 

\begin{lemma}\label{Lemma 4.3}
(cf. \cite[Pro. 3.4]{HZZ19}) Let $\tau$ be a bounded stopping time of $(\mathcal{B}_{t})_{t\geq 0}$, $\xi^{\text{in}} \in L_{\sigma}^{2} \times \mathring{L}^{2}$, and $P$ be a martingale solution to \eqref{3} on $[0,\tau]$ with initial condition $\xi^{\text{in}}$ at initial time 0 that satisfies Definition \ref{Definition 4.2}. Suppose that there exists a Borel set $\mathcal{N} \subset \Omega_{0,\tau}$ such that $P(\mathcal{N}) = 0$ and $Q_{\omega}$ from Lemma \ref{Lemma 4.2} satisfies for every $\omega \in \Omega_{0} \setminus \mathcal{N}$ 
\begin{equation}\label{[Equ. (22), Y20a]}
Q_{\omega} (\{\omega' \in \Omega_{0}:\hspace{0.5mm}  \tau(\omega') = \tau(\omega) \}) = 1. 
\end{equation} 
Then the probability measure $P \otimes_{\tau}R \in \mathcal{P}(\Omega_{0})$ defined by 
\begin{equation}\label{[Equ. (23), Y20a]} 
P\otimes_{\tau} R (\cdot) \triangleq \int_{\Omega_{0}} Q_{\omega} (\cdot) P(d\omega) 
\end{equation} 
satisfies $P \otimes_{\tau}R \rvert_{\Omega_{0,\tau}} = P \rvert_{\Omega_{0,\tau}}$ and it is a martingale solution to \eqref{3} on $[0,\infty)$ with initial condition $\xi^{\text{in}}$ at initial time 0. 
\end{lemma}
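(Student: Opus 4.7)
The plan is to verify (M1)--(M3) of Definition \ref{Definition 4.1} for the concatenated measure $P \otimes_\tau R$ by decomposing every relevant expression at $\tau$ and using the disintegration supplied by Lemma \ref{Lemma 4.2}. This is the standard Stroock--Varadhan concatenation recipe for martingale-problem solutions, adapted here to the coupled pair $(\xi_1,\xi_2)$.

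First I would check that $(P \otimes_\tau R)|_{\Omega_{0,\tau}} = P|_{\Omega_{0,\tau}}$. By \eqref{[Equ. (20a), Y20a]}, $Q_\omega$ is concentrated on the set of trajectories agreeing with $\omega$ on $[0,\tau(\omega)]$, and combined with the $\mathcal{B}_\tau$-measurability of $\omega \mapsto Q_\omega(B)$ this gives $(P\otimes_\tau R)(A) = P(A)$ for every $A \in \mathcal{B}_\tau$. This immediately delivers the initial-time part of (M1) and, by Fubini, the local $P\otimes_\tau R$-integrability of $\lVert G_{1}(\xi_{1})\rVert^{2}$ and $\lVert G_{2}(\xi_{2})\rVert^{2}$ up to any finite horizon, split as $P$-integrability on $[0,\tau]$ plus $R_{\tau(\omega),\xi(\tau(\omega),\omega)}$-integrability on $[\tau(\omega),\infty)$. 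Property (M3) follows by the same splitting of the suprema and time integrals over $[0,t\wedge\tau]$ and $[t\wedge\tau,t]$, using the moment bounds of Definition \ref{Definition 4.2} for $P$ on the first piece and of Definition \ref{Definition 4.1} for $R_{\tau(\omega),\xi(\tau(\omega),\omega)}$ on the second.

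The heart of the argument, and the main obstacle, is the martingale property (M2) under $P\otimes_\tau R$. Fix $0 \leq s \leq t$, $\psi_i$ as in (M2), and a bounded $\mathcal{B}_s$-measurable random variable $H$. I would use the decomposition
\[
M_{k,t,0}^i - M_{k,s,0}^i = \bigl(M_{k,t\wedge\tau,0}^i - M_{k,s\wedge\tau,0}^i\bigr) + \bigl((M_{k,t,0}^i - M_{k,t\wedge\tau,0}^i) - (M_{k,s,0}^i - M_{k,s\wedge\tau,0}^i)\bigr).
\]
Against $H$, the first bracket has zero expectation because $P$ is a martingale solution on $[0,\tau]$. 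For the second, I would condition on $\mathcal{B}_\tau$ and use \eqref{[Equ. (23), Y20a]} together with \eqref{[Equ. (20b), Y20a]} to rewrite it $\omega$-wise as an $R_{\tau(\omega),\xi(\tau(\omega),\omega)}$-expectation of the increment $M_{k,t\vee\tau(\omega),\tau(\omega)}^i - M_{k,s\vee\tau(\omega),\tau(\omega)}^i$, which vanishes because $R_{\tau(\omega),\xi(\tau(\omega),\omega)}$ is a martingale solution starting at time $\tau(\omega)$. The hypothesis \eqref{[Equ. (22), Y20a]} is essential here: it guarantees that $\tau(\omega') = \tau(\omega)$ for $Q_\omega$-almost every $\omega'$, so that the time argument in the martingale identity under $R_{\tau(\omega),\xi(\tau(\omega),\omega)}$ is the deterministic constant $\tau(\omega)$ rather than a random quantity $\tau(\omega')$. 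The same decomposition, applied to the process $(M_{k,t,0}^i)^2 - \int_0^t \lVert G_k(\xi_k(r))^\ast \psi_i^k \rVert_{U_k}^2 dr$, delivers the quadratic-variation identity \eqref{estimate 302} under $P\otimes_\tau R$.

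The only step requiring genuine care is this martingale-pasting across $\tau$, where one must simultaneously juggle the $\mathcal{B}_\tau$-measurability of $\omega \mapsto Q_\omega(B)$ furnished by Lemma \ref{Lemma 4.2}, the almost-sure agreement \eqref{[Equ. (22), Y20a]} of the stopping time under $Q_\omega$, and the identification of the differences $M_{k,t,0}^i - M_{k,\tau(\omega),0}^i$ (evaluated along trajectories sampled from $Q_\omega$) with the natural $R_{\tau(\omega),\xi(\tau(\omega),\omega)}$-martingales $M_{k,t,\tau(\omega)}^i$. Once this alignment is secured, the remainder of the argument runs parallel to \cite[Pro. 3.4]{HZZ19}, with only cosmetic modifications to carry both components $(\xi_1,\xi_2)$ and the coupling term $\xi_2 e^n$ in $M_{1,t,s}^i$.
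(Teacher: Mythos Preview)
Your proposal is correct and follows precisely the Stroock--Varadhan concatenation scheme of \cite[Pro.~3.4]{HZZ19}, which is exactly what the paper defers to (it gives no independent proof, only the citation). The decomposition of $M_{k,t,0}^i - M_{k,s,0}^i$ at $\tau$, the use of \eqref{[Equ. (22), Y20a]} to freeze $\tau(\omega')=\tau(\omega)$ under $Q_\omega$, and the parallel treatment of the quadratic variation are all the right ingredients, with the only bookkeeping change being the second component $\xi_2$ and the coupling term $\xi_2 e^n$ in $M_{1,t,s}^i$ as you note.
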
 

Now we let $\mathcal{B}_{\tau}$ represent the $\sigma$-algebra associated to the stopping time $\tau$ and consider 
\begin{subequations}
\begin{align}
& dz_{1} + (-\Delta)^{m} z_{1} dt + \nabla \pi_{1} dt = dB_{1}, \hspace{2mm} \nabla\cdot z_{1} = 0, \hspace{2mm} z_{1}(0, x) \equiv 0, \label{estimate 12}\\
& dz_{2} - \Delta z_{2} dt = dB_{2} \hspace{37mm}  \hspace{2mm} z_{2}(0,x) \equiv 0, \label{estimate 13}
\end{align}
\end{subequations} 
and 
\begin{subequations}\label{estimate 21}
\begin{align}
& \partial_{t} v + (-\Delta)^{m} v + \text{div} ((v+ z_{1}) \otimes (v + z_{1})) + \nabla \pi_{2} = (\Theta + z_{2}) e^{n}, \hspace{3mm} \nabla\cdot v = 0,  \\
& \partial_{t} \Theta - \Delta \Theta + \text{div} ((v+ z_{1}) (\Theta + z_{2})) = 0, 
\end{align}
\end{subequations}
so that $(u,\theta) = (v+ z_{1}, \Theta + z_{2})$ solves \eqref{3} with $\pi = \pi_{1} + \pi_{2}$. We fix $G_{k}G_{k}^{\ast}$-Wiener processes $B_{k}$ on $(\Omega, \mathcal{F}, \textbf{P})$ for both $k \in \{1,2\}$ with $(\mathcal{F}_{t})_{t\geq 0}$ as the canonical filtration of $(B_{1}, B_{2})$ augmented by all the $\textbf{P}$-negligible sets. We see that 
\begin{equation}\label{estimate 20}
z_{1}(t) = \int_{0}^{t} e^{- (t-r) (-\Delta)^{m}} \mathbb{P} dB_{1}(r), \hspace{3mm} z_{2}(t) = \int_{0}^{t} e^{(t-r) \Delta} dB_{2}(r), 
\end{equation} 
where $e^{- (-\Delta)^{m} t}$ and $e^{-\Delta t}$ are semigroups generated by $- (-\Delta)^{m}$ and $- \Delta$, respectively and we recall from Section \ref{Section 3} that $\mathbb{P}$ is a Leray projection operator.  Next, let us prove the following Proposition \ref{Proposition 4.4} for the full range of $m \in (0, \frac{5}{4})$ in case $n = 3$. 

\begin{proposition}\label{Proposition 4.4}
Consider 
\begin{equation}\label{estimate 138}
dz + (-\Delta)^{\lambda} z dt + \nabla \pi dt = dB, \hspace{1mm} \nabla\cdot z = 0 \text{ or } \hspace{1mm} dz + (-\Delta)^{\lambda} z dt = dB 
\end{equation} 
where $B$ is a $GG^{\ast}$-Wiener process and $\lambda \in (0, \frac{1}{2} + \frac{n}{4})$ for $n \in \{2,3\}$. Suppose that 
\begin{equation}\label{estimate 15}
\text{Tr} ((-\Delta)^{\max \{ \frac{n}{2} + 2 \sigma, \frac{n+2}{2} - \lambda + 2 \sigma \}} GG^{\ast}) < \infty
\end{equation}
for some $\sigma > 0$ where 
\begin{equation*}
\max\{ \frac{n}{2} + 2 \sigma, \frac{n+2}{2} - \lambda + 2 \sigma \} = 
\begin{cases}
\frac{n}{2} + 2 \sigma & \text{ if } \lambda \geq 1, \\
\frac{n+2}{2} - \lambda + 2 \sigma & \text{ if } \lambda < 1.
\end{cases}
\end{equation*}
Then, for all $\delta \in (0, \frac{1}{2})$, $T> 0$, and $l \in \mathbb{N}$,  
\begin{equation}\label{estimate 16} 
\mathbb{E}^{\textbf{P}}[ \lVert z \rVert_{C_{T} \dot{H}_{x}^{\frac{2+n+ \sigma}{2}}}^{l} + \lVert z \rVert_{C_{T}^{\frac{1}{2} - \delta} \dot{H}_{x}^{\frac{n+ \sigma}{2}}}^{l} ] <\infty. 
\end{equation} 
Consequently, if \eqref{4}-\eqref{5} hold, then $z_{1}$ and $z_{2}$ that solve respectively \eqref{estimate 12}-\eqref{estimate 13} satisfy \eqref{estimate 16}. 
\end{proposition}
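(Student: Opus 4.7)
The plan is to treat $z$ as a stochastic convolution and apply the factorization method of Da Prato--Kwapie\'n--Zabczyk. Since the Leray projection $\mathbb{P}$ is bounded on every $\dot H_x^s$, commutes with $(-\Delta)^\lambda$, and preserves the Hilbert--Schmidt class, the two variants in \eqref{estimate 138} may be handled uniformly; in either case the mild representation reads
\begin{equation*}
z(t)=\int_0^t e^{-(t-r)(-\Delta)^\lambda}\,dB(r).
\end{equation*}
Because $z$ is a Gaussian process with values in each relevant Hilbert space, it will suffice to prove second-moment bounds and then invoke Fernique's theorem (equivalently, hypercontractivity of Gaussian measures on a Banach space) to obtain the $l$-th moment bounds in \eqref{estimate 16} for every $l$.

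First I would establish the pointwise-in-time spatial estimate. By the It\^o isometry and the commutation of $(-\Delta)^{s/2}$ with $e^{-\tau(-\Delta)^\lambda}$,
\begin{equation*}
\mathbb{E}\lVert z(t)\rVert_{\dot H^s}^2=\int_0^t \lVert e^{-(t-r)(-\Delta)^\lambda}(-\Delta)^{(s-a)/2}(-\Delta)^{a/2}G\rVert_{L_2}^2\,dr,
\end{equation*}
and the elementary spectral inequality $\sup_{y>0}y^{s-a}e^{-2\tau y^{2\lambda}}\le C\tau^{-(s-a)/(2\lambda)}$ reduces the integrand to $C(t-r)^{-(s-a)/\lambda}\,\text{Tr}((-\Delta)^a GG^*)$. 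Setting $s=(n+2+\sigma)/2$ and taking $a$ equal to the exponent in \eqref{estimate 15}, a short computation gives $s-a=1-\tfrac{3\sigma}{2}$ in the case $\lambda\ge 1$ and $s-a=\lambda-\tfrac{3\sigma}{2}$ in the case $\lambda<1$, so $(s-a)/\lambda<1$ in both cases and the integral converges uniformly in $t\in[0,T]$. This is the step where the case distinction in \eqref{estimate 15} is tight and deserves the most care.

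Next I would upgrade this to Hölder regularity in time via the factorization identity
\begin{equation*}
z(t)=\frac{\sin(\alpha\pi)}{\pi}\int_0^t (t-s)^{\alpha-1}e^{-(t-s)(-\Delta)^\lambda} Y_\alpha(s)\,ds,\qquad Y_\alpha(s)=\int_0^s(s-r)^{-\alpha}e^{-(s-r)(-\Delta)^\lambda}\,dB(r),
\end{equation*}
valid for $\alpha\in(0,\tfrac12)$. The same It\^o-isometry calculation as above, now carrying an extra weight $(s-r)^{-2\alpha}$, yields $\sup_{s\in[0,T]}\mathbb{E}\lVert Y_\alpha(s)\rVert_{\dot H^{(n+2+\sigma)/2}}^2<\infty$ provided $2\alpha+(s-a)/\lambda<1$, which holds whenever $\alpha$ is taken small enough relative to $\sigma$. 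The standard deterministic lemma that the operator $Y\mapsto\int_0^t(t-s)^{\alpha-1}e^{-(t-s)(-\Delta)^\lambda}Y(s)\,ds$ maps $L^p(0,T;\dot H^\beta)$ continuously into $C([0,T];\dot H^\beta)$, and using the parabolic-type smoothing of the semigroup also into $C^{\alpha-1/p}([0,T];\dot H^{\beta-\epsilon})$ for any $\epsilon>0$ and $p$ large, then delivers $z\in C_T\dot H_x^{(n+2+\sigma)/2}$ and $z\in C_T^{1/2-\delta}\dot H_x^{(n+\sigma)/2}$ after taking $\alpha$ close to $\tfrac12$ and $p$ sufficiently large.

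Finally, Fernique's theorem applied to the Gaussian variable $z$ with values in the separable Banach space $C_T\dot H_x^{(n+2+\sigma)/2}\cap C_T^{1/2-\delta}\dot H_x^{(n+\sigma)/2}$ promotes the second-moment bound to the finite $l$-th moment bound \eqref{estimate 16} for every $l\in\mathbb{N}$. For the final assertion, the equation for $z_1$ in \eqref{estimate 12} is the case $\lambda=m\in(0,\tfrac54)$, so hypothesis \eqref{estimate 15} reduces to the first half of \eqref{5}; the equation for $z_2$ in \eqref{estimate 13} is the case $\lambda=1$, for which $\max\{\tfrac{n}{2}+2\sigma,\tfrac{n+2}{2}-\lambda+2\sigma\}=\tfrac{n}{2}+2\sigma$ and \eqref{estimate 15} reduces to the second half of \eqref{5}. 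The main obstacle throughout is the bookkeeping of exponents at the threshold $\lambda=1$, where the semigroup smoothing rate changes regime; once this case distinction is correctly encoded, the remainder is a routine application of factorization and Gaussian moment equivalence.
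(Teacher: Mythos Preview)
Your proposal is correct and follows essentially the same route as the paper: both arguments use the Da Prato--Zabczyk factorization representation $z(t)=\tfrac{\sin(\alpha\pi)}{\pi}\int_0^t(t-s)^{\alpha-1}e^{-(t-s)(-\Delta)^\lambda}Y(s)\,ds$, bound $Y$ in $\dot H_x^{(2+n+\sigma)/2}$ via the It\^o isometry together with the trace hypothesis \eqref{estimate 15} (the case split at $\lambda=1$ determining the admissible range of $\alpha$ exactly as you computed), and then read off the desired continuity properties of $z$ from the factorization map. The only cosmetic difference is that the paper estimates the $2l$-th moments of $Y(s)$ directly (Gaussian moment equivalence) and integrates in $s$, whereas you prove second-moment bounds and then appeal to Fernique at the end; these are equivalent.
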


\begin{proof}[Proof of Proposition \ref{Proposition 4.4}]
The consequence is clear for $z_{1}$ considering \eqref{4}-\eqref{5}; for $z_{2}$, \eqref{estimate 15} with $\lambda = 1$ is precisely \eqref{5} so that its claim also follows. The proof of \eqref{estimate 16} follows from that of \cite[Pro. 34]{D13}. In short, one can take $\alpha \in (0, \frac{3\sigma}{4\lambda})$ if $1 \geq \lambda $ and $\alpha \in (0, \frac{3\sigma}{4\lambda} + \frac{1}{2} - \frac{1}{2\lambda})$ otherwise, and then define 
\begin{equation}\label{estimate 314}
Y(s) \triangleq 
\begin{cases}
\frac{\sin(\pi \alpha)}{\pi} \int_{0}^{s} e^{- (-\Delta)^{\lambda} (s-r)} (s-r)^{-\alpha} \mathbb{P} d B(r) & \text{ if } \exists  \hspace{1mm} \nabla \pi \text{ in } \eqref{estimate 138}, \\
\frac{\sin(\pi \alpha)}{\pi} \int_{0}^{s} e^{- (-\Delta)^{\lambda} (s-r)} (s-r)^{-\alpha} d B(r) & \text{ if } \not\exists \hspace{1mm} \nabla \pi \text{ in } \eqref{estimate 138}
\end{cases} 
\end{equation}
(cf. \cite[Equ. (5.15)]{DZ14}). Then one can show that $\int_{0}^{t} (t-s)^{\alpha- 1} e^{- (-\Delta)^{\lambda}(t-s)} Y(s) ds= z (t)$ using \eqref{estimate 314} and our choice of $\alpha$ and $\mathbb{E}^{\textbf{P}} [ \lVert (-\Delta)^{\frac{2+ n + \sigma}{4}} Y(s) \rVert_{L_{x}^{2}}^{2l}] \lesssim_{l}  1$ using \eqref{estimate 15}. Integrating this inequality over $[0,T]$ and relying on Fubini's theorem give $\mathbb{E}^{\textbf{P}}[ \int_{0}^{T} \lVert (-\Delta)^{\frac{2+ n + \sigma}{4}} Y(s) \rVert_{L_{x}^{2}}^{2l} ds] \lesssim_{l} 1$, from which we can deduce $\mathbb{E}^{\textbf{P}}[\lVert (-\Delta)^{\frac{2+ n + \sigma}{4}} z(t) \rVert_{C_{T}L_{x}^{2}}^{2l}] \lesssim_{l} 1$. The second inequality in \eqref{estimate 16} can be proven similarly, and we refer to \cite[Pro. 34]{D13} and \cite[Pro. 4.4]{Y20c} for details. 
\end{proof} 
Next, for every $\omega = (\omega_{1}, \omega_{2}) \in \Omega_{0}$ we define 
\begin{subequations}\label{[Equ. (40a), Y20c]}
\begin{align}
& M_{1, t, 0}^{\omega} \triangleq \omega_{1}(t) - \omega_{1}(0) + \int_{0}^{t} \mathbb{P} \text{div} (\omega_{1}(r) \otimes \omega_{1}(r)) + (-\Delta)^{m} \omega_{1}(r) - \mathbb{P} \omega_{2}(r) e^{n} dr, \label{estimate 141} \\
& M_{2, t,0}^{\omega} \triangleq \omega_{2}(t) - \omega_{2}(0) + \int_{0}^{t} \text{div} (\omega_{1}(r) \omega_{2}(r)) - \Delta \omega_{2}(r) dr, 
\end{align}
\end{subequations}
and 
\begin{subequations}\label{[Equ. (40b), Y20c]}
\begin{align}
& Z_{1}^{\omega}(t) \triangleq M_{1, t,0}^{\omega} - \int_{0}^{t} \mathbb{P} (-\Delta)^{m} e^{- (t-r) (-\Delta)^{m}} M_{1, r,0}^{\omega} dr, \\
& Z_{2}^{\omega} (t) \triangleq M_{2,t,0}^{\omega}+\int_{0}^{t} \Delta e^{(t-r) \Delta } M_{2,r,0}^{\omega} dr. 
\end{align}
\end{subequations}
If $P$ is a martingale solution to \eqref{3}, then the mapping $\omega \mapsto M_{k,t,0}^{\omega}$ for both $k \in \{1,2\}$ is a $G_{k}G_{k}^{\ast}$-Wiener processes under $P$ and 
we can show 
\begin{equation}\label{[Equ. (30), Y20a]}
Z_{1}(t) = \int_{0}^{t} e^{-(t-r) (-\Delta)^{m}}\mathbb{P}  dM_{1,r,0} \hspace{1mm} \text{ and } \hspace{1mm} Z_{2}(t) =  \int_{0}^{t} e^{(t-r) \Delta} dM_{2,r,0}. 
\end{equation}
As $M_{k,t,0}^{\omega}$ is a $G_{k}G_{k}^{\ast}$-Wiener process under $P$ for both $k \in \{1,2\}$, Proposition \ref{Proposition 4.4} gives 
\begin{equation}
Z_{k} \in C_{T} \dot{H}_{x}^{\frac{2+ n + \sigma}{2}} \cap C_{T}^{\frac{1}{2} - \delta} \dot{H}_{x}^{\frac{n+\sigma}{2}}
\end{equation} 
$P$-a.s. for any $\delta \in (0, \frac{1}{2})$ and any $T > 0$. Now we define 
\begin{align}
\tau_{L}^{\lambda}(\omega) &\triangleq \inf\{ t \geq 0: \hspace{0.5mm} C_{S}\max_{k=1,2} \lVert Z_{k}^{\omega}(t) \rVert_{\dot{H}_{x}^{\frac{2+ n + \sigma}{2}}} > (L- \frac{1}{\lambda})^{\frac{1}{4}} \} \nonumber \\
& \wedge \inf\{t \geq 0: \hspace{0.5mm} C_{S} \max_{k= 1,2} \lVert Z_{k}^{\omega} \rVert_{C_{t}^{\frac{1}{2} - \delta} \dot{H}_{x}^{\frac{n+\sigma}{2}}} > (L - \frac{1}{\lambda})^{\frac{1}{2}} \} \wedge L, \hspace{5mm} \tau_{L}(\omega) \triangleq \lim_{\lambda \to \infty} \tau_{L}^{\lambda}(\omega) \label{[Equ. (31) and (32), Y20a]}
\end{align}
where $C_{S} > 0$ is a Sobolev constant such that $\lVert f \rVert_{L^{\infty} (\mathbb{T}^{n})} \leq C_{S} \lVert f \rVert_{\dot{H}^{\frac{n+\sigma}{2}}(\mathbb{T}^{n})}$ for all $f \in \dot{H}^{\frac{n+\sigma}{2}}(\mathbb{T}^{n})$ that is mean-zero so that $(\tau_{L}^{\lambda})_{\lambda \in \mathbb{N}}$ is non-decreasing in $\lambda$. It follows that $\tau_{L}$ is a $(\mathcal{B}_{t})_{t\geq 0}$-stopping time (see \cite[Lem. 3.5]{HZZ19}). Next, we assume Theorem \ref{Theorem 2.1} on a probability space $(\Omega, \mathcal{F}, (\mathcal{F}_{t})_{t\geq 0}, \textbf{P})$ and denote by $P$ the law of the solution $(u,\theta)$ constructed from Theorem \ref{Theorem 2.1}. 

\begin{proposition}\label{Proposition 4.5}
Let $\tau_{L}$ be defined by \eqref{[Equ. (31) and (32), Y20a]}. Then $P$, the law of $(u,\theta)$, is a martingale solution on $[0, \tau_{L}]$ according to Definition \ref{Definition 4.2}. 
\end{proposition}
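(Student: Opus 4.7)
The plan is to verify conditions (M1), (M2), (M3) of Definition \ref{Definition 4.2} for the measure $P$ induced on $\Omega_0$ by the pair $(u,\theta)$ from Theorem \ref{Theorem 2.1}, stopped at $\tau_L$ pulled back through the canonical coordinates. The backbone of the argument is the identification of the abstract functionals $M_{k,t,0}^\omega$ and $Z_k^\omega$ of \eqref{[Equ. (40a), Y20c]}--\eqref{[Equ. (40b), Y20c]} with concrete objects on $(\Omega,\mathcal{F},\textbf{P})$: because $(u,\theta)$ solves \eqref{3} path-by-path in the analytic sense, evaluating \eqref{[Equ. (40a), Y20c]} along the trajectory $(u,\theta)(\omega)$ simply returns the stochastic integrals $\int_0^t dB_1$ (up to $\mathbb{P}$) and $\int_0^t dB_2$, respectively. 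Substituting this into \eqref{[Equ. (30), Y20a]} and comparing with the mild representation \eqref{estimate 20} yields $Z_k^{(u,\theta)}(\cdot) = z_k(\cdot)$ $\textbf{P}$-a.s. Thus $\tau_L$ evaluated on the canonical path agrees with the stopping time obtained from $z_1, z_2$ on the original space, and Proposition \ref{Proposition 4.4} implies $\tau_L < \infty$ with the bounds governing $Z_k$ in place.

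With this identification in hand, (M1) is immediate: the deterministic initial datum gives $P(\{\xi(t) = \xi^{\text{in}} \ \forall \, t \in [0,0]\}) = 1$, and since $F_k \equiv 1$, the growth bound \eqref{[Equ. (11), Y20a]} together with \eqref{estimate 17} forces $\int_0^{l\wedge \tau_L} \lVert G_k(\xi_k)\rVert^2\, dr <\infty$ $P$-a.s. For (M2), test \eqref{3} against $\psi_i^k$ and stop at $\tau_L$: on $(\Omega,\mathcal{F},\textbf{P})$ the process $\int_0^{t\wedge \tau_L} \langle dB_k, \psi_i^k\rangle$ is a continuous square-integrable $(\mathcal{F}_t)$-martingale with quadratic variation $\int_0^{t\wedge \tau_L} \lVert G_k(\xi_k)^\ast \psi_i^k\rVert_{U_k}^2\, dr$, and since both the process and its bracket are measurable functionals of the canonical trajectory, pushforward delivers a $(\mathcal{B}_t)$-martingale under $P$ with exactly the bracket required in \eqref{estimate 302}. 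The fact that $\tau_L$ is a $(\mathcal{B}_t)$-stopping time (already recorded after \eqref{[Equ. (31) and (32), Y20a]}) ensures that optional stopping is legitimate. Finally, (M3) reduces to reading off \eqref{estimate 17} and the energy inequality displayed just below it: these control $\sup_{r\leq t\wedge \tau_L} \lVert u\rVert_{L_x^2}^{2q} + \int_0^{t\wedge \tau_L} \lVert u\rVert_{\dot H_x^\gamma}^2\, dr$ and the analogous $\theta$ quantities, which furnishes the constant $C_{t,q}$.

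The main obstacle I anticipate is the identification $Z_k^{(u,\theta)} = z_k$ almost surely, since it is the bridge between the canonical-space definition of $\tau_L$ and the concrete stochastic convolutions used in the construction of $(u,\theta)$. Carrying it out rigorously requires applying the semigroup $e^{-(t-r)(-\Delta)^m}\mathbb{P}$ (respectively $e^{(t-r)\Delta}$) to $dM_{k,r,0}^{(u,\theta)}$, checking progressive measurability, and interchanging a Bochner integral in $r$ with the Itô integral in $dB_k$ by a stochastic Fubini argument. Once that is done, every other ingredient reduces either to the pathwise formulation of \eqref{3} or to the a priori bounds in Theorem \ref{Theorem 2.1}, and (M1)--(M3) follow directly.
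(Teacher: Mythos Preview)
Your proposal is correct and follows essentially the same route as the paper. The paper's proof (in Subsection \ref{Subsection 6.4}) also hinges on the identification $Z_k^{(u,\theta)} = z_k$ $\textbf{P}$-a.s.\ (stated there as \eqref{[Equ. (211), Y20d]}), then records the consequence $\tau_L(u,\theta) = T_L$ $\textbf{P}$-a.s.\ (your ``$\tau_L$ evaluated on the canonical path agrees with the stopping time obtained from $z_1,z_2$''), and verifies (M1)--(M3) exactly as you outline, with (M3) drawn from the constants $C_{L,1}, C_{L,2}$ of \eqref{[Equ. (59), Y20c]} and \eqref{estimate 320}.
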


\begin{proof}[Proof of Proposition \ref{Proposition 4.5}]
For completeness, we sketch the proof in the Subsection \ref{Subsection 6.4} of Appendix referring to \cite[Pro. 3.7]{HZZ19} for details. 
\end{proof} 

\begin{proposition}\label{Proposition 4.6}
Let $\tau_{L}$ be defined by \eqref{[Equ. (31) and (32), Y20a]} and $P = \mathcal{L} ((u,\theta))$ constructed from Theorem \ref{Theorem 2.1}. Then, $P \otimes_{\tau_{L}}R$ in \eqref{[Equ. (23), Y20a]} is a martingale solution on $[0,\infty)$ according to Definition \ref{Definition 4.1}. 
\end{proposition}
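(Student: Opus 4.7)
The plan is to reduce the statement to Lemma \ref{Lemma 4.3} with $\tau = \tau_L$. Note $\tau_L \le L$ is bounded by construction in \eqref{[Equ. (31) and (32), Y20a]}, and by Proposition \ref{Proposition 4.5} the law $P$ is a martingale solution to \eqref{3} on $[0,\tau_L]$ in the sense of Definition \ref{Definition 4.2} with initial condition $(u^{\text{in}}, \theta^{\text{in}})$. The only remaining hypothesis of Lemma \ref{Lemma 4.3} is \eqref{[Equ. (22), Y20a]}, namely the existence of a Borel $P$-null set $\mathcal{N} \subset \Omega_{0,\tau_L}$ outside of which
\[
Q_{\omega} \bigl( \{ \omega' \in \Omega_{0} : \tau_L(\omega') = \tau_L(\omega) \} \bigr) = 1.
\]
Once this is proved, Lemma \ref{Lemma 4.3} immediately gives that $P\otimes_{\tau_L} R$ is a martingale solution on $[0,\infty)$ in the sense of Definition \ref{Definition 4.1}, which is the statement.

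To produce the null set $\mathcal{N}$, I would first let $\mathcal{N}$ consist of those $\omega \in \Omega_{0,\tau_L}$ for which either $\omega$ fails to satisfy the integrability conditions underlying the definitions \eqref{[Equ. (40a), Y20c]}--\eqref{[Equ. (40b), Y20c]} of $M_{k,\cdot,0}^\omega$ and $Z_k^\omega$, or for which the identity \eqref{[Equ. (30), Y20a]} fails; using (M1) and (M3) of Definition \ref{Definition 4.2}, together with the Hilbert-Schmidt hypothesis \eqref{5} on $G_k$, this is seen to be a Borel set of zero $P$-measure, and outside of it $Z_k^\omega$ is a well-defined progressively measurable functional of the past of $\omega$. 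Then for $\omega \in \Omega_0 \setminus \mathcal{N}$, property \eqref{[Equ. (20a), Y20a]} guarantees that $Q_\omega$-almost surely $\omega'(t) = \omega(t)$ for all $t \in [0, \tau_L(\omega)]$, so that $M_{k,t,0}^{\omega'} = M_{k,t,0}^\omega$ and consequently $Z_k^{\omega'}(t) = Z_k^\omega(t)$ throughout the interval $[0, \tau_L(\omega)]$.

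The final step is to deduce $\tau_L(\omega') = \tau_L(\omega)$ from this path identity. Since $\tau_L^\lambda$ is nondecreasing in $\lambda$ with $\tau_L = \lim_{\lambda \to \infty} \tau_L^\lambda$, one has $\tau_L^\lambda(\omega) \le \tau_L(\omega)$ for every $\lambda \in \mathbb{N}$; in particular, the first-hitting event defining $\tau_L^\lambda(\omega)$ occurs within $[0, \tau_L(\omega)]$ (or the stopping time saturates at $L$, in which case $\tau_L(\omega) = L$ and $\omega'$ agrees with $\omega$ on all of $[0,L]$). In either case, because $Z_k^{\omega'}$ coincides with $Z_k^\omega$ on $[0, \tau_L(\omega)]$ and the defining thresholds in \eqref{[Equ. (31) and (32), Y20a]} are identical, the infima in \eqref{[Equ. (31) and (32), Y20a]} coincide, giving $\tau_L^\lambda(\omega') = \tau_L^\lambda(\omega)$ for each $\lambda$. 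Passing to the limit $\lambda\to\infty$ yields $\tau_L(\omega') = \tau_L(\omega)$, which completes the verification of \eqref{[Equ. (22), Y20a]}.

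The main technical obstacle is not the stopping-time argument itself, which is essentially automatic from the structure of a first-hitting time and the path-identity on $[0, \tau_L(\omega)]$, but rather ensuring Borel measurability of $\mathcal{N}$ together with the continuous-functional interpretation of $\omega \mapsto Z_k^\omega$ in \eqref{[Equ. (40b), Y20c]}; this is where moment bounds from (M3) of Definition \ref{Definition 4.2} and the trace-class assumption \eqref{5} are indispensable. As in \cite[Pro. 3.8]{HZZ19}, these measurability details are routine once the martingale formulation has been set up, so the proof reduces to invoking Lemma \ref{Lemma 4.3}.
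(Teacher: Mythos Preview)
Your reduction to Lemma \ref{Lemma 4.3} and the use of \eqref{[Equ. (20a), Y20a]} to obtain $Z_k^{\omega'}=Z_k^{\omega}$ on $[0,\tau_L(\omega)]$ are exactly right and match the paper. The gap is in what you call the ``final step.'' From $\tau_L^\lambda(\omega)\le\tau_L(\omega)$ you assert that ``the first-hitting event defining $\tau_L^\lambda(\omega)$ occurs within $[0,\tau_L(\omega)]$,'' and hence $\tau_L^\lambda(\omega')=\tau_L^\lambda(\omega)$. But $\tau_L^\lambda$ in \eqref{[Equ. (31) and (32), Y20a]} is the infimum of a set defined by a \emph{strict} inequality; nothing prevents $\tau_L^\lambda(\omega)=\tau_L(\omega)<L$ with every witness time lying strictly past $\tau_L(\omega)$. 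In that boundary case path agreement on $[0,\tau_L(\omega)]$ alone does not force $\tau_L^\lambda(\omega')\le\tau_L^\lambda(\omega)$: after $\tau_L(\omega)$ the trajectory $\omega'$ is governed by $R_{\tau_L(\omega),\xi(\tau_L(\omega),\omega)}$, and $Z_k^{\omega'}$ could in principle stay at or below the threshold. Your $\mathcal N$, built only from integrability failures and failure of \eqref{[Equ. (30), Y20a]}, does not exclude this.

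The paper closes the gap by a genuinely different mechanism. It introduces the post-$\tau_L(\omega)$ process $\mathbb{Z}_{k,\tau_L(\omega)}^{\omega'}$ in \eqref{estimate 321}--\eqref{[Equ. (214), Y20d]} and the decomposition \eqref{[Equ. (215), Y20d]}, identifies $\mathbb{Z}_{k,\tau_L(\omega)}^{\omega'}$ with a stochastic convolution driven by $M_{k,\cdot,0}^{\omega'}-M_{k,\cdot\wedge\tau_L(\omega),0}^{\omega'}$ (see \eqref{[Equ. (217), Y20d]}), and uses that under $R_{\tau_L(\omega),\xi(\tau_L(\omega),\omega)}$ this driver is a $G_kG_k^{\ast}$-Wiener process to invoke Proposition \ref{Proposition 4.4}. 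The conclusion is that $Q_\omega$-a.e.\ $\omega'$ has $Z_k^{\omega'}\in C_T\dot H_x^{\frac{n+2+\sigma}{2}}\cap C_{\mathrm{loc}}^{\frac12-\delta}\dot H_x^{\frac{n+\sigma}{2}}$ on the \emph{whole} time axis, not just on $[0,\tau_L(\omega)]$. Only with this regularity does $\tau_L(\omega')$ agree with the $\ge$-version $\bar\tau_L(\omega')$ of \eqref{[Equ. (219), Y20d]}, after which path agreement plus continuity at $\tau_L(\omega)$ yields \eqref{[Equ. (221), Y20d]}. So the stopping-time argument is not ``essentially automatic'': the real content is regularity of $Z_k^{\omega'}$ under $Q_\omega$, and that is precisely what your outline omits.
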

 
\begin{proof}[Proof of Proposition \ref{Proposition 4.6}]
For completeness, we sketch the proof in the Subsection \ref{Subsection 6.5} referring to \cite[Pro. 3.8]{HZZ19} for details. 
\end{proof}  
 
\begin{proof}[Proof of Theorem \ref{Theorem 2.2} assuming Theorem \ref{Theorem 2.1}]
We fix $T > 0$ arbitrary, $\kappa \in (0,1)$ and $K > 1$ such that $\kappa K^{2} \geq 1$, rely on Theorem \ref{Theorem 2.1} and Proposition \ref{Proposition 4.6} to deduce the existence of $L > 1$ and a martingale solution  $P \otimes_{\tau_{L}}R$ to \eqref{3} on $[0,\infty)$ such that $P \otimes_{\tau_{L}} R = P$ on $[0, \tau_{L}]$ where $P= \mathcal{L} (u,\theta)$ for a solution constructed in Theorem \ref{Theorem 2.1}. Hence, $P \otimes_{\tau_{L}} R$ starts with a deterministic initial condition $\xi^{\text{in}} = (u^{\text{in}}, \theta^{\text{in}})$ from the proof of Theorem \ref{Theorem 2.1} and satisfies 
\begin{equation}\label{estimate 139}
P \otimes_{\tau_{L}} R (\{ \tau_{L} \geq T \}) \overset{\eqref{[Equ. (23), Y20a]} \eqref{[Equ. (221), Y20d]}\eqref{[Equ. (212), Y20d]}}{=} \textbf{P} ( \{ T_{L} \geq T \})   \overset{\eqref{6}}{>} \kappa 
\end{equation} 
so that 
\begin{equation}
\mathbb{E}^{P \otimes_{\tau_{L}} R} [ \lVert \xi(T) \rVert_{L_{x}^{2}}^{2}] \overset{\eqref{estimate 139} \eqref{[Equ. (4), Y20c]}}{>} \kappa K^{2} e^{T} [\lVert \xi_{1}^{\text{in}} \rVert_{L_{x}^{2}}^{2} + \lVert \xi_{2}^{\text{in}} \rVert_{L_{x}^{2}}^{2} +  \text{Tr} (G_{1}G_{1}^{\ast}) +  \text{Tr} (G_{2}G_{2}^{\ast})]. 
\end{equation} 
It is well-known that a classical Galerkin approximation (e.g., \cite{FR08} and \cite[The. 4.2.4]{Z12} in case of a fractional Laplacian) gives another martingale solution $\mathcal{Q}$ such that 
\begin{equation*}
\mathbb{E}^{\mathcal{Q}} [ \lVert \xi(T) \rVert_{L_{x}^{2}}^{2}] \leq e^{T} [ \lVert \xi_{1}^{\text{in}} \rVert_{L_{x}^{2}}^{2} + \lVert \xi_{2}^{\text{in}} \rVert_{L_{x}^{2}}^{2} +  \text{Tr} ( G_{1}G_{1}^{\ast}) + \text{Tr} (G_{2}G_{2}^{\ast})],
\end{equation*} 
which implies non-uniqueness in law for \eqref{3}, completing the proof of Theorem \ref{Theorem 2.2}.
\end{proof} 
 
Considering  \eqref{estimate 21}, for $q  \in \mathbb{N}_{0}$ we aim to construct a solution $(v_{q}, \theta_{q}, \mathring{R}_{q})$ to 
\begin{subequations}\label{[Equ. (46), Y20c]}
\begin{align}
& \partial_{t} v_{q} + (-\Delta)^{m} v_{q} + \text{div} ((v_{q} + z_{1}) \otimes (v_{q} + z_{1})) + \nabla \pi_{q} = \theta_{q} e^{n} + \text{div} \mathring{R}_{q}, \hspace{3mm} \nabla\cdot v_{q} = 0,  \label{estimate 340}\\
& d\theta_{q} + [-\Delta \theta_{q} + \text{div} ((v_{q} + z_{1} ) \theta_{q} )] dt = dB_{2}, \label{estimate 323}
\end{align}
\end{subequations} 
where $\mathring{R}_{q}$ will be a trace-free symmetric matrix. For any $a > 0, b \in \mathbb{N}, \beta \in (0,1)$,  and $L \geq 1$ to be specified subsequently, we define 
\begin{equation}\label{[Equ. (47), Y20c]}
\lambda_{q} \triangleq a^{b^{q}}, \hspace{3mm} \delta_{q} \triangleq \lambda_{q}^{-2\beta}, \hspace{3mm} \text{ and } \hspace{3mm} M_{0}(t) \triangleq L^{4} e^{4Lt}. 
\end{equation} 
We see from \eqref{[Equ. (33), Y20a]} that for any $\delta \in (0, \frac{1}{12})$, $t \in [0, T_{L}]$, and both $k \in \{1, 2 \}$, 
\begin{equation}\label{[Equ. (38), Y20a]}
\lVert z_{k} (t) \rVert_{L_{x}^{\infty}} \leq L^{\frac{1}{4}}, \hspace{3mm} \lVert z_{k} (t) \rVert_{\dot{W}_{x}^{1,\infty}} \leq L^{\frac{1}{4}}, \hspace{3mm} \text{ and } \hspace{3mm} \lVert z_{k} \rVert_{C_{t}^{\frac{1}{2} - 2  \delta} L_{x}^{\infty}} \leq L^{\frac{1}{2}} 
\end{equation} 
by definition of $C_{S}$. Now if $a^{\beta b} > 3$ and $b\geq 2$, then $\sum_{1 \leq \iota \leq q} \delta_{\iota}^{\frac{1}{2}} < \frac{1}{2}$ for any $q \in \mathbb{N}$. Let us set a convention that $\sum_{1\leq \iota \leq 0} \triangleq 0$, denote by $c_{R} > 0$ a universal small constant to be described subsequently from the proof of Proposition \ref{Proposition 4.8 for n=2} in case $n = 2$ and Proposition \ref{Proposition 4.8 for n=3} in case $n=3$ (e.g., \eqref{[Equ. (83), Y20c]}-\eqref{[Equ. (84), Y20c]}, and \eqref{[Equ. (73), Y20a]}) and assume the following bounds over $t \in [0, T_{L}]$ inductively:  
\begin{subequations}\label{[Equ. (49), Y20c]}
\begin{align}
& \lVert v_{q} \rVert_{C_{t}L_{x}^{2}} \leq M_{0}(t)^{\frac{1}{2}} (1 + \sum_{1 \leq \iota \leq q} \delta_{\iota}^{\frac{1}{2}}) \leq 2 M_{0}(t)^{\frac{1}{2}}, \hspace{5mm} \lVert v_{q} \rVert_{C_{t,x}^{1}} \leq M_{0}(t)^{\frac{1}{2}} \lambda_{q}^{4}, \label{[Equ.  (49a) and (49b), Y20c]}\\
& \lVert \mathring{R}_{q} \rVert_{C_{t}L_{x}^{1}} \leq c_{R}M_{0}(t)  \delta_{q+1}, \label{[Equ. (49c), Y20c]}\\
& \mathbb{E}^{\textbf{P}} [ \lVert \theta_{q}(t \wedge T_{L} ) \rVert_{L_{x}^{2}}^{2} + 2 \int_{0}^{t\wedge T_{L} } \lVert \theta_{q} \rVert_{\dot{H}_{x}^{1}}^{2} dr] \leq   \lVert \theta_{q}(0) \rVert_{L_{x}^{2}}^{2} + \mathbb{E}^{\textbf{P}}[ (t\wedge T_{L} ) \text{Tr} ( G_{2}G_{2}^{\ast})]. \label{estimate 26}
\end{align} 
\end{subequations} 
We note that for all $q \in \mathbb{N}_{0}$ fixed, $v_{q}$ will be shown to have at least the regularity of $C_{t,x}^{1}$ as in \eqref{[Equ. (49a) and (49b), Y20c]} and hence $\theta_{q}$, considering \eqref{estimate 323}, will have sufficient regularity to apply It$\hat{\mathrm{o}}$'s formula (e.g., \cite{K10}), as we will in order to verify \eqref{estimate 26}. 

In what follows, we consider the case $n = 2$ first so that $m \in (0,1)$ by \eqref{4}, followed by the case $n = 3$. The main reason why we need to separate these two cases is because their settings of convex integration differ significantly. While we employ Mikado flows in case $n =3$, it is inapplicable in case $n =2$; in fact, a lack of suitable replacement of Mikado flows in the 2D case is precisely the reason why Isett's resolution of Onsager's conjecture was only for $n \geq 2$ (see \cite[p. 877]{I18}). In case $n=2$, we employ 2D intermittent stationary flows from \cite{CDS12, LQ20}. For notations and preliminaries hereafter in case $n=2$, we refer to Subsection \ref{Preliminaries needed for convex integration in 2D case and more}. We impose that $a \in 10 \mathbb{N}$ so that $\lambda_{q+1} \in 10 \mathbb{N} \subset 5 \mathbb{N}$ as required in \eqref{[Equ. (18), Y20c]}. 

\begin{proposition}\label{Proposition 4.7 for n=2}
Fix $\theta^{\text{in}} \in H^{2} (\mathbb{T}^{2})$ that is deterministic and mean-zero. Let 
\begin{equation}\label{[Equ. (50), Y20c]}
v_{0}(t,x) \triangleq \frac{L^{2} e^{2Lt}}{2\pi} 
\begin{pmatrix}
\sin(x^{2}) & 0 
\end{pmatrix}^{T}. 
\end{equation} 
Then there exists a unique solution $\theta_{0} \in L_{\omega}^{\infty} L_{t}^{\infty} H_{x}^{2}$ to the following linear stochastic PDE with additive noise: 
\begin{equation}\label{estimate 25}
d\theta_{0} + [-\Delta \theta_{0} + \text{div} ((v_{0} +z_{1})\theta_{0})] dt = dB_{2} \hspace{1mm} \text{ for } \hspace{1mm}  t > 0 \text{ and } \theta_{0}(0,x) = \theta^{\text{in}}(x)
\end{equation} 
where $z_{1}$ solves \eqref{estimate 12}. It follows that together with 
\begin{align}
\mathring{R}_{0} (t,x) \triangleq& \frac{L^{3} e^{2Lt}}{\pi} 
\begin{pmatrix}
0 & - \cos(x^{2}) \\
-\cos(x^{2}) & 0 
\end{pmatrix}  \nonumber \\
& + ( \mathcal{R} (-\Delta)^{m} v_{0} - \mathcal{R} \theta_{0} e^{2} + v_{0} \mathring{\otimes}z_{1} + z_{1} \mathring{\otimes} v_{0} + z_{1} \mathring{\otimes} z_{1})(t,x), \label{estimate 106}
\end{align} 
$(v_{0}, \theta_{0})$ satisfy \eqref{[Equ. (46), Y20c]} at level $q= 0$, where $\mathcal{R}$ is defined in Lemma \ref{[Def. 9, Lem. 10, CDS12]}. Moreover, \eqref{[Equ. (49), Y20c]} is satisfied at level $q= 0$ provided 
\begin{equation}\label{[Equ. (52), Y20c]}
\max \{ (18 \pi^{-1} \lVert \theta^{\text{in}} \rVert_{L_{x}^{2}})^{\frac{1}{3}}, (72 \pi)^{\frac{4}{7}} \} < L, \hspace{3mm} (51 \pi^{2}) 9 < (51 \pi^{2}) a^{2 \beta b} \leq c_{R} L \leq c_{R} (a^{4} \pi - 1),  
\end{equation} 
where the inequality of $9 < a^{2\beta b}$ is assumed for the justification of the second inequality of \eqref{[Equ.  (49a) and (49b), Y20c]}. Furthermore, $v_{0}(0,x)$ and $\mathring{R}_{0}(0,x)$ are both deterministic. 
\end{proposition}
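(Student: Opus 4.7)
The plan is to verify each claim in turn: first the algebraic identity making $(v_0, \theta_0, \mathring{R}_0)$ solve the system \eqref{[Equ. (46), Y20c]} at $q=0$, then existence, uniqueness and $H^2$-regularity of $\theta_0$, next the quantitative bounds \eqref{[Equ. (49), Y20c]}, and finally the deterministic character of the initial data.

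First I would verify \eqref{estimate 340} with an appropriate pressure $\pi_0$. The velocity $v_0$ in \eqref{[Equ. (50), Y20c]} has Fourier support exactly at $k = (0,\pm 1)$, so $(-\Delta)^m v_0 = v_0$ for every $m$, and $v_0 \otimes v_0$ has only the $(1,1)$-entry nonzero and depending solely on $x^2$; consequently $\mathrm{div}(v_0 \otimes v_0) = 0$. A direct computation gives $\partial_t v_0 = \frac{L^3 e^{2Lt}}{\pi}(\sin(x^2), 0)^T$, and the divergence of the first matrix in \eqref{estimate 106} equals exactly $\partial_t v_0$. For the remaining contributions, the inverse-divergence operator $\mathcal{R}$ satisfies $\mathrm{div}\,\mathcal{R} f = f$ on mean-zero $f$, so the divergence of $\mathcal{R}(-\Delta)^m v_0 - \mathcal{R}\theta_0 e^{2}$ recovers $(-\Delta)^m v_0 - \theta_0 e^{2}$; the scalar-trace discrepancy between $v_0 \mathring{\otimes} z_1 + z_1 \mathring{\otimes} v_0 + z_1 \mathring{\otimes} z_1$ and its full-tensor counterpart has divergence equal to a gradient absorbed into $\nabla\pi_0$. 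Equation \eqref{estimate 323} at $q=0$ is simply the defining equation \eqref{estimate 25}.

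Second, for existence, uniqueness and regularity of $\theta_0$, I would set $\eta \triangleq \theta_0 - z_2$, reducing \eqref{estimate 25} to the deterministic pathwise linear PDE
\begin{equation*}
\partial_t \eta - \Delta \eta + \mathrm{div}((v_0+z_1)(\eta + z_2)) = 0, \qquad \eta(0) = \theta^{\mathrm{in}},
\end{equation*}
with divergence-free drift $v_0 + z_1$ that is smooth in $x$. Energy estimates up to order $H^2$ give a unique $\eta \in L^\infty_t H^2_x$, with bound controlled by $\|v_0+z_1\|_{L^\infty_t W^{1,\infty}_x}$, $\|z_2\|_{L^\infty_t H^2_x}$ and $\|\theta^{\mathrm{in}}\|_{H^2_x}$. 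All these are bounded $\textbf{P}$-a.s.\ on $[0, T_L]$ by \eqref{[Equ. (38), Y20a]} and Proposition \ref{Proposition 4.4}, yielding $\theta_0 = \eta + z_2 \in L^\infty_\omega L^\infty_t H^2_x$. The energy inequality \eqref{estimate 26} at $q=0$ follows from It$\hat{\mathrm{o}}$'s formula applied to $\|\theta_0\|_{L^2_x}^2$: the transport term vanishes by divergence-freeness, the Laplacian contributes $-2\|\theta_0\|_{\dot H^1_x}^2$, and the quadratic variation of the stochastic integral gives $\mathrm{Tr}(G_2 G_2^*)\,dt$.

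Third, I would verify \eqref{[Equ. (49), Y20c]} at $q=0$. A direct $L^2$-computation gives $\|v_0(t)\|_{L^2_x} = \frac{L^2 e^{2Lt}}{\sqrt{2}} \leq M_0(t)^{1/2}$, matching the first bound since the convention $\sum_{1\leq\iota\leq 0} \triangleq 0$ makes the right-hand side exactly $M_0(t)^{1/2}$. The dominant contribution to $\|v_0\|_{C^1_{t,x}}$ is $\|\partial_t v_0\|_{L^\infty} = \frac{L^3 e^{2Lt}}{\pi}$, and the bound $\leq M_0(t)^{1/2} \lambda_0^4 = L^2 e^{2Lt} a^4$ reduces to $L \leq \pi a^4$, which is exactly $c_R L \leq c_R(a^4\pi - 1)$ in \eqref{[Equ. (52), Y20c]}. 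For the Reynolds stress bound, the first matrix in \eqref{estimate 106} has $L^1_x$-norm of order $L^3 e^{2Lt}$; the terms $\mathcal{R}(-\Delta)^m v_0 = \mathcal{R} v_0$ and $\mathcal{R}\theta_0 e^{2}$ are controlled via the $L^2_x$-continuity of $\mathcal{R}$ together with $\|v_0\|_{L^2_x}$ and the pathwise $L^2_x$-bound on $\theta_0$ from the energy inequality; and the tensor terms are bounded in $L^1_x$ using $\|v_0\|_{L^\infty_x} \lesssim L^2$, $\|z_1\|_{L^\infty_x} \leq L^{1/4}$ via \eqref{[Equ. (38), Y20a]}. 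Requiring the total to lie below $c_R M_0(t)\delta_1 = c_R L^4 e^{4Lt} a^{-2\beta b}$ gives the inequality $(51\pi^2)a^{2\beta b} \leq c_R L$ appearing in \eqref{[Equ. (52), Y20c]}, with the numerical constant $51\pi^2$ absorbing all estimated constants. Finally, since $v_0(0,x)$ is given explicitly, $z_1(0,x) \equiv 0$ by \eqref{estimate 12}, and $\theta_0(0,x) = \theta^{\mathrm{in}}(x)$ is deterministic by hypothesis, $\mathring{R}_0(0,x)$ reduces to the first matrix at $t=0$ plus $\mathcal{R}(-\Delta)^m v_0(0) - \mathcal{R}\theta^{\mathrm{in}} e^{2}$, which is deterministic. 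The main obstacle is the tight tracking of absolute constants in the $\mathring{R}_0$ estimate to meet the explicit threshold $(51\pi^2)a^{2\beta b} \leq c_R L$; this is precisely why the first matrix in \eqref{estimate 106} is chosen so that its divergence cancels $\partial_t v_0$ \emph{exactly} rather than approximately, so that no extra error term feeds back into the inductive estimate.
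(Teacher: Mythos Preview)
Your proposal is correct and follows essentially the same route as the paper: verify \eqref{estimate 340} at $q=0$ by direct computation (the paper records the explicit pressure $\pi_0 = -(v_0\cdot z_1 + \tfrac{1}{2}|z_1|^2)$), reduce $\theta_0$ to the random PDE for $\Theta_0 \triangleq \theta_0 - z_2$ and bootstrap to $H^2_x$, apply It\^{o}'s formula for \eqref{estimate 26}, and estimate $\|\mathring{R}_0\|_{L^1_x}$ term by term against the threshold $c_R M_0(t)\delta_1$. One small imprecision: the pathwise $L^2_x$-bound on $\theta_0$ that you need to control $\|\mathcal{R}\theta_0 e^2\|_{L^1_x}$ must come from the deterministic energy estimate on $\eta = \theta_0 - z_2$ (the paper's \eqref{estimate 315}) together with \eqref{[Equ. (38), Y20a]} for $z_2$, not from the It\^{o} energy inequality \eqref{estimate 26}, which is only in expectation.
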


\begin{proof}[Proof of Proposition \ref{Proposition 4.7 for n=2}]
First, we can immediately deduce 
\begin{equation}\label{[Equ. (53), Y20c]} 
\lVert v_{0} (t) \rVert_{L_{x}^{2}} \overset{\eqref{[Equ. (47), Y20c]} \eqref{[Equ. (50), Y20c]}}{=}  \frac{M_{0}(t)^{\frac{1}{2}}}{\sqrt{2}} \leq M_{0}(t)^{\frac{1}{2}}, \hspace{3mm} \lVert v_{0} \rVert_{C_{t,x}^{1}}   \overset{\eqref{[Equ. (47), Y20c]} \eqref{[Equ. (52), Y20c]}}{\leq} M_{0}(t)^{\frac{1}{2}} \lambda_{0}^{4}
\end{equation} 
(see  \cite[Equ. (53)]{Y20c}), which verifies \eqref{[Equ.  (49a) and (49b), Y20c]} at level $q=0$. The existence and uniqueness of solution $\theta_{0}$ to \eqref{estimate 25} is standard (e.g., \cite[Cha. 5]{DZ14}). Concerning the verification of \eqref{[Equ. (49c), Y20c]} at level $q = 0$, we will need to estimate $\lVert \mathcal{R} \theta_{0} e^{2}\rVert_{L_{x}^{1}}$ and $L_{\omega}^{p}$-estimate for $p < \infty$ will not suffice. Let us compute the following for general $q$ for subsequent convenience; the case $q = 0$ is our current case. We define $\Theta_{q} \triangleq \theta_{q} - z_{2}$ so that from \eqref{estimate 13} and \eqref{estimate 323}, we deduce 
\begin{equation}\label{estimate 40}
\partial_{t} \Theta_{q} - \Delta \Theta_{q} + (v_{q} + z_{1}) \cdot \nabla (\Theta_{q} + z_{2}) = 0, 
\end{equation}  
from which it can be computed by using divergence-free property of $v_{q}$ and $z_{1}$ that 
\begin{equation}\label{estimate 42}
\frac{1}{2} \partial_{t} \lVert \Theta_{q} \rVert_{L_{x}^{2}}^{2} + \lVert \Theta_{q} \rVert_{\dot{H}_{x}^{1}}^{2} = - \int_{\mathbb{T}^{2}}(v_{q} + z_{1}) \cdot \nabla z_{2} \Theta_{q} dx 
\leq ( \lVert v_{q} \rVert_{L_{x}^{2}} + \lVert z_{1} \rVert_{L_{x}^{2}}) \lVert z_{2} \rVert_{\dot{W}_{x}^{1,\infty}} \lVert \Theta_{q} \rVert_{L_{x}^{2}}.  
\end{equation} 
As $\frac{1}{2} \partial_{t} \lVert \Theta_{0} \rVert_{L_{x}^{2}}^{2} = \lVert \Theta_{0} \rVert_{L_{x}^{2}} \partial_{t} \lVert \Theta_{0} \rVert_{L_{x}^{2}}$, we obtain
\begin{equation}\label{estimate 43} 
\partial_{t} \lVert \Theta_{0} \rVert_{L_{x}^{2}} \leq (\lVert v_{0} \rVert_{L_{x}^{2}} + \lVert z_{1} \rVert_{L_{x}^{2}}) \lVert z_{2} \rVert_{\dot{W}_{x}^{1,\infty}} 
\overset{\eqref{[Equ. (53), Y20c]} \eqref{[Equ. (38), Y20a]} }{\leq} L^{\frac{9}{4}} e^{2Lt} 4\pi^{2}. 
\end{equation}
We integrate over $[0,t]$ to obtain 
\begin{equation}\label{estimate 315}
\lVert \Theta_{0}(t) \rVert_{L_{x}^{2}} \leq \lVert \theta^{\text{in}} \rVert_{L_{x}^{2}} + L^{\frac{5}{4}} 2 \pi^{2} e^{2Lt}.  
\end{equation} 
Applying \eqref{estimate 315} to \eqref{estimate 42} at level $q= 0$ shows that $\Theta_{0} \in L_{\omega}^{\infty} L_{t}^{\infty} L_{x}^{2} \cap L_{\omega}^{\infty} L_{t}^{2} H_{x}^{1}$ and a straight-forward bootstrap estimate using \eqref{[Equ. (33), Y20a]} and  \eqref{[Equ. (53), Y20c]}, as well as the fact that $\Delta v_{0} = - v_{0}$  show that $\Theta_{0} \in L_{\omega}^{\infty} L_{t}^{\infty} H_{x}^{2} \cap L_{\omega}^{\infty} L_{t}^{2} H_{x}^{3}$. Considering \eqref{[Equ. (33), Y20a]}, this implies that $\theta_{0} \in L_{\omega}^{\infty} L_{t}^{\infty} H_{x}^{2}$ as claimed. Next, the fact that \eqref{estimate 340} at level $q  =0$ with $\pi = - (v_{0} \cdot z_{1} + \frac{1}{2} \lvert z_{1} \rvert^{2})$   is satisfied can be proven immediately. Next, we realize that clearly $v_{0}$ is divergence-free and mean-zero so that $(-\Delta)^{m} v_{0}$ is also mean-zero while $\theta_{0}(t)$ is mean-zero for all $t \geq 0$; consequently, $\mathcal{R}(-\Delta)^{m} v_{0}$ and $\mathcal{R}\theta_{0}e^{2}$ are both trace-free and symmetric due to Lemma \ref{[Def. 9, Lem. 10, CDS12]}, leading to $\mathring{R}_{0}$ being trace-free and symmetric. Next, by Lemma \ref{[Def. 9, Lem. 10, CDS12]},  
\begin{equation}\label{estimate 335}
\lVert \mathcal{R} \theta_{0} e^{2} \rVert_{C_{t}L_{x}^{1}} \overset{\eqref{estimate 315}}{\leq} 6 \pi ( \lVert \theta^{\text{in}} \rVert_{L_{x}^{2}} + L^{\frac{5}{4}} e^{2Lt} 2 \pi^{2} + 2 \pi \lVert z_{2} \rVert_{C_{t}L_{x}^{\infty}})  
\overset{ \eqref{[Equ. (38), Y20a]}\eqref{[Equ. (52), Y20c]}}{\leq} \pi^{2} M_{0}(t) L^{-1}. 
\end{equation} 
On the other hand, we know from \cite[Equ. (56)]{Y20c} that 
\begin{equation}\label{estimate 322}
 \lVert \mathring{R}_{0} + \mathcal{R} \theta_{0}e^{2} \rVert_{C_{t}L_{x}^{1}} 
\leq 16 L M_{0}(t)^{\frac{1}{2}} + 8 \pi M_{0}(t)^{\frac{1}{2}} + 2 0 \pi M_{0}(t)^{\frac{1}{2}} L^{\frac{1}{4}} + 5 (2\pi)^{2} L^{\frac{1}{2}}. 
\end{equation} 
Directly due to \eqref{[Equ. (47), Y20c]} we can bound \eqref{estimate 322} furthermore by $50 \pi^{2} M_{0}(t) L^{-1}$ so that 
\begin{align*}
\lVert \mathring{R}_{0}(t,x) \rVert_{C_{t}L_{x}^{1}} \overset{\eqref{estimate 322}}{\leq}& 50 \pi^{2} M_{0}(t) L^{-1} + \lVert \mathcal{R} \theta_{0} e^{2} \rVert_{C_{t}L_{x}^{1}} \overset{\eqref{estimate 335}}{\leq} 51 \pi^{2} M_{0}(t) L^{-1}  \overset{\eqref{[Equ. (52), Y20c]}}{\leq} c_{R}M_{0}(t) \delta_{1}, 
\end{align*} 
which verifies \eqref{[Equ. (49c), Y20c]} at level $q = 0$. On the other hand, as $v_{0}$ and $z_{1}$ are both bounded, it is clear from \eqref{estimate 25} that \eqref{estimate 26} holds at level $q = 0$. Indeed, computing for general $q \in \mathbb{N}_{0}$ for subsequent convenience, standard computations on \eqref{estimate 323} give  
\begin{align}
&\lVert \theta_{q}(t \wedge T_{L} ) \rVert_{L_{x}^{2}}^{2} + 2 \int_{0}^{t \wedge T_{L} } \lVert \theta_{q} \rVert_{\dot{H}_{x}^{1}}^{2} dr \nonumber\\
\leq& \lVert \theta^{\text{in}} \rVert_{L_{x}^{2}}^{2} + \int_{0}^{t\wedge T_{L} } \int_{\mathbb{T}^{2}} 2 \theta_{q}  d B_{2}(s)dx  + (t\wedge T_{L} ) \text{Tr} (G_{2}G_{2}^{\ast});\label{estimate 30}
\end{align}
thus, taking expectation $\mathbb{E}^{\textbf{P}}$ leads to \eqref{estimate 26} at level $q = 0$. Finally, it is clear that $v_{0}(0,x)$ is deterministic. As $\theta^{\text{in}}$ is deterministic and $z_{1}(0,x) \equiv 0$ by \eqref{estimate 12}, so is $\mathring{R}_{0}(0,x)$. 
\end{proof} 

\begin{proposition}\label{Proposition 4.8 for n=2}
Fix $\theta^{\text{in}} \in H^{2}(\mathbb{T}^{2})$ that is deterministic and mean-zero from the hypothesis of Proposition \ref{Proposition 4.7 for n=2}. Let $L$ satisfy 
\begin{equation}\label{estimate 318}
L > \max\{ ( 18 \pi^{-1} \lVert \theta^{\text{in}} \rVert_{L_{x}^{2}})^{\frac{1}{3}}, (72 \pi)^{\frac{4}{7}}, c_{R}^{-1}459 \pi^{2} \} 
\end{equation}
and suppose that $(v_{q}, \theta_{q}, \mathring{R}_{q})$ are $(\mathcal{F}_{t})_{t\geq 0}$-adapted processes that solve \eqref{[Equ. (46), Y20c]} and satisfy \eqref{[Equ. (49), Y20c]}. Then there exist a choice of parameters $a, b,$ and $\beta$ such that \eqref{[Equ. (52), Y20c]} is fulfilled and $(\mathcal{F}_{t})_{t\geq 0}$-adapted processes $(v_{q+1}, \theta_{q+1}, \mathring{R}_{q+1})$ that satisfy \eqref{[Equ. (46), Y20c]}, \eqref{[Equ. (49), Y20c]} at level $q+1$, and for all $t \in [0, T_{L}]$ and $p \in [1, \infty)$, 
\begin{subequations}
\begin{align}
& \lVert v_{q+1} (t) - v_{q}(t) \rVert_{L_{x}^{2}} \leq M_{0}(t)^{\frac{1}{2}} \delta_{q+1}^{\frac{1}{2}}, \label{[Equ. (57), Y20c]}\\
& \mathbb{E}^{\textbf{P}} [ \lVert \theta_{q+1} - \theta_{q} \rVert_{C_{t  \wedge T_{L} }L_{x}^{2}}^{2p} + ( \int_{0}^{t \wedge T_{L}} \lVert \theta_{q+1} - \theta_{q} \rVert_{\dot{H}_{x}^{1}}^{2} dr)^{p} ] \lesssim_{p, \lVert \theta^{\text{in}} \rVert_{L_{x}^{2}}, \text{Tr} (G_{2}G_{2}^{\ast}), L} \lambda_{q+1}^{- \frac{ 8 \beta p}{8+ \beta}}. \label{estimate 22}
\end{align}
\end{subequations} 
Finally, if $v_{q}(0,x)$ and $\mathring{R}_{q}(0,x)$ are deterministic, then so are $v_{q+1}(0,x)$ and $\mathring{R}_{q+1}(0,x)$. 
\end{proposition}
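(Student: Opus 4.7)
The plan is to execute a single step of the convex integration scheme adapted to the 2D stochastic Boussinesq setting, following the template of \cite{LQ20,LTZ20,Y20c} but inserting the additional terms needed to control the buoyancy coupling $\theta e^{2}$. The scheme proceeds by: (i) mollifying $(v_q, \mathring{R}_q)$ in space and time at scale $\ell = \lambda_{q+1}^{-\alpha}$ for a suitable small $\alpha > 0$ so as to gain smoothness without losing too much accuracy; (ii) using a geometric decomposition of the mollified Reynolds tensor $\mathring{R}_\ell$ into finitely many rank-one pieces; (iii) building a principal perturbation $w_{q+1}^{(p)}$ by multiplying amplitude functions extracted from $\mathring{R}_\ell$ against the 2D intermittent stationary flows of \cite{CDS12,LQ20} at base frequency $\lambda_{q+1}$; and (iv) adding an incompressibility corrector $w_{q+1}^{(c)}$ plus a temporal corrector $w_{q+1}^{(t)}$ to cancel the largest oscillation errors.

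Having set $w_{q+1} \triangleq w_{q+1}^{(p)} + w_{q+1}^{(c)} + w_{q+1}^{(t)}$ and $v_{q+1} \triangleq v_\ell + w_{q+1}$, I would first verify \eqref{[Equ. (57), Y20c]} by isolating the principal contribution: the intermittent building blocks satisfy $\lVert W_{(\xi)}\rVert_{L^2_x} \approx 1$ while the correctors are smaller by a positive power of $\lambda_{q+1}^{-1}$, so a standard $L^p$ almost-orthogonality argument reduces \eqref{[Equ. (57), Y20c]} to the pointwise bound on the amplitudes, which is controlled by $c_R M_0(t)\delta_{q+1}$ via the geometric lemma after taking $c_R$ small enough. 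The $C^1_{t,x}$ bound in \eqref{[Equ.  (49a) and (49b), Y20c]} at level $q+1$ follows from the explicit frequency localisation at $\lambda_{q+1}$ together with the regularity of $z_1$ guaranteed by \eqref{[Equ. (38), Y20a]} and Proposition \ref{Proposition 4.4}. $(\mathcal{F}_t)$-adaptedness is automatic because $v_\ell$ inherits adaptedness from $v_q$, the perturbation depends only on $v_\ell$ and $\mathring{R}_\ell$, and the deterministic initial condition of $v_q(0,x)$ and $\mathring{R}_q(0,x)$ passes to $v_{q+1}(0,x)$ because the intermittent profiles are deterministic and $z_1(0,x)\equiv 0$.

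For the temperature, I would define $\theta_{q+1}$ as the unique $(\mathcal{F}_t)$-adapted solution of \eqref{estimate 323} at level $q+1$ with datum $\theta^{\text{in}}$, whose existence and regularity follow from the same bootstrap argument used in Proposition \ref{Proposition 4.7 for n=2}. Subtracting the equations for $\theta_{q+1}$ and $\theta_q$ gives
\begin{equation*}
\partial_t(\theta_{q+1} - \theta_q) - \Delta(\theta_{q+1} - \theta_q) + \mathrm{div}\bigl((v_{q+1}+z_1)(\theta_{q+1}-\theta_q)\bigr) = -\mathrm{div}\bigl(w_{q+1}\,\theta_q\bigr),
\end{equation*}
a \emph{deterministic} linear parabolic equation with a small source. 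Testing against $\theta_{q+1}-\theta_q$, the divergence-free transport term vanishes and the parabolic damping absorbs the right-hand side in $H^{-1}_x$ by $\lVert w_{q+1}\rVert_{L^{p'}_x}\lVert \theta_q\rVert_{L^p_x}$. Interpolating the $L^2$ smallness $\lVert w_{q+1}\rVert_{L^2_x}\lesssim \delta_{q+1}^{1/2}$ against the worse $C^0_x$ size of the intermittent profile, and combining with $L^p$ control on $\theta_q$ supplied inductively by \eqref{estimate 26} plus an $L^p$ bootstrap for $\theta_q$, yields \eqref{estimate 22}; the specific exponent $\tfrac{8\beta p}{8+\beta}$ is exactly what falls out of the 2D intermittent interpolation, as in \cite{LQ20,LTZ20}. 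The level-$(q+1)$ version of \eqref{estimate 26} is then recovered by applying It\^{o}'s formula to $\lVert \theta_{q+1}\rVert_{L^2_x}^2$ in \eqref{estimate 323} and using $\nabla\cdot(v_{q+1}+z_1)=0$ to drop the nonlinearity.

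Finally, I would define $\mathring{R}_{q+1}$ by applying the inverse-divergence operator $\mathcal{R}$ of Lemma \ref{[Def. 9, Lem. 10, CDS12]} to the residual in \eqref{estimate 340} at level $q+1$, which splits into a linear error $(-\Delta)^m w_{q+1}$, a transport/Nash error $\partial_t w_{q+1}^{(p)} + \mathrm{div}\bigl((v_\ell+z_1)\otimes w_{q+1} + w_{q+1}\otimes(v_\ell+z_1)\bigr)$, the oscillation error $\mathrm{div}(w_{q+1}^{(p)}\mathring\otimes w_{q+1}^{(p)})$ (largely cancelled by $\partial_t w_{q+1}^{(t)}$ via the usual temporal-corrector identity and by the geometric decomposition of $\mathring{R}_\ell$), a mollification commutator, and the new buoyancy contribution $-\mathcal{R}(\theta_{q+1}-\theta_\ell)e^{2}$ that is absent in pure NS. The target \eqref{[Equ. (49c), Y20c]} at $q+1$ will follow by choosing $\beta$ small, $b$ large, and $a$ sufficiently large so that each individual error beats $c_R M_0(t)\delta_{q+2}$. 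The hardest part of the bookkeeping, and the essentially new ingredient compared with \cite{LQ20,Y20c}, is the buoyancy term $\mathcal{R}(\theta_{q+1}-\theta_\ell)e^{2}$: controlling it in $C_t L^1_x$ requires feeding the Cauchy estimate \eqref{estimate 22} into the $L^p$-boundedness of $\mathcal{R}$ and matching the resulting exponent $\lambda_{q+1}^{-\tfrac{8\beta}{8+\beta}}$ against $\delta_{q+2}=\lambda_{q+1}^{-2\beta b}$, which, together with the compatibility constraint $(51\pi^2)a^{2\beta b}\leq c_R L$ from \eqref{[Equ. (52), Y20c]}, is what ultimately dictates the choice of $b\gg 1$ and $\beta\ll 1$.
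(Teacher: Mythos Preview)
Your outline is broadly correct through the construction of $w_{q+1}$, the verification of \eqref{[Equ. (57), Y20c]} and \eqref{[Equ.  (49a) and (49b), Y20c]}, and the derivation of \eqref{estimate 22} via interpolation (the paper does exactly this, interpolating $\lVert v_{q+1}-v_q\rVert_{L^2}^{8/(8+\beta)}\lVert v_{q+1}-v_q\rVert_{\dot H^1}^{\beta/(8+\beta)}$ and pairing with $\lVert\theta_q\rVert_{\dot H^1}$). But there is a genuine gap in your treatment of the buoyancy error $\mathcal{R}((\theta_{q+1}-\theta_l)e^2)$ in the Reynolds stress.

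You propose to control this term by feeding \eqref{estimate 22} into the $L^p$-boundedness of $\mathcal{R}$ and ``matching $\lambda_{q+1}^{-8\beta/(8+\beta)}$ against $\delta_{q+2}=\lambda_{q+1}^{-2\beta b}$.'' This cannot work: the inequality $8\beta/(8+\beta)\geq 2\beta b$ forces $b\leq 4/(8+\beta)<1$, whereas $b$ must be a large integer (the paper takes $b>16/\alpha$). No choice of $a,b,\beta$ rescues this; the $L^2$-Cauchy estimate is simply too weak to close the Reynolds stress bound at level $q+1$. The paper confronts this explicitly (see Remark \ref{another remark}) and takes a completely different route: it splits $\theta_l-\theta_{q+1}=(\theta_l-\theta_q)+(\theta_q-\theta_{q+1})$ and, for the second piece, derives a separate $L^{p^\ast}$-estimate with $p^\ast\in(1,2)$ directly from the difference equation,
\[
\lVert\theta_{q+1}-\theta_q\rVert_{C_tL^{p^\ast}_x}\leq \lVert v_{q+1}-v_q\rVert_{C_tL^{p^\ast}_x}\int_0^t\lVert\theta_q\rVert_{\dot W^{1,\infty}_x}\,dr.
\]
The gain comes from two places. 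First, intermittency makes $\lVert w_{q+1}\rVert_{L^{p^\ast}_x}$ smaller than $\lVert w_{q+1}\rVert_{L^2_x}$ by a large negative power of $\lambda_{q+1}$ (a factor $r^{1-2/p^\ast}$), so the decay is governed by $\alpha$ rather than $\beta$. Second, the time integral $\int_0^t\lVert\theta_q\rVert_{\dot W^{1,\infty}_x}$ is controlled via an $H^2$-bootstrap on $\Theta_q=\theta_q-z_2$ (equations \eqref{estimate 324}--\eqref{estimate 48}), which costs only $\lambda_q^4$ because it involves $\lVert v_q\rVert_{C^1_{t,x}}$, \emph{not} $\lVert v_{q+1}\rVert_{C^1_{t,x}}$; this is why the paper writes the nonlinear difference as $(v_{q+1}-v_q)\cdot\nabla\theta_q$ rather than $(v_{q+1}-v_q)\cdot\nabla\theta_{q+1}$ (Remark \ref{Remark 4.4}). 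The combination gives a decay $\lambda_{q+1}^{-c\alpha}$ that comfortably beats $\delta_{q+2}$ once $\alpha>96\beta b$, which is precisely the constraint \eqref{[Equ. (68), Y20c]}. Your proposal misses this entire mechanism.
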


\begin{proof}[Proof of Theorem \ref{Theorem 2.1} if $n = 2$ assuming Proposition \ref{Proposition 4.8 for n=2}]
Fix $\theta^{\text{in}} \in H^{2}(\mathbb{T}^{2})$ that is deterministic and mean-zero from the hypothesis of Proposition \ref{Proposition 4.7 for n=2}. Given $T > 0, K > 1$, and $\kappa \in (0,1)$, starting from $(v_{0}, \theta_{0}, \mathring{R}_{0})$ in Proposition \ref{Proposition 4.7 for n=2}, Proposition \ref{Proposition 4.8 for n=2} gives us $(v_{q}, \theta_{q}, \mathring{R}_{q})$ for all $q \geq 1$ that satisfies \eqref{[Equ. (46), Y20c]} and \eqref{[Equ. (49), Y20c]}. Then, for all $\gamma \in (0, \frac{\beta}{4+ \beta})$ and $t \in [0, T_{L}]$, by Gagliardo-Nirenberg's inequality, we can deduce 
\begin{equation}\label{estimate 29}
\sum_{q \geq 0} \lVert v_{q+1}(t) - v_{q}(t) \rVert_{\dot{H}_{x}^{\gamma}} \overset{ \eqref{[Equ. (57), Y20c]}}{\lesssim} \sum_{q \geq 0} (M_{0}(t)^{\frac{1}{2}} \delta_{q+1}^{\frac{1}{2}})^{1-\gamma} ( \lVert v_{q+1} \rVert_{C_{t,x}^{1}} + \lVert v_{q} \rVert_{C_{t,x}^{1}})^{\gamma} 
\overset{ \eqref{[Equ. (49a) and (49b), Y20c]}}{\lesssim} M_{0}(t)^{\frac{1}{2}}.
\end{equation} 
Thus, $\{v_{q}\}_{q=0}^{\infty}$ is Cauchy in $C_{T_{L}} \dot{H}^{\gamma}(\mathbb{T}^{2})$ and we deduce $\lim_{q\to \infty} v_{q} = v \in C([0, T_{L}]; \dot{H}^{\gamma}(\mathbb{T}^{2}))$ for which there exists a deterministic constant $C_{L,1} > 0$ such that 
\begin{equation}\label{[Equ. (59), Y20c]}
\sup_{t \in [0, T_{L}]} \lVert v(t) \rVert_{\dot{H}_{x}^{\gamma}} \leq C_{L,1}. 
\end{equation} 
As each $v_{q}$ is $(\mathcal{F}_{t})_{t\geq 0}$-adapted, $v$ is also $(\mathcal{F}_{t})_{t\geq 0}$-adapted. Next, a standard computation on \eqref{estimate 323} using the fact that $v_{q} + z_{1}$ is divergence-free so that $\int_{\mathbb{T}^{n}} (v_{q} + z_{1}) \cdot \nabla \theta_{q} \lvert \theta_{q} \rvert^{p-2} \theta_{q} dx = 0$ and Burkholder-Davis-Gundy inequality (e.g., \cite[p. 166]{KS91}), shows that for all $q \in \mathbb{N}_{0}$, 
\begin{align}
\mathbb{E}^{\textbf{P}} [ \lVert \theta_{q} \rVert_{C_{t \wedge T_{L} }L_{x}^{p}}^{p}] \lesssim& \lVert \theta^{\text{in}} \rVert_{L_{x}^{p}}^{p} + \sqrt{L} \text{Tr} ((-\Delta)^{\frac{n}{2} + 2 \sigma} G_{2}G_{2}^{\ast})^{\frac{1}{2}} \mathbb{E}^{\textbf{P}} [ \lVert \theta_{q} \rVert_{C_{t\wedge T_{L}}L_{x}^{p-1}}^{p-1}] \nonumber \\
&+ p (p-1) \text{Tr} ((-\Delta)^{\frac{n}{2} + 2 \sigma} G_{2}G_{2}^{\ast}) L \mathbb{E}^{\textbf{P}} [ \lVert \theta_{q} \rVert_{C_{t\wedge T_{L}}L_{x}^{p-2}}^{p-2} ];   \label{estimate 114}
\end{align} 
we chose to state this for general $n \in \{2,3\}$ for subsequent convenience with our current case being $n = 2$.  Continuing from \eqref{estimate 42}, we can show 
\begin{align}
\lVert \Theta_{q} (t) \rVert_{L_{x}^{2}} \leq& \lVert \theta^{\text{in}} \rVert_{L_{x}^{2}} + \int_{0}^{t} (\lVert v_{q} \rVert_{L_{x}^{2}} + 2\pi \lVert z_{1} \rVert_{L_{x}^{\infty}}) \lVert z_{2} \rVert_{\dot{W}_{x}^{1,\infty}} dr \nonumber\\
\overset{\eqref{[Equ. (38), Y20a]} \eqref{[Equ.  (49a) and (49b), Y20c]}}{\leq}& \lVert \theta^{\text{in}} \rVert_{L_{x}^{2}} + t [ 2 M_{0}(t)^{\frac{1}{2}} + 2 \pi L^{\frac{1}{4}}]L^{\frac{1}{4}}. \label{estimate 319}
\end{align} 
Along with $\lVert z_{2} \rVert_{C_{t}L_{x}^{\infty}} \leq 2\pi L^{\frac{1}{4}}$ due to \eqref{[Equ. (38), Y20a]}, we deduce for all $q \in \mathbb{N}_{0}$ 
\begin{equation}\label{estimate 317}
\lVert \theta_{q} \rVert_{C_{t}L_{x}^{2}} \leq \lVert \Theta_{q} \rVert_{C_{t}L_{x}^{2}} + \lVert z_{2} \rVert_{C_{t}L_{x}^{2}} \leq    \lVert \theta^{\text{in}} \rVert_{L_{x}^{2}} + t [ 2 M_{0}(t)^{\frac{1}{2}} + 2 \pi L^{\frac{1}{4}}]L^{\frac{1}{4}} + 2 \pi L^{\frac{1}{4}}. 
\end{equation}
Starting from \eqref{estimate 317} and using \eqref{estimate 114}, inductively we can now conclude that for all $q \in \mathbb{N}_{0}$, $\theta_{q} \in L_{\omega}^{p} C_{t}L_{x}^{p}$, $p \in [1, \infty)$. This allows one to interpolate and use \eqref{estimate 22} so that for any $p \in [1, \infty)$ fixed, 
\begin{align}  
\mathbb{E}^{\textbf{P}} [ \lVert \theta_{q+1} - \theta_{q} \rVert_{C_{t \wedge T_{L}} L_{x}^{p}}^{p}] \leq&   ( \mathbb{E}^{\textbf{P}} [ \sup_{r \in [0,t \wedge T_{L}]} \lVert \theta_{q+1}(r) - \theta_{q}(r) \rVert_{L_{x}^{2p-2}}^{2p-2} ])^{\frac{1}{2}} (\mathbb{E}^{\textbf{P}} [ \sup_{r \in [0,t \wedge T_{L}]} \lVert \theta_{q+1}(r) - \theta_{q}(r) \rVert_{L_{x}^{2}}^{2}])^{\frac{1}{2}} \nonumber\\
&\overset{\eqref{estimate 22}}{\lesssim}_{p, \lVert \theta^{\text{in}} \rVert_{H_{x}^{2}}, \text{Tr} (G_{2}G_{2}^{\ast}), \text{Tr} ((-\Delta)^{1+ 2 \sigma} G_{2}G_{2}^{\ast}), L} \lambda_{q+1}^{- \frac{4\beta}{8+\beta}}. \label{estimate 338} 
\end{align} 
Thus, we conclude from \eqref{estimate 22} that $\{\theta_{q} \}_{q =0}^{\infty}$ is Cauchy in $\cap_{p \in [1,\infty)} L_{\omega}^{p} C_{T_{L}} L_{x}^{p} \cap L_{\omega}^{p} L_{T_{L}}^{2} \dot{H}_{x}^{1}$ so that we have $\lim_{q\to\infty} \theta_{q} \triangleq \theta \in \cap_{p\in [1,\infty)} L_{\omega}^{p} C_{T_{L}} L_{x}^{p} \cap L_{\omega}^{p} L_{T_{L}}^{2} \dot{H}_{x}^{1}$ for which there exists a deterministic constant $C_{L,2} = C_{L,2} (p) > 0$ for $p \in [1,\infty)$ such that 
\begin{equation}\label{estimate 320}
\mathbb{E}^{\textbf{P}} [ \lVert \theta \rVert_{C_{T_{L}} L_{x}^{p}}^{p} + \lVert \theta \rVert_{L_{T_{L}}^{2} \dot{H}_{x}^{1}}^{p}] \leq C_{L,2}, 
\end{equation} 
which verifies the second inequality of \eqref{estimate 17}.  As each $\theta_{q}$ is $(\mathcal{F}_{t})_{t\geq 0}$-adapted, so is $\theta$. Finally, for all $t \in [0,T_{L}]$, $\lVert \mathring{R}_{q} \rVert_{C_{t} L_{x}^{1}}  \overset{\eqref{[Equ. (49c), Y20c]}}{\leq} c_{R} M_{0}(t) \delta_{q+1} \to 0$ as $q\to \infty$. Thus, $u = v + z_{1}$ and $\theta$ solve \eqref{3}. Now for $c_{R} > 0$ that is determined from the proof of Proposition \ref{Proposition 4.8 for n=2}, we choose $L  = L(T, K, c_{R}, G_{1}, G_{2}, \lVert \theta^{\text{in}} \rVert_{L_{x}^{2}}, \lVert u^{\text{in}} \rVert_{L_{x}^{2}})$ that satisfies \eqref{estimate 318} to be larger if necessary to satisfy 
\begin{subequations}\label{[Equ. (60), Y20c]}
\begin{align} 
&\frac{3}{2} + \frac{1}{L} < (\frac{1}{\sqrt{2}} - \frac{1}{2}) e^{LT} \text{ or equivalently } (\frac{3}{2} M_{0}(0)^{\frac{1}{2}} + L) e^{LT} < (\frac{1}{\sqrt{2}} - \frac{1}{2}) M_{0}(T)^{\frac{1}{2}}, \label{estimate 332} \\
&L^{\frac{1}{4}} 2 \pi + K e^{\frac{T}{2}} (\lVert \theta^{\text{in}} \rVert_{L_{x}^{2}}+ \sum_{l=1}^{2} \sqrt{ \text{Tr} (G_{l}G_{l}^{\ast})}) \leq (e^{LT} - K e^{\frac{T}{2}}) \lVert u^{\text{in}} \rVert_{L_{x}^{2}} + L e^{LT}, \label{estimate 333}  
\end{align} 
\end{subequations} 
where $u^{\text{in}}(x) = v(0,x)$ as $z_{1}(0,x)  \overset{\eqref{estimate 12}}{\equiv} 0$. Because $\lim_{L\to\infty} T_{L} = + \infty$ $\textbf{P}$-a.s. due to \eqref{estimate 16} and \eqref{[Equ. (33), Y20a]}, for the fixed $T >0$ and $\kappa > 0$, increasing $L$ sufficiently larger if necessary gives \eqref{6}. Next, as $z_{1}(t)$ is clearly $(\mathcal{F}_{t})_{t\geq 0}$-adapted, we see that $u$ is $(\mathcal{F}_{t})_{t\geq 0}$-adapted. Moreover, \eqref{[Equ. (38), Y20a]} and \eqref{[Equ. (59), Y20c]} imply the first inequality of \eqref{estimate 17}. Next, we can compute for all $t \leq T_{L}$ 
\begin{equation}\label{[Equ. (61), Y20c]}
\lVert v(t) - v_{0}(t) \rVert_{L_{x}^{2}}  \overset{\eqref{[Equ. (57), Y20c]}}{\leq} M_{0}(t)^{\frac{1}{2}} \sum_{q \geq 0} \delta_{q+1}^{\frac{1}{2}} \overset{\eqref{[Equ. (47), Y20c]} \eqref{[Equ. (52), Y20c]}}{<} M_{0}(t)^{\frac{1}{2}} (\frac{1}{2})
\end{equation} 
(see \cite[Equ. (61)]{Y20c}). We can also deduce $(\lVert v(0) \rVert_{L_{x}^{2}} + L) e^{LT} < \lVert v(T) \rVert_{L_{x}^{2}}$ due to \eqref{[Equ. (53), Y20c]}, \eqref{[Equ. (60), Y20c]}-\eqref{[Equ. (61), Y20c]} (see \cite[Equ. (62)]{Y20c}).  Therefore, on $\{T_{L} \geq T \}$, 
\begin{align}
\lVert u(T) \rVert_{L_{x}^{2}} \geq& \lVert v(T) \rVert_{L_{x}^{2}} - \lVert z_{1}(T) \rVert_{L_{x}^{2}} > (\lVert v(0) \rVert_{L_{x}^{2}} + L) e^{LT} - \lVert z_{1}(T) \rVert_{L_{x}^{\infty}} 2\pi \label{estimate 142}\\
\overset{\eqref{estimate 12} \eqref{[Equ. (38), Y20a]}}{\geq}& ( \lVert u^{\text{in}} \rVert_{L_{x}^{2}} + L) e^{LT} - L^{\frac{1}{4}} 2 \pi \overset{\eqref{[Equ. (60), Y20c]}}{\geq} K e^{\frac{T}{2}} (\lVert u^{\text{in}} \rVert_{L_{x}^{2}} + \lVert \theta^{\text{in}} \rVert_{L_{x}^{2}} + \sum_{l=1}^{2} \sqrt{ \text{Tr} (G_{l}G_{l}^{\ast})}), \nonumber 
\end{align} 
which verifies \eqref{[Equ. (4), Y20c]}. At last, because $v_{0}(0,x)$ from Proposition \ref{Proposition 4.7 for n=2} is deterministic, Proposition \ref{Proposition 4.8 for n=2} implies that $v(0,x)$ remains deterministic; as $z_{1}(0,x) \equiv 0$ by \eqref{estimate 12}, we conclude that $u^{\text{in}}$ is deterministic. 
\end{proof} 

\subsection{Convex integration to prove Proposition \ref{Proposition 4.8 for n=2}}

\subsubsection{Choice of parameters}
We fix 
\begin{equation}\label{[Equ. (64), Y20c]}
m^{\ast} \triangleq 2 m- 1 \text{ if } m \in (\frac{1}{2}, 1) \text{ while } m^{\ast} \triangleq 0 \text{ if } m \in (0, \frac{1}{2}] 
\end{equation}
so that $m^{\ast} \in [0, 1)$. Furthermore, we fix $L$ that satisfies \eqref{estimate 318}, 
\begin{equation}\label{[Equ. (65), Y20c]}
\eta \in \mathbb{Q}_{+} \cap ( \frac{1-m^{\ast}}{16}, \frac{1-m^{\ast}}{8}] 
\end{equation} 
from which we see that $\eta \in (0, \frac{1}{8}]$, and 
\begin{equation}\label{[Equ. (66), Y20c]}
\alpha \triangleq \frac{1-m}{400}. 
\end{equation} 
We set 
\begin{equation}\label{[Equ. (67), Y20c]}
r \triangleq \lambda_{q+1}^{1- 6 \eta}, \hspace{2mm} \mu \triangleq \lambda_{q+1}^{1- 4 \eta}, \hspace{2mm} \text{ and } \hspace{2mm} \sigma \triangleq \lambda_{q+1}^{2\eta - 1} 
\end{equation} 
so that the condition of $1 \ll r \ll \mu \ll \sigma^{-1} \ll \lambda_{q+1}$ from \eqref{[Equ. (18), Y20c]} is satisfied as $\eta \leq \frac{1}{8}$. Moreover, for the $\alpha$ fixed in \eqref{[Equ. (66), Y20c]}, we can choose 
\begin{equation}\label{estimate 62}
b \in \{ \iota \in \mathbb{N}: \hspace{0.5mm} \iota > \frac{16}{\alpha} \} 
\end{equation}
such that $r \in \mathbb{N}$ and $\lambda_{q+1} \sigma \in 10 \mathbb{N}$ so that the conditions of $r \in \mathbb{N}$ and $\lambda_{q+1}, \lambda_{q+1} \sigma \in 5 \mathbb{N}$ from \eqref{[Equ. (18), Y20c]} are fulfilled. For the $\alpha$ in \eqref{[Equ. (66), Y20c]} and $b$ in \eqref{estimate 62} fixed, we can take $\beta$ sufficiently small so that 
\begin{equation}\label{[Equ. (68), Y20c]} 
\alpha > 96 \beta b. 
\end{equation} 
We also choose 
\begin{equation}\label{[Equ. (69), Y20c]} 
l \triangleq \lambda_{q+1}^{- \frac{3\alpha}{2}} \lambda_{q}^{-2} 
\end{equation} 
that has an immediate consequence of 
\begin{equation}\label{[Equ. (70), Y20c]}
l \lambda_{q}^{4} \overset{\eqref{estimate 62}}{\leq} \lambda_{q+1}^{-\alpha} \text{ and } l^{-1} \overset{\eqref{estimate 62}}{\leq} \lambda_{q+1}^{2\alpha}
\end{equation} 
by taking $a \in 10 \mathbb{N}$ sufficiently large. Concerning \eqref{[Equ. (52), Y20c]}, by \eqref{estimate 318} we have $L > (18 \pi^{-1} \lVert \theta^{\text{in}} \rVert_{L_{x}^{2}})^{\frac{1}{3}}$ and $L > (72 \pi)^{\frac{4}{7}}$ and choosing $a \in 10 \mathbb{N}$ sufficiently large gives $c_{R} L \leq c_{R} (a^{4} \pi - 1)$ while $\beta > 0$ sufficiently small gives  $(51 \pi^{2}) 9 < 51 \pi^{2} a^{2 \beta b} \leq c_{R} L$. Thus, we consider such $m^{\ast}, \eta, \alpha, b$, and $l$ fixed, preserving our freedom to take $a \in 10 \mathbb{N}$ larger and $\beta > 0$ smaller as needed. 

\subsubsection{Mollification}
We let $\{ \phi_{\epsilon} \}_{\epsilon > 0}$ and $\{ \varphi_{\epsilon}\}_{\epsilon > 0}$, specifically $\phi_{\epsilon} (\cdot) \triangleq  \frac{1}{\epsilon^{2}} \phi(\frac{\cdot}{\epsilon})$ and $\varphi_{\epsilon}(\cdot) \triangleq \frac{1}{\epsilon} \varphi(\frac{\cdot}{\epsilon})$ respectively, be families of standard mollifiers on $\mathbb{R}^{2}$ and $\mathbb{R}$ with mass one where the latter is compactly supported on $\mathbb{R}_{+}$. Then we mollify $v_{q}, \theta_{q}, \mathring{R}_{q}$, and $z_{k}$ in space and time to obtain for both $k \in \{1,2\}$, 
\begin{equation}\label{[Equ. (71), Y20c]}
v_{l} \triangleq (v_{q} \ast_{x} \phi_{l}) \ast_{t} \varphi_{l}, \hspace{1mm} \theta_{l} \triangleq (\theta_{q} \ast_{x} \phi_{l}) \ast_{t} \varphi_{l}, \hspace{1mm} \mathring{R}_{l} \triangleq (\mathring{R}_{q} \ast_{x} \phi_{l}) \ast_{t} \varphi_{l}, \hspace{1mm} z_{k,l} \triangleq (z_{k} \ast_{x} \phi_{l}) \ast_{t} \varphi_{l}. 
\end{equation} 
Then we see that 
\begin{align}\label{[Equ. (72), Y20c]}
\partial_{t} v_{l} + (-\Delta)^{m} v_{l} + \text{div} ((v_{l} + z_{1,l}) \otimes (v_{l} + z_{1,l}) ) + \nabla \pi_{l}= \theta_{l} e^{2} + \text{div} (\mathring{R}_{l} + R_{\text{com1}})
\end{align} 
if we define 
\begin{subequations}\label{[Equ. (73), Y20c]}
\begin{align}
\pi_{l} \triangleq& (\pi_{q} \ast_{x} \phi_{l}) \ast_{t} \varphi_{l} - \frac{1}{2} ( \lvert v_{l} + z_{1,l} \rvert^{2} - (\lvert v_{q} + z_{1} \rvert^{2} \ast_{x} \phi_{l}) \ast_{t} \varphi_{l}), \label{[Equ. (73a), Y20c]}\\
R_{\text{com1}} \triangleq& R_{\text{commutator1}} \triangleq (v_{l} + z_{1,l}) \mathring{\otimes} (v_{l} + z_{1,l}) - (((v_{q} +z_{1}) \mathring{\otimes} (v_{q} + z_{1}))\ast_{x} \phi_{l})\ast_{t} \varphi_{l}. \label{[Equ. (73b), Y20c]}
\end{align} 
\end{subequations} 
For all $t \in [0, T_{L}]$ and $N \geq 1$, using \eqref{[Equ. (68), Y20c]} and taking $a \in 10 \mathbb{N}$ sufficiently large we have  
\begin{subequations}
\begin{align}
& \lVert v_{q} - v_{l} \rVert_{C_{t}L_{x}^{2}} \overset{\eqref{[Equ. (49a) and (49b), Y20c]} \eqref{[Equ. (70), Y20c]}}{\leq} \frac{1}{4} M_{0}(t)^{\frac{1}{2}} \delta_{q+1}^{\frac{1}{2}},  \label{[Equ. (74a), Y20c]}\\
&\lVert v_{l} \rVert_{C_{t}L_{x}^{2}} \overset{\eqref{[Equ. (49a) and (49b), Y20c]}}{\leq} M_{0}(t)^{\frac{1}{2}} (1+ \sum_{1 \leq \iota \leq q} \delta_{\iota}^{\frac{1}{2}}), \hspace{10mm}  \lVert v_{l} \rVert_{C_{t,x}^{N}} \overset{\eqref{[Equ. (49a) and (49b), Y20c]} \eqref{estimate 62} \eqref{[Equ. (69), Y20c]} }{\leq} l^{-N }M_{0}(t)^{\frac{1}{2}} \lambda_{q+1}^{-\alpha}  \label{[Equ. (74b) and (74c), Y20c]}
\end{align}
\end{subequations} 
(see  \cite[Equ. (74)]{Y20c}). 

\subsubsection{Perturbation}
We let $\chi$ be a smooth function such that 
\begin{equation}\label{[Equ. (75), Y20c]}
\chi(z) \triangleq 1 \text{ if } z \in [0,1], \hspace{1mm}  \hspace{1mm} z \leq 2 \chi(z) \leq 4z \text{ for } z \in (1,2), \hspace{1mm} \text{ and } \hspace{1mm} \chi(z) = z \text{ if } z \in [2,\infty). 
\end{equation} 
We define for $t \in [0, T_{L}]$ and $\omega \in \Omega$, 
\begin{equation}\label{[Equ. (76), Y20c]}
\rho(\omega, t, x) \triangleq 4c_{R} \delta_{q+1} M_{0}(t) \chi((c_{R} \delta_{q+1} M_{0}(t))^{-1} \lvert \mathring{R}_{l} (\omega, t, x) \rvert ) 
\end{equation} 
for which it follows that 
\begin{equation}\label{[Equ. (77), Y20c]} 
\lvert \frac{ \mathring{R}_{l} (\omega, t, x) }{\rho(\omega, t ,x) } \rvert \overset{\eqref{[Equ. (75), Y20c]} \eqref{[Equ. (76), Y20c]}}{\leq} \frac{1}{2}, 
\end{equation} 
which is useful in deriving \eqref{estimate 150}. For any $p \in [1, \infty],$ $t \in [0, T_{L}]$, and $N \geq 0$, we have 
\begin{equation}\label{[Equ. (78) and (79), Y20c]}
\lVert \rho \rVert_{C_{t}L_{x}^{p}} \overset{\eqref{[Equ. (75), Y20c]}}{\leq} 12 (( 4 \pi^{2})^{\frac{1}{p}} c_{R} \delta_{q+1} M_{0}(t) + \lVert \mathring{R}_{l}  \rVert_{C_{t}L_{x}^{p}}), \hspace{1mm} \lVert \mathring{R}_{l} \rVert_{C_{t,x}^{N}} \overset{\eqref{[Equ. (49c), Y20c]}}{\lesssim} l^{-N-3}  M_{0}(t) c_{R} \delta_{q+1} 
\end{equation} 
(see \cite[Equ. (78)-(79)]{Y20c}). Moreover, for any $N \geq 0$ and $t \in [0, T_{L}]$, we can estimate 
\begin{equation}\label{[Equ. (80), Y20c]}
\lVert \rho \rVert_{C_{t}C_{x}^{N}} \overset{\eqref{[Equ. (78) and (79), Y20c]}}{\lesssim} c_{R} \delta_{q+1} M_{0}(t) l^{-3-N} \text{ and } \lVert \rho \rVert_{C_{t}^{1}C_{x}^{k}} \overset{\eqref{[Equ. (78) and (79), Y20c]}}{\lesssim} c_{R} \delta_{q+1} M_{0}(t) l^{-4 (k+1)}, \hspace{1mm} k \in \{0,1,2\}
\end{equation} 
(see \cite[Equ. (80)]{Y20c}). Next, we recall $\gamma_{\zeta}$ from Lemma \ref{[Lemma 4.1, LQ20]} and define the amplitude function 
\begin{equation}\label{[Equ. (81), Y20c]} 
a_{\zeta} (\omega, t, x) \triangleq a_{\zeta, q+1} (\omega, t, x) \triangleq \rho(\omega, t, x)^{\frac{1}{2}} \gamma_{\zeta} (\frac{ \mathring{R}_{l} (\omega, t, x)}{\rho(\omega, t, x)}) 
\end{equation} 
that admits the following estimates: for all $t \in [0, T_{L}]$, $N \in \mathbb{N}_{0}$, and $k \in \{0,1,2\}$, with $C_{\Lambda}$ and $M$ from \eqref{[Equ. (15), Y20c]} by requiring $c_{R}^{\frac{1}{4}} \ll \frac{1}{M}$, 
\begin{subequations}\label{estimate 150}
\begin{align}
&\lVert a_{\zeta} \rVert_{C_{t}L_{x}^{2}} \overset{\eqref{[Equ. (77), Y20c]} \eqref{[Equ. (81), Y20c]}}{\leq} \lVert \rho \rVert_{C_{t} L_{x}^{1}}^{\frac{1}{2}} \lVert \gamma_{\zeta} \rVert_{C(B_{\frac{1}{2}} (0))} \overset{\eqref{[Equ. (15), Y20c]} \eqref{[Equ. (49c), Y20c]}  \eqref{[Equ. (78) and (79), Y20c]}  }{\leq} \frac{c_{R}^{\frac{1}{4}} M_{0}(t)^{\frac{1}{2}} \delta_{q+1}^{\frac{1}{2}}}{2 \lvert \Lambda \rvert}, \label{[Equ. (83), Y20c]}\\
& \lVert a_{\zeta} \rVert_{C_{t}C_{x}^{N}} \overset{\eqref{[Equ. (77), Y20c]}-\eqref{[Equ. (81), Y20c]} }{\leq} c_{R}^{\frac{1}{4}} \delta_{q+1}^{\frac{1}{2}} M_{0}(t)^{\frac{1}{2}} l^{- \frac{3}{2} - 4N}, \hspace{1mm} \lVert a_{\zeta} \rVert_{C_{t}^{1}C_{x}^{k}} \overset{ \eqref{[Equ. (77), Y20c]}-\eqref{[Equ. (81), Y20c]}}{\leq} c_{R}^{\frac{1}{4}} \delta_{q+1}^{\frac{1}{2}} M_{0}(t)^{\frac{1}{2}} l^{-(k+1) 4} \label{[Equ. (84), Y20c]}
\end{align} 
\end{subequations} 
(see \cite[Equ. (83)-(84)]{Y20c}). Next, we recall $\psi_{\zeta}, \eta_{\zeta}, \mathbb{W}_{\zeta}$, $\Lambda^{+}$, $\Lambda^{-}$, and $\Lambda$, respectively from \eqref{[Equ. (12), Y20c]}, \eqref{[Equ. (19), Y20c]}, \eqref{[Equ. (21), Y20c]}, and \eqref{[Equ. (10), Y20c]} and define the perturbation as 
\begin{equation}\label{[Equ. (85), Y20c]}
w_{q+1} \triangleq w_{q+1}^{(p)} + w_{q+1}^{(c)} + w_{q+1}^{(t)} \text{ and } v_{q+1} \triangleq v_{l} + w_{q+1} 
\end{equation} 
where 
\begin{equation}\label{[Equ. (86), Y20c]}
w_{q+1}^{(p)} \triangleq \sum_{\zeta \in \Lambda} a_{\zeta} \mathbb{W}_{\zeta},  w_{q+1}^{(c)} \triangleq \sum_{\zeta \in \Lambda} \nabla^{\bot} (a_{\zeta} \eta_{\zeta}) \psi_{\zeta},  w_{q+1}^{(t)} \triangleq \mu^{-1} ( \sum_{\zeta \in \Lambda^{+}} - \sum_{\zeta \in \Lambda^{-}}) \mathbb{P} \mathbb{P}_{\neq 0} (a_{\zeta}^{2} \mathbb{P}_{\neq 0} \eta_{\zeta}^{2} \zeta). 
\end{equation} 
It follows that $w_{q+1}$ is both divergence-free and mean-zero (see \cite[Equ. (87)]{Y20c} for details). For all $t \in [0, T_{L}]$ and $p \in (1,\infty)$, by relying on \cite[Lem. 6.2]{LQ20} we have the estimates of 
\begin{subequations}
\begin{align}
& \lVert w_{q+1}^{(p)} \rVert_{C_{t}L_{x}^{2}} \overset{ \eqref{[Equ. (24a) and (24b), Y20c]} \eqref{[Equ. (70), Y20c]} \eqref{[Equ. (83), Y20c]}}{\lesssim} c_{R}^{\frac{1}{4}} \delta_{q+1}^{\frac{1}{2}} M_{0}(t)^{\frac{1}{2}}, \hspace{4mm}  \lVert w_{q+1}^{(p)}\rVert_{C_{t}L_{x}^{p}} \overset{\eqref{[Equ. (24a) and (24b), Y20c]} \eqref{[Equ. (84), Y20c]} }{\lesssim} \delta_{q+1}^{\frac{1}{2}} M_{0}(t)^{\frac{1}{2}} l^{-\frac{3}{2}} r^{1- \frac{2}{p}}, \label{[Equ. (88) and (89a), Y20c]} \\
& \lVert w_{q+1}^{(c)} \rVert_{C_{t}L_{x}^{p}} \overset{ \eqref{[Equ. (13b), Y20c]} \eqref{[Equ. (86), Y20c]}}{\lesssim} \delta_{q+1}^{\frac{1}{2}} M_{0}(t)^{\frac{1}{2}} l^{- \frac{11}{2}} \sigma r^{2 - \frac{2}{p}}, \hspace{1mm} \lVert w_{q+1}^{(t)} \rVert_{C_{t}L_{x}^{p}} \overset{\eqref{[Equ. (24a) and (24b), Y20c]} }{\lesssim} \mu^{-1} \delta_{q+1} M_{0}(t) l^{-3} r^{2 - \frac{2}{p}} \label{[Equ. (89b) and (89c), Y20c]}
\end{align}
\end{subequations} 
(see \cite[Equ. (88)-(89)]{Y20c}). In turn, these estimates in \eqref{[Equ. (88) and (89a), Y20c]}-\eqref{[Equ. (89b) and (89c), Y20c]} lead to, for all $t \in [0, T_{L}]$ and $p \in (1, \infty)$, 
\begin{equation}\label{[Equ. (90) and (91), Y20c]}
\lVert w_{q+1}^{(c)} \rVert_{C_{t}L_{x}^{p}} + \lVert w_{q+1}^{(t)} \rVert_{C_{t}L_{x}^{p}} \overset{\eqref{[Equ. (70), Y20c]} }{\lesssim} \delta_{q+1} M_{0}(t) l^{-3} r^{2- \frac{2}{p}} \lambda_{q+1}^{4 \eta -1}, \hspace{2mm} \lVert w_{q+1} \rVert_{C_{t}L_{x}^{2}} \overset{\eqref{[Equ. (70), Y20c]}}{\leq} \frac{3}{4} \delta_{q+1}^{\frac{1}{2}} M_{0}(t)^{\frac{1}{2}}
\end{equation}
(see \cite[Equ. (90)-(91)]{Y20c}). By applying \eqref{[Equ. (85), Y20c]}, \eqref{[Equ. (74a), Y20c]}-\eqref{[Equ. (74b) and (74c), Y20c]}, and \eqref{[Equ. (90) and (91), Y20c]}, we are able to deduce both the first inequality of \eqref{[Equ. (49a) and (49b), Y20c]} at level $q+1$ and \eqref{[Equ. (57), Y20c]} (see \cite[Equ. (92)]{Y20c}).  For norms of higher order, we can compute   
\begin{subequations}\label{estimate 151}
\begin{align}
& \lVert w_{q+1}^{(p)} \rVert_{C_{t,x}^{1}} \overset{  \eqref{[Equ. (84), Y20c]} \eqref{[Equ. (70), Y20c]}}{\lesssim} \delta_{q+1}^{\frac{1}{2}} M_{0}(t)^{\frac{1}{2}} \lambda_{q+1}^{3- 14 \eta} l^{-\frac{3}{2}},  \lVert w_{q+1}^{(c)} \rVert_{C_{t,x}^{1}} \overset{\eqref{[Equ. (13b), Y20c]} \eqref{[Equ. (84), Y20c]}}{\lesssim} \delta_{q+1}^{\frac{1}{2}} M_{0}(t)^{\frac{1}{2}} \lambda_{q+1}^{3- 18 \eta} l^{-\frac{3}{2}}, \label{[Equ. (93a) and (93b), Y20c]} \\
& \lVert w_{q+1}^{(t)} \rVert_{C_{t,x}^{1}} \overset{\eqref{[Equ. (86), Y20c]} \eqref{[Equ. (70), Y20c]} \eqref{[Equ. (84), Y20c]}}{\lesssim} \lambda_{q+1}^{3- 16 \eta + \alpha} \delta_{q+1} M_{0}(t) l^{-3} \label{[Equ. (94), Y20c]}
\end{align}
\end{subequations} 
(see \cite[Equ. (93)-(94)]{Y20c}). Taking advantage of \eqref{[Equ. (65), Y20c]}, \eqref{[Equ. (66), Y20c]}, \eqref{[Equ. (70), Y20c]}, \eqref{[Equ. (74b) and (74c), Y20c]}, \eqref{[Equ. (85), Y20c]}, \eqref{estimate 151} allows us to conclude that the second inequality of \eqref{[Equ. (49a) and (49b), Y20c]} at level $q+1$ holds (see \cite[Equ. (95)]{Y20c}). Next, with $\theta^{\text{in}} \in H^{2}(\mathbb{T}^{2})$ from hypothesis fixed, and $v_{q+1} = v_{l}+ w_{q+1}$ already constructed from \eqref{[Equ. (85), Y20c]}, we deduce the unique solution $\theta_{q+1}$ to the linear transport-diffusion equation with additive noise \eqref{estimate 323} starting from $\theta_{q+1}(0,x) = \theta^{\text{in}}(x)$, which can be shown to satisfy \eqref{estimate 26} identically to \eqref{estimate 30}. Concerning \eqref{estimate 22}, we see that $\theta_{q+1} - \theta_{q}$ satisfies 
\begin{equation}\label{estimate 27}
\partial_{t} (\theta_{q+1} - \theta_{q}) - \Delta (\theta_{q+1} - \theta_{q}) + (v_{q+1} + z_{1}) \cdot \nabla (\theta_{q+1} - \theta_{q}) + (v_{q+1} - v_{q}) \cdot \nabla \theta_{q} = 0; 
\end{equation} 
fortunately, the noise canceled out because it is only additive. Therefore, we obtain 
\begin{equation}\label{estimate 28}
\frac{1}{2} \partial_{t} \lVert \theta_{q+1} - \theta_{q} \rVert_{L_{x}^{2}}^{2} + \lVert \theta_{q+1} - \theta_{q} \rVert_{\dot{H}_{x}^{1}}^{2}  = \int_{\mathbb{T}^{2}} (v_{q+1} - v_{q}) \cdot \nabla (\theta_{q+1} - \theta_{q}) \theta_{q}dx. 
\end{equation} 
\begin{remark}\label{Remark 4.2}
From \eqref{estimate 28} we want to deduce the Cauchy type bound of \eqref{estimate 22} by taking advantage of \eqref{[Equ. (57), Y20c]} that we already proved. Thus, we need a bound that consists of $\lVert v_{q+1} - v_{q} \rVert_{L_{x}^{2}}$. However, we point out that if we bound the right hand side of \eqref{estimate 28} by 
\begin{equation*}
\frac{1}{2} \lVert \theta_{q+1} - \theta_{q} \rVert_{\dot{H}_{x}^{1}}^{2}+ \frac{1}{2} \lVert v_{q+1} - v_{q} \rVert_{L_{x}^{2}}^{2} \lVert\theta_{q} \rVert_{L_{x}^{\infty}}^{2}, 
\end{equation*} 
then it becomes hopeless to obtain the Cauchy type bound of \eqref{estimate 22} because we cannot handle $\lVert \theta_{q} \rVert_{L_{x}^{\infty}}$. We do have a bound of $L_{\omega}^{2}L_{t}^{2} \dot{H}_{x}^{1}$-bound by inductive hypothesis; yet, $\dot{H}^{1}(\mathbb{T}^{2}) \hookrightarrow L^{\infty}(\mathbb{T}^{2})$ is also false. The break here is to take advantage of interpolation inequality similarly to \eqref{estimate 29}; i.e., we will give up on $\lVert v_{q+1} - v_{q} \rVert_{L_{x}^{2}}$ and compromise to $\lVert v_{q+1} - v_{q} \rVert_{L_{x}^{2}}^{\epsilon} \lVert v_{q+1} - v_{q} \rVert_{\dot{H}_{x}^{1}}^{1-\epsilon}$ for some $\epsilon > 0$ where the difficult term $\lVert v_{q+1} - v_{q} \rVert_{\dot{H}_{x}^{1}}^{1-\epsilon}$ must be handled by the second inequality of \eqref{[Equ. (49a) and (49b), Y20c]} at the level of $q+1$, which has fortunately for us already been proven. This will allow us to handle $\theta_{q}$ by bounding it with $\lVert \theta_{q} \rVert_{L_{x}^{p}}$ for arbitrarily large but finite $p$ and then relying on $H^{1}(\mathbb{T}^{2}) \hookrightarrow L^{p}(\mathbb{T}^{2})$ which holds for every $p \in [2, \infty)$. The new difficulty however is that the Cauchy bound from $\lVert v_{q+1} - v_{q} \rVert_{L_{x}^{2}}^{\epsilon}$  must be preserved despite $(M_{0}(t)^{\frac{1}{2}} \lambda_{q+1}^{4})^{1-\epsilon}$ that we expect from $\lVert v_{q+1} - v_{q} \rVert_{\dot{H}_{x}^{1}}^{1-\epsilon}$ where $\lambda_{q+1}$ is, of course, dangerously large; thus, it will be crucial to carefully choose such $\epsilon$. We will now choose $p =\frac{16 + 2\beta}{\beta}$ and $\epsilon = \frac{8}{8+\beta}$ and proceed.  

Before we do so, let us comment that this is certainly possible only because $n= 2$ as $\dot{H}^{1}(\mathbb{T}^{3}) \hookrightarrow L^{p}(\mathbb{T}^{3})$ is false for $p > 6$; one may be tempted to consider $(-\Delta)^{l} \theta$ for $l = \frac{3}{2}$ in \eqref{3b} instead of $-\Delta \theta$ and try to extend this argument in the 3D case; however, subsequently in \eqref{estimate 37}  we will need to consider an $L^{p^{\ast}}$-estimate of $\theta$ for $p^{\ast} \in (1,2)$ in which the positivity of $\int_{\mathbb{T}^{n}} (-\Delta)^{l} \theta \lvert \theta \rvert^{p^{\ast} -2} \theta \geq 0$ will be crucial (recall Remark \ref{Remark 2.2}). 
\end{remark}
We now compute from \eqref{estimate 28}
\begin{align}
\frac{1}{2} \partial_{t} \lVert \theta_{q+1} - \theta_{q} \rVert_{L_{x}^{2}}^{2} + \lVert \theta_{q+1} - \theta_{q} \rVert_{\dot{H}_{x}^{1}}^{2} \lesssim& \lVert v_{q+1} - v_{q} \rVert_{L_{x}^{2}}^{\frac{8}{8+\beta}} \lVert v_{q+1} - v_{q} \rVert_{\dot{H}_{x}^{1}}^{\frac{\beta}{8+\beta}} \lVert \theta_{q+1} - \theta_{q} \rVert_{\dot{H}_{x}^{1}} \lVert \theta_{q}\rVert_{\dot{H}_{x}^{1}} \nonumber\\
\overset{\eqref{[Equ. (57), Y20c]}\eqref{[Equ. (49a) and (49b), Y20c]} }{\leq}& \frac{1}{2} \lVert \theta_{q+1} - \theta_{q} \rVert_{\dot{H}_{x}^{1}}^{2} + C M_{0}(t) \lambda_{q+1}^{ - \frac{8\beta}{8+\beta}} \lVert \theta_{q}\rVert_{\dot{H}_{x}^{1}}^{2} \label{estimate 38}  
\end{align}  
by the embedding $H^{1}(\mathbb{T}^{2}) \hookrightarrow L^{\frac{16 + 2\beta}{\beta}} (\mathbb{T}^{2})$, Gagliardo-Nirenberg's and Young's inequalities. Integrating over $[0,t]$, taking supremum over $[0,t]$ and raising to the power $p \in [1, \infty)$ give
\begin{equation}\label{estimate 33}
\lVert \theta_{q+1} - \theta_{q} \rVert_{C_{t}L_{x}^{2}}^{2p} + (\int_{0}^{t} \lVert \theta_{q+1} - \theta_{q} \rVert_{\dot{H}_{x}^{1}}^{2} dr)^{p}  \lesssim \lambda_{q+1}^{- \frac{8 \beta p}{8+ \beta}} M_{0}(t)^{p} (\int_{0}^{t} \lVert \theta_{q} \rVert_{\dot{H}_{x}^{1}}^{2} ds)^{p}. 
\end{equation} 
We return to \eqref{estimate 30}, take supremum over $[0,t]$ on the right and then left sides, and then raise to the power of $p \in [1, \infty)$ to obtain for all $t \in [0, T_{L}]$,  
\begin{equation}\label{estimate 31}
\lVert \theta_{q} \rVert_{C_{t}L_{x}^{2}}^{2p} + ( \int_{0}^{t}  \lVert \theta_{q} \rVert_{\dot{H}_{x}^{1}}^{2} dr)^{p} 
\lesssim_{p}  \lVert \theta^{\text{in}} \rVert_{L_{x}^{2}}^{2p} + \sup_{r \in [0, t]}  \lvert \int_{0}^{r} \int_{\mathbb{T}^{2}} \theta_{q} dx dB_{2}(s) \rvert^{p} + t^{p} \text{Tr} (G_{2}G_{2}^{\ast})^{p}.
\end{equation} 
After taking expectation $\mathbb{E}^{\textbf{P}}$, standard applications of Burkholder-Davis-Gundy, H$\ddot{\mathrm{o}}$lder's and Young's inequalities lead us to  
\begin{equation}\label{estimate 32}
\mathbb{E}^{\textbf{P}} [ \sup_{r \in [0,t \wedge T_{L}]} \lvert \int_{0}^{r} \int_{\mathbb{T}^{2}} \theta_{q} dB_{s} \rvert^{p}]  \leq \frac{1}{2} \mathbb{E}^{\textbf{P}} [ \lVert \theta_{q} \rVert_{C_{t \wedge T_{L}}L_{x}^{2}}^{2p}] + C L^{p} \text{Tr} (G_{2}G_{2}^{\ast})^{p}. 
\end{equation} 
Applying \eqref{estimate 32} to \eqref{estimate 31} after taking expectation $\mathbb{E}^{\textbf{P}}$ and then subtracting $\frac{1}{2} \mathbb{E}^{\textbf{P}} [ \lVert \theta_{q} \rVert_{C_{t}L_{x}^{2}}^{2p}]$ give us for all $p \in [1, \infty)$ and $t \in [0, T_{L}]$,  
\begin{equation}\label{estimate 34}
\mathbb{E}^{\textbf{P}} [ \lVert \theta_{q} \rVert_{C_{t \wedge T_{L}}L_{x}^{2}}^{2p} ] +\mathbb{E}^{\textbf{P}} [ (\int_{0}^{t \wedge T_{L}} \lVert \theta_{q} \rVert_{\dot{H}_{x}^{1}}^{2} dr)^{p}] \lesssim_{p, \lVert \theta^{\text{in}} \rVert_{L_{x}^{2}}, \text{Tr} (G_{2}G_{2}^{\ast}), L} 1. 
\end{equation} 
At last, taking expectation $\mathbb{E}^{\textbf{P}}$ in \eqref{estimate 33} and applying \eqref{estimate 34} gives us  \eqref{estimate 22} as desired.

\subsubsection{Reynolds stress} 
We have due to \eqref{[Equ. (46), Y20c]}, \eqref{[Equ. (85), Y20c]}, and \eqref{[Equ. (72), Y20c]}, 
\begin{align}
& \text{div} \mathring{R}_{q+1} - \nabla \pi_{q+1} \label{[Equ. (98), Y20c]}\\
=& \underbrace{(-\Delta)^{m} w_{q+1} + \partial_{t} (w_{q+1}^{(p)} + w_{q+1}^{(c)}) + \text{div} ((v_{l} + z_{1, l}) \otimes w_{q+1} + w_{q+1} \otimes (v_{l} + z_{1, l})) + (\theta_{l} - \theta_{q+1}) e^{2} }_{\text{div} (R_{\text{lin}} ) + \nabla \pi_{\text{lin}}} \nonumber\\
&+ \underbrace{\text{div} (( w_{q+1}^{(c)} + w_{q+1}^{(t)}) \otimes w_{q+1} + w_{q+1}^{(p)} \otimes (w_{q+1}^{(c)} + w_{q+1}^{(t)}))}_{\text{div}(R_{\text{cor}}) + \nabla \pi_{\text{cor}}} + \underbrace{\text{div}(w_{q+1}^{(p)} \otimes w_{q+1}^{(p)} + \mathring{R}_{l}) + \partial_{t} w_{q+1}^{(t)}}_{\text{div}(R_{\text{osc}}) + \nabla \pi_{\text{osc}}} \nonumber \\
&+ \underbrace{\text{div} ( v_{q+1} \otimes z_{1} - v_{q+1} \otimes z_{1, l} + z_{1} \otimes v_{q+1} - z_{1,l} \otimes v_{q+1} + z_{1} \otimes z_{1} - z_{1,l} \otimes z_{1,l})}_{\text{div} (R_{\text{com2}} ) + \nabla \pi_{\text{com2}}} + \text{div} R_{\text{com1}} - \nabla \pi_{l} \nonumber 
\end{align} 
within which we specify 
\begin{subequations}\label{[Equ. (99), Y20c]}
\begin{align}
R_{\text{lin}} \triangleq& R_{\text{linear}} \triangleq \mathcal{R} ( -\Delta)^{m} w_{q+1}+ \mathcal{R} \partial_{t} (w_{q+1}^{(p)} + w_{q+1}^{(c)}) \nonumber\\
& \hspace{21mm} + (v_{l} + z_{1, l}) \mathring{\otimes} w_{q+1} + w_{q+1} \mathring{\otimes} (v_{l} + z_{1, l}) + \mathcal{R} (( \theta_{l} - \theta_{q+1}) e^{2}), \label{[Equ. (99a), Y20c]}\\
\pi_{\text{lin}} \triangleq& \pi_{\text{linear}} \triangleq (v_{l} + z_{1, l}) \cdot w_{q+1}, \label{[Equ. (99b), Y20c]}\\
 R_{\text{cor}} \triangleq& R_{\text{corrector}} \triangleq (w_{q+1}^{(c)} + w_{q+1}^{(t)} ) \mathring{\otimes} w_{q+1} + w_{q+1}^{(p)} \mathring{\otimes} (w_{q+1}^{(c)} + w_{q+1}^{(t)}), \label{[Equ. (99c), Y20c]}\\
 \pi_{\text{cor}} \triangleq & \pi_{\text{corrector}} \triangleq \frac{1}{2} [ (w_{q+1}^{(c)} + w_{q+1}^{(t)}) \cdot w_{q+1} + w_{q+1}^{(p)} \cdot (w_{q+1}^{(c)} + w_{q+1}^{(t)}) ], \label{[Equ. (99d), Y20c]}\\
R_{\text{com2}} \triangleq&  R_{\text{commutator2}} \triangleq v_{q+1} \mathring{\otimes} (z_{1} - z_{1,l}) + (z_{1} - z_{1,l}) \mathring{\otimes} v_{q+1} + z_{1} \mathring{\otimes} z_{1} - z_{1,l} \mathring{\otimes} z_{1,l},  \label{[Equ. (99e), Y20c]} \\
\pi_{\text{com2}} \triangleq&  \pi_{\text{commutator2}} \triangleq v_{q+1} \cdot (z_{1}- z_{1,l}) + \frac{1}{2} \lvert z_{1} \rvert^{2} - \frac{1}{2} \lvert z_{1,l} \rvert^{2}.  \label{[Equ. (99f), Y20c]}
\end{align}
\end{subequations}  
Concerning the explicit forms of $R_{\text{osc}}$ and $\pi_{\text{osc}}$ within \eqref{[Equ. (98), Y20c]}, we refer to \cite[Equ. (114)]{Y20c}. We set, along with $R_{\text{com1}}$ and $\pi_{l}$ from \eqref{[Equ. (73), Y20c]}
\begin{equation}\label{estimate 58}
\mathring{R}_{q+1} \triangleq R_{\text{lin}} + R_{\text{cor}} + R_{\text{osc}} + R_{\text{com2}} + R_{\text{com1}},  \hspace{2mm} \pi_{q+1} \triangleq \pi_{l} - \pi_{\text{lin}} - \pi_{\text{cor}} - \pi_{\text{osc}} - \pi_{\text{com2}}, 
\end{equation} 
and choose 
\begin{equation}\label{[Equ. (116), Y20c]}
p^{\ast} \triangleq \frac{16(1-6\eta)}{300 \alpha + 16 (1-7\eta)} \overset{\eqref{[Equ. (64), Y20c]}-\eqref{[Equ. (66), Y20c]}}{\in} (1,2). 
\end{equation} 
Within $R_{\text{lin}}$ we estimate 
\begin{equation}\label{estimate 63}
\lVert \mathcal{R} ((\theta_{l} - \theta_{q+1}) e^{2}) \rVert_{C_{t}L_{x}^{p^{\ast}}} \leq I + II 
\end{equation}
where 
\begin{equation}\label{estimate 36}
I \triangleq \lVert \mathcal{R} ((\theta_{q} - \theta_{q+1}) e^{2}) \rVert_{C_{t}L_{x}^{p^{\ast}}}, \text{ and } II \triangleq\lVert \mathcal{R} ((\theta_{l} - \theta_{q}) e^{2}) \rVert_{C_{t}L_{x}^{p^{\ast}}}.
\end{equation} 
First, we return to \eqref{estimate 27} and compute 
\begin{align}
&\frac{1}{p^{\ast}} \partial_{t} \lVert \theta_{q+1} - \theta_{q} \rVert_{L_{x}^{p^{\ast}}}^{p^{\ast}} +  (p^{\ast} - 1) \int_{\mathbb{T}^{2}} \lvert \nabla (\theta_{q+1} - \theta_{q}) \rvert^{2} \lvert \theta_{q+1} - \theta_{q} \rvert^{p^{\ast} - 2} dx  \nonumber \\
=& - \int_{\mathbb{T}^{2}} (v_{q+1} - v_{q}) \cdot \nabla \theta_{q} \lvert \theta_{q+1} - \theta_{q} \rvert^{p^{\ast} - 2} (\theta_{q+1} - \theta_{q}) dx. \label{estimate 37}  
\end{align} 
Let us comment on some difficulties in the following remark. 
\begin{remark}\label{another remark}
To make use of the diffusion, first natural idea to estimate \eqref{estimate 37} by 
\begin{align}
& - \int_{\mathbb{T}^{2}} (v_{q+1} - v_{q}) \cdot \nabla \theta_{q} \lvert \theta_{q+1} - \theta_{q} \rvert^{p^{\ast} -2} (\theta_{q+1} - \theta_{q}) dx   \label{estimate 140}\\
\leq& \frac{p^{\ast} -1}{2} \int_{\mathbb{T}^{2}} \lvert \nabla (\theta_{q+1} - \theta_{q}) \rvert^{2} \lvert \theta_{q+1} - \theta_{q} \rvert^{p^{\ast} -2} dx + \frac{p^{\ast} -1}{2} \int_{\mathbb{T}^{2}} \lvert v_{q+1} - v_{q} \rvert^{2} \lvert \theta_{q+1} - \theta_{q} \rvert^{p^{\ast} -2} \lvert \theta_{q} \rvert^{2} dx \nonumber 
\end{align} 
and hope to bound the second term by 
\begin{equation*}
\frac{p^{\ast} -1}{2} \lVert v_{q+1} - v_{q} \rVert_{L_{x}^{p^{\ast}}}^{2} \lVert \theta_{q+1} - \theta_{q} \rVert_{L_{x}^{p^{\ast}}}^{p^{\ast} -2} \lVert \theta_{q} \rVert_{L_{x}^{\infty}}^{2}. 
\end{equation*} 
However, we have two problems; we would not be able to handle $\lVert \theta_{q} \rVert_{L_{x}^{\infty}}^{2}$; more importantly, such H$\ddot{\mathrm{o}}$lder's inequality is not even allowed because $p^{\ast} - 2< 0$. The difficulty being that $p^{\ast} -2 < 0$, a second natural idea would be to estimate 
\begin{equation*}
\lVert \theta_{q+1} - \theta_{q} \rVert_{L_{x}^{p^{\ast}}} \leq (2\pi)^{2-p^{\ast}} \lVert \theta_{q+1} - \theta_{q} \rVert_{L_{x}^{2}} 
\end{equation*} 
by H$\ddot{\mathrm{o}}$lder's inequality and estimate following \eqref{estimate 37}-\eqref{estimate 140} to obtain  
\begin{align}
\frac{1}{2} \partial_{t} \lVert \theta_{q+1} - \theta_{q} \rVert_{L_{x}^{2}}^{2} + \lVert \nabla (\theta_{q+1} - \theta_{q}) \rVert_{L_{x}^{2}}^{2} \leq \frac{1}{2} \lVert \nabla (\theta_{q+1} - \theta_{q}) \rVert_{L_{x}^{2}}^{2} + \frac{1}{2} \lVert v_{q+1} - v_{q} \rVert_{L_{x}^{2}}^{2} \lVert \theta_{q} \rVert_{L_{x}^{\infty}}^{2}. 
\end{align} 
While this application of H$\ddot{\mathrm{o}}$lder's inequality is allowed, the difficulty of $\lVert \theta_{q} \rVert_{L_{x}^{\infty}}^{2}$ remains; more importantly, from $\lVert v_{q+1} - v_{q} \rVert_{L_{x}^{2}}^{2}$, we can only expect a bound of  $M_{0}(t) \delta_{q+1}$ due to \eqref{[Equ. (57), Y20c]}; however, \eqref{[Equ. (49c), Y20c]} at level of $q+1$ a bound by $c_{R}M_{0}(t) \delta_{q+2}$ where $\delta_{q+2} \ll \delta_{q+1}$. Third natural idea would be the interpolation similarly to \eqref{estimate 38}; i.e., we can try to estimate 
\begin{align*}
\int_{\mathbb{T}^{2}} (v_{q+1} -v_{q}) \cdot \nabla (\theta_{q+1} - \theta_{q}) \theta_{q}dx \leq& \frac{1}{2} \lVert \nabla (\theta_{q+1} - \theta_{q}) \rVert_{L_{x}^{2}}^{2} + C \lVert v_{q+1} - v_{q} \rVert_{L_{x}^{p_{1}}}^{2} \lVert \theta_{q} \rVert_{L_{x}^{p_{2}}} 
\end{align*}
where $\frac{1}{p_{1}} + \frac{1}{p_{2}} = \frac{1}{2}$, $p_{2} < \infty$, and rely on Sobolev embedding $H^{1}(\mathbb{T}^{2}) \hookrightarrow L^{p_{2}} (\mathbb{T}^{2})$, and interpolate on $\lVert v_{q+1} - v_{q} \rVert_{L_{x}^{p_{1}}}^{2}$ between $L^{r}$-norm for $r <2$ (for which we need to obtain a bound akin to $\delta_{q+2}$) and $C_{t,x}^{1}$-norm (for which we need to rely on the second inequality of \eqref{[Equ. (49a) and (49b), Y20c]} at the level $q+1$). We attempted this approach but failed to close this argument. 
\end{remark} 

Our last approach is to give up on taking advantage of the diffusion and go ahead with $L^{p^{\ast}}$-norm bound instead of $L^{2}$-norm bound from \eqref{estimate 37} to obtain 
\begin{equation}\label{estimate 113}
\frac{1}{p^{\ast}} \partial_{t} \lVert \theta_{q+1} - \theta_{q} \rVert_{L_{x}^{p^{\ast}}}^{p^{\ast}}  \leq \lVert v_{q+1} - v_{q} \rVert_{L_{x}^{p^{\ast}}} \lVert \nabla \theta_{q} \rVert_{L_{x}^{\infty}} \lVert \theta_{q+1} - \theta_{q} \rVert_{L_{x}^{p^{\ast}}}^{p^{\ast} -1}. 
\end{equation} 
Using the fact that $\frac{1}{p^{\ast}} \partial_{t} \lVert \theta_{q+1} - \theta_{q} \rVert_{L_{x}^{p^{\ast}}}^{p^{\ast}} = \lVert \theta_{q+1} - \theta_{q} \rVert_{L_{x}^{p^{\ast}}}^{p^{\ast} -1} \partial_{t} \lVert \theta_{q+1} - \theta_{q} \rVert_{L_{x}^{p^{\ast}}}$, integrating over $[0,t]$, and taking supremum over $[0,t]$ on the right and then left hand sides give for all $t \in [0, T_{L}]$  
\begin{equation}\label{estimate 49}
\lVert \theta_{q+1}- \theta_{q} \rVert_{C_{t}L_{x}^{p^{\ast}}} \leq \lVert v_{q+1} - v_{q} \rVert_{C_{t}L_{x}^{p^{\ast}}} \int_{0}^{t} \lVert  \theta_{q} \rVert_{\dot{W}_{x}^{1,\infty}} dr. 
\end{equation} 
Now we need to estimate $\lVert v_{q+1} - v_{q} \rVert_{C_{t}L_{x}^{p^{\ast}}}$ and $\int_{0}^{t} \lVert \theta_{q} \rVert_{\dot{W}_{x}^{1,\infty}} dr$. The need to obtain a new estimate on  $\int_{0}^{t} \lVert  \theta_{q} \rVert_{\dot{W}_{x}^{1,\infty}} dr$ seems a daunting task; however, it becomes fortunately possible as follows. The idea is that we will go ahead and employ $H^{2}(\mathbb{T}^{2})$-estimate on $\Theta_{q}$ from \eqref{estimate 40} so that the diffusive term gives us $L_{t}^{2} \dot{H}_{x}^{3}$ which, together with \eqref{[Equ. (38), Y20a]}, is more than enough to bound $L_{t}^{1}\dot{W}_{x}^{1,\infty}$-norm of $\theta_{q}$. Let us make the following remark. 

\begin{remark}\label{Remark 4.4}
In an $H_{x}^{2}$-estimate of $\Theta_{q}$ in \eqref{estimate 40}, classical examples (e.g., \cite{BKM84}) warn us that it will depend on $L_{t}^{1}\dot{W}_{x}^{1,\infty}$-bound of $(v_{q} + z_{1})$. While \eqref{[Equ. (38), Y20a]} takes care of the $L_{t}^{1} \dot{W}_{x}^{1,\infty}$-bound on $z_{1}$, such a bound on $v_{q}$ still seems too large at first sight. The break here is that it is a bound on $v_{q}$, and not $v_{q+1}$. Indeed, in \eqref{estimate 27}, from which this estimate started, we could have naively written 
\begin{equation}
(v_{q+1} + z_{1}) \cdot \nabla \theta_{q+1} - (v_{q} + z_{1}) \cdot \nabla \theta_{q} = (v_{q+1} - v_{q}) \cdot \nabla \theta_{q+1} + (v_{q} + z_{1}) \cdot \nabla (\theta_{q+1} - \theta_{q})
\end{equation} 
instead of $(v_{q+1} + z_{1}) \cdot \nabla (\theta_{q+1} - \theta_{q}) + (v_{q+1} - v_{q}) \cdot \nabla \theta_{q}$. If we did, then we would have $\lVert \nabla \theta_{q+1} \rVert_{L_{x}^{\infty}}$ instead of $\lVert \nabla \theta_{q} \rVert_{L_{x}^{\infty}}$ in \eqref{estimate 113}-\eqref{estimate 49} which would have translated to a necessary bound on $L_{T}^{1}\dot{W}_{x}^{1,\infty}$-norm of $v_{q+1}$ rather than $v_{q}$; considering the second inequality of \eqref{[Equ. (49a) and (49b), Y20c]}, we realize that such a bound is too large in case of $v_{q+1}$ but not necessarily  for $v_{q}$ because $\lambda_{q} \ll \lambda_{q+1}$. Let us now make these ideas precise. 
\end{remark} 
From \eqref{estimate 40} we can integrate by parts several times and estimate 
\begin{align}
 \frac{1}{2} \partial_{t} \lVert \Theta_{q}(t) \rVert_{\dot{H}_{x}^{2}}^{2} +& \lVert  \Theta_{q}(t) \rVert_{\dot{H}_{x}^{3}}^{2} 
= \int_{\mathbb{T}^{2}}[ \nabla (v_{q} + z_{1}) \cdot \nabla \Theta_{q} \cdot \nabla \Delta \Theta_{q} - \nabla (v_{q} + z_{1}) \cdot \nabla \nabla \Theta_{q}  \Delta \Theta_{q}  \nonumber\\
& \hspace{14mm} + \nabla (v_{q} + z_{1}) \cdot \nabla z_{2} \cdot \nabla \Delta \Theta_{q}+ (v_{q} + z_{1}) \cdot \nabla \nabla z_{2} \cdot \nabla \Delta \Theta_{q}](t) dx \nonumber\\
\overset{\eqref{[Equ. (33), Y20a]} \eqref{[Equ. (38), Y20a]}}{\lesssim}& ( \lVert v_{q} \rVert_{C_{t,x}^{1}} + \lVert z_{1} \rVert_{L_{x}^{\infty}} + \lVert z_{1} \rVert_{\dot{W}_{x}^{1,\infty}})(t) ( \lVert \nabla \Theta_{q} \rVert_{L_{x}^{2}} \lVert \nabla \Delta \Theta_{q} \rVert_{L_{x}^{2}} + L^{\frac{1}{4}} \lVert \nabla \Delta \Theta_{q} \rVert_{L_{x}^{2}})(t)  \nonumber\\
\overset{\eqref{[Equ. (38), Y20a]} \eqref{[Equ. (49a) and (49b), Y20c]}}{\lesssim}& M_{0}(t)^{\frac{1}{2}} \lambda_{q}^{4}   ( \lVert \Theta_{q} \rVert_{\dot{H}_{x}^{1}} \lVert \Theta_{q} \rVert_{\dot{H}_{x}^{3}}+ L^{\frac{1}{4}} \lVert  \Theta_{q} \rVert_{\dot{H}_{x}^{3}} )(t)  \nonumber\\
\leq&  \frac{1}{2} \lVert \Theta_{q}(t) \rVert_{\dot{H}_{x}^{3}}^{2} + C M_{0}(t) \lambda_{q}^{8} (\lVert \Theta_{q}(t) \rVert_{\dot{H}_{x}^{1}}^{2} + L^{\frac{1}{2}}), \label{estimate 324} 
\end{align} 
where we relied directly on \eqref{[Equ. (33), Y20a]} rather than \eqref{[Equ. (38), Y20a]} to handle $\lVert z_{2} \rVert_{\dot{H}_{x}^{2}}$. We also point out that it is crucial to have no $\lVert \Theta_{q} \rVert_{\dot{H}_{x}^{2}}$ on the right hand side here as an exponential growth such as $e^{ \lambda_{q}^{8} \int_{0}^{t}M_{0}(s) ds}$ will be far too large to handle; indeed, it is crucial to attain a linear growth from \eqref{estimate 324}. Subtracting $\frac{1}{2} \lVert \Theta_{q} (t)\rVert_{\dot{H}_{x}^{3}}^{2}$ from both sides and integrating over $[0,t]$ give us
\begin{equation}\label{estimate 47}
\lVert \Theta_{q}(t) \rVert_{\dot{H}_{x}^{2}}^{2} + \int_{0}^{t} \lVert \Theta_{q} \rVert_{\dot{H}_{x}^{3}}^{2} dr \leq \lVert \theta^{\text{in}} \rVert_{\dot{H}_{x}^{2}}^{2} + C M_{0}(t) \lambda_{q}^{8} [ \int_{0}^{t} \lVert \Theta_{q} \rVert_{\dot{H}_{x}^{1}}^{2 } dr + L^{\frac{1}{2}}]. 
\end{equation} 
In order to handle $ \int_{0}^{t} \lVert \Theta_{q} \rVert_{\dot{H}_{x}^{1}}^{2 } dr$, we apply \eqref{estimate 319} to \eqref{estimate 42} to deduce for all $t \in [0, T_{L}]$ 
\begin{equation}\label{estimate 46}
\frac{1}{2} \lVert \Theta_{q}(t) \rVert_{L_{x}^{2}}^{2} + \int_{0}^{t} \lVert \Theta_{q} \rVert_{\dot{H}_{x}^{1}}^{2} dr \overset{\eqref{[Equ.  (49a) and (49b), Y20c]} \eqref{[Equ. (38), Y20a]} }{\leq} \frac{1}{2} \lVert \theta^{\text{in}} \rVert_{L_{x}^{2}}^{2} + 3 M_{0}(t) [ \lVert \theta^{\text{in}} \rVert_{L_{x}^{2}} + 3 M_{0}(t) t] t. 
\end{equation} 
Applying \eqref{estimate 46} to \eqref{estimate 47} gives for all $t \in [0, T_{L}]$ 
\begin{equation}\label{estimate 48} 
\lVert \Theta_{q}(t) \rVert_{\dot{H}_{x}^{2}}^{2} + \int_{0}^{t} \lVert \Theta_{q} \rVert_{\dot{H}_{x}^{3}}^{2} dr \lesssim M_{0}(t) \lambda_{q}^{8} [ \lVert \theta^{\text{in}} \rVert_{H_{x}^{2}}^{2} + M_{0}(t)^{2} t^{2} + L^{\frac{1}{2}}]. 
\end{equation} 
Applying \eqref{estimate 48} to \eqref{estimate 49} now gives for all $t \in [0, T_{L}]$  
\begin{align}
\lVert \theta_{q+1} - \theta_{q} \rVert_{C_{t}L_{x}^{p^{\ast}}}& \overset{\eqref{estimate 49}}{\lesssim}  \lVert v_{q+1} - v_{q} \rVert_{C_{t}L_{x}^{p^{\ast}}} \sqrt{T_{L}} (\int_{0}^{t} \lVert \Theta_{q} \rVert_{\dot{H}_{x}^{3}}^{2} +  \lVert z_{2} \rVert_{\dot{W}_{x}^{1,\infty}}^{2} dr)^{\frac{1}{2}}  \nonumber \\
\overset{\eqref{[Equ. (38), Y20a]} \eqref{estimate 48}}{\lesssim}& \lVert v_{q+1} - v_{q} \rVert_{C_{t}L_{x}^{p^{\ast}}} \sqrt{T_{L}} M_{0}(t)^{\frac{1}{2}} \lambda_{q}^{4} [ \lVert \theta^{\text{in}} \rVert_{H_{x}^{2}} + M_{0}(t) T_{L} + L^{\frac{1}{4}} + \sqrt{T_{L}} L^{\frac{1}{4}}].\label{estimate 50}
\end{align}  
Our next task is to estimate $\lVert v_{q+1} - v_{q} \rVert_{C_{t}L_{x}^{p^{\ast}}}$ within \eqref{estimate 50}. We write for all $t \in [0, T_{L}]$  
\begin{equation}\label{estimate 51}
\lVert v_{q+1} - v_{q} \rVert_{C_{t}L_{x}^{p^{\ast}}} \leq  I_{1} + I_{2} \text{ where } I_{1} \triangleq \lVert v_{l} - v_{q} \rVert_{C_{t}L_{x}^{p^{\ast}}} \text{ and } I_{2} \triangleq \lVert v_{q+1} - v_{l} \rVert_{C_{t}L_{x}^{p^{\ast}}},  
\end{equation} 
where 
\begin{equation}\label{estimate 52}
I_{1} \lesssim \lVert v_{l} - v_{q} \rVert_{C_{t}L_{x}^{\infty}} \lesssim l \lVert v_{q} \rVert_{C_{t,x}^{1}}  \overset{\eqref{[Equ. (49a) and (49b), Y20c]}}{\lesssim} l M_{0}(t)^{\frac{1}{2}} \lambda_{q}^{4} \overset{\eqref{[Equ. (70), Y20c]}}{\lesssim} \lambda_{q+1}^{-\alpha} M_{0}(t)^{\frac{1}{2}}. 
\end{equation} 
The estimate on $I_{2}$ is more subtle. We estimate from \eqref{estimate 51} for $a \in 10 \mathbb{N}$ sufficiently large 
\begin{align}
I_{2} \overset{\eqref{[Equ. (85), Y20c]}}{\leq}& \lVert w_{q+1}^{(p)} \rVert_{C_{t}L_{x}^{p^{\ast}}} + \lVert w_{q+1}^{(c)} \rVert_{C_{t}L_{x}^{p^{\ast}}} + \lVert w_{q+1}^{(t)} \rVert_{C_{t}L_{x}^{p^{\ast}}} \label{estimate 53}\\
\overset{\eqref{[Equ. (88) and (89a), Y20c]} \eqref{[Equ. (90) and (91), Y20c]}}{\lesssim}& \delta_{q+1}^{\frac{1}{2}} M_{0}(t)^{\frac{1}{2}} l^{-\frac{3}{2}} r^{1- \frac{2}{p^{\ast}}}   + \delta_{q+1} M_{0}(t) l^{-3} r^{2 - \frac{2}{p^{\ast}}} \lambda_{q+1}^{4 \eta -1}    \nonumber\\
\overset{ \eqref{[Equ. (67), Y20c]} \eqref{[Equ. (70), Y20c]}}{\lesssim}& \delta_{q+1}^{\frac{1}{2}} M_{0}(t)^{\frac{1}{2}} (\lambda_{q+1}^{1- 6 \eta})^{1- \frac{2}{p^{\ast}}} \lambda_{q+1}^{3 \alpha} [ \lambda_{q+1}^{3 \alpha - 2\eta}M_{0}(t)^{\frac{1}{2}} + 1] 
\lesssim \delta_{q+1}^{\frac{1}{2}} M_{0}(t)^{\frac{1}{2}} \lambda_{q+1}^{(1- 6 \eta) (1- \frac{2}{p^{\ast}})+ 3 \alpha} \nonumber
\end{align}  
where the last inequality used the fact that $3\alpha < 2 \eta$ which can be readily verified by \eqref{[Equ. (64), Y20c]}-\eqref{[Equ. (66), Y20c]}. Applying \eqref{estimate 52}-\eqref{estimate 53} to \eqref{estimate 51} and the resulting bound to \eqref{estimate 50} gives us 
\begin{equation}\label{estimate 56}
\lVert \theta_{q+1} - \theta_{q} \rVert_{C_{t}L_{x}^{p^{\ast}}} \lesssim I_{3} + I_{4},  
\end{equation} 
where 
\begin{subequations}
\begin{align}
& I_{3} \triangleq  \lambda_{q+1}^{-\alpha} M_{0}(t) \sqrt{T_{L}} \lambda_{q}^{4} (\lVert \theta^{\text{in}} \rVert_{H_{x}^{2}} + M_{0}(t) T_{L} + L^{\frac{1}{4}} + \sqrt{T_{L}} L^{\frac{1}{4}}), \label{estimate 325}\\
& I_{4}\triangleq \delta_{q+1}^{\frac{1}{2}} M_{0}(t) \lambda_{q+1}^{(1- 6 \eta) (1- \frac{2}{p^{\ast}}) + 3 \alpha} \sqrt{T_{L}} \lambda_{q}^{4} (\lVert \theta^{\text{in}} \rVert_{H_{x}^{2}} + M_{0}(t) T_{L} + L^{\frac{1}{4}} + \sqrt{T_{L}} L^{\frac{1}{4}}).\label{estimate 326}
\end{align}
\end{subequations} 
Because 
\begin{equation}\label{estimate 88}
2 \beta b - \alpha + \frac{4}{b} \overset{\eqref{[Equ. (68), Y20c]}}{<} \frac{\alpha}{48} - \alpha + \frac{4}{b}  \overset{\eqref{estimate 62}}{<} \frac{\alpha}{48} - \alpha + \frac{\alpha}{4} = - \frac{35\alpha}{48}, 
\end{equation}
we can immediately see that taking $a\in 10 \mathbb{N}$ sufficiently large gives us 
\begin{equation}\label{estimate 54}
I_{3} \approx_{T_{L}} c_{R} M_{0}(t) \delta_{q+2} a^{b^{q+1} [2\beta b - \alpha + \frac{4}{b}]} (\lVert \theta^{\text{in}} \rVert_{H_{x}^{2}} + M_{0}(t) T_{L} + L^{\frac{1}{4}} + \sqrt{T_{L}} L^{\frac{1}{4}}) \ll c_{R}M_{0}(t) \delta_{q+2}.  
\end{equation} 
On the other hand, because 
\begin{equation}
(1- 6 \eta) (1- \frac{2}{p^{\ast}}) + 3 \alpha + \frac{4}{b} \overset{\eqref{[Equ. (116), Y20c]}}{=} -1 + 8 \eta - \frac{69\alpha}{2} + \frac{4}{b}, 
\end{equation} 
taking $a \in 10 \mathbb{N}$ sufficiently large gives us 
\begin{align}
I_{4} \overset{\eqref{estimate 326}}{\approx}& c_{R} M_{0}(t) \delta_{q+2} [ a^{b^{q+1} [ -1 + 8 \eta - \frac{69\alpha}{2} + \frac{4}{b} + 2 \beta b]} \delta_{q+1}^{\frac{1}{2}} \sqrt{T_{L}}] (\lVert \theta^{\text{in}} \rVert_{H_{x}^{2}} + M_{0}(t) T_{L} + L^{\frac{1}{4}} + \sqrt{T_{L}} L^{\frac{1}{4}}) \nonumber\\
\lesssim_{T_{L}}&  
c_{R} M_{0}(t) \delta_{q+2}  a^{b^{q+1} [- \frac{1643\alpha}{48}]}  (\lVert \theta^{\text{in}} \rVert_{H_{x}^{2}} + M_{0}(t) T_{L} + L^{\frac{1}{4}} + \sqrt{T_{L}} L^{\frac{1}{4}}) 
 \ll c_{R} M_{0}(t) \delta_{q+2}, \label{estimate 55}
\end{align} 
where we also used that 
\begin{equation}
-1 + 8 \eta - \frac{69\alpha}{2} + \frac{4}{b}+  2\beta b \overset{ \eqref{[Equ. (68), Y20c]} \eqref{estimate 62}}{<} -1 + 8 \eta - \frac{1643\alpha}{48} \overset{\eqref{[Equ. (65), Y20c]}}{\leq} -\frac{1643\alpha}{48}. 
\end{equation}
Applying \eqref{estimate 54} and \eqref{estimate 55} to \eqref{estimate 56} finally gives us $\lVert \theta_{q+1} - \theta_{q} \rVert_{C_{t}L_{x}^{p^{\ast}}} \ll c_{R} M_{0}(t) \delta_{q+2}$. Therefore, due to Lemma \ref{[Def. 9, Lem. 10, CDS12]}, for any $t \in [0, T_{L}]$, we conclude 
\begin{equation}\label{estimate 64}
I \overset{\eqref{estimate 36}}{=} \lVert \mathcal{R} ((\theta_{q} - \theta_{q+1} )e^{2}) \rVert_{C_{t} L_{x}^{p^{\ast}}} \ll c_{R} M_{0}(t) \delta_{q+2}.
\end{equation}
Next, we estimate $II$ from \eqref{estimate 36}. For subsequent convenience, we compute the following for general $n \in \{2,3\}$, with the current case being $n = 2$. First, we can compute for any $\epsilon \in (0, \min \{1 + \frac{\sigma}{2}, 1+ 2 \delta \})$ and $t \in [0, T_{L}]$  
\begin{equation}\label{estimate 348}
II \lesssim \lVert \mathcal{R} (\theta_{l} - \theta_{q}) e^{n} \rVert_{C_{t,x}} 
\lesssim l^{\frac{1}{2} - 2 \delta} ( \lVert \theta_{q} \rVert_{C_{t}^{\frac{1}{2} - 2 \delta} \dot{H}_{x}^{\frac{n}{2} - 1 + \epsilon}} + \lVert \theta_{q} \rVert_{C_{t}\dot{H}_{x}^{\frac{n}{2} + \epsilon - \frac{1}{2} - 2\delta}}). 
\end{equation} 
Now we can apply $\nabla$ on \eqref{estimate 40} so that 
\begin{equation}
\partial_{t} \nabla \Theta_{q} = \Delta \nabla \Theta_{q} - \nabla ((v_{q} + z_{1}) \cdot \nabla ( \Theta_{q} + z_{2}))
\end{equation} 
and estimate for any $t \in [0, T_{L}]$ 
\begin{align}
 \int_{0}^{t} \lVert \partial_{t} \nabla \Theta_{q} \rVert_{L_{x}^{2}}^{2} dr \lesssim& \int_{0}^{t}  \lVert \Delta \nabla \Theta_{q} \rVert_{L_{x}^{2}}^{2} + ( \lVert v_{q} \rVert_{\dot{W}_{x}^{1,\infty}}^{2} + \lVert  z \rVert_{\dot{W}_{x}^{1,\infty}}^{2})(\lVert \Theta_{q} \rVert_{\dot{H}_{x}^{2}}^{2} + \lVert z_{2} \rVert_{\dot{H}_{x}^{2}}^{2}) dr \nonumber\\
& \hspace{10mm} \overset{\eqref{estimate 48}\eqref{[Equ.  (49a) and (49b), Y20c]}\eqref{[Equ. (33), Y20a]} }{\lesssim}   M_{0}(t)^{4} \lambda_{q}^{16} [ \lVert \theta^{\text{in}} \rVert_{H_{x}^{2}}^{2}  + M_{0}(t)^{2} t^{2} + L^{\frac{1}{2}}].   
\end{align}
This allows us to compute due to \eqref{[Equ. (33), Y20a]} and \eqref{estimate 46} for any $t \in [0, T_{L}]$ 
\begin{align}
\lVert \theta_{q} \rVert_{C_{t}^{\frac{1}{2} - 2\delta} \dot{H}_{x}^{\frac{n}{2} - 1 + \epsilon}} 
\lesssim&  \lVert \Theta_{q} \rVert_{C_{t}^{\frac{1}{2} - 2\delta} \dot{H}_{x}^{\frac{n}{2} - 1 + \epsilon}} +  \lVert z_{2} \rVert_{C_{t}^{\frac{1}{2} - 2\delta} \dot{H}_{x}^{\frac{n}{2} - 1 + \epsilon}} \nonumber\\
\lesssim&  M_{0}(t)^{2} \lambda_{q}^{8} [ \lVert \theta^{\text{in}} \rVert_{H_{x}^{2}}(1+ \sqrt{t})  + M_{0}(t) t + L^{\frac{1}{4}}]. \label{estimate 346}
\end{align}  
On the other hand, due to our choice of $\epsilon \in (0, \min\{ 1 + \frac{\sigma}{2}, 1 + 2\delta \})$, for any $t \in [0, T_{L}]$  
\begin{equation}\label{estimate 347}
\lVert \theta_{q} \rVert_{C_{t} \dot{H}_{x}^{\frac{n}{2} - \frac{1}{2} + \epsilon - 2 \delta}} 
\lesssim  \lVert \Theta_{q} \rVert_{C_{t} \dot{H}_{x}^{2}} + \lVert z_{2} \rVert_{C_{t} \dot{H}_{x}^{\frac{n + 2 + \sigma}{2}}} 
\overset{\eqref{estimate 48} \eqref{[Equ. (33), Y20a]} }{\lesssim} M_{0}(t)^{\frac{1}{2}} \lambda_{q}^{4} [ \lVert \theta^{\text{in}} \rVert_{H_{x}^{2}} + M_{0}(t) t + L^{\frac{1}{4}}].  
\end{equation} 
We apply \eqref{estimate 346}-\eqref{estimate 347} to \eqref{estimate 348} to deduce for $\delta \in (0, \frac{1}{12})$  
\begin{align}
II \overset{\eqref{estimate 348} \eqref{estimate 346}\eqref{estimate 347}}{\lesssim}& l^{\frac{1}{2} - 2 \delta }  M_{0}(t)^{2} \lambda_{q}^{8} [ \lVert \theta^{\text{in}} \rVert_{H_{x}^{2}}(1+ \sqrt{t})  + M_{0}(t) t + L^{\frac{1}{4}}] \label{estimate 349}\\
\overset{\eqref{estimate 62}-\eqref{[Equ. (69), Y20c]}}{\lesssim}& c_{R} \delta_{q+2} M_{0}(t)  a^{b^{q+1} [ -\frac{\alpha}{48}]} M_{0}(t) ( \lVert \theta^{\text{in}} \rVert_{H_{x}^{2}}(1+ \sqrt{t}) + + M_{0}(t) t + L^{\frac{1}{4}}) 
\ll c_{R} \delta_{q+2} M_{0}(t) \nonumber
\end{align} 
by taking $a \in 10 \mathbb{N}$ sufficiently large, where we also used the fact that 
\begin{align*}
2 \beta b - \frac{\alpha}{2} + \frac{22}{3b} \overset{\eqref{[Equ. (68), Y20c]} \eqref{estimate 62} }{<} \frac{\alpha}{48} - \frac{\alpha}{2} + (\frac{22}{3}) (\frac{\alpha}{16}) = \frac{\alpha}{48} - \frac{24\alpha}{48} + \frac{22\alpha}{48} =  - \frac{\alpha}{48}. 
\end{align*} 
Therefore, applying \eqref{estimate 64} and \eqref{estimate 349} to \eqref{estimate 63} allows us to conclude 
\begin{equation}\label{estimate 327}
\lVert \mathcal{R} ((\theta_{l} - \theta_{q+1})e^{2} ) \rVert_{C_{t}L_{x}^{p^{\ast}}} \ll c_{R} M_{0}(t) \delta_{q+2}. 
\end{equation} 
It can be clearly seen from the computations in \cite[Equ. (99a), (120)-(123)]{Y20c} that 
\begin{equation}\label{estimate 328}
\lVert R_{\text{lin}} - \mathcal{R} ((\theta_{l} - \theta_{q+1} ) e^{2}) \rVert_{C_{t}L_{x}^{p^{\ast}}} \ll c_{R}M_{0}(t) \delta_{q+2}.
\end{equation} 
Therefore, we conclude by \eqref{estimate 327}-\eqref{estimate 328} that 
\begin{equation}\label{estimate 329} 
\lVert R_{\text{lin}} \rVert_{C_{t}L_{x}^{p^{\ast}}} \leq (2\pi)^{-2 (\frac{p^{\ast} -1}{p^{\ast}})} \frac{c_{R} M_{0}(t) \delta_{q+2}}{5}. 
\end{equation} 
By \cite[Equ. (125), (131)-(133)]{Y20c} we obtain 
\begin{align}
& \lVert R_{\text{cor}} \rVert_{C_{t}L_{x}^{p^{\ast}}} +\lVert R_{\text{osc}} \rVert_{C_{t}L_{x}^{p^{\ast}}} + \lVert R_{\text{com1}} \rVert_{C_{t}L_{x}^{1}} + \lVert R_{\text{com2}} \rVert_{C_{t}L_{x}^{1}}  \nonumber\\
\leq& (2\pi)^{-2 (\frac{p^{\ast} -1}{p^{\ast}})} \frac{ 2 c_{R} M_{0}(t) \delta_{q+2}}{5} + \frac{2c_{R} M_{0}(t) \delta_{q+2}}{5},   \label{estimate 132}
\end{align}
from which, along with \eqref{estimate 329}, we can now conclude by H$\ddot{\mathrm{o}}$lder's inequality that 
\begin{align}
\lVert \mathring{R}_{q+1} \rVert_{C_{t}L_{x}^{1}} \overset{\eqref{estimate 58}}{\leq}& (2\pi)^{2 (\frac{p^{\ast} -1}{p^{\ast}})} [ \lVert R_{\text{lin}} \rVert_{C_{t}L_{x}^{p^{\ast}}} + \lVert R_{\text{cor}} \rVert_{C_{t}L_{x}^{p^{\ast}}} + \lVert R_{\text{osc}} \rVert_{C_{t}L_{x}^{p^{\ast}}}] \nonumber \\
&+ \lVert R_{\text{com1}} \rVert_{C_{t}L_{x}^{1}} + \lVert R_{\text{com2}} \rVert_{C_{t}L_{x}^{1}} \leq c_{R} M_{0}(t) \delta_{q+2}. 
\end{align} 
This verifies \eqref{[Equ. (49c), Y20c]} at level $q+1$. Lastly, the argument to verify that $v_{q+1}$ is $(\mathcal{F}_{t})$-adapted is identical to \cite{HZZ19, Y20a, Y20c}. As $v_{q+1}$ is $(\mathcal{F}_{t})$-adapted, so is $\theta_{q+1}$. It follows from \eqref{[Equ. (99), Y20c]}-\eqref{estimate 58} that $\mathring{R}_{q+1}$ is also $(\mathcal{F}_{t})_{t\geq 0}$-adapted. Finally, the argument of $v_{q+1}(0,x)$ being deterministic is also identical to those of previous works \cite{HZZ19, Y20a, Y20c}. As $\theta^{\text{in}}$ is deterministic, $R_{\text{lin}}(0,x)$, $R_{\text{cor}}(0,x), R_{\text{com2}}(0,x), R_{\text{osc}}(0,x)$, and $R_{\text{com1}}(0,x)$ are all deterministic; hence, so is $\mathring{R}_{q+1}(0,x)$. 

Next, we consider the case $n = 3$ so that $m \in (\frac{13}{20}, \frac{5}{4})$ by \eqref{4}. For notations and preliminaries hereafter in case $n = 3$, we refer to Subsection \ref{Preliminaries needed for convex integration in 3D case and more}. 
\begin{proposition}\label{Proposition 4.7 for n=3}
Fix $\theta^{\text{in}} \in H^{2}(\mathbb{T}^{3})$ that is deterministic and mean-zero. Let 
\begin{equation}\label{[Equ. (41), Y20a]}
v_{0}(t,x) \triangleq \frac{L^{2} e^{2Lt}}{(2\pi)^{\frac{3}{2}}} 
\begin{pmatrix}
\sin(x^{3}) & 0 & 0 
\end{pmatrix}^{T}. 
\end{equation} 
Then there exists a unique solution $\theta_{0} \in L_{\omega}^{\infty} L_{t}^{\infty} H_{x}^{2}$ to \eqref{estimate 25} again. It follows that together with 
\begin{align}
\mathring{R}_{0}(t,x) \triangleq& \frac{2L^{3} e^{2Lt}}{(2\pi)^{\frac{3}{2}}} 
\begin{pmatrix}
0 & 0 & - \cos(x^{3}) \\
 0 & 0 & 0 \\
 -\cos(x^{3}) & 0 & 0 
\end{pmatrix} 
\nonumber\\
&+ 
(\mathcal{R} (-\Delta)^{m} v_{0} - \mathcal{R} \theta_{0} e^{3} + v_{0} \mathring{\otimes} z_{1} + z_{1} \mathring{\otimes} v_{0} + z_{1} \mathring{\otimes} z_{1})(t,x), \label{estimate 107}
\end{align} 
$(v_{0}, \theta_{0})$ satisfy \eqref{[Equ. (46), Y20c]} at level $q= 0$ where $\mathcal{R}$ is defined in Lemma \ref{divergence inverse operator}. Moreover, \eqref{[Equ. (49), Y20c]} is satisfied at level $q= 0$ provided 
\begin{equation}\label{[Equ. (43), Y20a]}
\max \{ (18 \lVert \theta^{\text{in}} \rVert_{L_{x}^{2}})^{\frac{1}{3}}, 16 \} < L, (18) (2\pi)^{\frac{3}{2}} 9 < (18) (2\pi)^{\frac{3}{2}} a^{2 \beta b} \leq c_{R} L \leq c_{R} ( \frac{(2\pi)^{\frac{3}{2}} a^{4} - 2}{2}), 
\end{equation} 
where the inequality $9 < a^{2\beta b}$ is assumed for the justification of the second inequality of \eqref{[Equ.  (49a) and (49b), Y20c]}. Furthermore, $v_{0}(0,x)$ and $\mathring{R}_{0}(0,x)$ are both deterministic.  
\end{proposition}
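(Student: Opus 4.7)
The plan is to mirror the proof of Proposition \ref{Proposition 4.7 for n=2} step by step, with the modifications dictated by the three-dimensional geometry of $\mathbb{T}^3$ (volume factor $(2\pi)^3$ rather than $(2\pi)^2$) and by the use of the 3D divergence-inverse $\mathcal{R}$ from Lemma \ref{divergence inverse operator} in place of the 2D Lemma \ref{[Def. 9, Lem. 10, CDS12]}. The normalization $(2\pi)^{-3/2}$ in \eqref{[Equ. (41), Y20a]} is chosen precisely so that $\lVert v_0(t) \rVert_{L_x^2} = L^2 e^{2Lt}/\sqrt{2} \leq M_0(t)^{1/2}$; combined with \eqref{[Equ. (43), Y20a]} and $a^{2\beta b} > 9$, a direct computation from \eqref{[Equ. (41), Y20a]} also yields $\lVert v_0 \rVert_{C_{t,x}^1} \leq M_0(t)^{1/2} \lambda_0^4$, verifying the first two inequalities of \eqref{[Equ.  (49a) and (49b), Y20c]} at level $q = 0$.

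Next I would show that $\theta_0 \in L_\omega^\infty L_t^\infty H_x^2$ exists uniquely as a solution to \eqref{estimate 25} by standard linear SPDE theory (e.g.\ \cite[Cha.~5]{DZ14}). The bootstrap proceeds via $\Theta_0 = \theta_0 - z_2$, which satisfies the deterministic drift equation \eqref{estimate 40} at $q = 0$; the energy identity analogous to \eqref{estimate 42}--\eqref{estimate 43} integrates to $\lVert \Theta_0(t) \rVert_{L_x^2} \leq \lVert \theta^{\text{in}} \rVert_{L_x^2} + C L^{5/4} (2\pi)^{3/2} e^{2Lt}$ via \eqref{[Equ. (38), Y20a]}, and higher regularity follows from a straightforward bootstrap using \eqref{[Equ. (33), Y20a]} and the fact that $v_0$ is a single Fourier mode of frequency one, so $\Delta v_0 = -v_0$. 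That $(v_0, \theta_0, \mathring{R}_0)$ satisfies \eqref{estimate 340} at $q = 0$ is then immediate with $\pi_0 = -(v_0 \cdot z_1 + \tfrac{1}{2}\lvert z_1 \rvert^2)$: one uses $\partial_t v_0 = 2L v_0$, $(-\Delta)^m v_0 = v_0$ (since $\lvert k \rvert = 1$), and $(v_0 \cdot \nabla) v_0 = 0$ (since $v_0$ depends only on $x^3$ while its $x^3$-component vanishes), then applies $\mathcal{R}$ to move $(-\Delta)^m v_0 - \theta_0 e^3$ and the cross $z_1$-terms into $\mathring{R}_0$. Trace-freeness and symmetry of $\mathring{R}_0$ follow because the mean-zero conditions on $(-\Delta)^m v_0$ and on $\theta_0(t)$ let Lemma \ref{divergence inverse operator} output symmetric trace-free matrices.

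To verify \eqref{[Equ. (49c), Y20c]} at $q = 0$, I would bound each piece of \eqref{estimate 107} in $C_t L_x^1$ in direct analogy with \eqref{estimate 335}--\eqref{estimate 322}: the explicit matrix contributes $O(L^3 e^{2Lt})$; the $v_0 \mathring{\otimes} z_1$, $z_1 \mathring{\otimes} v_0$, and $z_1 \mathring{\otimes} z_1$ pieces are controlled by \eqref{[Equ. (38), Y20a]}; the fractional-diffusion term $\mathcal{R}(-\Delta)^m v_0$ reduces to a multiple of $M_0(t)^{1/2}$ because $(-\Delta)^m v_0 = v_0$; and $\mathcal{R}(\theta_0 e^3)$ is handled via the $L_x^2 \to L_x^1$ continuity of $\mathcal{R}$ in Lemma \ref{divergence inverse operator} together with the $\Theta_0$ bound, producing a contribution of order $M_0(t)/L$. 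Summing, the total is $\lesssim (2\pi)^{3/2} M_0(t)/L$, which the two-sided constraint $(18)(2\pi)^{3/2} a^{2\beta b} \leq c_R L$ in \eqref{[Equ. (43), Y20a]} converts into $c_R M_0(t) \delta_1$. Finally, \eqref{estimate 26} at $q = 0$ follows by applying It\^o's formula to $\lVert \theta_0(\cdot) \rVert_{L_x^2}^2$ in \eqref{estimate 25}, since the transport term vanishes upon integration by parts, exactly as in \eqref{estimate 30}; and $v_0(0,x)$ is manifestly deterministic from \eqref{[Equ. (41), Y20a]}, while $\mathring{R}_0(0,x)$ is deterministic because $z_1(0,x) \equiv 0$ by \eqref{estimate 12} and $\theta^{\text{in}}$ is deterministic.

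The main obstacle is bookkeeping: tracking the $(2\pi)^{3/2}$ factors and absolute constants with enough care that the two-sided inequality in \eqref{[Equ. (43), Y20a]} is precisely what is required to close the $C_t L_x^1$-bound on $\mathring{R}_0$ by $c_R M_0(t) \delta_1$. No new conceptual difficulty arises compared with the $n = 2$ case; the genuine $n = 3$ challenges (in particular, the use of Mikado flows in place of the 2D intermittent construction and the altered parameter balances) are confined to Proposition \ref{Proposition 4.8 for n=3}, not to the initial step treated here.
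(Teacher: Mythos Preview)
Your proposal follows the same approach as the paper and is essentially correct, but contains one concrete slip worth flagging: the choice $\pi_0 = -(v_0 \cdot z_1 + \tfrac{1}{2}\lvert z_1 \rvert^2)$ is the two-dimensional formula. In $\mathbb{T}^3$ the trace-free projection reads $v \mathring{\otimes} w = v \otimes w - \tfrac{1}{3}(v\cdot w)\,\mathrm{Id}$, so when you split off the trace of the cross-terms $v_0 \otimes z_1 + z_1 \otimes v_0 + z_1 \otimes z_1$ you obtain instead $\pi_0 = -\tfrac{1}{3}(2 v_0 \cdot z_1 + \lvert z_1 \rvert^2)$, which is exactly what the paper uses. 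With that coefficient corrected, the remainder of your outline matches the paper's proof: the $\Theta_0$ energy bound (the paper obtains $\lVert \Theta_0 \rVert_{C_t L_x^2} \leq \lVert \theta^{\mathrm{in}} \rVert_{L_x^2} + L^{5/4}(2\pi)^{3/2} e^{2Lt}$ with constant $1$), the appeal to Lemma \ref{divergence inverse operator} for $\mathcal{R}\theta_0 e^3$, the splitting of $\lVert \mathring{R}_0 \rVert_{C_t L_x^1}$ into $\lVert \mathcal{R}\theta_0 e^3 \rVert_{C_t L_x^1} \leq (2\pi)^{3/2} L^{-1} M_0(t)$ plus $\lVert \mathring{R}_0 + \mathcal{R}\theta_0 e^3 \rVert_{C_t L_x^1} \leq 17(2\pi)^{3/2} L^{-1} M_0(t)$ quoted from \cite{Y20a}, and the final use of $(18)(2\pi)^{3/2} a^{2\beta b} \leq c_R L$ from \eqref{[Equ. (43), Y20a]} to close.
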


\begin{proof}[Proof of Proposition \ref{Proposition 4.7 for n=3}]
The proof of Proposition \ref{Proposition 4.7 for n=3} is similar to that of Proposition \ref{Proposition 4.7 for n=2}. We can readily verify the same bounds as \eqref{[Equ. (53), Y20c]} (see \cite[Equ. (44)]{Y20a}). Concerning $\lVert \mathring{R}_{0}(t,x) \rVert_{C_{t}L_{x}^{1}}$, we can compute from \eqref{estimate 40} identically to \eqref{estimate 42}-\eqref{estimate 43} to obtain 
\begin{equation}\label{estimate 60}
\lVert \Theta_{0} \rVert_{C_{t}L_{x}^{2}}  \leq \lVert  \theta^{\text{in}} \rVert_{L_{x}^{2}} + L^{\frac{5}{4}} e^{2L t} (2\pi)^{\frac{3}{2}}. 
\end{equation} 
Taking advantage of \eqref{[Equ. (53), Y20c]} and \eqref{estimate 60}, we can verify that $\theta_{0} \in L_{\omega}^{\infty} L_{t}^{\infty} H_{x}^{2}$ as we did in the proof of Proposition \ref{Proposition 4.7 for n=2}. Next, setting $\pi_{0} \triangleq - \frac{1}{3} (2 v_{0} \cdot z_{1} + \lvert z_{1} \rvert^{2})$ shows that  $(v_{0}, \theta_{0})$ satisfy \eqref{estimate 340} at level $q= 0$. Similarly to the proof of Proposition \ref{Proposition 4.7 for n=2}, $\nabla\cdot v_{0} = 0$ while the hypothesis that $\theta^{\text{in}}$ is mean-zero implies that $\theta_{0}(t)$ is mean-zero for all $t \geq 0$ and hence $\mathcal{R}(\theta_{0} e^{3})(t,x)$ is trace-free and symmetric for all $t$ and $x$; it follows that $\mathring{R}_{0}$ is trace-free and symmetric. Next, 
\begin{equation}\label{estimate 330} 
\lVert \mathcal{R} \theta_{0} e^{3} \rVert_{C_{t}L_{x}^{1}} \overset{\eqref{estimate 336}}{\leq} (2\pi)^{\frac{3}{2}} 6 [ \lVert \Theta_{0} \rVert_{C_{t}L_{x}^{2}} + (2\pi)^{\frac{3}{2}} \lVert z_{2} \rVert_{C_{t}L_{x}^{\infty}}] \overset{\eqref{estimate 60}  \eqref{[Equ. (38), Y20a]} \eqref{[Equ. (43), Y20a]}}{\leq} (2\pi)^{\frac{3}{2}} L^{-1} M_{0}(t). 
\end{equation} 
From the proof of \cite[Pro. 4.7]{Y20a}, using the fact that $L > 16$ we see that 
\begin{equation}\label{estimate 337}
 \Vert \mathring{R}_{0} (t) + \mathcal{R} \theta_{0} e^{3} \rVert_{L_{x}^{1}} \leq 17 (2\pi)^{\frac{3}{2}} M_{0}(t) L^{-1}. 
\end{equation} 
Therefore, 
\begin{align*}
\lVert \mathring{R}_{0} (t) \rVert_{L_{x}^{1}}  \overset{\eqref{estimate 337} \eqref{estimate 330}}{\leq}& 17 (2\pi)^{\frac{3}{2}} M_{0}(t) L^{-1}  + (2\pi)^{\frac{3}{2}} L^{-1} M_{0}(t)  
\overset{\eqref{[Equ. (43), Y20a]}}{\leq}c_{R} \delta_{1}M_{0}(t). 
\end{align*} 
Finally, the verification that $v_{0}(0,x)$ and $\mathring{R}_{0}(0,x)$ are both deterministic is identical to that of Proposition \ref{Proposition 4.7 for n=2}. 
\end{proof}

\begin{proposition}\label{Proposition 4.8 for n=3}
Fix $\theta^{\text{in}}  \in H^{2} (\mathbb{T}^{3})$ that is deterministic and mean-zero from the hypothesis of Proposition \ref{Proposition 4.7 for n=3}. Let $L$ satisfy 
\begin{equation}\label{estimate 339}
L > \max\{ (18 \lVert \theta^{\text{in}} \rVert_{L_{x}^{2}})^{\frac{1}{3}}, 16, c_{R}^{-1} 162 (2\pi)^{\frac{3}{2}} \},  
\end{equation} 
and suppose that $(v_{q}, \theta_{q}, \mathring{R}_{q})$ are $(\mathcal{F}_{t})_{t\geq 0}$-adapted processes that solve \eqref{[Equ. (46), Y20c]} and satisfy \eqref{[Equ. (49), Y20c]}. Then there exist a choice of parameters $a, b,$ and $\beta$ such that \eqref{[Equ. (43), Y20a]} is fulfilled and $(\mathcal{F}_{t})_{t\geq 0}$-adapted processes $(v_{q+1}, \theta_{q+1}, \mathring{R}_{q+1})$ that satisfy \eqref{[Equ. (46), Y20c]}, \eqref{[Equ. (49), Y20c]} at level $q+1$, and for all $t \in [0, T_{L}]$ and $p \in [1, \infty)$, 
\begin{subequations}
\begin{align}
& \lVert v_{q+1} (t) - v_{q}(t) \rVert_{L_{x}^{2}} \leq M_{0}(t)^{\frac{1}{2}} \delta_{q+1}^{\frac{1}{2}},  \label{[Equ. (45), Y20a]}\\
& \mathbb{E}^{\textbf{P}} [ \lVert \theta_{q+1} - \theta_{q} \rVert_{C_{t \wedge T_{L}}L_{x}^{2}}^{2p} + ( \int_{0}^{t \wedge T_{L}} \lVert \theta_{q+1} - \theta_{q} \rVert_{\dot{H}_{x}^{1}}^{2} dr)^{p} ] \nonumber\\
& \hspace{20mm} \lesssim_{p, \lVert \theta^{\text{in}} \rVert_{H_{x}^{2}}, \text{Tr} ((-\Delta)^{\frac{3}{2} + 2 \sigma} G_{2}G_{2}^{\ast}), L} \lambda_{q+1}^{-  \beta p (\frac{21 + \beta}{12 + 2 \beta})}. \label{estimate 59}
\end{align}
\end{subequations} 
Finally, if $v_{q}(0,x)$ and $\mathring{R}_{q}(0,x)$ are deterministic, then so are $v_{q+1}(0,x)$ and $\mathring{R}_{q+1}(0,x)$. 
\end{proposition}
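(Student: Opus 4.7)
The strategy is to mirror the proof of Proposition \ref{Proposition 4.8 for n=2} step by step, replacing the 2D intermittent stationary flows by the 3D Mikado building blocks from Subsection \ref{Preliminaries needed for convex integration in 3D case and more}. First I would fix the parameters $m^{\ast}, \eta, \alpha, b, \beta, l$ exactly as in \eqref{[Equ. (64), Y20c]}--\eqref{[Equ. (70), Y20c]}, enlarging $a \in 10\mathbb{N}$ and shrinking $\beta > 0$ so as to realize both \eqref{[Equ. (43), Y20a]} and the integer/divisibility conditions required by the 3D Mikado geometry, then mollify via \eqref{[Equ. (71), Y20c]}--\eqref{[Equ. (73), Y20c]} to obtain $v_l, \theta_l, \mathring{R}_l, z_{k,l}$ and the commutator error $R_{\text{com1}}$.

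Next I would define the cutoff $\rho$ via the 3D analogue of \eqref{[Equ. (76), Y20c]}, the amplitude $a_\zeta = \rho^{1/2}\gamma_\zeta(\mathring{R}_l/\rho)$ using the 3D geometric lemma, and the perturbation $w_{q+1} = w_{q+1}^{(p)} + w_{q+1}^{(c)} + w_{q+1}^{(t)}$ as in \eqref{[Equ. (85), Y20c]}--\eqref{[Equ. (86), Y20c]}, now built from Mikado flows $\mathbb{W}_\zeta$, potentials $\psi_\zeta$, and a temporal corrector. Setting $v_{q+1} \triangleq v_l + w_{q+1}$, the two inequalities in \eqref{[Equ.  (49a) and (49b), Y20c]} at level $q+1$ and the Cauchy bound \eqref{[Equ. (45), Y20a]} will follow from the standard 3D Mikado-flow estimates on $w_{q+1}^{(p)}, w_{q+1}^{(c)}, w_{q+1}^{(t)}$, thanks to our choice of $\eta, \alpha, b$. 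Having $v_{q+1}$ in hand, I would then define $\theta_{q+1}$ as the unique solution of \eqref{estimate 323} at level $q+1$ starting from $\theta^{\text{in}}$, and obtain \eqref{estimate 26} at level $q+1$ from the same stochastic computation that led to \eqref{estimate 30}.

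The main obstacle is the Cauchy estimate \eqref{estimate 59}. The 2D argument (see Remark \ref{Remark 4.2}) relies on $H^1(\mathbb{T}^2) \hookrightarrow L^p(\mathbb{T}^2)$ for every finite $p$, while in three dimensions only $H^1(\mathbb{T}^3) \hookrightarrow L^6(\mathbb{T}^3)$ is available, so the exponent on $\lVert v_{q+1}-v_q\rVert_{\dot{H}^1}$ can no longer be taken arbitrarily small. My plan is to start from the 3D analogue of \eqref{estimate 28} and apply H\"older with exponents $\frac{1}{q_1} + \frac{1}{2} + \frac{1}{6} = 1$, so that $\lVert \theta_q \rVert_{L^6} \lesssim \lVert\theta_q\rVert_{\dot{H}^1}$ is absorbed by the induction hypothesis and the diffusion controls the $\dot{H}^1$ norm of $\theta_{q+1}-\theta_q$. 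Following Remark \ref{Remark 4.4}, I would keep $(v_{q+1}+z_1)$ as the transport coefficient in \eqref{estimate 27} so that the forcing term is $(v_{q+1}-v_q)\cdot\nabla\theta_q$, and then exploit that $\theta_q$ itself satisfies \eqref{estimate 40} transported only by $v_q$, not $v_{q+1}$; this makes $\lVert \theta_q \rVert_{\dot{H}^2}$ controllable through \eqref{estimate 48} by $M_0^{1/2}\lambda_q^4$ rather than the fatal $\lambda_{q+1}^4$. Interpolating $\lVert v_{q+1}-v_q\rVert_{L^{q_1}}$ by Gagliardo-Nirenberg between $\lVert v_{q+1}-v_q\rVert_{L^2}\lesssim M_0^{1/2}\lambda_{q+1}^{-\beta}$ and $\lVert v_{q+1}-v_q\rVert_{C^1_{t,x}}\lesssim M_0^{1/2}\lambda_{q+1}^4$ and optimizing the H\"older split, the factor $\lambda_q^4 = \lambda_{q+1}^{4/b}$ can be made smaller than the gain $\lambda_{q+1}^{-2\beta}$ by enlarging $b$, yielding a decay of order $\lambda_{q+1}^{-\beta(21+\beta)/(12+2\beta)}$ for $\lVert\theta_{q+1}-\theta_q\rVert_{C_{t\wedge T_L}L^2_x}^2$. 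The $L^{2p}$ form \eqref{estimate 59} then follows by a Burkholder-Davis-Gundy plus interpolation argument paralleling \eqref{estimate 31}--\eqref{estimate 34} and \eqref{estimate 338}.

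Finally, for the Reynolds stress I would decompose exactly as in \eqref{[Equ. (98), Y20c]}--\eqref{estimate 58} with $e^2$ replaced by $e^3$ and with $\mathcal{R}$ now the 3D antidivergence from Lemma \ref{divergence inverse operator}. The bounds on $R_{\text{cor}}, R_{\text{osc}}, R_{\text{com1}}, R_{\text{com2}}$ and on $R_{\text{lin}} - \mathcal{R}((\theta_l-\theta_{q+1})e^3)$ in $L^{p^{\ast}}$ or $L^1$ transfer from the 2D computations using the 3D convex-integration toolbox and the perturbation bounds. The only genuinely new piece is $\mathcal{R}((\theta_l-\theta_{q+1})e^3)$, which I would split as in \eqref{estimate 63}--\eqref{estimate 36}: its first summand is absorbed by the Cauchy estimate just proven, while its second summand is a mollification commutator handled by the high-regularity bounds on $\theta_q$ and $z_2$ as in \eqref{estimate 348}--\eqref{estimate 349}. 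Summing and applying H\"older yields \eqref{[Equ. (49c), Y20c]} at level $q+1$, and the $(\mathcal{F}_t)_{t\geq 0}$-adaptedness together with the determinism of $v_{q+1}(0,x)$ and $\mathring{R}_{q+1}(0,x)$ then follow exactly as in the 2D argument.
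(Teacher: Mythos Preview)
There is a genuine gap in your plan for the Cauchy estimate \eqref{estimate 59}. Your proposed H\"older split $\tfrac{1}{3}+\tfrac{1}{2}+\tfrac{1}{6}=1$ forces $\lVert v_{q+1}-v_q\rVert_{L^3}$ on the velocity difference, and Gagliardo--Nirenberg in $\mathbb{T}^3$ then reads $\lVert v_{q+1}-v_q\rVert_{L^3}\lesssim\lVert v_{q+1}-v_q\rVert_{L^2}^{1/2}\lVert v_{q+1}-v_q\rVert_{\dot H^1}^{1/2}$. After Young this yields $\lVert v_{q+1}-v_q\rVert_{L^3}^2\lesssim M_0\,\lambda_{q+1}^{-\beta+4}$, which blows up; the $\lambda_q^4=\lambda_{q+1}^{4/b}$ you invoke is irrelevant here, because the interpolation acts on $v_{q+1}-v_q$ and brings in $\lambda_{q+1}^4$, not $\lambda_q^4$. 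The paper's fix is the one announced just above \eqref{estimate 0}: abandon the $\dot H^1\hookrightarrow L^6$ route for $\theta_q$ and instead use the uniform-in-$q$ bound $\theta_q\in L^p_\omega C_{T_L}L^p_x$ for every finite $p$ (from \eqref{estimate 114} and \eqref{estimate 331}). Taking $r_2=\tfrac{2(12+2\beta)}{\beta}$ very large forces $r_1$ close to $2$, so the $\dot H^1$ exponent on $v_{q+1}-v_q$ becomes $\tfrac{3\beta}{2(12+2\beta)}$, tiny enough to absorb $\lambda_{q+1}^4$ and leave the net decay $\lambda_{q+1}^{-(21\beta+\beta^2)/(12+2\beta)}$.

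A second gap concerns $\lVert\mathcal R((\theta_q-\theta_{q+1})e^3)\rVert_{C_tL^{p^\ast}}$ in the Reynolds stress: it is \emph{not} absorbed by \eqref{estimate 59}, which decays only like $\lambda_{q+1}^{-c\beta}$, whereas \eqref{[Equ. (49c), Y20c]} at level $q+1$ demands $\delta_{q+2}=\lambda_{q+1}^{-2\beta b}$ with $b\ge16/\alpha$. The paper treats this term by the separate $L^{p^\ast}$ argument of \eqref{estimate 49} and \eqref{estimate 124}--\eqref{estimate 87}: bound $\lVert\theta_{q+1}-\theta_q\rVert_{C_tL^{p^\ast}}$ by $\lVert v_{q+1}-v_q\rVert_{C_tL^{p^\ast}}\int_0^t\lVert\theta_q\rVert_{\dot W^{1,\infty}}$, exploit the intermittency of the perturbation in $L^{p^\ast}$ with $p^\ast=\tfrac{40m-14}{170\alpha-19+44m}$ from \eqref{[Equ. (93), Y20a]}, and control $\int_0^t\lVert\Theta_q\rVert_{\dot H^3}$ through \eqref{estimate 67}, where only $\lambda_q^4$ appears. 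Finally, the 3D parameter setup is not that of \eqref{[Equ. (64), Y20c]}--\eqref{[Equ. (70), Y20c]}: there is no $m^\ast$ or $\eta$, one takes $\alpha=\tfrac{5-4m}{480}$ as in \eqref{[Equ. (52), Y20a]}, the building blocks are the intermittent jets $W_\zeta,W_\zeta^{(c)},V_\zeta$ with parameters $r_\lVert,r_\bot,\mu$ from \eqref{[Equ. (62), Y20a]}, and the amplitude is $a_\zeta=\rho^{1/2}\gamma_\zeta(\text{Id}-\mathring R_l/\rho)(2\pi)^{-3/4}$ from \eqref{estimate 344}.
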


\begin{proof}[Proof of Theorem \ref{Theorem 2.1} if $n = 3$ assuming Proposition \ref{Proposition 4.8 for n=3}]
The existence of $(v_{q}, \theta_{q}, \mathring{R}_{q})$ for all $q \geq 1$ that satisfies \eqref{[Equ. (46), Y20c]} and \eqref{[Equ. (49), Y20c]}, and $v \in C([0,T_{L}]; \dot{H}^{\gamma}(\mathbb{T}^{3}))$ that is $(\mathcal{F}_{t})_{t \geq 0}$-adapted and satisfies \eqref{[Equ. (59), Y20c]} follow identically from the proof of Theorem \ref{Theorem 2.1} in case $n =2$. Next, similarly to \eqref{estimate 60} we can show 
\begin{align*}
\partial_{t} \lVert \Theta_{q}(t) \rVert_{L_{x}^{2}} \leq \lVert v_{q} (t) \rVert_{L_{x}^{2}} \lVert z_{2}(t) \rVert_{\dot{W}_{x}^{1,\infty}} + (2\pi)^{\frac{3}{2}} \lVert z_{1}(t) \rVert_{L_{x}^{\infty}} \lVert z_{2}(t) \rVert_{\dot{W}_{x}^{1,\infty}}  \overset{\eqref{[Equ. (38), Y20a]} \eqref{[Equ. (49a) and (49b), Y20c]}}{\leq} 2M_{0}(t)^{\frac{1}{2}} L^{\frac{1}{4}} + (2\pi)^{\frac{3}{2}} L^{\frac{1}{2}} 
\end{align*} 
and hence integrating over $[0,t]$ and taking supremum over $[0,t]$ on the left hand side give 
\begin{equation}\label{estimate 359}
\lVert \Theta_{q} \rVert_{C_{t}L_{x}^{2}} \leq \lVert \theta^{\text{in}} \rVert_{L_{x}^{2}} + t [ 2M_{0}(t)^{\frac{1}{2}} + (2\pi)^{\frac{3}{2}} L^{\frac{1}{4}}] L^{\frac{1}{4}}.
\end{equation} 
Along with $\lVert z_{2} \rVert_{C_{t}L_{x}^{2}} \leq (2\pi)^{\frac{3}{2}} L^{\frac{1}{4}}$ due to \eqref{[Equ. (38), Y20a]}, we deduce for all $q \in \mathbb{N}_{0}$ 
\begin{equation}\label{estimate 331}
\lVert \theta_{q} \rVert_{C_{t}L_{x}^{2}} \leq \lVert \Theta_{q} \rVert_{C_{t}L_{x}^{2}} + \lVert z_{2} \rVert_{C_{t}L_{x}^{2}}  \leq \lVert \theta^{\text{in}} \rVert_{L_{x}^{2}} + t[ 2 M_{0}(t)^{\frac{1}{2}} + (2\pi)^{\frac{3}{2}} L^{\frac{1}{4}}]L^{\frac{1}{4}} + (2\pi)^{\frac{3}{2}} L^{\frac{1}{4}}.  
\end{equation}
Inductively from \eqref{estimate 114} and \eqref{estimate 331}, we can now conclude that for all $q \in \mathbb{N}_{0}$ $\theta_{q} \in L_{\omega}^{p} C_{t}L_{x}^{p}$ for all $p \in [1, \infty)$ with a bound by a constant $C(\lVert \theta^{\text{in}} \rVert_{H_{x}^{2}}, p, \text{Tr} ( (-\Delta)^{\frac{3}{2} + 2 \sigma} G_{2}G_{2}^{\ast}), L)$. This allows us to conclude from \eqref{estimate 59} that $\{\theta_{q}\}_{q=0}^{\infty}$ is Cauchy in not only $\cap_{p\in[1,\infty)} L_{\omega}^{p} L_{T_{L}}^{2} \dot{H}_{x}^{1}$ but also interpolate identically to \eqref{estimate 338} to conclude that it is Cauchy also in $\cap_{p\in [1,\infty)} L_{\omega}^{p} C_{T_{L}} L_{x}^{p}$: 
\begin{equation}  
 \mathbb{E}^{\textbf{P}} [ \lVert \theta_{q+1} - \theta_{q} \rVert_{C_{t \wedge T_{L}} L_{x}^{p}}^{p}] \overset{\eqref{estimate 59}}{\lesssim}_{\lVert \theta^{\text{in}} \rVert_{H_{x}^{2}}, p, \text{Tr} ((-\Delta)^{\frac{3}{2} + 2 \sigma} G_{2}G_{2}^{\ast} ), L} \lambda_{q+1}^{- \frac{\beta}{2} ( \frac{21+ \beta}{12 + 2 \beta})}.
\end{equation}  
Therefore, we deduce the existence of $\lim_{q\to\infty} \theta_{q} \triangleq \theta \in \cap_{p \in [1,\infty)} L_{\omega}^{p} C_{T_{L}} L_{x}^{p} \cap L_{\omega}^{p} L_{T_{L}}^{2} \dot{H}_{x}^{1}$ for which there exists a deterministic constant $C_{L,2} = C_{L,2}(p) > 0$ for $p \in [1,\infty)$ such that \eqref{estimate 320} holds and hence verify the second inequality of \eqref{estimate 17}. As each $\theta_{q}$ is $(\mathcal{F}_{t})_{t\geq 0}$-adapted, so is $\theta$. Finally, for all $t \in [0, T_{L}]$, $\lVert \mathring{R}_{q} \rVert_{C_{t}L_{x}^{1}} \to 0$ as $q\to \infty$ so that $(u,\theta)$ with $u = v + z_{1}$ solve \eqref{3}. The rest of the proof is identical to that of Theorem \ref{Theorem 2.1} in case $n = 2$ with the only exception that we must replace \eqref{estimate 333} by 
\begin{equation}\label{estimate 334}
L^{\frac{1}{4}} (2 \pi)^{\frac{3}{2}}  + K e^{\frac{T}{2}} (\lVert \theta^{\text{in}} \rVert_{L_{x}^{2}}+ \sum_{l=1}^{2} \sqrt{ \text{Tr} (G_{l}G_{l}^{\ast})}) \leq (e^{LT} - K e^{\frac{T}{2}}) \lVert u^{\text{in}} \rVert_{L_{x}^{2}} + L e^{LT}
\end{equation}
so that analogous computations in \eqref{estimate 142} go through. 
\end{proof} 

\subsection{Convex integration to prove Proposition \ref{Proposition 4.8 for n=3}}

\subsubsection{Choice of parameters}
For $L $ that satisfies \eqref{estimate 339}, we choose 
\begin{equation}\label{[Equ. (52), Y20a]}
\alpha \triangleq \frac{5-4m}{480} 
\end{equation} 
instead of \eqref{[Equ. (66), Y20c]} while we choose $b$ identically to \eqref{estimate 62} and be sure to take $\beta$ sufficiently small subsequently to guarantee \eqref{[Equ. (68), Y20c]} . We take $l$ identically to \eqref{[Equ. (69), Y20c]} so that \eqref{[Equ. (70), Y20c]} continues to hold. The last inequality of $c_{R} L \leq c_{R} ( \frac{ (2\pi)^{\frac{3}{2}} a^{4} - 2}{2})$ in \eqref{[Equ. (43), Y20a]} holds by taking $a$ sufficiently large while the inequalities of $162 (2\pi)^{\frac{3}{2}}  < (18) (2\pi)^{\frac{3}{2}} a^{2\beta b} \leq c_{R} L$ in \eqref{[Equ. (43), Y20a]} also holds by taking $\beta > 0$ sufficiently small because we chose $L > c_{R}^{-1} 162 (2\pi)^{\frac{3}{2}}$ in \eqref{estimate 339}. Therefore, we hereafter consider such $\alpha, L$, and $b$ fixed, while take $a > 0$ such that $a^{\frac{25- 20m}{24}} \in \mathbb{N}$, which will be crucial in \eqref{[Equ. (62), Y20a]}, as large and $\beta > 0$ as small as necessary. 

\subsubsection{Mollification}
We mollify identically to \eqref{[Equ. (71), Y20c]} with the only exception of $\phi_{\epsilon} (\cdot) \triangleq \frac{1}{\epsilon^{3}}  \phi(\frac{\cdot}{\epsilon})$ being a mollifier in $\mathbb{R}^{3}$. It follows that 
\begin{equation}\label{estimate 341}
\partial_{t} v_{l} + (-\Delta)^{m} v_{l} + \text{div} ((v_{l} + z_{1,l}) \otimes (v_{l} + z_{1,l}) ) + \nabla \pi_{l}= \theta_{l} e^{3} + \text{div} (\mathring{R}_{l} + R_{\text{com1}})
\end{equation} 
with $R_{\text{com1}}$ identical to that in \eqref{[Equ. (73b), Y20c]} while 
\begin{equation}\label{estimate 145}
\pi_{l} \triangleq (\pi_{q} \ast_{x} \phi_{l}) \ast_{t} \varphi_{l} -\frac{1}{3} ( \lvert v_{l} + z_{1,l} \rvert^{2} - ( \lvert v_{q} + z_{1} \rvert^{2} \ast_{x} \phi_{l}) \ast_{t} \varphi_{l}). 
\end{equation} 
Due to \eqref{[Equ. (70), Y20c]} and \eqref{[Equ.  (49a) and (49b), Y20c]} we have the identical estimates to \eqref{[Equ. (74a), Y20c]}-\eqref{[Equ. (74b) and (74c), Y20c]}. Next, we define 
\begin{equation}\label{[Equ. (62), Y20a]}
r_{\lVert} \triangleq \lambda_{q+1}^{\frac{13 - 20m}{12}}, \hspace{3mm} r_{\bot}\triangleq \lambda_{q+1}^{\frac{1-20m}{24}}, \hspace{3mm} \mu \triangleq \frac{ \lambda_{q+1}^{2m-1} r_{\lVert}}{r_{\bot}} = \lambda_{q+1}^{2m-1} \lambda_{q+1}^{\frac{25 - 20m}{24}},  
\end{equation} 
which satisfies $r_{\bot} \ll r_{\lVert} \ll 1$ and $r_{\bot}^{-1} \ll \lambda_{q+1}$ for $a> 0$ sufficiently large as $m \in (\frac{13}{20}, \frac{5}{4})$. Our choice of $a^{\frac{25 - 20m}{24}} \in \mathbb{N}$ and $b \in \mathbb{N}$ guarantee that $\lambda_{q+1} r_{\bot} = a^{b^{q+1} (\frac{25 -20m}{24})} \in \mathbb{N}$ which is needed to ensure the appropriate periodicity of $W_{\zeta}, V_{\zeta}, \Phi_{\zeta}, \phi_{\zeta}$ and $\psi_{\zeta}$ in \eqref{[Equ. (168), Y20a]}, \eqref{[Equ. (173), Y20a]}, and \eqref{[Equ. (167), Y20a]}. 

\subsubsection{Perturbation}

We define $\chi(z)$ and $\rho(\omega, t, x)$ identically to \eqref{[Equ. (75), Y20c]}-\eqref{[Equ. (76), Y20c]}, from which the bound of \eqref{[Equ. (77), Y20c]} follows. For any $p \in [1, \infty], N \geq 0$, and $t \in [0, T_{L}]$, we have 
\begin{equation}\label{estimate 342}
\lVert \rho \rVert_{C_{t}L_{x}^{p}} \overset{\eqref{[Equ. (75), Y20c]}}{\leq} 12 ((8 \pi^{3} )^{\frac{1}{p}} c_{R} \delta_{q+1} M_{0}(t) + \lVert \mathring{R}_{l} \rVert_{C_{t}L_{x}^{p}}), \hspace{3mm} \lVert \mathring{R}_{l} \rVert_{C_{t,x}^{N}} \overset{\eqref{[Equ. (49c), Y20c]}}{\lesssim} l^{-4-N } M_{0}(t) c_{R} \delta_{q+1}; 
\end{equation} 
it follows that for all $k \in \{0,1,2\}$, 
\begin{equation}\label{estimate 343}
\lVert \rho \rVert_{C_{t}C_{x}^{N}} \overset{\eqref{estimate 342} \eqref{[Equ. (49c), Y20c]} }{\lesssim} c_{R} \delta_{q+1}M_{0}(t) l^{-4-N}, \hspace{3mm} \lVert \rho \rVert_{C_{t}^{1}C_{x}^{k}} \overset{\eqref{[Equ. (75), Y20c]}\eqref{estimate 342} }{\lesssim} c_{R} \delta_{q+1} M_{0}(t) l^{-5 (k+1)} 
\end{equation} 
(see \cite[Equ. (67)-(69)]{Y20a}). We recall $\gamma_{\zeta}$ from Lemma \ref{[Lem. B.1, HZZ19]} and define an amplitude function slightly differently from \eqref{[Equ. (81), Y20c]} as follows:
\begin{equation}\label{estimate 344}
a_{\zeta} (\omega, t, x) \triangleq a_{\zeta, q+1} (\omega, t, x) \triangleq \rho(\omega, t, x)^{\frac{1}{2}} \gamma_{\zeta} ( \text{Id} - \frac{\mathring{R}_{l}(\omega, t, x)}{\rho(\omega, t, x)}) (2\pi)^{-\frac{3}{4}}.
\end{equation} 
It follows that the same estimate in \eqref{[Equ. (83), Y20c]} holds and additionally for all $t \in [0, T_{L}], N \geq 0$, $k \in \{0,1,2\}$, and $C_{\Lambda}$ and $M$ from \eqref{[Equ. (163), Y20a]}, 
\begin{equation}\label{[Equ. (73), Y20a]}
\lVert a_{\zeta} \rVert_{C_{t}C_{x}^{N}} \overset{\eqref{estimate 343} \eqref{estimate 344}}{\leq} c_{R}^{\frac{1}{4}} \delta_{q+1}^{\frac{1}{2}} M_{0}(t)^{\frac{1}{2}} l^{- 2 - 5N}, \hspace{5mm} \lVert a_{\zeta} \rVert_{C_{t}^{1}C_{x}^{k}} \overset{ \eqref{estimate 342} \eqref{estimate 343} }{\leq} c_{R}^{\frac{1}{4}} \delta_{q+1}^{\frac{1}{2}} M_{0}(t)^{\frac{1}{2}} l^{-(k+1) 5}
\end{equation}
(see \cite[Equ. (72)-(73)]{Y20a}). Next, we define 
\begin{subequations}
\begin{align}
&w_{q+1}^{(p)} \triangleq \sum_{\zeta \in \Lambda} a_{\zeta} W_{\zeta},  \hspace{3mm}  w_{q+1}^{(c)} \triangleq \sum_{\zeta \in \Lambda} \text{curl} (\nabla a_{\zeta} \times V_{\zeta}) + \nabla a_{\zeta} \times \text{curl} V_{\zeta} + a_{\zeta} W_{\zeta}^{(c)}, \label{[Equ. (74a) and (74b), Y20a]} \\
& w_{q+1}^{(t)} \triangleq - \mu^{-1} \sum_{\zeta \in \Lambda} \mathbb{P} \mathbb{P}_{\neq 0} (a_{\zeta}^{2} \phi_{\zeta}^{2} \psi_{\zeta}^{2} \zeta), \label{[Equ. (74c), Y20a]}
\end{align}
\end{subequations} 
where $\phi_{\zeta}, \psi_{\zeta}, W_{\zeta}, V_{\zeta}, W_{\zeta}^{(c)}$ are defined in \eqref{[Equ. (167), Y20a]}, \eqref{[Equ. (168), Y20a]}, and \eqref{[Equ. (173), Y20a]}, and we define $w_{q+1}$ and $v_{q+1}$ identically as \eqref{[Equ. (85), Y20c]}. We can estimate for all $p \in (1, \infty)$ and $t \in [0, T_{L}]$ 
\begin{subequations}
\begin{align}
& \lVert w_{q+1}^{(p)} \rVert_{C_{t}L_{x}^{2}} \overset{\eqref{[Equ. (74a) and (74b), Y20a]}}{\leq}  \frac{M_{0}(t)^{\frac{1}{2}}}{2} \delta_{q+1}^{\frac{1}{2}},  \hspace{10mm}  \lVert w_{q+1}^{(p)} \rVert_{C_{t}L_{x}^{p}}  \overset{\eqref{[Equ. (73), Y20a]} \eqref{[Equ. (174), Y20a]}}{\lesssim}  M_{0}(t)^{\frac{1}{2}} \delta_{q+1}^{\frac{1}{2}} l^{-2} r_{\bot}^{\frac{2}{p} -1} r_{\lVert}^{\frac{1}{p} - \frac{1}{2}}, \label{[Equ. (77) and (78a), Y20a]}\\
&  \lVert w_{q+1}^{(c)} \rVert_{C_{t}L_{x}^{p}}  \overset{\eqref{[Equ. (174), Y20a]}}{\lesssim} M_{0}(t)^{\frac{1}{2}} \delta_{q+1}^{\frac{1}{2}} l^{-12} r_{\bot}^{\frac{2}{p}} r_{\lVert}^{\frac{1}{p} - \frac{3}{2}},   \lVert w_{q+1}^{(t)} \rVert_{C_{t}L_{x}^{p}} \lesssim  \delta_{q+1} M_{0}(t) l^{-4} r_{\bot}^{\frac{2}{p} -1} r_{\lVert}^{\frac{1}{p} - 2} \lambda_{q+1}^{1-2m} \label{[Equ. (78b) and (78c), Y20a]}
\end{align} 
\end{subequations}
(see \cite[Equ. (77)-(78)]{Y20a}). These lead us furthermore to, for all $t \in [0, T_{L}]$ and $p \in (1,\infty)$ 
\begin{equation}\label{[Equ. (80) and (81), Y20a]}
 \lVert w_{q+1}^{(c)} \rVert_{C_{t}L_{x}^{p}} + \lVert w_{q+1}^{(t)} \rVert_{C_{t}L_{x}^{p}} \overset{\eqref{[Equ. (78b) and (78c), Y20a]}}{\lesssim} M_{0}(t)^{\frac{1}{2}} \delta_{q+1}^{\frac{1}{2}} l^{-2} r_{\bot}^{\frac{2}{p} -1} r_{\lVert}^{\frac{1}{p} - \frac{1}{2}}, \hspace{1mm} \lVert w_{q+1} \rVert_{C_{t}L_{x}^{2}} \overset{\eqref{[Equ. (85), Y20c]}}{\leq} \frac{3}{4} M_{0}(t)^{\frac{1}{2}} \delta_{q+1}^{\frac{1}{2}}
\end{equation}
(see \cite[Equ. (80)-(81)]{Y20a}). We may now verify the first inequality of \eqref{[Equ. (49a) and (49b), Y20c]} at level $q+1$ via \eqref{[Equ. (74b) and (74c), Y20c]}, \eqref{[Equ. (85), Y20c]} and \eqref{[Equ. (80) and (81), Y20a]} while \eqref{[Equ. (45), Y20a]} by \eqref{[Equ. (74a), Y20c]}, \eqref{[Equ. (85), Y20c]}, and \eqref{[Equ. (80) and (81), Y20a]}. Moreover, for all $t \in [0, T_{L}]$ we can compute 
\begin{subequations}\label{estimate 146}
\begin{align}
&  \lVert w_{q+1}^{(p)} \rVert_{C_{t,x}^{1}} \overset{\eqref{[Equ. (73), Y20a]} \eqref{[Equ. (174c), Y20a]}}{\lesssim} M_{0}(t)^{\frac{1}{2}} l^{-7} r_{\bot}^{-1} r_{\lVert}^{-\frac{1}{2}} \lambda_{q+1}^{2m},   \lVert w_{q+1}^{(c)} \rVert_{C_{t,x}^{1}} \overset{\eqref{[Equ. (73), Y20a]} \eqref{[Equ. (174c), Y20a]}}{\lesssim} M_{0}(t)^{\frac{1}{2}} l^{-17} r_{\lVert}^{-\frac{3}{2}} \lambda_{q+1}^{2m}, \label{[Equ. (82a) and (82b), Y20a]}\\
& \lVert w_{q+1}^{(t)} \rVert_{C_{t,x}^{1}} \overset{\eqref{[Equ. (73), Y20a]} \eqref{[Equ. (174a) and (174b), Y20a]}}{\lesssim}  M_{0}(t) l^{-9} r_{\bot}^{-1} r_{\lVert}^{-2} \lambda_{q+1}^{-2m + 2 + \alpha} (\frac{r_{\bot} \mu}{r_{\lVert}}) \label{[Equ. (85), Y20a]}
\end{align}
\end{subequations}
(see \cite[Equ. (82) and (85)]{Y20a}). Now, applying \eqref{[Equ. (74b) and (74c), Y20c]}, \eqref{[Equ. (85), Y20c]} and \eqref{estimate 146} allows us to verify the second inequality of \eqref{[Equ.  (49a) and (49b), Y20c]} at level $q+1$ (see \cite[Equ. (86)-(87)]{Y20a}). Moreover, with $\theta^{\text{in}}\in H^{2}(\mathbb{T}^{3})$ from the hypothesis and $v_{q+1}$ already constructed via \eqref{[Equ. (85), Y20c]}, identically to the 2D case, we deduce the unique solution $\theta_{q+1}$ to \eqref{estimate 323} starting from $\theta_{q+1}(0,x)  = \theta^{\text{in}}(x)$  that satisfies \eqref{estimate 26}. Concerning the Cauchy estimate \eqref{estimate 59}, as we discussed in Remark \ref{Remark 4.2}, due to $\dot{H}^{1}(\mathbb{T}^{3})\hookrightarrow L^{p}(\mathbb{T}^{3})$ being false for $p > 6$, we cannot rely on $L_{\omega}^{2}L_{t}^{2} \dot{H}_{x}^{1}$-bound of $\theta_{q}$. The break here is that we can rely on the fact that $\theta_{q} \in L_{\omega}^{p} C_{T_{L}} L_{x}^{p}$ for every $p \in [1,\infty)$ with a bound by a constant $C(\lVert \theta^{\text{in}} \rVert_{H_{x}^{2}}, p, \text{Tr} ((-\Delta)^{\frac{3}{2} + 2 \sigma} G_{2}G_{2}^{\ast}), L)$, which we verified (recall \eqref{estimate 114} and \eqref{estimate 331}). To make this argument precise, we start from \eqref{estimate 27} and compute 
\begin{align}
\frac{1}{2}\partial_{t} \lVert \theta_{q+1} - \theta_{q} \rVert_{L_{x}^{2}}^{2} + \lVert \theta_{q+1} - \theta_{q} \rVert_{\dot{H}_{x}^{1}}^{2} 
\lesssim& \lVert v_{q+1} - v_{q} \rVert_{L_{x}^{2}}^{\frac{24 + \beta}{2 (12 + 2 \beta)}} \lVert v_{q+1} - v_{q} \rVert_{\dot{H}_{x}^{1}}^{\frac{3\beta}{2 (12 + 2\beta)}} \lVert \theta_{q+1} - \theta_{q} \rVert_{\dot{H}_{x}^{1}} \lVert \theta_{q}\rVert_{L_{x}^{\frac{2 (12 + 2\beta)}{\beta}}} \nonumber \\
\overset{\eqref{[Equ. (45), Y20a]}\eqref{[Equ.  (49a) and (49b), Y20c]} }{\leq}&  \frac{1}{2} \lVert \theta_{q+1} - \theta_{q} \rVert_{\dot{H}_{x}^{1}}^{2} + C M_{0}(t)\lambda_{q+1}^{\frac{ - 21 \beta - \beta^{2}}{12 + 2\beta}} \lVert \theta_{q} \rVert_{L_{x}^{\frac{2(12 + 2\beta)}{\beta}}}^{2} \label{estimate 0}
\end{align} 
where we relied on H$\ddot{\mathrm{o}}$lder's, Gagliardo-Nirenberg's and Young's inequalities (cf. \eqref{estimate 38}). Subtracting $\frac{1}{2} \lVert \theta_{q+1} - \theta_{q} \rVert_{\dot{H}_{x}^{1}}^{2}$ from both sides, integrating over time $[0,t]$, taking supremum over $[0,t]$ on the right and then left hand sides lead us to 
\begin{equation*}
\lVert \theta_{q+1} - \theta_{q} \rVert_{C_{t}L_{x}^{2}}^{2} + \lVert \theta_{q+1} - \theta_{q} \rVert_{L_{t}^{2} \dot{H}_{x}^{1}}^{2} \lesssim M_{0}(t) \lambda_{q+1}^{\frac{- 21 \beta - \beta^{2}}{12 + 2 \beta}} t \lVert \theta_{q} \rVert_{C_{t} L_{x}^{\frac{ 2(12 + 2 \beta)}{\beta}}}^{2}. 
\end{equation*} 
Raising to the power of $p\in [1, \infty)$, taking expectation $\mathbb{E}^{\textbf{P}}$, and relying on the fact that $\theta_{q} \in L_{\omega}^{p} C_{T_{L}} L_{x}^{p}$ for all $p \in [1, \infty)$ give us \eqref{estimate 59}. 

\subsubsection{Reynolds stress} 
Identically to \eqref{[Equ. (98), Y20c]}-\eqref{estimate 58} due to \eqref{[Equ. (46), Y20c]}, \eqref{[Equ. (85), Y20c]}, and \eqref{estimate 341}, we can define $\mathring{R}_{q+1}$ and $\pi_{q+1}$ with 
\begin{subequations}\label{[Equ. (91), Y20a]}
\begin{align}
 R_{\text{lin}} \triangleq& \mathcal{R} ( -\Delta)^{m} w_{q+1} + \mathcal{R}\partial_{t} (w_{q+1}^{(p)} + w_{q+1}^{(c)}) \nonumber\\
 & + (v_{l} + z_{1, l}) \mathring{\otimes} w_{q+1} + w_{q+1} \mathring{\otimes} (v_{l} + z_{1, l}) + \mathcal{R} ((\theta_{l} - \theta_{q+1})e^{3}), \label{[Equ. (91a), Y20a]}\\
 \pi_{\text{lin}} \triangleq& (\frac{2}{3})(v_{l} + z_{1,l}) \cdot w_{q+1}, \label{[Equ. (91b), Y20a]}\\
 R_{\text{cor}} \triangleq& (w_{q+1}^{(c)} + w_{q+1}^{(t)}) \mathring{\otimes} w_{q+1} + w_{q+1}^{(p)} \mathring{\otimes} (w_{q+1}^{(c)} + w_{q+1}^{(t)}), \label{[Equ. (91c), Y20a]}\\
 \pi_{\text{cor}} \triangleq& \frac{1}{3} [ (w_{q+1}^{(c)} + w_{q+1}^{(t)}) \cdot w_{q+1} + w_{q+1}^{(p)} \cdot (w_{q+1}^{(c)} + w_{q+1}^{(t)}) ], \label{[Equ. (91d), Y20a]}\\
R_{\text{com2}} \triangleq& v_{q+1} \mathring{\otimes} (z_{1}-z_{1,l}) + (z_{1}-z_{1,l}) \mathring{\otimes} v_{q+1} + z_{1} \mathring{\otimes} z_{1} - z_{1,l} \mathring{\otimes} z_{1,l}, \label{[Equ. (91e), Y20a]}\\
\pi_{\text{com2}} \triangleq& \frac{1}{3} [2 v_{q+1} \cdot (z_{1}- z_{1,l}) + \lvert z_{1} \rvert^{2} - \lvert z_{1,l} \rvert^{2}], \\
R_{\text{osc}} \triangleq&  \sum_{\zeta \in \Lambda} \mathcal{R}(\nabla a_{\zeta}^{2} \mathbb{P}_{\neq 0} (W_{\zeta} \otimes W_{\zeta})) - \mu^{-1} \sum_{\zeta \in \Lambda} \mathbb{P}_{\neq 0} (\partial_{t} a_{\zeta}^{2} (\phi_{\zeta}^{2} \psi_{\zeta}^{2} \zeta )), \\
\pi_{\text{osc}} \triangleq& \rho + \Delta^{-1} \text{div} \mu^{-1} \sum_{\zeta \in \Lambda} \mathbb{P}_{\neq 0} \partial_{t} (a_{\zeta}^{2} \phi_{\zeta}^{2}\psi_{\zeta}^{2} \zeta). \label{[Equ. (91g), Y20a]}
\end{align}
\end{subequations}
Differently from \eqref{[Equ. (116), Y20c]} we set 
\begin{equation}\label{[Equ. (93), Y20a]}
p^{\ast} \triangleq \frac{40m - 14}{170 \alpha - 19 + 44m} \overset{\eqref{[Equ. (52), Y20a]}}{\in} (1,2). 
\end{equation}
Identically to \eqref{estimate 63}-\eqref{estimate 36}, we see that for all $t \in [0, T_{L}]$ 
\begin{equation}\label{estimate 68}
\lVert \mathcal{R}(( \theta_{l} - \theta_{q+1}) e^{3}) \rVert_{C_{t}L_{x}^{p^{\ast}}} \leq III + IV  
\end{equation} 
where 
\begin{equation}
III \triangleq \lVert \mathcal{R} ((\theta_{q} - \theta_{q+1}) e^{3}) \rVert_{C_{t}L_{x}^{p^{\ast}}} \text{ and } IV \triangleq \lVert \mathcal{R} ((\theta_{l} - \theta_{q}) e^{3} ) \rVert_{C_{t}L_{x}^{p^{\ast}}}. 
\end{equation} 
To handle $III$, identically to the 2D case, we can deduce \eqref{estimate 49}.To handle the time integral of $\lVert \theta_{q} \rVert_{W_{x}^{1,\infty}}$ within \eqref{estimate 49}, identically to the 2D case, we can deduce \eqref{estimate 47}. Then, to handle the integral of $\lVert \Theta_{q} \rVert_{\dot{H}_{x}^{1}}^{2}$ within \eqref{estimate 47}, we apply  \eqref{estimate 359} and an observation that $2 M_{0}(t)^{\frac{1}{2}} L^{\frac{1}{4}} + (2\pi)^{\frac{3}{2}} L^{\frac{1}{2}} \leq (2\pi)^{\frac{3}{2}} M_{0}(t)$ by \eqref{estimate 339} to \eqref{estimate 42} and deduce  
\begin{equation*}
\lVert \Theta_{q} \rVert_{C_{t}L_{x}^{2}} \overset{\eqref{estimate 359}}{\leq} \lVert \theta^{\text{in}} \rVert_{L_{x}^{2}} + t [2M_{0}(t)^{\frac{1}{2}} + (2\pi)^{\frac{3}{2}} L^{\frac{1}{4}}]L^{\frac{1}{4}} \leq \lVert \theta^{\text{in}} \rVert_{L_{x}^{2}} + (2\pi)^{\frac{3}{2}} M_{0}(t) t 
\end{equation*} 
so that 
\begin{equation}\label{estimate 152}
\int_{0}^{t} \lVert \Theta_{q} \rVert_{\dot{H}_{x}^{1}}^{2} dr \overset{\eqref{[Equ.  (49a) and (49b), Y20c]} \eqref{[Equ. (38), Y20a]}}{\leq} \frac{1}{2} \lVert \theta^{\text{in}} \rVert_{L_{x}^{2}}^{2} + (2\pi)^{\frac{3}{2}} M_{0}(t) [ \lVert \theta^{\text{in}} \rVert_{L_{x}^{2}} + (2\pi)^{\frac{3}{2}} M_{0}(t) t] t.
\end{equation} 
We apply \eqref{estimate 152} to \eqref{estimate 47} and deduce for all $t \in [0, T_{L}]$ 
\begin{equation}\label{estimate 67}
\int_{0}^{t} \lVert \Theta_{q}\rVert_{\dot{H}_{x}^{3}}^{2} dr 
\lesssim M_{0}(t) \lambda_{q}^{8} [ \lVert \theta^{\text{in}} \rVert_{H_{x}^{2}}^{2} + M_{0}(t)^{2} t^{2} + L^{\frac{1}{2}}].  
\end{equation} 
Similarly to \eqref{estimate 50}, applying \eqref{estimate 67} to \eqref{estimate 49} gives for all $t \in [0, T_{L}]$ 
\begin{equation}\label{estimate 124}
\lVert \theta_{q+1} - \theta_{q} \rVert_{C_{t}L_{x}^{p^{\ast}}} \overset{\eqref{[Equ. (38), Y20a]}}{\lesssim} \lVert v_{q+1} - v_{q} \rVert_{C_{t}L_{x}^{p^{\ast}}} \sqrt{T_{L}} (M_{0}(t)^{\frac{1}{2}} \lambda_{q}^{4} ( \lVert \theta^{\text{in}} \rVert_{H_{x}^{2}} + M_{0}(t)T_{L} +  L^{\frac{1}{4}}) + \sqrt{T_{L}} L^{\frac{1}{4}}).
\end{equation} 
We split $\lVert v_{q+1} - v_{q} \rVert_{C_{t}L_{x}^{p^{\ast}}}$ identically to \eqref{estimate 51}: 
\begin{equation}\label{estimate 360}
\lVert v_{q+1} - v_{q} \rVert_{C_{t}L_{x}^{p^{\ast}}} \leq  III_{1} + III_{2} \text{ where } III_{1} \triangleq \lVert v_{l} - v_{q} \rVert_{C_{t}L_{x}^{p^{\ast}}} \text{ and } III_{2} \triangleq \lVert v_{q+1} - v_{l} \rVert_{C_{t}L_{x}^{p^{\ast}}} 
\end{equation}
where $III_{1} \lesssim \lambda_{q+1}^{-\alpha} M_{0}(t)^{\frac{1}{2}}$ by identical estimates in \eqref{estimate 52}. On the other hand, we estimate $III_{2}$ carefully as follows:
\begin{align}
III_{2}\overset{\eqref{[Equ. (85), Y20c]}}{\leq}& \lVert w_{q+1}^{(p)} \rVert_{C_{t}L_{x}^{p^{\ast}}} + \lVert w_{q+1}^{(c)} \rVert_{C_{t}L_{x}^{p^{\ast}}} + \lVert w_{q+1}^{(t)} \rVert_{C_{t}L_{x}^{p^{\ast}}} \label{estimate 123} \\
\overset{\eqref{[Equ. (77) and (78a), Y20a]} \eqref{[Equ. (80) and (81), Y20a]}}{\lesssim}&   M_{0}(t)^{\frac{1}{2}} \delta_{q+1}^{\frac{1}{2}} l^{-2} r_{\bot}^{\frac{2}{p^{\ast}} - 1} r_{\lVert}^{\frac{1}{p^{\ast}} - \frac{1}{2}} 
\overset{\eqref{[Equ. (69), Y20c]} \eqref{[Equ. (62), Y20a]}}{\lesssim}  M_{0}(t)^{\frac{1}{2}} \delta_{q+1}^{\frac{1}{2}} \lambda_{q+1}^{\frac{ - 67 \alpha + 6 - 12m}{6}} \lambda_{q}^{4} \overset{\eqref{estimate 62} }{\lesssim} M_{0}(t)^{\frac{1}{2}} \delta_{q+1}^{\frac{1}{2}} \lambda_{q+1}^{ \frac{ - 65 \alpha + 6 - 12m}{6}}  \nonumber
\end{align} 
where we used that 
\begin{align*}
3 \alpha + ( \frac{1- 20m}{24}) (\frac{2}{p^{\ast}} - 1) + (\frac{13 - 20m}{12}) (\frac{1}{p^{\ast}} - \frac{1}{2})\overset{\eqref{[Equ. (93), Y20a]}}{=}\frac{- 67 \alpha + 6 - 12m}{6} 
\end{align*} 
and that $\frac{4}{b} < \frac{\alpha}{3}$ due to \eqref{estimate 62}. By applying \eqref{estimate 52} and \eqref{estimate 123} to \eqref{estimate 360} and then to \eqref{estimate 124} and relying on Lemma \ref{divergence inverse operator}, we deduce similarly to \eqref{estimate 56}-\eqref{estimate 64} that for $a > 0$ sufficiently large 
\begin{align}
III \overset{\eqref{estimate 124} \eqref{estimate 360}}{\lesssim}&  (III_{1} + III_{2}) \sqrt{T_{L}} (M_{0}(t)^{\frac{1}{2}} \lambda_{q}^{4} ( \lVert \theta^{\text{in}} \rVert_{H_{x}^{2}} + M_{0}(t)T_{L} + L^{\frac{1}{4}}) + \sqrt{T_{L}} L^{\frac{1}{4}}) \nonumber\\
\overset{\eqref{estimate 52} \eqref{estimate 123}}{\lesssim}& c_{R} \delta_{q+2} M_{0}(t) [ a^{b^{q+1} [ 2 \beta b - \alpha + \frac{4}{b}]} + a^{b^{q+1} [ 2 \beta b + \frac{- 65 \alpha + 6 - 12m}{6} + \frac{4}{b} ]}] \nonumber\\
& \hspace{5mm}  \times \sqrt{T_{L}} [ \lVert \theta^{\text{in}} \rVert_{H_{x}^{2}} + M_{0}(t) T_{L} + L^{\frac{1}{4}} + \sqrt{T_{L}} L^{\frac{1}{4}}]  \ll c_{R} \delta_{q+2} M_{0}(t)  \label{estimate 87} 
\end{align} 
where we used that 
\begin{subequations}
\begin{align}
&2 \beta b - \alpha + \frac{4}{b} \overset{\eqref{[Equ. (68), Y20c]} \eqref{estimate 62} }{<} -\frac{35\alpha}{48}, \label{estimate 130}\\
& 2 \beta b + \frac{ - 65 \alpha + 6 - 12m}{6} + \frac{4}{b} \overset{\eqref{estimate 62}}{<} - \frac{507 \alpha}{48} + 1 - 2m. \label{estimate 131}
\end{align}
\end{subequations}
On the other hand, it is clear that we can handle $IV$ in \eqref{estimate 68} by identical arguments that led to \eqref{estimate 349}. Thus, we conclude by applying \eqref{estimate 87} and \eqref{estimate 349} to \eqref{estimate 68}  that for all $t \in [0,T_{L}]$ 
\begin{equation}\label{estimate 361}
\lVert \mathcal{R} (( \theta_{l} - \theta_{q+1}) e^{3} ) \rVert_{C_{t}L_{x}^{p^{\ast}}} \overset{\eqref{estimate 68}}{\leq} III + IV \overset{\eqref{estimate 87}}{\ll} c_{R} M_{0}(t) \delta_{q+2}.
\end{equation} 
It follows from \cite[Equ. (91a), (94)-(98)]{Y20a} that 
\begin{equation*}
\lVert R_{\text{lin}} - \mathcal{R} ((\theta_{l} - \theta_{q+1}) e^{3}) \rVert_{L_{x}^{p^{\ast}}} \leq (2\pi)^{-3 ( \frac{p^{\ast} -1}{p^{\ast}})} \frac{c_{R}M_{0}(t) \delta_{q+2}}{10}
\end{equation*} 
so that together with \eqref{estimate 361} we obtain 
\begin{equation}\label{estimate 380}
\lVert R_{\text{lin}} \rVert_{L_{x}^{p^{\ast}}} \leq (2\pi)^{-3 ( \frac{p^{\ast} - 1}{p^{\ast}})} \frac{c_{R}M_{0}(t) \delta_{q+2}}{5}. 
\end{equation} 
It follows from \cite[Equ. (91g), (100), (103)-(106)]{Y20a} that 
\begin{align}
& \lVert R_{\text{cor}} \rVert_{C_{t}L_{x}^{p^{\ast}}} + \lVert R_{\text{osc}} \rVert_{C_{t}L_{x}^{p^{\ast}}} + \lVert R_{\text{com1}} \rVert_{C_{t}L_{x}^{1}} + \lVert R_{\text{com2}} \rVert_{C_{t}L_{x}^{1}} \nonumber\\
\leq& \frac{ 2 ( 2\pi)^{-3 ( \frac{p^{\ast} - 1}{p^{\ast}})} c_{R} M_{0}(t) \delta_{q+2}}{5} + \frac{ 2 c_{R} M_{0}(t) \delta_{q+2}}{5}; \label{estimate 381} 
\end{align} 
hence, together with \eqref{estimate 380}, we now have $\lVert \mathring{R}_{q+1} \rVert_{C_{t}L_{x}^{1}} \leq c_{R} M_{0}(t) \delta_{q+2}$, which is \eqref{[Equ. (49c), Y20c]} at level $q+1$ as desired. The rest of the arguments are similar to those of proof of Proposition \ref{Proposition 4.8 for n=2}. 

\section{Proofs of Theorems \ref{Theorem 2.3}-\ref{Theorem 2.4}}

We recall the definitions of $U_{1}, U_{2}, \tilde{U}_{1}, \tilde{U}_{2}, \bar{\Omega}$ and $\bar{\mathcal{B}}_{t}$ from Section \ref{Section 3} and define the following: 
\begin{define}\label{Definition 5.1}
Let $s \geq 0$, $\xi^{\text{in}} = (\xi_{1}^{\text{in}}, \xi_{2}^{\text{in}}) \in L_{\sigma}^{2} \times \mathring{L}^{2}$, and $\zeta^{\text{in}} = (\zeta_{1}^{\text{in}}, \zeta_{2}^{\text{in}}) \in \tilde{U}_{1} \times \tilde{U}_{2}$. A probability measure $P \in \mathcal{P} (\bar{\Omega})$ is a probabilistically weak solution to \eqref{3} with initial condition $(\xi^{\text{in}}, \zeta^{\text{in}})$ at initial time $s$ if 
\begin{enumerate}
\item [] (M1) $P(\{ \xi(t) = \xi^{\text{in}}, \zeta(t) = \zeta^{\text{in}} \hspace{1mm} \forall \hspace{1mm} t \in [0,s] \}) = 1$ and for all $l \in \mathbb{N}$, 
\begin{equation}
P ( \{ (\xi, \zeta) \in\bar{\Omega}: \hspace{0.5mm} \int_{0}^{l} \lVert F_{1} (\xi_{1}(r)) \rVert_{L_{2}(U_{1}, L_{\sigma}^{2})}^{2} + \lVert F_{2}(\xi_{2}(r)) \rVert_{L_{2} (U_{2}, \mathring{L}^{2})}^{2} dr < \infty \}) = 1, 
\end{equation} 
\item [] (M2) under $P$, $\zeta = (\zeta_{1}, \zeta_{2})$ are cylindrical $(\bar{\mathcal{B}}_{t})_{t\geq s}$-Wiener processes on $U_{1} \times U_{2}$ starting from initial condition $\zeta^{\text{in}}$ at initial time $s$ and for every $\psi_{i} = (\psi_{i}^{1}, \psi_{i}^{2}) \in C^{\infty} (\mathbb{T}^{n}) \cap L_{\sigma}^{2} \times C^{\infty} (\mathbb{T}^{n}) \times \mathring{L}^{2}$ and $t \geq s$, 
\begin{subequations}
\begin{align}
& \langle \xi_{1}(t) - \xi_{1}(s), \psi_{i}^{1} \rangle + \int_{s}^{t} \langle \text{div} (\xi_{1}(r) \otimes \xi_{1}(r)) + (-\Delta)^{m} \xi_{1}(r) - \xi_{2}(r)e^{n}, \psi_{i}^{1} \rangle dr  \nonumber\\
& \hspace{50mm} = \int_{s}^{t} \langle \psi_{i}^{1}, F_{1}(\xi_{1}(r)) d\zeta_{1}(r) \rangle, \\
& \langle \xi_{2}(t)- \xi_{2}(s), \psi_{i}^{2} \rangle + \int_{s}^{t} \langle \text{div} (\xi_{1}(r) \xi_{2}(r)) - \Delta \xi_{2}(r), \psi_{i}^{2} \rangle dr \nonumber\\
& \hspace{50mm} = \int_{s}^{t} \langle \psi_{i}^{2}, F_{2}(\xi_{2}(r)) d \zeta_{2}(r) \rangle, 
\end{align}
\end{subequations} 
\item [] (M3) for any $q \in \mathbb{N}$ there exists a function $t \mapsto C_{t,q} \in \mathbb{R}_{+}$ such that for all $t \geq s$, 
\begin{align}
&\mathbb{E}^{P} [ \sup_{r \in [0,t]} \lVert \xi_{1}(r) \rVert_{L_{x}^{2}}^{2q} + \int_{s}^{t} \lVert \xi_{1}(r) \rVert_{\dot{H}_{x}^{\gamma}}^{2} dr \nonumber \\
& \hspace{2mm} + \sup_{r \in [0,t]} \lVert \xi_{2}(r) \rVert_{L_{x}^{2}}^{2q} + \int_{s}^{t} \lVert \xi_{2} (r) \rVert_{\dot{H}_{x}^{1}}^{2} dr] \leq C_{t,q} (1+ \lVert \xi_{1}^{\text{in}} \rVert_{L_{x}^{2}}^{2q} + \lVert \xi_{2}^{\text{in}} \rVert_{L_{x}^{2}}^{2q}). \label{estimate 125}
\end{align}  
\end{enumerate} 
The set of all such probabilistically weak solutions with the same constant $C_{t,q}$ in \eqref{estimate 125} for every $q \in \mathbb{N}$ and $t \geq s$ will be denoted by $\mathcal{W} ( s, \xi^{\text{in}}, \zeta^{\text{in}}, \{C_{t,q} \}_{q \in \mathbb{N}, t \geq s } )$. 
\end{define}

For any stopping time $\tau$, we set 
\begin{equation}\label{estimate 126}
\bar{\Omega}_{\tau} \triangleq \{ \omega( \cdot \wedge \tau(\omega)): \hspace{0.5mm} \omega \in \bar{\Omega} \} = \{ \omega \in \bar{\Omega}: \hspace{0.5mm} (\xi, \zeta)(t,\omega) = (\xi, \zeta)(t\wedge \tau(\omega), \omega) \} 
\end{equation} 
and denote the $\sigma$-algebra associated to $\tau$ by $(\bar{\mathcal{B}}_{\tau})$. 

\begin{define}\label{Definition 5.2}
Let $s \geq 0, \xi^{\text{in}} = ( \xi_{1}^{\text{in}}, \xi_{2}^{\text{in}}) \in L_{\sigma}^{2} \times \mathring{L}^{2}$, and $\zeta^{\text{in}} = (\zeta_{1}^{\text{in}}, \zeta_{2}^{\text{in}}) \in \tilde{U}_{1} \times \tilde{U}_{2}$. Let $\tau \geq s$ be a stopping time of $(\bar{\mathcal{B}}_{t})_{t\geq s}$. A probability measure $P \in \mathcal{P} (\bar{\Omega}_{\tau})$ is a probabilistically weak solution to \eqref{3} on $[s, \tau]$ with initial condition $(\xi^{\text{in}}, \zeta^{\text{in}})$ at initial time $s$ if 
\begin{enumerate}
\item [] (M1) $P( \{ \xi(t) = \xi^{\text{in}}, \zeta(t) = \zeta^{\text{in}} \hspace{1mm} \forall \hspace{1mm} t \in [0,s] \}) = 1$ and for all $l \in \mathbb{N}$, 
\begin{equation}
P(\{ ( \xi, \zeta) \in \bar{\Omega}: \hspace{0.5mm} \int_{0}^{l \wedge \tau} \lVert F_{1}(\xi_{1}(r)) \rVert_{L_{2}(U_{1}, L_{\sigma}^{2})}^{2} + \lVert F_{2} (\xi_{2} (r)) \rVert_{L_{2}(U_{2}, \mathring{L}^{2})}^{2} dr < \infty \}) = 1, 
\end{equation} 
\item [] (M2) under $P$, for both $k \in \{1,2\}$, $\langle \zeta_{k}(\cdot \wedge \tau), l_{i}^{k} \rangle_{U_{k}}$, where $\{l_{i}^{k}\}_{i \in \mathbb{N}}$ is an orthonormal basis of $U_{k}$, is a continuous, square-integrable $(\bar{\mathcal{B}}_{t})_{t\geq s}$-martingale with initial condition $\langle \zeta_{k}^{\text{in}}, l_{i}^{k} \rangle_{U_{k}}$ at initial time $s$ with its quadratic variation process given by $(t\wedge \tau - s) \lVert l_{i} ^{k}\rVert_{U_{k}}^{2}$ and for every $\psi_{i} = (\psi_{i}^{1}, \psi_{i}^{2}) \in C^{\infty} (\mathbb{T}^{n}) \cap L_{\sigma}^{2} \times C^{\infty} (\mathbb{T}^{n}) \cap \mathring{L}^{2}$ and $t \geq s$, 
\begin{subequations}\label{[Equ. (136), Y20c]}
\begin{align}
& \langle \xi_{1}(t\wedge \tau) - \xi_{1}(s), \psi_{i}^{1} \rangle + \int_{s}^{t\wedge \tau} \langle \text{div} (\xi_{1}(r) \otimes \xi_{1}(r)) + (-\Delta)^{m} \xi_{1}(r) - \xi_{2}(r)e^{n}, \psi_{i}^{1} \rangle dr  \nonumber\\
& \hspace{50mm} = \int_{s}^{t\wedge \tau} \langle \psi_{i}^{1}, F_{1}(\xi_{1}(r)) d\zeta_{1}(r) \rangle, \\
& \langle \xi_{2}(t\wedge \tau)- \xi_{2}(s), \psi_{i}^{2} \rangle + \int_{s}^{t\wedge \tau} \langle \text{div} (\xi_{1}(r) \xi_{2}(r)) - \Delta \xi_{2}(r), \psi_{i}^{2} \rangle dr \nonumber \\
& \hspace{50mm} = \int_{s}^{t\wedge \tau} \langle \psi_{i}^{2}, F_{2}(\xi_{2}(r)) d \zeta_{2}(r) \rangle, 
\end{align}
\end{subequations} 
\item [] (M3) for any $q \in \mathbb{N}$ there exists a function $t \mapsto C_{t,q} \in \mathbb{R}_{+}$ such that for all $t \geq s$,   
\begin{align}
&\mathbb{E}^{P} [ \sup_{r \in [0,t\wedge \tau]} \lVert \xi_{1}(r) \rVert_{L_{x}^{2}}^{2q} + \int_{s}^{t\wedge \tau} \lVert \xi_{1}(r) \rVert_{\dot{H}_{x}^{\gamma}}^{2} dr \nonumber \\
& \hspace{2mm} + \sup_{r \in [0,t\wedge \tau]} \lVert \xi_{2}(r) \rVert_{L_{x}^{2}}^{2q} + \int_{s}^{t\wedge \tau} \lVert \xi_{2} (r) \rVert_{\dot{H}_{x}^{1}}^{2} dr] \leq C_{t,q} (1+ \lVert \xi_{1}^{\text{in}} \rVert_{L_{x}^{2}}^{2q} + \lVert \xi_{2}^{\text{in}} \rVert_{L_{x}^{2}}^{2q}). 
\end{align}  
\end{enumerate} 
\end{define}

\begin{proposition}\label{Proposition 5.1}
For every $(s, \xi^{\text{in}}, \zeta^{\text{in}}) \in [0,\infty) \times L_{\sigma}^{2} \times \mathring{L}^{2} \times \tilde{U}_{1} \times \tilde{U}_{2}$, there exists a probabilistically weak solution $P \in \mathcal{P} (\bar{\Omega})$ to \eqref{3} with initial condition $(\xi^{\text{in}}, \zeta^{\text{in}})$ at initial time $s$ according to Definition \ref{Definition 5.1}. Moreover, if there exists a family $\{(s_{l}, \xi_{l}, \zeta_{l})\}_{l\in\mathbb{N}} \subset [0,\infty) \times L_{\sigma}^{2} \times \mathring{L}^{2} \times \tilde{U}_{1} \times \tilde{U}_{2}$ such that $\lim_{l\to\infty} \lVert (s_{l}, \xi_{l}, \zeta_{l}) - (s, \xi^{\text{in}}, \zeta^{\text{in}}) \rVert_{\mathbb{R} \times L_{\sigma}^{2} \times \mathring{L}^{2} \times \tilde{U}_{1}\times \tilde{U}_{2}} = 0$ and $P_{l} \in \mathcal{W} (s_{l}, \xi_{l}, \zeta_{l}, \{C_{t,q} \}_{q \in \mathbb{N}, t \geq s_{l}})$, then there exists a subsequence $\{P_{l_{k}} \}_{k\in\mathbb{N}}$ that converges weakly to some $P \in \mathcal{W} (s, \xi^{\text{in}}, \zeta^{\text{in}}, \{C_{t,q}\}_{q \in \mathbb{N}, t \geq s})$. 
\end{proposition}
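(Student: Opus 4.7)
The plan is to mimic the strategy for Proposition \ref{Proposition 4.1} but in the probabilistically weak (multiplicative-noise) setting, exploiting the linearity of $F_{k}(\cdot) = \cdot$. For existence, I would set up a Galerkin scheme: fix complete orthonormal systems of $L_{\sigma}^{2}$ and $\mathring{L}^{2}$, let $\Pi_{N}$ be the associated projections, and, on the given stochastic basis carrying a cylindrical Wiener process $\zeta^{\text{in}} + (\zeta_{1}-\zeta_{1}(s), \zeta_{2}-\zeta_{2}(s))$ on $U_{1}\times U_{2}$, solve the finite-dimensional Stratonovich-free It\^o SDE
\begin{equation*}
d\xi_{k}^{N} + \Pi_{N}[\cdots]\,dt = \Pi_{N} \xi_{k}^{N}\, d\zeta_{k}, \qquad \xi_{k}^{N}(s) = \Pi_{N} \xi_{k}^{\text{in}},
\end{equation*}
for $k \in \{1,2\}$, which has a unique strong solution by standard theory. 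Applying It\^o's formula to $\lVert \xi_{k}^{N}\rVert_{L_{x}^{2}}^{2q}$ and using that $\langle \text{div}(\xi_{1}^{N}\otimes \xi_{1}^{N}),\xi_{1}^{N}\rangle = 0$, $\langle \text{div}(\xi_{1}^{N}\xi_{2}^{N}),\xi_{2}^{N}\rangle = 0$, the coupling term $\int \xi_{2}^{N}\xi_{1}^{N,n}$ controlled by $\lVert \xi_{1}^{N}\rVert_{L_{x}^{2}}^{2}+\lVert \xi_{2}^{N}\rVert_{L_{x}^{2}}^{2}$, and the BDG inequality on the stochastic integral whose quadratic variation only involves $\lVert \xi_{k}^{N}\rVert_{L_{x}^{2}}^{2}$ (since $F_{k}=\text{Id}$), together with Gr\"onwall, yields uniform estimates of the form \eqref{estimate 125}, as well as the integrated dissipation bounds $\int_{s}^{t}\lVert \xi_{1}^{N}\rVert_{\dot{H}_{x}^{m}}^{2}+\lVert \xi_{2}^{N}\rVert_{\dot{H}_{x}^{1}}^{2}\,dr$; in particular, one obtains the \eqref{estimate 125} bound with $\gamma = m$, from which the stated version follows by interpolation for any smaller $\gamma$.

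Next I would establish tightness of the laws $\{\mathcal{L}(\xi^{N},\zeta)\}_{N}$ on $\bar{\Omega}$. The uniform moment bounds, combined with fractional-in-time H\"older estimates for $\xi_{k}^{N}$ deduced from the mild formulation (deterministic parts gain regularity from $e^{-(-\Delta)^{m}t}$ and $e^{t\Delta}$; the stochastic parts are handled by a factorization trick analogous to \eqref{estimate 314}, using the multiplicative noise bound $\lVert \xi_{k}^{N}\xi_{k}^{N}\ast\rVert\lesssim \lVert \xi_{k}^{N}\rVert_{L_{x}^{2}}^{2}$), give compactness in $C([0,\infty);H^{-3})\cap L^{2}_{\text{loc}}([0,\infty);L_{x}^{2})$ for $\xi_{1}^{N}$ and in $C([0,\infty);H^{-n})\cap L^{2}_{\text{loc}}([0,\infty);L_{x}^{2})$ for $\xi_{2}^{N}$ by an Aubin--Lions/Simon argument. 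Since $L^{\infty}_{\text{loc}}([0,\infty);L_{\sigma}^{2})$ (weak-$\ast$) is not Polish, I would apply the Jakubowski--Skorokhod representation theorem to obtain a new probability space on which $(\tilde{\xi}^{N_{k}},\tilde{\zeta}^{N_{k}})\to(\tilde{\xi},\tilde{\zeta})$ almost surely in the topology of $\bar{\Omega}$. The identification of the limit as a probabilistically weak solution then requires passing to the limit in the nonlinear terms $\text{div}(\xi_{1}\otimes \xi_{1})$, $\text{div}(\xi_{1}\xi_{2})$, and in the stochastic integrals $\int\langle \psi_{i}^{k},\xi_{k}^{N}\,d\zeta_{k}\rangle$; the former uses strong $L^{2}_{\text{loc}}$ convergence, the latter standard martingale-identification arguments (e.g.\ showing that the explicit martingale part in \eqref{[Equ. (136), Y20c]} has the prescribed quadratic variation, whence $\tilde{\zeta}$ is a cylindrical Wiener process and the stochastic integral converges).

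For the stability assertion, given $P_{l}\in \mathcal{W}(s_{l},\xi_{l},\zeta_{l},\{C_{t,q}\})$ with $(s_{l},\xi_{l},\zeta_{l})\to(s,\xi^{\text{in}},\zeta^{\text{in}})$, the uniform bounds from (M3) with the common $\{C_{t,q}\}$ supply exactly the same estimates used in the tightness step above; applying the same compactness/representation/identification argument as for the Galerkin sequence yields a weakly convergent subsequence $P_{l_{k}}\rightharpoonup P$ with $P\in\mathcal{W}(s,\xi^{\text{in}},\zeta^{\text{in}},\{C_{t,q}\})$, where the initial-condition convergence is preserved because $(\xi,\zeta)\mapsto (\xi(0),\zeta(0))$ is continuous on $\bar{\Omega}$ and $(\xi_{l},\zeta_{l})\to(\xi^{\text{in}},\zeta^{\text{in}})$ in $L_{\sigma}^{2}\times \mathring{L}^{2}\times \tilde{U}_{1}\times \tilde{U}_{2}$. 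The same bounds are inherited in the limit by lower semicontinuity of the $L_{t}^{\infty}L_{x}^{2}$ and $L_{t}^{2}\dot{H}_{x}^{\gamma}$ norms under the Jakubowski--Skorokhod convergence.

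The main obstacle I anticipate is the compactness required to pass to the limit in $\text{div}(\xi_{1}\otimes \xi_{1})$ and in $\text{div}(\xi_{1}\xi_{2})$: one needs strong convergence of $\xi_{1}^{N_{k}}$ and $\xi_{2}^{N_{k}}$ in $L^{2}_{t,x}$ on compact sets, which demands the time-H\"older estimates for the mild formulation including the stochastic convolution with the multiplicative noise $\xi_{k}\,d\zeta_{k}$. Here the absence of a trace-class assumption on $\zeta_{k}$ (they are cylindrical) is handled by embedding $U_{k}\hookrightarrow \tilde{U}_{k}$ Hilbert--Schmidt and using that $\xi_{k}\mapsto \xi_{k}\cdot$ is a bounded operator from $L^{2}$ to $L_{2}(U_{k},L^{2})$ after truncation; the required factorization argument and the Kolmogorov continuity bound in $C_{t}^{\alpha} H_{x}^{-N}$ for suitable $N$ are standard but technical. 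Once this is in place, the identification of the cylindrical Wiener noise and of the stochastic integral proceeds by the classical martingale-representation argument of Da Prato--Zabczyk.
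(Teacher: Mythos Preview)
Your proposal is correct and follows essentially the same Galerkin--tightness--identification scheme that the paper invokes; the paper itself gives no details for Proposition \ref{Proposition 5.1}, merely stating that it ``follows from Proposition \ref{Proposition 4.1} identically to previous works'' and citing \cite[The.~5.1]{HZZ19}, \cite[Pro.~5.1]{Y20a}, \cite[Pro.~5.1]{Y20c}. Two minor remarks: first, the proposition is stated for general $F_{k}$ satisfying \eqref{[Equ. (11), Y20a]}--\eqref{[Equ. (12), Y20a]}, not only $F_{k}=\mathrm{Id}$, though your argument extends verbatim under the linear growth and continuity assumptions; second, the paper (in the proof of Proposition \ref{Proposition 4.1}) avoids Jakubowski--Skorokhod by proving tightness in the Polish space $\mathbb{S}$ of \eqref{estimate 1} rather than directly in $\Omega_{0}$ or $\bar{\Omega}$, so classical Skorokhod suffices.
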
 

The proof of Proposition \ref{Proposition 5.1} follows from Proposition \ref{Proposition 4.1} identically to previous works (see \cite[The. 5.1]{HZZ19}, \cite[Pro. 5.1]{Y20a}, \cite[Pro. 5.1]{Y20c}). Additionally, the following two results also follow from \cite[Pro. 5.2 and 5.3]{HZZ19}, to which we refer interested readers for details. 

\begin{lemma}\label{Lemma 5.2}
\rm{(\cite[Pro. 5.2]{HZZ19})} Let $\tau$ be a bounded $(\bar{\mathcal{B}}_{t})_{t \geq 0}$-stopping time. Then, for every $\omega \in \bar{\Omega}$, there exists $Q_{\omega} \in \mathcal{P}(\bar{\Omega})$ such that 
\begin{subequations}
\begin{align}
& Q_{\omega} ( \{ \omega' \in \bar{\Omega}: \hspace{0.5mm}  ( \xi, \zeta) (t, \omega') = (\xi, \zeta) (t,\omega) \hspace{1mm} \forall \hspace{1mm} t \in [0, \tau(\omega)] \}) = 1, \\
& Q_{\omega} (A) = R_{\tau(\omega), \xi(\tau(\omega), \omega), \zeta(\tau(\omega), \omega)} (A) \hspace{1mm} \forall \hspace{1mm} A \in \bar{\mathcal{B}}^{\tau(\omega)}, 
\end{align} 
\end{subequations}
where $R_{\tau(\omega), \xi(\tau(\omega), \omega), \zeta(\tau(\omega), \omega)} \in \mathcal{P} (\bar{\Omega})$ is a probabilistically weak solution to \eqref{3} with initial condition $(\xi(\tau(\omega), \omega), \zeta(\tau(\omega), \omega))$ at initial time $\tau(\omega)$. Moreover, for every $A \in \bar{\mathcal{B}}$, the mapping $\omega \mapsto Q_{\omega}(A)$ is $\bar{\mathcal{B}}_{\tau}$-measurable.  
\end{lemma}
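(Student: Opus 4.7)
The plan is to follow the standard Stroock--Varadhan reconstruction scheme, lifted to the enlarged path space $\bar{\Omega}$ that carries both the canonical process $\xi$ and the cylindrical noise $\zeta$. First, I would invoke Proposition \ref{Proposition 5.1} to produce, for every initial datum $(s, y, \eta) \in [0,\infty) \times L_{\sigma}^{2} \times \mathring{L}^{2} \times \tilde{U}_{1} \times \tilde{U}_{2}$, a probabilistically weak solution $R_{s,y,\eta} \in \mathcal{P}(\bar{\Omega})$. The weak stability part of that same proposition would then be used to show that the graph
\[
\bigl\{(s,y,\eta,P) : P \in \mathcal{W}(s,y,\eta,\{C_{t,q}\}_{q\in\mathbb{N},t\geq s})\bigr\}
\]
is closed, hence Borel, in the Polish space $[0,\infty)\times L_{\sigma}^{2}\times \mathring{L}^{2}\times \tilde{U}_{1}\times \tilde{U}_{2}\times \mathcal{P}(\bar{\Omega})$, with nonempty sections. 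A Jankov--von Neumann type measurable selection theorem then furnishes a universally measurable assignment $(s,y,\eta)\mapsto R_{s,y,\eta}$, which one renders suitably measurable after passing to the universal completion.

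Second, I would define $Q_{\omega}$ by pasting together the deterministic segment $\omega(\cdot\wedge\tau(\omega))$ on $[0,\tau(\omega)]$ with a random continuation drawn from $R_{\tau(\omega),\xi(\tau(\omega),\omega),\zeta(\tau(\omega),\omega)}$ from time $\tau(\omega)$ onward; symbolically $Q_{\omega} = \delta_{\omega(\cdot\wedge\tau(\omega))} \otimes_{\tau(\omega)} R_{\tau(\omega),\xi(\tau(\omega),\omega),\zeta(\tau(\omega),\omega)}$. With this definition the first claimed identity is immediate from the factor $\delta_{\omega(\cdot\wedge\tau(\omega))}$, while the second follows directly from the restriction of $R_{\tau(\omega),\xi(\tau(\omega),\omega),\zeta(\tau(\omega),\omega)}$ to $\bar{\mathcal{B}}^{\tau(\omega)}$.

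Third, I would establish $\bar{\mathcal{B}}_{\tau}$-measurability of $\omega\mapsto Q_{\omega}(A)$ by a monotone class argument. On cylinder sets $A$ lying in $\bar{\mathcal{B}}^{\tau}$, the map $\omega \mapsto Q_{\omega}(A)$ is the composition of $\omega \mapsto (\tau(\omega),\xi(\tau(\omega),\omega),\zeta(\tau(\omega),\omega))$, which is $\bar{\mathcal{B}}_{\tau}$-measurable by the definition of a stopping time and of $\bar{\mathcal{B}}_{\tau}$, with the universally measurable evaluation of the selection $R_{\cdot}$ at the cylinder $A$; this gives measurability on that class. On sets $A \in \bar{\mathcal{B}}_{\tau}^{0}$, the event only depends on the pre-$\tau$ trajectory, so $Q_{\omega}(A)$ equals $1_{A}(\omega(\cdot\wedge\tau(\omega)))$ and is directly $\bar{\mathcal{B}}_{\tau}$-measurable. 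The $\pi$-system generated by the union of these two classes generates $\bar{\mathcal{B}}$, so the standard Dynkin/monotone-class argument extends measurability to all $B\in\bar{\mathcal{B}}$.

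The principal obstacle is the measurable selection step, since one must equip $\mathcal{P}(\bar{\Omega})$ with a Polish structure compatible with the notion of weak convergence delivered by Proposition \ref{Proposition 5.1}, and verify that the map $(s,y,\eta) \mapsto \mathcal{W}(s,y,\eta,\{C_{t,q}\})$ has the measurable-graph property on that space, taking care of the hybrid $C_{t}H^{-3}\cap L^{\infty}_{t}L^{2}_{\sigma}$ (respectively $C_{t}H^{-n}\cap L^{\infty}_{t}\mathring{L}^{2}$) structure built into $\bar{\Omega}$. This is exactly the content of \cite[Pro. 5.2]{HZZ19} in a parallel Navier--Stokes setting; the present situation differs only in that the canonical process acquires a temperature component $\xi_{2}$ and a matching noise component $\zeta_{2}$, so the proof transfers with purely notational changes, and I would simply refer the reader to \cite{HZZ19} for the remaining technicalities.
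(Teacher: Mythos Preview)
Your proposal is correct and matches the paper's approach: the paper does not give an independent proof but simply refers the reader to \cite[Pro.~5.2]{HZZ19}, noting that the presence of the additional temperature component $\xi_{2}$ and noise component $\zeta_{2}$ entails only notational changes. Your sketch of the Stroock--Varadhan reconstruction via Proposition~\ref{Proposition 5.1} and measurable selection is precisely the argument carried out in \cite{HZZ19}, so you have in fact supplied more detail than the paper itself.
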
 

\begin{lemma}\label{Lemma 5.3}
\rm{(\cite[Pro. 5.3]{HZZ19})} Let $\tau$ be a bounded $(\bar{\mathcal{B}}_{t})_{t\geq 0}$-stopping time, $\xi^{\text{in}} = (\xi_{1}^{\text{in}}, \xi_{2}^{\text{in}}) \in L_{\sigma}^{2} \times \mathring{L}^{2}$, and $P$ be a probabilistically weak solution to \eqref{3} on $[0,\tau]$ with initial condition $(\xi^{\text{in}}, 0)$ at initial time $0$ according to Definition \ref{Definition 5.2}. Suppose that there exists a Borel set $\mathcal{N} \subset \bar{\Omega}_{\tau}$ such that $P(\mathcal{N}) = 0$ and $Q_{\omega}$ from Lemma \ref{Lemma 5.2} satisfies for every $\omega \in \bar{\Omega}_{\tau} \setminus \mathcal{N}$ 
\begin{equation}\label{[Equ. (143), Y20c]}
Q_{\omega} (\{ \omega' \in \bar{\Omega}: \hspace{0.5mm}  \tau(\omega') = \tau(\omega) \}) = 1. 
\end{equation} 
Then the probability measure $P\otimes_{\tau}R \in \mathcal{P} (\bar{\Omega})$ defined by 
\begin{equation}\label{[Equ. (144), Y20c]}
P \otimes_{\tau} R (\cdot) \triangleq \int_{\bar{\Omega}} Q_{\omega} (\cdot) P(d \omega)
\end{equation} 
satisfies $P\otimes_{\tau}R \rvert_{\bar{\Omega}_{\tau}} = P \rvert_{\bar{\Omega}_{\tau}}$ and it is a probabilistically weak solution to \eqref{3} on $[0,\infty)$ with initial condition $(\xi^{\text{in}}, 0)$ at initial time $0$. 
\end{lemma}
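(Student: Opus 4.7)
The plan is to follow the Stroock--Varadhan style gluing argument for martingale problems, adapted to the probabilistically weak framework where the driving noise $\zeta=(\zeta_1,\zeta_2)$ is carried as a canonical coordinate on $\bar\Omega$, exactly as done in \cite[Pro. 5.3]{HZZ19} but with the extra Boussinesq bookkeeping (a second canonical coordinate $\xi_2$ driven by $\zeta_2$ and the extra buoyancy term $-\xi_2 e^n$ in the momentum equation). The statement $P\otimes_\tau R\rvert_{\bar\Omega_\tau}=P\rvert_{\bar\Omega_\tau}$ is essentially built into the disintegration \eqref{[Equ. (144), Y20c]}: by Lemma~\ref{Lemma 5.2}, $Q_\omega$ is concentrated on paths that coincide with $\omega$ on $[0,\tau(\omega)]$, so integrating against $P(d\omega)$ reproduces $P$ on $\bar{\mathcal B}_\tau$. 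Combined with the hypothesis $P(\mathcal{N})=0$ and \eqref{[Equ. (143), Y20c]}, this guarantees that the time at which we glue is $P\otimes_\tau R$-almost surely equal to the original $\tau$, so subsequent arguments can localize cleanly at $\tau$.

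Next I would verify (M1)--(M3) of Definition \ref{Definition 5.1} for $P\otimes_\tau R$ by splitting every quantity at $\tau$. For (M1), the initial-condition clause is immediate from $P\otimes_\tau R\rvert_{\bar\Omega_\tau}=P\rvert_{\bar\Omega_\tau}$, while the finiteness $\int_0^l\|F_k(\xi_k)\|_{L_2}^2dr<\infty$ is split as $\int_0^{l\wedge\tau}+\int_{l\wedge\tau}^l$: the first piece is controlled by $P$ (Definition \ref{Definition 5.2}, (M1)) and the second by each $R_{\tau(\omega),\xi(\tau(\omega),\omega),\zeta(\tau(\omega),\omega)}$ (Definition \ref{Definition 5.1}, (M1) at the restart time). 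For (M3), the bounds follow the same splitting: use $P$'s bound on $[0,\tau]$ and the bound provided by the restarted solution $R$ on $[\tau,t]$, then combine using $\lVert\xi_k(\tau)\rVert_{L_x^2}^{2q}$ which is $P$-integrable and appears as the initial data for $R_{\tau(\omega),\cdot,\cdot}$.

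The main work is (M2). One needs to show that under $P\otimes_\tau R$, $\langle \zeta_k(\cdot),l_i^k\rangle_{U_k}$ is a continuous square-integrable $(\bar{\mathcal B}_t)_{t\geq 0}$-martingale with quadratic variation $t\lVert l_i^k\rVert_{U_k}^2$, and that \eqref{[Equ. (136), Y20c]} extends from $[0,\tau]$ to $[0,\infty)$. For a bounded $\bar{\mathcal B}_s$-measurable $G$ and $s<t$, the quantity $\mathbb E^{P\otimes_\tau R}[G(M_t-M_s)]$ (where $M_\bullet$ denotes any of the relevant candidate martingales obtained from \eqref{[Equ. (136), Y20c]} or the Wiener martingale) is split according to $\{s\geq \tau\}$ and $\{s<\tau\}$. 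On $\{s\geq\tau\}$ one writes, using the disintegration \eqref{[Equ. (144), Y20c]} and the $\bar{\mathcal B}_\tau$-measurability of $\omega\mapsto Q_\omega$, the expectation as $\int \mathbb E^{Q_\omega}[G(M_t-M_s)]P(d\omega)$ and invokes the martingale property of $R_{\tau(\omega),\cdot,\cdot}$ after time $\tau(\omega)$. On $\{s<\tau\}$ one splits further at $t$: for $t\leq\tau$ use the martingale property of $P$ on $[0,\tau]$, and for $t>\tau$ apply the tower property by conditioning on $\bar{\mathcal B}_\tau$ and using the previous step. Continuity, square-integrability, and identification of the quadratic variation then follow by the same splitting applied to $M_t^2-\langle\langle M\rangle\rangle_t$.

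The hard part will be Step~(M2), specifically checking the martingale property across the random time $\tau$: the delicate point is that the restarted measures $R_{\tau(\omega),\xi(\tau(\omega),\omega),\zeta(\tau(\omega),\omega)}$ have both a random starting time and a random initial condition measurable in the canonical coordinates, so one must genuinely use the $\bar{\mathcal B}_\tau$-measurability of $\omega\mapsto Q_\omega(A)$ from Lemma~\ref{Lemma 5.2} together with \eqref{[Equ. (143), Y20c]} on the $P$-full set $\bar\Omega_\tau\setminus\mathcal N$. A further technical point specific to the Boussinesq case is that the martingale problem for $\xi_1$ now contains the coupling term $-\xi_2 e^n$, but this term is linear in $\xi_2$ and bounded in $L_x^2$ under the (M3) estimate, so the splitting-at-$\tau$ argument goes through unchanged; no new convex-integration input is needed here.
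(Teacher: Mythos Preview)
Your proposal is correct and follows essentially the same approach as the paper, which does not give its own proof but refers the reader to \cite[Pro.~5.3]{HZZ19} for the Stroock--Varadhan gluing argument you outline. Your observation that the Boussinesq-specific coupling term $-\xi_2 e^n$ is linear and controlled by the (M3) bound, so that the splitting-at-$\tau$ argument goes through unchanged, is exactly the point.
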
 

Now we fix $\mathbb{R}$-valued Wiener processes $B_{1}$ and $B_{2}$ on $(\Omega, \mathcal{F}, \textbf{P})$ with $(\mathcal{F}_{t})_{t\geq 0}$ as its normal filtration. For $l \in \mathbb{N}, L > 1$, and $\delta \in (0, \frac{1}{12})$, we define 
\begin{align}
\tau_{L}^{l} (\omega) \triangleq& \inf\{t \geq 0: \hspace{0.5mm} \max_{k=1,2} \lvert \zeta_{k}(t,\omega) \rvert > (L - \frac{1}{l})^{\frac{1}{4}}\} \nonumber\\
& \wedge \inf\{t \geq 0: \hspace{0.5mm} \max_{k=1,2} \lVert \zeta_{k}(\omega) \rVert_{C_{t}^{\frac{1}{2} - 2 \delta}} > (L - \frac{1}{l})^{\frac{1}{2}} \wedge L, \hspace{5mm} \tau_{L}(\omega) \triangleq& \lim_{l\to\infty} \tau_{L}^{l}(\omega). \label{[Equ. (145a) and (145b), Y20c]}
\end{align}
Comparing \eqref{3} and \eqref{[Equ. (136), Y20c]} we see that $F_{k} (\xi_{k}(r)) = \xi_{k}(r), \zeta_{k} = B_{k}$ for $k \in \{1,2\}$; as Brownian path is locally H$\ddot{\mathrm{o}}$lder continuous with exponent $\alpha \in (0,\frac{1}{2})$, it follows that $\tau_{L}$ is a stopping time of $(\bar{\mathcal{B}}_{t})_{t\geq 0}$. For the fixed $(\Omega, \mathcal{F}, \textbf{P})$, we assume Theorem \ref{Theorem 2.3} and denote by $(u,\theta)$ the solution constructed by Theorem \ref{Theorem 2.3} on $[0,\mathfrak{t}]$ where $\mathfrak{t} = T_{L}$ for $L$ sufficiently large and differently from \eqref{[Equ. (33), Y20a]} 
\begin{equation}\label{[Equ. (146), Y20c]}
T_{L} \triangleq \inf\{t> 0: \hspace{0.5mm} \max_{k=1,2} \lvert B_{k}(t) \rvert \geq L^{\frac{1}{4}} \}  \wedge \inf\{t > 0:  \hspace{0.5mm} \max_{k=1,2} \lVert B_{k} \rVert_{C_{t}^{\frac{1}{2} - 2\delta}} \geq L^{\frac{1}{2}} \} \wedge L.  
\end{equation}
With $P$ representing the law of $(u,\theta, B_{1}, B_{2})$, the following two results also follows immediately from previous works (\cite{HZZ19, Y20a, Y20c}) making use of the fact that 
\begin{equation}\label{[Equ. (147), Y20c]}
\zeta(t, (u,\theta, B_{1}, B_{2})) = (B_{1}, B_{2})(t) \hspace{5mm} \forall \hspace{1mm} t \in [0, T_{L}] \hspace{1mm} \textbf{P}\text{-almost surely}. 
\end{equation}
\begin{proposition}\label{Proposition 5.4}
\rm{(cf. \cite[Pro. 5.4]{HZZ19}, \cite[Pro. 5.4]{Y20a}, \cite[Pro. 5.4]{Y20c})} Let $\tau_{L}$ be defined by \eqref{[Equ. (145a) and (145b), Y20c]}. Then $P$, the law of $(u,\theta, B_{1}, B_{2})$, is a probabilistically weak solution to \eqref{3} on $[0, \tau_{L}]$ that satisfies Definition \ref{Definition 5.2}. 
\end{proposition}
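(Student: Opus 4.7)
The plan is to follow the blueprint of Proposition \ref{Proposition 4.5}, adapted to the multiplicative noise setting of Theorem \ref{Theorem 2.3}. We set $P \triangleq \mathcal{L}(u,\theta, B_{1}, B_{2})$, where $(u,\theta)$ is the solution produced by Theorem \ref{Theorem 2.3} on $[0, T_{L}]$ with $T_{L}$ defined in \eqref{[Equ. (146), Y20c]}. The goal is to verify conditions (M1)--(M3) of Definition \ref{Definition 5.2} for $\tau = \tau_{L}$ with $\zeta^{\text{in}} = 0$ and initial data $(u^{\text{in}}, \theta^{\text{in}})$.

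The first step is to exploit the identification \eqref{[Equ. (147), Y20c]}, i.e., $\zeta_{t} = (B_{1}(t), B_{2}(t))$ for $t \in [0, T_{L}]$ $P$-a.s. Comparing the defining inequalities of $T_{L}$ in \eqref{[Equ. (146), Y20c]} with those of $\tau_{L}^{l}$ in \eqref{[Equ. (145a) and (145b), Y20c]} and passing to the limit in $l$, this identification gives $\tau_{L}((u,\theta, B_{1}, B_{2})) = T_{L}$ $\textbf{P}$-a.s., so everything that holds up to $T_{L}$ on $(\Omega, \mathcal{F}, \textbf{P})$ is transported to an analogous statement up to $\tau_{L}$ on $(\bar{\Omega}, \bar{\mathcal{B}}, P)$. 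Condition (M1) is then immediate: the initial distribution is $\delta_{(u^{\text{in}}, \theta^{\text{in}}, 0, 0)}$, and the integrability of $\lVert F_{k}(\xi_{k}) \rVert_{L_{2}(U_{k}, \cdot)}^{2} = \lVert \xi_{k} \rVert_{L_{x}^{2}}^{2}$ (up to a fixed operator norm) follows from $u \in C_{\mathfrak{t}} \dot{H}_{x}^{\gamma}$ and $\theta \in C_{\mathfrak{t}} L_{x}^{p}$ in \eqref{estimate 94}.

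For (M2), the real-valued structure of $B_{1}, B_{2}$ simplifies matters. One fixes orthonormal bases $\{l_{i}^{k}\}_{i \in \mathbb{N}}$ of $U_{k}$; in the present $\mathbb{R}$-valued case $U_{k} \cong \mathbb{R}$, so only $i = 1$ is nontrivial and $\langle \zeta_{k}(\cdot \wedge \tau_{L}), l_{1}^{k}\rangle_{U_{k}}$ reduces to $B_{k}(\cdot \wedge T_{L})$ under $P$. By the optional stopping theorem, this is a continuous square-integrable $(\bar{\mathcal{B}}_{t})_{t\geq 0}$-martingale (one verifies $(\bar{\mathcal{B}}_{t})$-adaptedness using the canonical process) with quadratic variation $(t \wedge \tau_{L})$, which is exactly what is required. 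The weak formulation \eqref{[Equ. (136), Y20c]} then follows by pushing forward the PDE \eqref{3} satisfied by $(u,\theta)$ on $[0, T_{L}]$ to $(\bar{\Omega}, P)$, using again the identification of $\zeta$ with $(B_{1}, B_{2})$ to interpret the It$\hat{\mathrm{o}}$ integrals $\int_{0}^{t\wedge \tau_{L}} \langle \psi_{i}^{k}, \xi_{k}(r) d\zeta_{k}(r) \rangle$ as $\int_{0}^{t\wedge T_{L}} \langle \psi_{i}^{k}, u(r) dB_{k}(r)\rangle$ (respectively for $\theta$). Condition (M3) is then a direct consequence of \eqref{estimate 94} together with the energy-type bound already built into Theorem \ref{Theorem 2.3}.

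The main obstacle I anticipate is the careful verification in (M2) that the stopped process $\zeta_{k}(\cdot \wedge \tau_{L})$ is an $(\bar{\mathcal{B}}_{t})$-martingale under $P$ (as opposed to under the original filtration $(\mathcal{F}_{t})$). This requires showing that $\tau_{L}$ is itself a $(\bar{\mathcal{B}}_{t})$-stopping time (which follows from the continuity of Brownian paths as noted after \eqref{[Equ. (145a) and (145b), Y20c]}), and then invoking the fact that martingality is a property of the joint law, not of the specific probability space. Concretely, one tests against $\bar{\mathcal{B}}_{s}$-measurable bounded continuous functionals and uses the approximation $\tau_{L} = \lim_{l} \tau_{L}^{l}$ together with dominated convergence, exactly as in \cite[Pro. 3.7]{HZZ19} and \cite[Pro. 5.4]{Y20c}. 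This last argument, which must be invoked once for the $\zeta_{k}$ martingales and once for each of the processes defined via \eqref{[Equ. (136), Y20c]}, is the only nontrivial step; everything else is bookkeeping and can be relegated to the appendix, as the author does.
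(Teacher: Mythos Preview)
Your proposal is correct and follows essentially the same approach as the paper, which does not give an explicit proof but refers to \cite[Pro.~5.4]{HZZ19}, \cite[Pro.~5.4]{Y20a}, \cite[Pro.~5.4]{Y20c} and highlights the key identification \eqref{[Equ. (147), Y20c]}; your outline mirrors the blueprint of Proposition~\ref{Proposition 4.5} (the additive-noise analogue proved in Subsection~\ref{Subsection 6.4}) with the appropriate modifications for the multiplicative case. The one point worth checking carefully is exactly the one you flag: passing the $(\mathcal{F}_t)$-martingale property to the $(\bar{\mathcal{B}}_t)$-filtration via the joint law, which is handled in the cited references precisely as you describe.
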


\begin{proposition}\label{Proposition 5.5}
\rm{(cf. \cite[Pro. 5.5]{HZZ19}, \cite[Pro. 5.5]{Y20a}, \cite[Pro. 5.5]{Y20c})}  Let $\tau_{L}$ be defined by \eqref{[Equ. (145a) and (145b), Y20c]}. Then $P\otimes_{\tau_{L}}R$ defined in \eqref{[Equ. (144), Y20c]} is a probabilistically weak solution to \eqref{3} on $[0,\infty)$ that satisfies Definition \ref{Definition 5.1}. 
\end{proposition}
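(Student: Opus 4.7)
The plan is to mimic the argument of Proposition \ref{Proposition 4.6} in the probabilistically weak setting, invoking Lemma \ref{Lemma 5.3} as the concatenation tool instead of Lemma \ref{Lemma 4.3}. First, by Proposition \ref{Proposition 5.4}, $P = \mathcal{L}(u, \theta, B_{1}, B_{2})$ is a probabilistically weak solution to \eqref{3} on $[0, \tau_{L}]$ in the sense of Definition \ref{Definition 5.2}. Since $\tau_{L}$ is a bounded $(\bar{\mathcal{B}}_{t})_{t\geq 0}$-stopping time (after replacing $\tau_{L}$ with $\tau_{L} \wedge L$, which is still $\geq T$ on the event of interest), Lemma \ref{Lemma 5.2} supplies a family $\{Q_{\omega}\}_{\omega \in \bar{\Omega}}$ of probability measures, each of which is the extension of $\delta_{\omega}$ by a probabilistically weak solution started from $(\xi(\tau_{L}(\omega), \omega), \zeta(\tau_{L}(\omega), \omega))$ at initial time $\tau_{L}(\omega)$. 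The concatenation $P \otimes_{\tau_{L}} R$ defined by \eqref{[Equ. (144), Y20c]} is then the candidate probabilistically weak solution on $[0,\infty)$.

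The only non-routine point is to verify the hypothesis \eqref{[Equ. (143), Y20c]} of Lemma \ref{Lemma 5.3}, namely that there exists a Borel set $\mathcal{N} \subset \bar{\Omega}_{\tau_{L}}$ with $P(\mathcal{N}) = 0$ such that for every $\omega \notin \mathcal{N}$,
\begin{equation*}
Q_{\omega}(\{\omega' \in \bar{\Omega}:\hspace{0.5mm} \tau_{L}(\omega') = \tau_{L}(\omega)\}) = 1.
\end{equation*}
The essential observation is that, by the definition \eqref{[Equ. (145a) and (145b), Y20c]}, $\tau_{L}$ depends on $\omega$ only through the continuous path $\zeta = (\zeta_{1}, \zeta_{2})$, and in fact only through its restriction to $[0, \tau_{L}(\omega)]$. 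Meanwhile, \eqref{[Equ. (20a), Y20a]}-type identities stated for $Q_{\omega}$ in Lemma \ref{Lemma 5.2} guarantee that $Q_{\omega}$-almost surely $\omega'(t) = \omega(t)$ for all $t \in [0, \tau_{L}(\omega)]$, so in particular $\zeta(t, \omega') = \zeta(t, \omega)$ on that interval. Combining these, the infimum defining $\tau_{L}(\omega')$ coincides with $\tau_{L}(\omega)$, provided we can exclude the event that the supremum/norm defining $\tau_{L}$ is attained exactly at $\tau_{L}(\omega)$ but not exceeded before; this exceptional event has $P$-measure zero because under $P$ the process $\zeta = (B_{1}, B_{2})$ is a pair of genuine Brownian motions, cf.\ \eqref{[Equ. (147), Y20c]}, and the relevant level sets are almost surely not attained tangentially. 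We collect this bad set into $\mathcal{N}$.

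Once \eqref{[Equ. (143), Y20c]} is established, Lemma \ref{Lemma 5.3} yields $P \otimes_{\tau_{L}} R \rvert_{\bar{\Omega}_{\tau_{L}}} = P \rvert_{\bar{\Omega}_{\tau_{L}}}$ and asserts that $P \otimes_{\tau_{L}} R$ is a probabilistically weak solution on $[0,\infty)$ in the sense of Definition \ref{Definition 5.1}. The verification of (M1)--(M3) of Definition \ref{Definition 5.1} under $P \otimes_{\tau_{L}} R$ is then decomposed along $\tau_{L}$: on $[0, \tau_{L}]$ it is inherited from $P$ via Proposition \ref{Proposition 5.4}, and after $\tau_{L}$ it follows from the fact that $Q_{\omega}$ is itself a probabilistically weak solution on $[\tau_{L}(\omega), \infty)$ with the prescribed initial data, using the strong Markov-type gluing property built into Lemma \ref{Lemma 5.3}. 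The energy-type bound \eqref{estimate 125} for the concatenation follows by splitting the time integrals at $\tau_{L}$ and using the bounds for $P$ on $[0, \tau_{L}]$ together with the uniform bounds of $Q_{\omega}$ on $[\tau_{L}(\omega), \infty)$.

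The main obstacle I anticipate is precisely the $P$-null set argument underlying \eqref{[Equ. (143), Y20c]}: one must be careful that $\tau_{L}$, which involves both an $L^{\infty}_{t}$-supremum of $\zeta_{k}$ and a H\"older seminorm $\lVert \zeta_{k}\rVert_{C_{t}^{\frac{1}{2}-2\delta}}$, is truly a function of the path up to its own value, so that agreement of paths on $[0, \tau_{L}(\omega)]$ forces equality of the stopping times $Q_{\omega}$-a.s. Apart from this measurability/null-set bookkeeping, the remainder of the argument is parallel to \cite[Pro. 5.5]{HZZ19}, \cite[Pro. 5.5]{Y20a}, \cite[Pro. 5.5]{Y20c}, and we refer to the Appendix for the detailed implementation.
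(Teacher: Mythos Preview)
Your proposal is correct and aligns with the paper's approach: the paper itself does not give a detailed proof of Proposition \ref{Proposition 5.5} but simply states that it follows from \cite[Pro.~5.5]{HZZ19}, \cite[Pro.~5.5]{Y20a}, \cite[Pro.~5.5]{Y20c} together with the identity \eqref{[Equ. (147), Y20c]}, which is exactly the mechanism you outline (invoke Lemma \ref{Lemma 5.3}, and verify \eqref{[Equ. (143), Y20c]} using that $\tau_{L}$ depends only on the noise path $\zeta$). One minor remark: the replacement $\tau_{L}\mapsto\tau_{L}\wedge L$ is unnecessary since $\tau_{L}\leq L$ already by \eqref{[Equ. (145a) and (145b), Y20c]}.
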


\begin{proof}[Proof of Theorem \ref{Theorem 2.4} assuming Theorem \ref{Theorem 2.3}]
This proof is similar to the proof of Theorem \ref{Theorem 2.2} assuming Theorem \ref{Theorem 2.1}. We fix $T > 0$ arbitrarily, $\kappa \in (0,1)$, and $K > 1$ such that $\kappa K^{2} \geq 1$. The probability measure $P \otimes_{\tau_{L}}R$ from Proposition \ref{Proposition 5.5} satisfies $P \otimes_{\tau_{L}} R ( \{ \tau_{L} \geq T \}) > \kappa$ due to Lemma \ref{Lemma 5.3} and \eqref{6} which implies 
\begin{equation*}
\mathbb{E}^{P \otimes_{\tau_{L}}R}[ \lVert \xi(T) \rVert_{L_{x}^{2}}^{2}] > \kappa K^{2} e^{2T} [ \lVert \xi_{1}^{\text{in}} \rVert_{L_{x}^{2}}^{2} + \lVert \xi_{2}^{\text{in}} \rVert_{L_{x}^{2}}^{2}] 
\end{equation*}
where $\xi^{\text{in}} = (\xi_{1}^{\text{in}}, \xi_{2}^{\text{in}})$ is the deterministic initial condition in Theorem \ref{Theorem 2.3}. On the other hand, it is well-known that via a Galerkin approximation, one can readily construct a probabilistically weak solution $\mathcal{Q}$ to \eqref{3} starting also from $\xi^{\text{in}}$ such that $\mathbb{E}^{\mathcal{Q}}[ \lVert \xi(T) \rVert_{L_{x}^{2}}^{2}] \leq e^{2T} [\lVert \xi_{1}^{\text{in}} \rVert_{L_{x}^{2}}^{2} + \lVert \xi_{2}^{\text{in}} \rVert_{L_{x}^{2}}^{2}]$. This implies a lack of joint uniqueness in law and consequently non-uniqueness in law for \eqref{3} due to Cherny's law (\cite[Lem. C.1]{HZZ19}). 
\end{proof}  

Now we define 
\begin{equation}\label{estimate 76}
\Upsilon_{1}(t) \triangleq e^{B_{1}(t)},\hspace{3mm} \Upsilon_{2}(t) \triangleq e^{B_{2}(t)}, \hspace{3mm} v \triangleq \Upsilon_{1}^{-1} u, \hspace{3mm} \Theta \triangleq \Upsilon_{2}^{-1} \theta. 
\end{equation} 
By It$\hat{\mathrm{o}}$'s product formula we see that they satisfy 
\begin{subequations}\label{[Equ. (148), Y20c]}
\begin{align}
& \partial_{t} v + \frac{1}{2} v + (-\Delta)^{m} v + \Upsilon_{1} \text{div} (v \otimes v) + \Upsilon_{1}^{-1} \nabla \pi = \Upsilon_{1}^{-1} \Upsilon_{2} \Theta e^{n}, \hspace{3mm} \nabla \cdot v = 0, \\
& \partial_{t} \Theta + \frac{1}{2} \Theta - \Delta \Theta + \Upsilon_{1} \text{div} (v \Theta) = 0. 
\end{align}
\end{subequations} 
Considering this, for every $q \in \mathbb{N}_{0}$ we will construct $(v_{q}, \Theta_{q}, \mathring{R}_{q})$ that solves 
\begin{subequations}\label{[Equ. (149), Y20c]}
\begin{align}
& \partial_{t} v_{q} + \frac{1}{2} v_{q} + (-\Delta)^{m} v_{q} + \Upsilon_{1} \text{div} (v_{q} \otimes v_{q}) + \nabla p_{q}  = \Upsilon_{1}^{-1} \Upsilon_{2} \Theta_{q} e^{n} + \text{div} \mathring{R}_{q}, \nabla\cdot v_{q} = 0,  \label{estimate 71}\\
& \partial_{t} \Theta_{q} + \frac{1}{2} \Theta_{q} - \Delta \Theta_{q} + \Upsilon_{1} \text{div} (v_{q} \Theta_{q}) = 0, \label{estimate 72} 
\end{align}
\end{subequations} 
where $\mathring{R}_{q}$ is a trace-free, symmetric matrix. Similarly to the proof of Theorems \ref{Theorem 2.1}-\ref{Theorem 2.2}, we define $\lambda_{q}$ and $\delta_{q}$ identically to \eqref{[Equ. (47), Y20c]}, while differently we define 
\begin{equation}\label{[Equ. (150), Y20c]}
M_{0}(t) \triangleq e^{4L t + 2L} \text{ and } m_{L} \triangleq \sqrt{3} L^{\frac{1}{4}} e^{\frac{1}{2} L^{\frac{1}{4}}}. 
\end{equation} 
We have for $L > 1, \delta \in (0, \frac{1}{12}), t \in [0, T_{L}]$, and $k \in \{1,2\}$, 
\begin{equation}\label{[Equ. (151) and (152), Y20c]}
\lvert B_{k}(t) \rvert \overset{\eqref{[Equ. (146), Y20c]}}{\leq} L^{\frac{1}{4}}, \lVert B_{k} \rVert_{C_{t}^{\frac{1}{2} - 2 \delta}} \overset{\eqref{[Equ. (146), Y20c]}}{\leq} L^{\frac{1}{2}}; \text{consequently}, \lVert \Upsilon_{k} \rVert_{C_{t}^{\frac{1}{2} - 2\delta}} + \lvert \Upsilon_{k}(t) \rvert + \lvert \Upsilon_{k}^{-1} (t) \rvert \leq m_{L}^{2}. 
\end{equation} 
For inductive estimates, we assume for all $t \in [0, T_{L}]$ and $p \in [1,\infty]$, 
\begin{subequations}\label{[Equ. (153), Y20c]}
\begin{align}
& \lVert v_{q} \rVert_{C_{t}L_{x}^{2}} \leq m_{L} M_{0}(t)^{\frac{1}{2}}(1+ \sum_{1 \leq \iota \leq q} \delta_{\iota}^{\frac{1}{2}}) \leq 2 m_{L} M_{0}(t)^{\frac{1}{2}}, \hspace{3mm} \lVert v_{q} \rVert_{C_{t,x}^{1}} \leq m_{L} M_{0}(t)^{\frac{1}{2}} \lambda_{q}^{4}, \label{[Equ. (153a) and (153b), Y20c]} \\
& \lVert \mathring{R}_{q} \rVert_{C_{t}L_{x}^{1}} \leq c_{R} M_{0}(t) \delta_{q+1}, \label{[Equ. (153c), Y20c]}\\
& \mathbb{E}^{\textbf{P}} [ \lVert \theta_{q} (t\wedge T_{L}) \rVert_{L_{x}^{2}}^{2} + 2 \int_{0}^{t\wedge T_{L}} \lVert \theta_{q} \rVert_{\dot{H}_{x}^{1}}^{2} dr] \leq \lVert \theta_{q}(0) \rVert_{L_{x}^{2}}^{2} + \mathbb{E}^{\textbf{P}} [ \int_{0}^{t\wedge T_{L}} \lVert \theta_{q} \rVert_{L_{x}^{2}}^{2} dr], \label{estimate 86}\\
& \lVert \Theta_{q}(t) \rVert_{L_{x}^{2}}^{2} + \int_{0}^{t} \lVert \Theta_{q} (r) \rVert_{L_{x}^{2}}^{2} + 2 \lVert \Theta_{q}(r) \rVert_{\dot{H}_{x}^{1}}^{2} dr = \lVert \Theta_{q}(0) \rVert_{L_{x}^{2}}^{2},  \lVert \Theta_{q}(t) \rVert_{L_{x}^{p}} \leq e^{- \frac{t}{2}} \lVert \Theta_{q}(0) \rVert_{L_{x}^{p}}, \label{estimate 70}
\end{align}
\end{subequations} 
where we assumed again that $a^{\beta b} > 3$, to be formally stated in \eqref{[Equ. (156), Y20c]} to justify $\sum_{1 \leq \iota} \delta_{\iota}^{\frac{1}{2}} < \frac{1}{2}$. 

Now we first consider the case $n = 2$ so that $m \in (0,1)$ by \eqref{4}. For notations and preliminaries hereafter in case $n=2$, we refer again to Subsection \ref{Preliminaries needed for convex integration in 2D case and more}.  We impose again that $a \in 10 \mathbb{N}$ and $b \in \mathbb{N}$ so that $\lambda_{q+1} \in 10 \mathbb{N} \subset 5 \mathbb{N}$ as required in \eqref{[Equ. (18), Y20c]}.  

\begin{proposition}\label{Proposition 5.6 for n=2}
Fix $\theta^{\text{in}} \in H^{2}(\mathbb{T}^{2})$ that is deterministic and mean-zero. Let 
\begin{equation}\label{estimate 108}
v_{0}(t,x) \triangleq \frac{m_{L} e^{2L t + L}}{2\pi} 
\begin{pmatrix}
\sin(x^{2}) & 0 
\end{pmatrix}^{T}. 
\end{equation}  
Then there exists a unique solution $\Theta_{0} \in L_{\omega}^{\infty} L_{t}^{\infty} H_{x}^{2} \cap L_{\omega}^{\infty} L_{t}^{2} H_{x}^{3}$ to the following linear random PDE: 
\begin{equation}\label{estimate 109}
\partial_{t} \Theta_{0} + \frac{1}{2} \Theta_{0} - \Delta \Theta_{0} + \Upsilon_{1} \text{div} (v_{0} \Theta_{0}) = 0, \hspace{3mm} \Theta_{0} (0,x) = \theta^{\text{in}} (x). 
\end{equation} 
It follows that together with 
\begin{align}
\mathring{R}_{0}(t,x) \triangleq& \frac{m_{L} (2L + \frac{1}{2}) e^{2L t + L}}{2\pi} 
\begin{pmatrix}
0 & - \cos(x^{2})\\
-\cos(x^{2}) & 0 
\end{pmatrix}  \nonumber\\
& \hspace{15mm} + \mathcal{R} (-\Delta)^{m} v_{0}(t,x) - \mathcal{R} (\Upsilon_{1}^{-1} \Upsilon_{2} \Theta_{0}(t,x) e^{2}), \label{estimate 110}
\end{align}
$(v_{0}, \Theta_{0})$ satisfy \eqref{[Equ. (149), Y20c]} at level $q = 0$. Moreover, \eqref{[Equ. (153), Y20c]} at level $q = 0$ is satisfied provided 
\begin{equation}\label{[Equ. (156), Y20c]}
72 \sqrt{3} < 8 \sqrt{3} a^{2 \beta b} \leq \frac{c_{R} e^{L - \frac{1}{2} L^{\frac{1}{4}}}}{L^{\frac{1}{4}} [ 2 L + \frac{1}{2} + \pi + \pi \lVert \theta^{\text{in}} \rVert_{L_{x}^{2}} ]}, \hspace{5mm}  (\frac{3}{2})^{\frac{4}{3}} \leq L \leq a^{4} \pi - 1,  
\end{equation} 
where the inequality $9 < a^{2 \beta b}$ is assumed for the justification of second inequality of \eqref{[Equ. (153a) and (153b), Y20c]}. Furthermore, $v_{0}(0,x)$ and $\mathring{R}_{0}(0,x)$ are both deterministic. 
\end{proposition}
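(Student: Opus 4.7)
The plan is to follow the blueprint of Proposition \ref{Proposition 4.7 for n=2}, with the simplification that the Doss--Sussmann transforms $\Upsilon_1 = e^{B_1}$, $\Upsilon_2 = e^{B_2}$ of \eqref{estimate 76} absorb the multiplicative noise, so the $z_1, z_2$ stochastic convolutions disappear from the initial Reynolds stress: only the deterministic correctors $\mathcal{R}(-\Delta)^m v_0$ and $\mathcal{R}(\Upsilon_1^{-1}\Upsilon_2 \Theta_0 e^2)$ remain in \eqref{estimate 110}. First I would verify the $v_0$-bounds in \eqref{[Equ. (153a) and (153b), Y20c]} at $q = 0$ by direct computation from \eqref{estimate 108}: $\|v_0(t)\|_{L_x^2} = m_L e^{2Lt+L}/\sqrt{2} \leq m_L M_0(t)^{1/2}$, recalling $M_0(t)^{1/2} = e^{2Lt+L}$ from \eqref{[Equ. (150), Y20c]}; and $\|v_0\|_{C_{t,x}^1} \leq m_L e^{2Lt+L}(L+1)/\pi \leq m_L M_0(t)^{1/2} \lambda_0^4$ after invoking $L+1 \leq \pi a^4$ from \eqref{[Equ. (156), Y20c]}.

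Next I would establish existence, uniqueness, and regularity of $\Theta_0$ solving \eqref{estimate 109}. Since $\Upsilon_1 v_0$ is a pathwise smooth, divergence-free, bounded coefficient on $[0,T_L]$ (bounded by $m_L^3 M_0(t)^{1/2}$ via \eqref{[Equ. (151) and (152), Y20c]}), classical linear parabolic theory yields a unique pathwise $H_x^2$-solution, and successive $\dot H_x^k$-energy estimates for $k = 0,1,2$ (exploiting the explicit identity $\Delta v_0 = -v_0$) upgrade this to $\Theta_0 \in L_\omega^\infty L_t^\infty H_x^2 \cap L_\omega^\infty L_t^2 H_x^3$. Two a priori bounds needed later by the induction drop out of testing \eqref{estimate 109} against $|\Theta_0|^{p-2}\Theta_0$: divergence-freeness of $v_0$ kills the nonlinear drift, and the positivity of $-\Delta$ on $|\Theta_0|^{p-2}\Theta_0$ (see Remark \ref{Remark 2.2}) gives $\frac{d}{dt}\|\Theta_0\|_{L_x^p}^p + \frac{p}{2}\|\Theta_0\|_{L_x^p}^p \leq 0$, hence the decay $\|\Theta_0(t)\|_{L_x^p} \leq e^{-t/2}\|\theta^{\text{in}}\|_{L_x^p}$ in \eqref{estimate 70}; at $p = 2$ this becomes the energy equality also in \eqref{estimate 70}.

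I would then verify \eqref{estimate 71} at $q = 0$ by direct substitution: the nonlinearity $\Upsilon_1 \operatorname{div}(v_0 \otimes v_0)$ vanishes identically since $v_0 \otimes v_0$ has only a $(1,1)$-entry depending solely on $x^2$, and the leading oscillatory matrix in \eqref{estimate 110} is designed so that its divergence exactly cancels $\partial_t v_0 + \tfrac{1}{2} v_0$, a one-line algebraic check. Trace-freeness and symmetry of $\mathring R_0$ follow from Lemma \ref{[Def. 9, Lem. 10, CDS12]} once one notes that $(-\Delta)^m v_0$ and $\Theta_0(t)$ are mean-zero (mean-zeroness of $\Theta_0$ is preserved by \eqref{estimate 109} since the equation is in divergence form and $\int_{\mathbb T^2} \theta^{\text{in}}\,dx = 0$). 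The stress bound \eqref{[Equ. (153c), Y20c]} at $q = 0$ is then assembled from three contributions: the leading matrix is bounded in $L_x^1$ by $C m_L(2L+\tfrac{1}{2}) e^{2Lt+L}$; $\|\mathcal R(-\Delta)^m v_0\|_{C_t L_x^1}$ is controlled explicitly via $\Delta v_0 = -v_0$; and $\|\mathcal R(\Upsilon_1^{-1}\Upsilon_2 \Theta_0 e^2)\|_{C_t L_x^1} \lesssim m_L^4 \|\theta^{\text{in}}\|_{L_x^2}$ via Lemma \ref{[Def. 9, Lem. 10, CDS12]}, the $\Upsilon_k$-bounds \eqref{[Equ. (151) and (152), Y20c]}, and the $L_x^2$-decay of $\Theta_0$. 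The condition \eqref{[Equ. (156), Y20c]} is calibrated precisely so that the sum is at most $c_R M_0(t) \delta_1$.

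Finally, I would verify the stochastic energy inequality \eqref{estimate 86} for $\theta_0 \triangleq \Upsilon_2 \Theta_0$ by an It\^o computation: the product formula gives $d\theta_0 = \theta_0 \, dB_2 + [\Delta \theta_0 - \Upsilon_1 v_0 \cdot \nabla \theta_0]\,dt$ (the two $\tfrac12$-terms cancel), and applying It\^o to $\|\theta_0\|_{L_x^2}^2$ produces $d\|\theta_0\|_{L_x^2}^2 = -2\|\theta_0\|_{\dot H_x^1}^2\,dt + 2\|\theta_0\|_{L_x^2}^2\,dB_2 + \|\theta_0\|_{L_x^2}^2\,dt$ because divergence-freeness of $v_0$ eliminates the transport contribution; stopping at $T_L$ and taking expectation gives \eqref{estimate 86} as an equality. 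Determinism of $v_0(0,x)$ is immediate from \eqref{estimate 108}, and $\mathring R_0(0,x)$ is deterministic because $\Upsilon_k(0) = 1$ (since $B_k(0) = 0$) and $\Theta_0(0,x) = \theta^{\text{in}}(x)$ is deterministic. The main difficulty is not conceptual but bookkeeping: the pathwise factors $\Upsilon_k^{\pm 1}$ grow with $L$ as $m_L^2$ on $[0,T_L]$, so each contribution to $\|\mathring R_0\|_{C_t L_x^1}$ must be tracked carefully and matched against the right-hand side of \eqref{[Equ. (156), Y20c]}, which is why that condition takes the particular exponential-in-$L$ form displayed there.
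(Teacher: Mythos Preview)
Your proposal is correct and follows essentially the same approach as the paper's proof: direct verification of the $v_0$-bounds from \eqref{estimate 108}, the $L^p$-decay and $H^2$-bootstrap for $\Theta_0$ from \eqref{estimate 109} using divergence-freeness of $v_0$ and $\Delta v_0=-v_0$, the check that \eqref{estimate 71} holds at $q=0$ with $p_0\equiv 0$ (the paper leaves implicit the algebraic cancellation of $\Upsilon_1\operatorname{div}(v_0\otimes v_0)$ and the matching of the oscillatory matrix's divergence with $\partial_t v_0+\tfrac12 v_0$ that you spell out), and the assembly of $\lVert \mathring{R}_0\rVert_{C_tL_x^1}$ from the three contributions calibrated against \eqref{[Equ. (156), Y20c]}. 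The paper's bound \eqref{estimate 74} tracks the $\Upsilon$-factors slightly more sharply as $e^{2L^{1/4}}$ rather than your $m_L^4$, but this is immaterial for the conclusion.
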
 

\begin{proof}[Proof of Proposition \ref{Proposition 5.6 for n=2}]
This proof is similar to that of Proposition \ref{Proposition 4.7 for n=2}. We only sketch the main steps. It can be shown immediately that for all $t \in [0, T_{L}]$
\begin{equation}\label{[Equ. (157), Y20c]}
\lVert v_{0}(t) \rVert_{L_{x}^{2}} = \frac{m_{L} M_{0}(t)^{\frac{1}{2}}}{\sqrt{2}} \leq m_{L} M_{0}(t)^{\frac{1}{2}}, \hspace{2mm} \lVert v_{0} \rVert_{C_{t,x}^{1}} \overset{\eqref{[Equ. (156), Y20c]}}{\leq} m_{L} M_{0}(t)^{\frac{1}{2}} \lambda_{0}^{4} 
\end{equation} 
(see \cite[Equ. (157)]{Y20c}). The estimates in  \eqref{estimate 70} are clear from a standard $L^{p}(\mathbb{T}^{2})$-estimate as multiplying \eqref{estimate 109} by $\lvert \Theta_{0} \rvert^{p-2} \Theta_{0}$ for any $p \in [1,\infty)$ and integrating over $\mathbb{T}^{2}$ give 
\begin{equation*}
\frac{1}{p} \partial_{t} \lVert \Theta_{0} \rVert_{L_{x}^{p}}^{p} + \frac{1}{2} \lVert \Theta_{0} \rVert_{L_{x}^{p}}^{p} - \int_{\mathbb{T}^{2}} \Delta \Theta_{0} \lvert \Theta_{0} \rvert^{p-2} \Theta_{0} dx = 0 
\end{equation*} 
due to $v_{0}$ being divergence-free. Using \eqref{estimate 70}, \eqref{[Equ. (157), Y20c]}, and \eqref{[Equ. (151) and (152), Y20c]}, as well as the fact that $\Delta v_{0} = -v_{0}$, one can immediately bootstrap from \eqref{estimate 70} to verify that $\Theta_{0} \in L_{\omega}^{\infty} L_{t}^{\infty} H_{x}^{2} \cap L_{\omega}^{\infty} L_{t}^{2} H_{x}^{3}$. Moreover, \eqref{estimate 86} also follows from the equation of $\theta_{0} = \Upsilon_{2} \Theta_{0}$. Because $\theta^{\text{in}}$ is mean-zero by hypothesis, so is $\Theta_{0}(t)$ for all $t \geq 0$; consequently, $\mathcal{R} (\Upsilon_{1}^{-1} \Upsilon_{2} \Theta_{0}e^{2})$ is trace-free and symmetric. As $v_{0}$ is also mean-zero, $\mathcal{R}(-\Delta)^{m} v_{0}$ is also trace-free and symmetric, leading to $\mathring{R}_{0}$ also being trace-free and symmetric. Moreover, \eqref{estimate 71} at level $q = 0$ holds with $p_{0} \equiv 0$. Finally, by H$\ddot{\mathrm{o}}$lder's inequality and Lemma \ref{[Def. 9, Lem. 10, CDS12]}, for all $t \in [0, T_{L}]$ 
\begin{equation}\label{estimate 74}
\lVert \mathcal{R} (\Upsilon_{1}^{-1} \Upsilon_{2} \Theta_{0} e^{2}) \rVert_{L_{x}^{1}} \overset{\eqref{[Equ. (151) and (152), Y20c]}}{\leq} 6 \pi e^{2L^{\frac{1}{4}}} \lVert \Theta_{0} e^{2} \rVert_{L_{x}^{2}} \overset{\eqref{[Equ. (150), Y20c]} \eqref{[Equ. (156), Y20c]}  }{\leq} 8 \pi m_{L} M_{0}(t)^{\frac{1}{2}} \lVert \theta^{\text{in}} \rVert_{L_{x}^{2}}.  
\end{equation}
Moreover, 
\begin{align*}
\lVert \mathring{R}_{0}(t) + \mathcal{R} ( \Upsilon_{1}^{-1} \Upsilon_{2} \Theta_{0} e^{2}) \rVert_{L_{x}^{1}} 
\leq& m_{L} (2 L + \frac{1}{2}) M_{0}(t)^{\frac{1}{2}}8 + 8 \pi \lVert v_{0}(t) \rVert_{L_{x}^{2}} \\
\overset{\eqref{[Equ. (157), Y20c]}}{\leq}& m_{L} 8 M_{0}(t)^{\frac{1}{2}} [2L + \frac{1}{2} + \pi]
\end{align*} 
where the first inequality is due to \cite[Equ. (158)]{Y20c}. This, along with \eqref{estimate 74}, imply 
\begin{equation}
\lVert \mathring{R}_{0} (t) \rVert_{L_{x}^{1}} \leq m_{L} 8 M_{0}(t)^{\frac{1}{2}}[ 2L + \frac{1}{2} + \pi + \pi \lVert \theta^{\text{in}} \rVert_{L_{x}^{2}}] \overset{\eqref{[Equ. (156), Y20c]}}{\leq} c_{R} M_{0}(t) \delta_{1}. 
\end{equation} 
The rest of the arguments are identical to those of Proposition \ref{Proposition 4.7 for n=2}. 
\end{proof}

\begin{proposition}\label{Proposition 5.7 for n=2}
Fix $\theta^{\text{in}} \in H^{2}(\mathbb{T}^{2})$ that is deterministic and mean-zero from the hypothesis of Proposition \ref{Proposition 5.6 for n=2}. Let $L$ satisfy 
\begin{equation}\label{[Equ. (159), Y20c]}
L > (\frac{3}{2})^{\frac{4}{3}}, \hspace{5mm} 72 \sqrt{3} < \frac{ c_{R} e^{L - \frac{1}{2} L^{\frac{1}{4}}}}{L^{\frac{1}{4}} [ 2L + \frac{1}{2} + \pi + \pi \lVert \theta^{\text{in}} \rVert_{L_{x}^{2}}]}, 
\end{equation} 
and suppose that $(v_{q}, \Theta_{q}, \mathring{R}_{q})$ are $(\mathcal{F}_{t})_{t\geq 0}$-adapted processes that solve \eqref{[Equ. (149), Y20c]} and satisfy \eqref{[Equ. (153), Y20c]}. Then there exist a choice of parameters $a, b,$ and $\beta$ such that \eqref{[Equ. (156), Y20c]} is fulfilled and $(\mathcal{F}_{t})_{t\geq 0}$-adapted processes $(v_{q+1}, \Theta_{q+1}, \mathring{R}_{q+1})$ that satisfy \eqref{[Equ. (149), Y20c]}, \eqref{[Equ. (153), Y20c]} at level $q+1$, and for all $t \in [0, T_{L}]$, 
\begin{subequations}
\begin{align}
& \lVert v_{q+1}(t) - v_{q}(t) \rVert_{L_{x}^{2}} \leq m_{L} M_{0}(t)^{\frac{1}{2}} \delta_{q+1}^{\frac{1}{2}}, \label{[Equ. (160), Y20c]}\\
& \lVert \Theta_{q+1} - \Theta_{q} \rVert_{C_{t}L_{x}^{2}}^{2} + \int_{0}^{t} \lVert \Theta_{q+1} - \Theta_{q} \rVert_{\dot{H}_{x}^{1}}^{2} dr \leq e^{2 L^{\frac{1}{4}}}  m_{L}^{2} M_{0}(t) \delta_{q+1} \lVert \theta^{\text{in}} \rVert_{L_{x}^{\infty}}^{2}. \label{estimate 75}
\end{align}
\end{subequations} 
Finally, if $v_{q}(0,x)$ and $\mathring{R}_{q}(0,x)$ are deterministic, then so are $v_{q+1}(0,x)$ and $\mathring{R}_{q+1}(0,x)$. 
\end{proposition}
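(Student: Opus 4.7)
The plan is to mirror Proposition \ref{Proposition 4.8 for n=2} essentially verbatim, replacing the random process $z_1$ throughout by the purely time-dependent multiplicative factor $\Upsilon_1$ (and $z_2$ by $\Upsilon_2$), and to obtain the Cauchy estimate \eqref{estimate 75} by a short $L_x^2$ energy estimate that exploits the $L_x^\infty$ decay of $\Theta_q$ guaranteed by \eqref{estimate 70}, thereby bypassing the delicate interpolation of Remarks \ref{Remark 4.2} and \ref{Remark 4.4}. Fix the parameters $m^\ast, \eta, \alpha, b, l$ exactly as in \eqref{[Equ. (64), Y20c]}--\eqref{[Equ. (70), Y20c]} and verify \eqref{[Equ. (156), Y20c]} by taking $a \in 10\mathbb{N}$ sufficiently large and $\beta > 0$ sufficiently small. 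Mollify $(v_q, \Theta_q, \mathring{R}_q, \Upsilon_1, \Upsilon_2)$ in space with $\phi_l$ and time with $\varphi_l$ to obtain $(v_l, \Theta_l, \mathring{R}_l, \Upsilon_{1,l}, \Upsilon_{2,l})$; because each $\Upsilon_k$ is $x$-independent, its spatial mollification is trivial and only a time-mollification error of order $l^{\frac{1}{2} - 2\delta}m_L^2$ arises from \eqref{[Equ. (151) and (152), Y20c]}, which is absorbed into a commutator stress $R_{\mathrm{com2}}$ modeled after \eqref{[Equ. (99e), Y20c]}.

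Define the perturbation $w_{q+1} = w_{q+1}^{(p)} + w_{q+1}^{(c)} + w_{q+1}^{(t)}$ using the 2D intermittent stationary flows from \eqref{[Equ. (85), Y20c]}--\eqref{[Equ. (86), Y20c]}, with amplitudes $a_\zeta$ built from $\mathring{R}_l$ as in \eqref{[Equ. (76), Y20c]}--\eqref{[Equ. (81), Y20c]}, and set $v_{q+1} \triangleq v_l + w_{q+1}$. Since $m_L \geq 1$, the bound $\lVert w_{q+1}\rVert_{C_t L_x^2} \leq \tfrac{3}{4}\delta_{q+1}^{\frac{1}{2}}M_0(t)^{\frac{1}{2}} \leq m_L \delta_{q+1}^{\frac{1}{2}}M_0(t)^{\frac{1}{2}}$ from \eqref{[Equ. (90) and (91), Y20c]} yields both the first inequality of \eqref{[Equ. (153a) and (153b), Y20c]} at level $q+1$ and \eqref{[Equ. (160), Y20c]}, while the second inequality of \eqref{[Equ. (153a) and (153b), Y20c]} follows from the $C_{t,x}^1$ estimates \eqref{estimate 151} and \eqref{[Equ. (74b) and (74c), Y20c]}. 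Then let $\Theta_{q+1}$ be the unique solution of the linear transport-diffusion equation \eqref{estimate 72} at level $q+1$ with $\Theta_{q+1}(0,x) = \theta^{\text{in}}(x)$; because $\Upsilon_1 v_{q+1}$ is divergence-free and no stochastic integral is present, a standard $L^p$ energy identity immediately produces \eqref{estimate 70} and \eqref{estimate 86} at level $q+1$.

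The Reynolds stress at level $q+1$ is defined by $\mathring{R}_{q+1} = R_{\mathrm{lin}} + R_{\mathrm{cor}} + R_{\mathrm{osc}} + R_{\mathrm{com1}} + R_{\mathrm{com2}}$ as in \eqref{[Equ. (99), Y20c]} and \eqref{estimate 58}, where the principal new contribution to $R_{\mathrm{lin}}$ is $\mathcal{R}(\Upsilon_1^{-1}\Upsilon_2(\Theta_l - \Theta_{q+1})e^2)$. Split this into $\mathcal{R}(\Upsilon_1^{-1}\Upsilon_2(\Theta_{q+1} - \Theta_q)e^2)$, controlled by the forthcoming Cauchy bound \eqref{estimate 75}, plus $\mathcal{R}(\Upsilon_1^{-1}\Upsilon_2(\Theta_l - \Theta_q)e^2)$, controlled by $l^{\frac{1}{2}-2\delta}$ times a finite Hölder norm of $\Theta_q$ obtained through an $H_x^2$ energy estimate on \eqref{estimate 72} using \eqref{[Equ. (153a) and (153b), Y20c]} and \eqref{estimate 70}; the multiplicative prefactor $\Upsilon_1^{-1}\Upsilon_2$ contributes a harmless $m_L^4$ by \eqref{[Equ. (151) and (152), Y20c]}. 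The remaining pieces $R_{\mathrm{cor}}, R_{\mathrm{osc}}, R_{\mathrm{com1}}, R_{\mathrm{com2}}$ are estimated with the same exponents as in \eqref{estimate 132}, delivering \eqref{[Equ. (153c), Y20c]} at level $q+1$ once $a$ is taken sufficiently large.

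The key new step is the Cauchy estimate \eqref{estimate 75}. Subtracting the equations for $\Theta_{q+1}$ and $\Theta_q$ yields
\begin{equation*}
\partial_t(\Theta_{q+1} - \Theta_q) + \tfrac{1}{2}(\Theta_{q+1} - \Theta_q) - \Delta(\Theta_{q+1} - \Theta_q) + \Upsilon_1 v_{q+1}\cdot\nabla(\Theta_{q+1} - \Theta_q) = -\Upsilon_1(v_{q+1} - v_q)\cdot\nabla\Theta_q.
\end{equation*}
Pairing with $\Theta_{q+1} - \Theta_q$ in $L_x^2$, using divergence-freeness of $\Upsilon_1 v_{q+1}$ to drop the transport term, integrating by parts on the right, and applying Young's inequality together with $\lvert \Upsilon_1\rvert \leq m_L^2$ from \eqref{[Equ. (151) and (152), Y20c]} and $\lVert \Theta_q(t)\rVert_{L_x^\infty} \leq e^{-t/2}\lVert \theta^{\text{in}}\rVert_{L_x^\infty}$ from \eqref{estimate 70}, then absorbing the gradient term, leads to
\begin{equation*}
\partial_t\lVert \Theta_{q+1} - \Theta_q\rVert_{L_x^2}^2 + \lVert \nabla(\Theta_{q+1} - \Theta_q)\rVert_{L_x^2}^2 \lesssim m_L^4 e^{-t}\lVert v_{q+1} - v_q\rVert_{L_x^2}^2\lVert \theta^{\text{in}}\rVert_{L_x^\infty}^2.
\end{equation*}
Integrating in time and substituting \eqref{[Equ. (160), Y20c]} produces \eqref{estimate 75}. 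The main obstacle will be the careful bookkeeping of the $m_L$ prefactors throughout the Reynolds stress estimates so that the new temperature-induced terms together with those inherited from the additive-noise proof close simultaneously under a single choice of $a, \beta$ compatible with \eqref{[Equ. (156), Y20c]} and \eqref{[Equ. (159), Y20c]}; the $(\mathcal{F}_t)_{t\geq 0}$-adaptedness and the deterministic nature of $v_{q+1}(0,x)$ and $\mathring{R}_{q+1}(0,x)$ follow as in Proposition \ref{Proposition 4.8 for n=2}.
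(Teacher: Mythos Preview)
Your derivation of the Cauchy estimate \eqref{estimate 75} is correct and matches the paper's computation \eqref{estimate 80}--\eqref{estimate 23}. However, there are two genuine gaps elsewhere.

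First, you miss the modified amplitude $\bar{a}_\zeta \triangleq \Upsilon_{1,l}^{-1/2} a_\zeta$ (see \eqref{[Equ. (166), Y20c]}). In the multiplicative case the nonlinearity is $\Upsilon_1\,\mathrm{div}(v_q\otimes v_q)$, so the oscillation error reads $\mathrm{div}(\Upsilon_{1,l}\, w_{q+1}^{(p)}\otimes w_{q+1}^{(p)} + \mathring{R}_l)$. The geometric cancellation from Lemma \ref{[Lemma 4.1, LQ20]} requires $\sum_\zeta (\text{amplitude})^2(\zeta\mathring{\otimes}\zeta)$ to reproduce $-\mathring{R}_l$; with your unmodified $a_\zeta$ one obtains $\Upsilon_{1,l}\sum_\zeta a_\zeta^2(\zeta\mathring{\otimes}\zeta)$, which equals $\Upsilon_{1,l}\mathring{R}_l$ rather than $\mathring{R}_l$, and the oscillation stress does not close. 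The paper builds $w_{q+1}^{(p)}, w_{q+1}^{(c)}$ from $\bar{a}_\zeta$ (keeping $w_{q+1}^{(t)}$ with the original $a_\zeta$) precisely so that $\Upsilon_{1,l}\bar{a}_\zeta^2 = a_\zeta^2$; this also explains why the paper's perturbation estimates \eqref{[Equ. (172a) and (172b), Y20c]}--\eqref{[Equ. (172), Y20c]} carry the extra factor $m_L$ that yields \eqref{[Equ. (160), Y20c]} with $m_L$ on the right-hand side.

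Second, and more seriously, you propose to control $\mathcal{R}(\Upsilon_1^{-1}\Upsilon_2(\Theta_{q+1}-\Theta_q)e^2)$ in $R_{\mathrm{lin}}$ by the $L_x^2$ Cauchy bound \eqref{estimate 75}. That bound only gives $\lVert \Theta_{q+1}-\Theta_q\rVert_{C_tL_x^2}\lesssim \delta_{q+1}^{1/2}$, whereas \eqref{[Equ. (153c), Y20c]} at level $q+1$ demands a bound of size $\delta_{q+2}=\lambda_{q+1}^{-2\beta b}$; since $b>16/\alpha\gg 1$, one has $\delta_{q+1}^{1/2}\gg \delta_{q+2}$ and the estimate fails by a large margin. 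The paper instead runs the $L_x^{p^\ast}$ argument \eqref{estimate 81}--\eqref{estimate 91}: it bounds $\lVert \Theta_{q+1}-\Theta_q\rVert_{C_tL_x^{p^\ast}}$ by $e^{L^{1/4}}\lVert v_{q+1}-v_q\rVert_{C_tL_x^{p^\ast}}\int_0^t\lVert\Theta_q\rVert_{\dot W_x^{1,\infty}}dr$, estimates $\lVert v_{q+1}-v_q\rVert_{C_tL_x^{p^\ast}}$ via the perturbation $L^{p^\ast}$ bounds (yielding the crucial small factor $\lambda_{q+1}^{-\alpha}$, see \eqref{estimate 85}), and controls the $\dot W_x^{1,\infty}$ integral by an $H_x^2$ energy estimate on $\Theta_q$ involving only $\lambda_q^4$ (see \eqref{estimate 82}--\eqref{estimate 83}). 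This is exactly the mechanism from Remarks \ref{another remark}--\ref{Remark 4.4} that you say you are bypassing; it cannot be bypassed here.
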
  

\begin{proof}[Proof of Theorem \ref{Theorem 2.3} if $n=2$ assuming Proposition \ref{Proposition 5.7 for n=2}]
Fix $\theta^{\text{in}} \in H^{2} (\mathbb{T}^{2})$ that is deterministic and mean-zero from the hypothesis of Proposition \ref{Proposition 5.6 for n=2}, any $T > 0, K > 1$, and $\kappa \in (0,1)$. Then we take $L$ that satisfies \eqref{[Equ. (159), Y20c]} and enlarge it if necessary to satisfy 
\begin{equation}\label{[Equ. (208), Y20c]} 
( \frac{1}{\sqrt{2}} - \frac{1}{2}) e^{2L T} > e^{2L^{\frac{1}{2}}} [ (\frac{1}{2} + \frac{1}{\sqrt{2}} ) + 3^{-\frac{1}{2}} L^{- \frac{1}{4}} e^{- \frac{1}{2} L^{\frac{1}{4}}} e^{-L} \lVert \theta^{\text{in}} \rVert_{L_{x}^{2}}] \text{ and } L > [ \ln (Ke^{T})]^{2}. 
\end{equation} 
We can start from $(v_{0}, \Theta_{0}, \mathring{R}_{0})$ in Proposition \ref{Proposition 5.6 for n=2}, and via Proposition \ref{Proposition 5.7 for n=2} inductively obtain a sequence $(v_{q}, \Theta_{q}, \mathring{R}_{q})$ that satisfies \eqref{[Equ. (149), Y20c]}, \eqref{[Equ. (153), Y20c]}, and \eqref{[Equ. (160), Y20c]}-\eqref{estimate 75}. For any $\gamma \in (0, \frac{\beta}{4+ \beta})$ and any $t \in [0,T_{L}]$, we can show $\sum_{q\geq 0} \lVert v_{q+1}(t) - v_{q}(t) \rVert_{\dot{H}_{x}^{\gamma}} \lesssim m_{L} M_{0}(t)^{\frac{1}{2}}$ similarly to \eqref{estimate 29}. Thus,  $\{v_{q} \}_{q=0}^{\infty}$ is Cauchy in $C_{T_{L}} \dot{H}_{x}^{\gamma}$ and hence we deduce the existence of $\lim_{q\to\infty} v_{q} \triangleq v \in C([0, T_{L}]; \dot{H}^{\gamma} (\mathbb{T}^{2}))$. On the other hand, \eqref{estimate 75}, \eqref{estimate 70} and interpolation show that $\{\Theta_{q}\}_{q=0}^{\infty}$ is Cauchy in $\cap_{p\in [1, \infty)} C_{T_{L}} L_{x}^{p} \cap L_{T_{L}}^{2} \dot{H}_{x}^{1}$. Therefore, we can deduce $\lim_{q\to\infty} \Theta_{q} \triangleq \Theta \in \cap_{p\in [1,\infty)} C_{T_{L}} L_{x}^{p} \cap L_{T_{L}}^{2} \dot{H}_{x}^{1}$. Finally, clearly \eqref{[Equ. (153c), Y20c]} implies that $\lim_{q\to\infty} \lVert \mathring{R}_{q} \rVert_{C_{T_{L}}L_{x}^{1}} = 0$ and hence $(v, \Theta)$ solves \eqref{[Equ. (148), Y20c]}. Because $u = e^{B_{1} (t)} v$ where $\lvert e^{B_{1}} \rvert \leq e^{L^{\frac{1}{4}}}$ for all $t \in [0, T_{L}]$, we deduce $esssup_{\omega \in \Omega} \sup_{s \in [0, \mathfrak{t}]} \lVert u(s) \rVert_{\dot{H}_{x}^{\gamma}} < \infty$; similarly considering that $\theta = e^{B_{2}(t)} \Theta$ where $\lvert e^{B_{2}} \rvert \leq e^{L^{\frac{1}{4}}}$ for all $t \in [0,T_{L}]$ shows that \eqref{estimate 94} is satisfied. Next, for all $t \in [0, T_{L}]$, we can show similarly to \eqref{[Equ. (61), Y20c]} that $\lVert v(t) - v_{0}(t) \rVert_{L_{x}^{2}} \leq  \frac{m_{L}}{2} M_{0}(t)^{\frac{1}{2}}$ by \eqref{[Equ. (160), Y20c]} and \eqref{[Equ. (156), Y20c]}, which in turn implies 
\begin{equation}\label{[Equ. (209), Y20c]} 
\lVert v(0) \rVert_{L_{x}^{2}} \leq  \lVert v(0) - v_{0}(0) \rVert_{L_{x}^{2}} + \lVert v_{0} (0) \rVert_{L_{x}^{2}}   \overset{\eqref{[Equ. (157), Y20c]}}{\leq} (\frac{1}{2} + \frac{1}{\sqrt{2}}) m_{L} M_{0}(0)^{\frac{1}{2}}. 
\end{equation} 
These lead us to, on a set $\{T_{L} \geq T\}$, 
\begin{equation}\label{[Equ. (210), Y20c]}
\lVert v(T) \rVert_{L_{x}^{2}} \overset{\eqref{[Equ. (157), Y20c]}}{\geq}  (\frac{1}{\sqrt{2}} - \frac{1}{2}) m_{L} M_{0}(T)^{\frac{1}{2}} \\
\overset{\eqref{[Equ. (208), Y20c]} \eqref{[Equ. (209), Y20c]}}{>}  e^{2L^{\frac{1}{2}}} ( \lVert v(0) \rVert_{L_{x}^{2}} + \lVert \theta^{\text{in}} \rVert_{L_{x}^{2}}). 
\end{equation}  
For the fixed $T > 0, \kappa \in (0,1)$, taking $L$ even larger gives us \eqref{6} because $\lim_{L\to\infty} T_{L} = + \infty$ $\textbf{P}$-a.s. by \eqref{[Equ. (146), Y20c]}. We also see that $u^{\text{in}} (x)  = v(0,x)$ which is deterministic because $v_{q}(0,x)$ is deterministic for all $q \in \mathbb{N}_{0}$ due to Propositions \ref{Proposition 5.6 for n=2} and \ref{Proposition 5.7 for n=2}. Clearly from \eqref{estimate 76}, $(u,\theta) = (\Upsilon_{1} v, \Upsilon_{2} \Theta)$ is an $(\mathcal{F}_{t})_{t\geq 0}$-adapted solution of \eqref{3}. Finally, due to the fact that $\lvert \Upsilon_{1}(T) \rvert \geq e^{-L^{\frac{1}{4}}}$ by \eqref{[Equ. (151) and (152), Y20c]}, \eqref{[Equ. (210), Y20c]}, and \eqref{[Equ. (208), Y20c]}, we see that 
\begin{equation*}
\lVert u(T) \rVert_{L_{x}^{2}} \overset{\eqref{[Equ. (210), Y20c]}}{>} e^{-L^{\frac{1}{4}}} [ e^{2L^{\frac{1}{2}}} (\lVert v(0) \rVert_{L_{x}^{2}} + \lVert \theta^{\text{in}} \rVert_{L_{x}^{2}})] \overset{\eqref{[Equ. (208), Y20c]}}{\geq}  K e^{T} ( \lVert u^{\text{in}} \rVert_{L_{x}^{2}}+ \lVert \theta^{\text{in}} \rVert_{L_{x}^{2}}).  
\end{equation*}  
\end{proof} 

\subsection{Convex integration to prove Proposition \ref{Proposition 5.7 for n=2}}

\subsubsection{Choice of parameters}
We fix $L$ sufficiently large that satisfies \eqref{[Equ. (159), Y20c]}. We take the same choices of $m^{\ast}, \eta, \alpha, r, \mu$, $\sigma$, and $b$ in \eqref{[Equ. (64), Y20c]}-\eqref{estimate 62},  such that $r \in \mathbb{N}$ and $\lambda_{q+1} \sigma \in 10 \mathbb{N}$ so that $r \in \mathbb{N}$ and $\lambda_{q+1}\sigma \in 5 \mathbb{N}$ from \eqref{[Equ. (18), Y20c]} are satisfied. Then we make sure to take $\beta > 0$ sufficiently small to satisfy \eqref{[Equ. (68), Y20c]} and then $l$ by \eqref{[Equ. (69), Y20c]} so that \eqref{[Equ. (70), Y20c]} remains valid. We take $a \in 10 \mathbb{N}$ larger if necessary to satisfy $a^{26} \geq \sqrt{3} L^{\frac{1}{4}} e^{\frac{1}{2} L^{\frac{1}{4}}}$ which implies 
\begin{equation}\label{[Equ. (161), Y20c]}
m_{L} \overset{\eqref{estimate 62}}{\leq} a^{\frac{3 \alpha b}{2} + 2} \overset{\eqref{[Equ. (69), Y20c]}}{\leq} l^{-1} \text{ and } m_{L} \overset{\eqref{[Equ. (159), Y20c]}}{\leq} c_{R} e^{L} \overset{\eqref{[Equ. (150), Y20c]}}{\leq} M_{0}(t)^{\frac{1}{2}}. 
\end{equation} 
Taking $a \in 10 \mathbb{N}$ even larger guarantees $L \leq a^{4} \pi -1$ in \eqref{[Equ. (156), Y20c]}, while taking $\beta > 0$ even smaller if necessary gives us the other inequalities in \eqref{[Equ. (156), Y20c]} due to \eqref{[Equ. (159), Y20c]}. 

\subsubsection{Mollification}
We mollify $v_{q}, \theta_{q}, \mathring{R}_{q}$ identically to \eqref{[Equ. (71), Y20c]} while for $k \in \{1,2\}$, 
\begin{equation}\label{estimate 96}
\Upsilon_{k,l} \triangleq \Upsilon_{k} \ast_{t} \varphi_{l}; 
\end{equation} 
it follows from \eqref{estimate 71} that they satisfy 
\begin{align}
& \partial_{t}v_{l} + \frac{1}{2} v_{l} + (-\Delta)^{m} v_{l} + \Upsilon_{1,l} \text{div} (v_{l} \otimes v_{l}) + \nabla p_{l} \nonumber \\
& \hspace{10mm} = \text{div} (\mathring{R}_{l} + R_{\text{com1}}) + ((\Upsilon_{1}^{-1} \Upsilon_{2} \Theta_{q} e^{2}) \ast_{x} \phi_{l}) \ast_{t} \varphi_{l}, \label{estimate 127}
\end{align} 
where 
\begin{subequations}\label{estimate 128}
\begin{align}
p_{l} \triangleq& (p_{q} \ast_{x} \phi_{l})\ast_{t} \varphi_{l} - \frac{1}{2} (\Upsilon_{1,l} \lvert v_{l} \rvert^{2} - ((\Upsilon_{1} \lvert v_{q} \rvert^{2}) \ast_{x}\phi_{l})\ast_{t} \varphi_{l}),\\
R_{\text{com1}} \triangleq& - ((\Upsilon_{1}(v_{q} \mathring{\otimes} v_{q})) \ast_{x} \phi_{l}) \ast_{t} \varphi_{l} + \Upsilon_{1,l} (v_{l} \mathring{\otimes} v_{l}). \label{estimate 147}
\end{align}
\end{subequations} 
We can compute for all $t \in [0, T_{L}]$ and $N \geq 1$, due to \eqref{[Equ. (153a) and (153b), Y20c]}
\begin{subequations}
\begin{align}
& \lVert v_{q} - v_{l} \rVert_{C_{t}L_{x}^{2}} \leq  (\frac{m_{L}}{4}) M_{0}(t)^{\frac{1}{2}} \delta_{q+1}^{\frac{1}{2}}, 
\lVert v_{l} \rVert_{C_{t}L_{x}^{2}}  \leq m_{L} M_{0}(t)^{\frac{1}{2}} (1+ \sum_{1 \leq \iota \leq q} \delta_{\iota}^{\frac{1}{2}}) \leq 2m_{L} M_{0}(t)^{\frac{1}{2}}, \label{[Equ. (165a) and (165b), Y20c]}\\
& \lVert v_{l} \rVert_{C_{t,x}^{N}} \overset{ \eqref{[Equ. (69), Y20c]}}{\leq} l^{-N} m_{L} M_{0}(t)^{\frac{1}{2}} \lambda_{q+1}^{-\alpha} \label{[Equ. (165c), Y20c]}
\end{align}
\end{subequations} 
(see \cite[Equ. (165)]{Y20c}). 

\subsubsection{Perturbation}
We can continue to define $\chi$ and $\rho$ identically as we did in \eqref{[Equ. (75), Y20c]}-\eqref{[Equ. (76), Y20c]} except $M_{0}(t)$ being defined now by \eqref{[Equ. (150), Y20c]} instead of \eqref{[Equ. (47), Y20c]}. As the only difference thus far is the definition of $M_{0}(t)$, one can verify that \eqref{[Equ. (77), Y20c]}, \eqref{[Equ. (78) and (79), Y20c]}, and \eqref{[Equ. (80), Y20c]} all remain valid. Next, we define a modified amplitude function by 
\begin{equation}\label{[Equ. (166), Y20c]}
\bar{a}_{\zeta} (\omega, t,x) \triangleq \bar{a}_{\zeta, q+1} (\omega, t,x) \triangleq \Upsilon_{1,l}^{-\frac{1}{2}} a_{\zeta} (\omega, t,x), 
\end{equation} 
where $a_{\zeta} (\omega, t,x)$ is identical to that defined in \eqref{[Equ. (81), Y20c]}. Making use of $\lVert \Upsilon_{k,l}^{-\frac{1}{2}} \rVert_{C_{t}} \leq m_{L}$ for both $k \in \{1,2\}$, we can estimate for all $t \in [0, T_{L}]$, 
\begin{equation}\label{[Equ. (168), Y20c]}
\lVert \bar{a}_{\zeta} \rVert_{C_{t}L_{x}^{2}} \overset{\eqref{[Equ. (166), Y20c]} \eqref{[Equ. (77), Y20c]}}{\leq} \lVert \Upsilon_{1,l}^{-\frac{1}{2}} \rVert_{C_{t}} \lVert \rho \rVert_{C_{t} L_{x}^{1}}^{\frac{1}{2}} \lVert \gamma_{\zeta} \rVert_{C(B_{\frac{1}{2}}(0))}   
\overset{ \eqref{[Equ. (15), Y20c]} \eqref{[Equ. (78) and (79), Y20c]} \eqref{[Equ. (153c), Y20c]}  }{\leq} \frac{c_{R}^{\frac{1}{4}} m_{L} M_{0}(t)^{\frac{1}{2}} \delta_{q+1}^{\frac{1}{2}}}{2 \lvert \Lambda \rvert} 
\end{equation} 
(see \cite[Equ. (167)-(168)]{Y20c}). On the other hand, relying on \eqref{[Equ. (84), Y20c]} that is still satisfied by $a_{\zeta}$ leads to for any $N \geq 0$ and $k \in \{0,1,2\}$, 
\begin{equation}\label{[Equ. (169), Y20c]}
\lVert \bar{a}_{\zeta} \rVert_{C_{t}C_{x}^{N}} \overset{\eqref{[Equ. (84), Y20c]}}{\leq} m_{L} c_{R}^{\frac{1}{4}} \delta_{q+1}^{\frac{1}{2}} M_{0}(t)^{\frac{1}{2}} l^{- \frac{3}{2} - 4N}, \hspace{1mm} \lVert \bar{a}_{\zeta} \rVert_{C_{t}^{1}C_{x}^{k}} \overset{\eqref{[Equ. (151) and (152), Y20c]} \eqref{[Equ. (161), Y20c]} }{\leq} m_{L} c_{R}^{\frac{1}{8}} \delta_{q+1}^{\frac{1}{2}} M_{0}(t)^{\frac{1}{2}} l^{- \frac{13}{2} - 4 k} 
\end{equation} 
(see \cite[Equ. (169)]{Y20c}). We define $w_{q+1}^{(p)}$ and $w_{q+1}^{(c)}$ identically to \eqref{[Equ. (86), Y20c]} with $a_{\zeta}$ replaced by $\bar{a}_{\zeta}$ from \eqref{[Equ. (166), Y20c]} and $M_{0}(t)$ from \eqref{[Equ. (150), Y20c]} within the definition of $\rho(\omega, t, x)$ and finally $w_{q+1}^{(t)}$ identically as in \eqref{[Equ. (86), Y20c]} with $a_{\zeta}$ still from \eqref{[Equ. (81), Y20c]} but with $M_{0}(t)$ from \eqref{[Equ. (150), Y20c]}. These choices allow us to define $w_{q+1}$ and $v_{q+1}$ identically to \eqref{[Equ. (85), Y20c]}. It follows that $w_{q+1}$ is divergence-free and mean-zero (see \cite[Equ. (171)]{Y20c}). For all $t \in [0, T_{L}]$ and $p \in (1,\infty)$, we can compute 
\begin{subequations}\label{estimate 345}
\begin{align}
&  \lVert w_{q+1}^{(p)} \rVert_{C_{t}L_{x}^{2}}  \overset{\eqref{[Equ. (166), Y20c]} \eqref{[Equ. (88) and (89a), Y20c]}}{\lesssim} m_{L} c_{R}^{\frac{1}{4}} \delta_{q+1}^{\frac{1}{2}} M_{0}(t)^{\frac{1}{2}},  \lVert w_{q+1}^{(p)} \rVert_{C_{t}L_{x}^{p}} \overset{\eqref{[Equ. (166), Y20c]} \eqref{[Equ. (88) and (89a), Y20c]}}{\lesssim} m_{L} \delta_{q+1}^{\frac{1}{2}} M_{0}(t)^{\frac{1}{2}} l^{-\frac{3}{2}} r^{1- \frac{2}{p}}, \label{[Equ. (172a) and (172b), Y20c]}\\
& \lVert w_{q+1}^{(c)} \rVert_{C_{t}L_{x}^{p}}  \overset{\eqref{[Equ. (166), Y20c]}\eqref{[Equ. (89b) and (89c), Y20c]} }{\lesssim} m_{L} \delta_{q+1}^{\frac{1}{2}} M_{0}(t)^{\frac{1}{2}} l^{-\frac{11}{2}} \sigma r^{2- \frac{2}{p}} \label{[Equ. (172c), Y20c]}
\end{align}
\end{subequations}
(see \cite[Equ. (172)]{Y20c}).  On the other hand, the estimate of $\lVert w_{q+1}^{(t)} \rVert_{C_{t}L_{x}^{p}}$ in \eqref{[Equ. (89b) and (89c), Y20c]} remains valid. It follows that for all $t \in [0, T_{L}]$, 
\begin{equation}\label{[Equ. (172), Y20c]}
\lVert w_{q+1} \rVert_{C_{t}L_{x}^{2}}  \overset{\eqref{[Equ. (85), Y20c]}}{\leq} \lVert w_{q+1}^{(p)} \rVert_{C_{t}L_{x}^{2}} + \lVert w_{q+1}^{(c)} \rVert_{C_{t}L_{x}^{2}} + \lVert w_{q+1}^{(t)} \rVert_{C_{t}L_{x}^{2}} \overset{\eqref{[Equ. (89b) and (89c), Y20c]} \eqref{estimate 345} \eqref{[Equ. (70), Y20c]}}{\leq}\frac{3m_{L} M_{0}(t)^{\frac{1}{2}} \delta_{q+1}^{\frac{1}{2}}}{4},  
\end{equation}
from which the first inequality in \eqref{[Equ. (153a) and (153b), Y20c]} at level $q+1$ and \eqref{[Equ. (160), Y20c]} can be verified using \eqref{[Equ. (85), Y20c]}, \eqref{[Equ. (165a) and (165b), Y20c]}, and \eqref{[Equ. (172), Y20c]} (see \cite[p. 31]{Y20c}). We can also compute for all $t \in [0, T_{L}]$, 
\begin{equation}\label{estimate 362}
\lVert w_{q+1}^{(p)} \rVert_{C_{t,x}^{1}} \overset{\eqref{[Equ. (169), Y20c]}}{\leq} m_{L} M_{0}(t)^{\frac{1}{2}} l^{-\frac{13}{2}} \lambda_{q+1} \sigma \mu r^{2}, \lVert w_{q+1}^{(c)} \rVert_{C_{t,x}^{1}} \overset{\eqref{[Equ. (13b), Y20c]} \eqref{[Equ. (24a) and (24b), Y20c]} }{\lesssim} m_{L} \delta_{q+1}^{\frac{1}{2}} M_{0}(t)^{\frac{1}{2}} \lambda_{q+1}^{3- 18 \eta} l^{-\frac{3}{2}} 
\end{equation} 
(see \cite[Equ. (174)]{Y20c}). This, along with the estimate \eqref{[Equ. (94), Y20c]} on $\lVert w_{q+1}^{(t)} \rVert_{C_{t,x}^{1}}$, and \eqref{[Equ. (85), Y20c]}, allows us to verify the second inequality of \eqref{[Equ. (153a) and (153b), Y20c]} at level $q+1$ (see \cite[Equ. (175)]{Y20c}). At last, with $v_{q+1}$ that we already constructed via \eqref{[Equ. (85), Y20c]}, identically to the proof of Proposition \ref{Proposition 5.6 for n=2}, we can deduce that $\Theta_{q+1}$ satisfies \eqref{estimate 70} while $\theta_{q+1} = \Upsilon_{2} \Theta_{q+1}$ satisfies \eqref{estimate 86} at level $q+1$. Concerning Cauchy property in \eqref{estimate 75}, we start from \eqref{estimate 72} to obtain 
\begin{align}
& \partial_{t} (\Theta_{q+1} - \Theta_{q}) + \frac{1}{2} (\Theta_{q+1} - \Theta_{q}) - \Delta (\Theta_{q+1} - \Theta_{q}) \nonumber\\
& + \Upsilon_{1} (v_{q+1} \cdot \nabla) (\Theta_{q+1} - \Theta_{q}) + \Upsilon_{1} (v_{q+1} - v_{q}) \cdot \nabla \Theta_{q} = 0, \label{estimate 80}
\end{align} 
on which $L^{2}$-inner products with $\Theta_{q+1} - \Theta_{q}$ leads us to, for all $t \in [0, T_{L}]$
\begin{align}
&\frac{1}{2} \partial_{t} \lVert \Theta_{q+1} - \Theta_{q} \rVert_{L_{x}^{2}}^{2} + \frac{1}{2} \lVert \Theta_{q+1} - \Theta_{q} \rVert_{L_{x}^{2}}^{2} + \lVert \Theta_{q+1} - \Theta_{q} \rVert_{\dot{H}_{x}^{1}}^{2} \nonumber \\
\overset{\eqref{[Equ. (151) and (152), Y20c]}}{\leq}& e^{L^{\frac{1}{4}}} \lVert v_{q+1} - v_{q} \rVert_{L_{x}^{2}} \lVert \Theta_{q+1} - \Theta_{q} \rVert_{\dot{H}_{x}^{1}} \lVert \Theta_{q} \rVert_{L_{x}^{\infty}}  \nonumber\\
\overset{\eqref{estimate 70}}{\leq}& \frac{1}{2} \lVert \Theta_{q+1} - \Theta_{q} \rVert_{\dot{H}_{x}^{1}}^{2} + \frac{1}{2} e^{2L^{\frac{1}{4}}} \lVert v_{q+1} - v_{q} \rVert_{L_{x}^{2}}^{2}e^{-t} \lVert \theta^{\text{in}} \rVert_{L_{x}^{\infty}}^{2} \label{estimate 23}
\end{align} 
by H$\ddot{\mathrm{o}}$lder's and Young's inequalities. Subtracting $\frac{1}{2} \lVert \Theta_{q+1} - \Theta_{q} \rVert_{\dot{H}_{x}^{1}}^{2}$ from both sides, integrating over $[0,t]$, applying \eqref{[Equ. (160), Y20c]}, and taking supremum over $[0,t]$ give \eqref{estimate 75}. 

\subsubsection{Reynolds stress} 
Similarly to \eqref{[Equ. (98), Y20c]}, we can write using \eqref{estimate 71}, \eqref{[Equ. (85), Y20c]}, and  \eqref{estimate 127}
\begin{align}
& \text{div}\mathring{R}_{q+1} - \nabla p_{q+1} \label{[Equ. (178), Y20c]}\\
=& \underbrace{\frac{1}{2} w_{q+1} + (-\Delta)^{m} w_{q+1} + \partial_{t} (w_{q+1}^{(p)} + w_{q+1}^{(c)}) + \Upsilon_{1,l} \text{div} (v_{l} \otimes w_{q+1} + w_{q+1} \otimes v_{l})}_{\text{Part of } \text{div} (R_{\text{lin}}) + \nabla p_{\text{lin}}} \nonumber\\
& \hspace{30mm} \underbrace{ + \mathcal{R} (((\Upsilon_{1}^{-1} \Upsilon_{2} \Theta_{q} e^{2}) \ast_{x} \phi_{l} )\ast_{t} \varphi_{l} - \Upsilon_{1}^{-1} \Upsilon_{2} \Theta_{q+1} e^{2} ) }_{\text{Another part of } \text{div}(R_{\text{lin}} ) + \nabla p_{\text{lin}}} \nonumber\\
&+ \underbrace{\Upsilon_{1, l} \text{div}((w_{q+1}^{(c)} + w_{q+1}^{(t)}) \otimes w_{q+1} + w_{q+1}^{(p)} \otimes (w_{q+1}^{(c)} + w_{q+1}^{(t)}))}_{\text{div} (R_{\text{cor}}) + \nabla p_{\text{cor}}} \nonumber\\
& + \underbrace{\text{div}(\Upsilon_{1, l} w_{q+1}^{(p)} \otimes w_{q+1}^{(p)} + \mathring{R}_{l}) + \partial_{t}w_{q+1}^{(t)}}_{\text{div} (R_{\text{osc}}) + \nabla p_{\text{osc}}} + \underbrace{(\Upsilon_{1} - \Upsilon_{1,l}) \text{div}(v_{q+1} \otimes v_{q+1})}_{\text{div} (R_{\text{com2}}) + \nabla p_{\text{com2}}} + \text{div}(R_{\text{com1}} ) - \nabla p_{l} \nonumber
\end{align} 
with 
\begin{subequations}
\begin{align}
R_{\text{lin}} \triangleq&  \mathcal{R} ( \frac{1}{2} w_{q+1} + (-\Delta)^{m} w_{q+1} + \partial_{t} (w_{q+1}^{(p)} + w_{q+1}^{(c)})) + \Upsilon_{1,l} (v_{l} \mathring{\otimes} w_{q+1} + w_{q+1} \mathring{\otimes} v_{l}) \nonumber\\
&\hspace{10mm} + \mathcal{R} (((\Upsilon_{1}^{-1} \Upsilon_{2} \Theta_{q} e^{2}) \ast_{x} \phi_{l} )\ast_{t} \varphi_{l} - \Upsilon_{1}^{-1} \Upsilon_{2} \Theta_{q+1} e^{2} ), \label{[Equ. (179a), Y20c]} \\
p_{\text{lin}} \triangleq& \Upsilon_{1,l} (v_{l} \cdot w_{q+1}), \label{[Equ. (179b), Y20c]} \\
R_{\text{cor}} \triangleq&  \Upsilon_{1,l} ((w_{q+1}^{(c)} + w_{q+1}^{(t)}) \mathring{\otimes} w_{q+1} + w_{q+1}^{(p)} \mathring{\otimes} (w_{q+1}^{(c)} + w_{q+1}^{(t)})), \label{[Equ. (179c), Y20c]} \\
p_{\text{cor}} \triangleq&  \frac{\Upsilon_{1,l}}{2} ((w_{q+1}^{(c)} + w_{q+1}^{(t)}) \cdot w_{q+1} + w_{q+1}^{(p)} \cdot (w_{q+1}^{(c)} + w_{q+1}^{(t)})), \label{[Equ. (179d), Y20c]} \\
R_{\text{com2}} \triangleq&  (\Upsilon_{1} - \Upsilon_{1,l}) (v_{q+1} \mathring{\otimes} v_{q+1}), \label{[Equ. (179e), Y20c]} \\
p_{\text{com2}} \triangleq&  \frac{\Upsilon_{1} - \Upsilon_{1,l}}{2} \lvert v_{q+1} \rvert^{2}, \label{[Equ. (179f), Y20c]}
\end{align}
\end{subequations}
where we refer to \cite[Equ. (181)]{Y20c} for specific form of $R_{\text{osc}}$ and $p_{\text{osc}}$. We define, along with $R_{\text{com1}}$ and $p_{l}$ in \eqref{estimate 128}, 
\begin{equation}\label{estimate 363}
\mathring{R}_{q+1} \triangleq R_{\text{lin}} + R_{\text{cor}} + R_{\text{osc}} + R_{\text{com2}} + R_{\text{com1}}, p_{q+1} \triangleq p_{l} - p_{\text{lin}} - p_{\text{cor}} - p_{\text{osc}} - p_{\text{com2}}, 
\end{equation}
fix the same $p^{\ast}$ as in \eqref{[Equ. (116), Y20c]} and first rewrite within $R_{\text{lin}}$, similarly to \eqref{estimate 63} 
\begin{equation}\label{estimate 77}
\lVert \mathcal{R}([( \Upsilon_{1}^{-1} \Upsilon_{2} \Theta_{q} e^{2}) \ast_{x} \phi_{l} ]\ast_{t} \varphi_{l} - \Upsilon_{1}^{-1} \Upsilon_{2} \Theta_{q+1} e^{2}) \rVert_{C_{t}L_{x}^{p^{\ast}}} \leq V + VI, 
\end{equation}
where 
\begin{subequations}\label{estimate 98}
\begin{align}
V \triangleq& \lVert \mathcal{R} (\Upsilon_{1}^{-1} \Upsilon_{2} \Theta_{q}e^{2} - \Upsilon_{1}^{-1} \Upsilon_{2}\Theta_{q+1}e^{2}) \rVert_{C_{t}L_{x}^{p^{\ast}}}, \\
VI \triangleq& \lVert \mathcal{R} (  
[(\Upsilon_{1}^{-1} \Upsilon_{2} \Theta_{q} e^{2})\ast_{x} \phi_{l} ] \ast_{t} \varphi_{l} - \Upsilon_{1}^{-1} \Upsilon_{2} \Theta_{q} e^{2} ) \rVert_{C_{t}L_{x}^{p^{\ast}}}.  
\end{align}
\end{subequations} 
We can estimate by Lemma \ref{[Def. 9, Lem. 10, CDS12]} for all $t \in [0, T_{L}]$ 
\begin{equation}\label{estimate 79} 
V \overset{\eqref{[Equ. (151) and (152), Y20c]}}{\lesssim} e^{2L^{\frac{1}{4}}} \lVert \Theta_{q+1} - \Theta_{q} \rVert_{C_{t}L_{x}^{p^{\ast}}}. 
\end{equation}  
To deal with $\lVert \Theta_{q+1} - \Theta_{q} \rVert_{C_{t}L_{x}^{p^{\ast}}}$ in \eqref{estimate 79}, we return to \eqref{estimate 80} and compute 
\begin{equation}\label{estimate 81} 
\lVert \Theta_{q+1} - \Theta_{q} \rVert_{C_{t}L_{x}^{p^{\ast}}} \overset{\eqref{[Equ. (151) and (152), Y20c]}}{\leq} e^{L^{\frac{1}{4}}} \lVert v_{q+1} - v_{q} \rVert_{C_{t}L_{x}^{p^{\ast}}} \int_{0}^{t} \lVert  \Theta_{q} \rVert_{\dot{W}_{x}^{1,\infty}} dr 
\end{equation} 
where Remark \ref{Remark 4.4} applies again as we emphasize that the way we formulated \eqref{estimate 80} with the difference of nonlinear terms as $(v_{q+1} \cdot \nabla) (\Theta_{q+1} - \Theta_{q}) + (v_{q+1} - v_{1}) \cdot \nabla \Theta_{q}$ instead of $(v_{q+1} - v_{q}) \cdot \nabla \Theta_{q+1} + (v_{q} \cdot \nabla) (\Theta_{q+1} - \Theta_{q})$ was crucial because if we have $\int_{0}^{t} \lVert \Theta_{q+1} \rVert_{\dot{W}_{x}^{1,\infty}} dr$ in \eqref{estimate 81} instead of $\int_{0}^{t} \lVert \Theta_{q} \rVert_{\dot{W}_{x}^{1,\infty}} dr$, then it would have been too large for us to handle. To deal with $\int_{0}^{t} \lVert \Theta_{q} \rVert_{\dot{W}_{x}^{1,\infty}} dr$ in \eqref{estimate 81}, we estimate from \eqref{estimate 72} as follows: 
\begin{align}
\frac{1}{2} \partial_{t} \lVert \Theta_{q} \rVert_{\dot{H}_{x}^{2}}^{2} + \frac{1}{2} \lVert \Theta_{q} \rVert_{\dot{H}_{x}^{2}}^{2} + \lVert \Theta_{q} \rVert_{\dot{H}_{x}^{3}}^{2} 
=& \Upsilon_{1} \int_{\mathbb{T}^{2}} \nabla v_{q} \cdot \nabla \Theta_{q} \cdot \nabla \Delta \Theta_{q} - (\nabla v_{q} \cdot \nabla) \nabla \Theta_{q} \Delta \Theta_{q} dx \nonumber \\
\overset{\eqref{[Equ. (151) and (152), Y20c]}}{\lesssim}& e^{L^{\frac{1}{4}}} \lVert \nabla v_{q} \rVert_{L_{x}^{\infty}} \lVert \nabla \Theta_{q} \rVert_{L_{x}^{2}} \lVert \nabla \Delta \Theta_{q} \rVert_{L_{x}^{2}} \label{estimate 99}
\end{align} 
by integration by parts and Gagliardo-Nirenberg's inequality. Relying on Young's inequality and \eqref{[Equ. (153a) and (153b), Y20c]}, and then integrating over $[0,t]$ give us for all $t \in [0, T_{L}]$   
\begin{equation}\label{estimate 82}
\lVert \Theta_{q}(t) \rVert_{\dot{H}_{x}^{2}}^{2} + \int_{0}^{t} \lVert \Theta_{q} \rVert_{\dot{H}_{x}^{3}}^{2} dr \leq \lVert \theta^{\text{in}} \rVert_{\dot{H}_{x}^{2}}^{2} +C e^{2L^{\frac{1}{4}}} m_{L}^{2} M_{0}(t) \lambda_{q}^{8} \int_{0}^{t} \lVert \Theta_{q} \rVert_{\dot{H}_{x}^{1}}^{2} dr; 
\end{equation} 
we point out for subsequent convenience that \eqref{estimate 82} holds for $n = 3$ by identical computations. Now relying on $H^{3}(\mathbb{T}^{2}) \hookrightarrow W^{1,\infty}(\mathbb{T}^{2})$ and the equality in \eqref{estimate 70} this time lead us to, for $a \in 10 \mathbb{N}$ sufficiently large  
\begin{align}
\int_{0}^{t} \lVert \Theta_{q} \rVert_{\dot{W}_{x}^{1,\infty}} dr \overset{\eqref{estimate 82}\eqref{estimate 70}}{\lesssim}& \sqrt{t} ( \lVert \theta^{\text{in}} \rVert_{\dot{H}_{x}^{2}}^{2} + e^{2L^{\frac{1}{4}}} m_{L}^{2} M_{0}(t) \lambda_{q}^{8} \lVert \theta^{\text{in}} \rVert_{L_{x}^{2}}^{2} )^{\frac{1}{2}}\nonumber\\
& \hspace{30mm} \lesssim \sqrt{t} \lVert \theta^{\text{in}} \rVert_{H_{x}^{2}} e^{L^{\frac{1}{4}}} m_{L} M_{0}(t)^{\frac{1}{2}} \lambda_{q}^{4}. \label{estimate 83}
\end{align} 
We will apply \eqref{estimate 83} to \eqref{estimate 81}. We will still have to estimate $\lVert v_{q+1} - v_{q} \rVert_{C_{t}L_{x}^{p^{\ast}}}$ in \eqref{estimate 81}; for that purpose,  we first split $\lVert v_{q+1} - v_{q} \rVert_{C_{t}L_{x}^{p^{\ast}}}$ to $V_{1} \triangleq \lVert v_{l} - v_{q} \rVert_{C_{t}L_{x}^{p^{\ast}}}$ and $V_{2}\triangleq  \lVert v_{q+1} - v_{l} \rVert_{C_{t}L_{x}^{p^{\ast}}}$ identically to \eqref{estimate 51} where the estimate that is similar to \eqref{estimate 52} applies to $V_{1}$ as follows: 
\begin{equation}\label{estimate 364}
V_{1} \lesssim \lVert v_{l} - v_{q} \rVert_{C_{t}L_{x}^{\infty}} \lesssim l \lVert v_{q} \rVert_{C_{t,x}^{1}}  \overset{\eqref{[Equ. (153a) and (153b), Y20c]}}{\lesssim} l m_{L}M_{0}(t)^{\frac{1}{2}} \lambda_{q}^{4} \overset{\eqref{[Equ. (70), Y20c]}}{\lesssim} \lambda_{q+1}^{-\alpha} m_{L}M_{0}(t)^{\frac{1}{2}}. 
\end{equation} 
The estimate on $V_{2} = \lVert v_{q+1} - v_{l} \rVert_{C_{t}L_{x}^{p^{\ast}}}$ is more subtle . We proceed as follows:
\begin{align}
V_{2} \overset{\eqref{[Equ. (85), Y20c]}}{\leq}& \lVert w_{q+1}^{(p)} \rVert_{C_{t}L_{x}^{p^{\ast}}} + \lVert w_{q+1}^{(c)} \rVert_{C_{t}L_{x}^{p^{\ast}}} + \lVert w_{q+1}^{(t)} \rVert_{C_{t}L_{x}^{p^{\ast}}} \nonumber\\
\overset{\eqref{[Equ. (89b) and (89c), Y20c]} \eqref{[Equ. (172a) and (172b), Y20c]} \eqref{[Equ. (172c), Y20c]}}{\lesssim}& m_{L} \delta_{q+1}^{\frac{1}{2}} M_{0}(t) l^{-\frac{3}{2}} r^{1- \frac{2}{p^{\ast}}}  + m_{L} \delta_{q+1}^{\frac{1}{2}} M_{0}(t)^{\frac{1}{2}} l^{-\frac{11}{2}} \sigma r^{2- \frac{2}{p^{\ast}}} + \mu^{-1} \delta_{q+1} M_{0}(t) l^{-3} r^{2- \frac{2}{p^{\ast}}} \nonumber \\
\overset{\eqref{[Equ. (70), Y20c]} \eqref{[Equ. (67), Y20c]} \eqref{[Equ. (116), Y20c]} }{\lesssim}& m_{L} M_{0}(t) \lambda_{q+1}^{- \frac{69 \alpha}{2} - 1 + 8 \eta} + m_{L} M_{0}(t)^{\frac{1}{2}} \lambda_{q+1}^{ - \frac{53\alpha}{2} + 4 \eta -1} + M_{0}(t) \lambda_{q+1}^{6 \eta - 1 - \frac{63 \alpha}{2}}  \label{estimate 129}
\end{align} 
where we used that 
\begin{align*}
&3 \alpha + (1- 6 \eta) (1 - \frac{2}{p^{\ast}}) \overset{\eqref{[Equ. (116), Y20c]}}{=} - \frac{69 \alpha}{2} - 1 + 8 \eta,\\
& 11 \alpha + 2 \eta - 1 + (1-6 \eta) (2 - \frac{2}{p^{\ast}})  \overset{\eqref{[Equ. (116), Y20c]}}{=}  - \frac{53\alpha}{2} + 4 \eta -1, \\
& 4 \eta - 1 + 6 \alpha + (1-6\eta) (2 - \frac{2}{p^{\ast}}) \overset{\eqref{[Equ. (116), Y20c]}}{=} 6 \eta - 1 - \frac{63 \alpha}{2}.
\end{align*}
By \eqref{[Equ. (64), Y20c]}-\eqref{[Equ. (66), Y20c]} it follows that 
\begin{equation}\label{estimate 84} 
V_{2} \overset{\eqref{estimate 129} \eqref{[Equ. (64), Y20c]} \eqref{[Equ. (65), Y20c]} \eqref{[Equ. (66), Y20c]}}{\lesssim} m_{L} M_{0}(t) \lambda_{q+1}^{ - \frac{69 \alpha}{2} - 1 + 8 \eta} \overset{\eqref{[Equ. (65), Y20c]}}{\lesssim} m_{L} M_{0}(t) \lambda_{q+1}^{ - \frac{69 \alpha}{2}}. 
\end{equation} 
Summing \eqref{estimate 364} and \eqref{estimate 84} gives for $a \in 10 \mathbb{N}$ sufficiently large for all $t \in [0, T_{L}]$ 
\begin{equation}\label{estimate 85}
\lVert v_{q+1} - v_{q} \rVert_{C_{t}L_{x}^{p^{\ast}}} \lesssim m_{L} M_{0}(t)^{\frac{1}{2}} \lambda_{q+1}^{-\alpha} + m_{L} M_{0}(t) \lambda_{q+1}^{- \frac{69\alpha}{2}} \lesssim m_{L} M_{0}(t) \lambda_{q+1}^{-\alpha}. 
\end{equation} 
We now apply \eqref{estimate 83} and \eqref{estimate 85} to \eqref{estimate 81} and obtain for all $t \in [0, T_{L}]$ 
\begin{equation}\label{estimate 90} 
\lVert \Theta_{q+1} - \Theta_{q} \rVert_{C_{t}L_{x}^{p^{\ast}}} 
\lesssim e^{2L^{\frac{1}{4}}} m_{L}^{2} M_{0}(t)^{\frac{3}{2}} \lambda_{q+1}^{-\alpha} \lambda_{q}^{4} \sqrt{t} \lVert \theta^{\text{in}} \rVert_{H_{x}^{2}}. 
\end{equation} 
Applying \eqref{estimate 90} to \eqref{estimate 79} and taking $a \in 10 \mathbb{N}$ sufficiently large lead to, for all $t \in [0, T_{L}]$,  
\begin{align}
V\overset{\eqref{estimate 79} \eqref{estimate 90}}{\lesssim} e^{4L^{\frac{1}{4}}} m_{L}^{2} M_{0}(t)^{\frac{3}{2}} \lambda_{q+1}^{-\alpha} \lambda_{q}^{4} \sqrt{t} \lVert \theta^{\text{in}} \rVert_{H_{x}^{2}} \ll c_{R}M_{0}(t) \delta_{q+2} \label{estimate 91}
\end{align} 
where we used \eqref{estimate 130}. Next, to handle $VI$ from \eqref{estimate 98}, we estimate as follows for any $\epsilon \in (0, 2- \frac{n}{2})$; for subsequent convenience, we compute for general $n \in \{2,3\}$, with the current case being $n = 2$. First,
\begin{equation}\label{estimate 353}
VI \overset{\eqref{[Equ. (151) and (152), Y20c]}}{\lesssim} l^{\frac{1}{2} - 2 \delta} e^{2L^{\frac{1}{4}}} (\lVert \Theta_{q} \rVert_{C_{t}^{\frac{1}{2} - 2 \delta} \dot{H}_{x}^{\frac{n}{2} - 1 + \epsilon}} + \lVert \Theta_{q} \rVert_{C_{t} \dot{H}_{x}^{\frac{n}{2} - \frac{1}{2} + \epsilon - 2 \delta}}). 
\end{equation} 
We apply $\nabla$ on \eqref{estimate 72}  and compute 
\begin{equation*}
\partial_{t} \nabla \Theta_{q} + \frac{1}{2} \nabla \Theta_{q} - \Delta \nabla \Theta_{q} + \Upsilon_{1} \nabla (v_{q} \cdot \nabla \Theta_{q}) = 0
\end{equation*} 
which leads to for all $t \in [0, T_{L}]$  
\begin{align}
\int_{0}^{t} \lVert \partial_{t} \nabla \Theta_{q} \rVert_{L_{x}^{2}}^{2} dr \lesssim& \int_{0}^{t} \lVert \Theta_{q} \rVert_{\dot{H}_{x}^{1}}^{2} dr + \int_{0}^{t} \lVert \Theta_{q} \rVert_{\dot{H}_{x}^{3}}^{2} dr + e^{2L^{\frac{1}{4}}} \lVert \nabla v_{q} \rVert_{C_{t,x}}^{2} \int_{0}^{t} \lVert \Theta_{q} \rVert_{\dot{H}_{x}^{2}}^{2} dr \nonumber \\
& \hspace{20mm} \overset{ \eqref{estimate 82}\eqref{estimate 70} \eqref{[Equ. (153a) and (153b), Y20c]} }{\lesssim} e^{4L^{\frac{1}{4}}} m_{L}^{4}M_{0}(t)^{2} \lambda_{q}^{16} \lVert \theta^{\text{in}} \rVert_{H_{x}^{2}}^{2} (t+1), \label{estimate 350}
\end{align} 
where we used \eqref{estimate 82}; recall that its computation was general in spatial dimension $n \in \{2,3\}$. This leads us to, for all $t \in [0, T_{L}]$, 
\begin{equation}\label{estimate 351}
 \lVert \Theta_{q} \rVert_{C_{t}^{\frac{1}{2} - 2 \delta} \dot{H}_{x}^{\frac{n}{2} - 1 + \epsilon}} \overset{\eqref{estimate 350}}{\lesssim} e^{2L^{\frac{1}{4}}} m_{L}^{2}M_{0}(t) \lambda_{q}^{8} \lVert \theta^{\text{in}} \rVert_{H_{x}^{2}} (\sqrt{t} + 1)
\end{equation} 
while 
\begin{equation}\label{estimate 352}
\lVert \Theta_{q} \rVert_{C_{t} \dot{H}_{x}^{\frac{n}{2} - \frac{1}{2} + \epsilon -  2 \delta}} \lesssim \lVert \Theta_{q} \rVert_{C_{t} \dot{H}_{x}^{2}}  \overset{\eqref{estimate 82}\eqref{estimate 70}}{\lesssim} \lVert \theta^{\text{in}} \rVert_{H_{x}^{2}} e^{L^{\frac{1}{4}}} m_{L} M_{0}(t)^{\frac{1}{2}} \lambda_{q}^{4}. 
\end{equation}
Applying \eqref{estimate 351}-\eqref{estimate 352} to \eqref{estimate 353} finally gives us for $\delta \in (0, \frac{1}{12})$ and all $t \in [0, T_{L}]$  
\begin{align}
VI\overset{\eqref{[Equ. (69), Y20c]}}{ \lesssim}& (\lambda_{q+1}^{-\frac{3\alpha}{2}} \lambda_{q}^{-2})^{\frac{1}{3}} e^{4L^{\frac{1}{4}}} m_{L}^{2} M_{0}(t) \lambda_{q}^{8} \lVert \theta^{\text{in}} \rVert_{H_{x}^{2}} (\sqrt{t} + 1)  \nonumber \\
\lesssim & c_{R} M_{0}(t) \delta_{q+2} a^{b^{q+1} [ - \frac{\alpha}{48}]} m_{L}^{2}e^{4L^{\frac{1}{4}}} \lVert \theta^{\text{in}} \rVert_{H_{x}^{2}} (\sqrt{t}+ 1) \ll c_{R} M_{0}(t) \delta_{q+2} \label{estimate 354}
\end{align} 
where we used that 
\begin{equation*}
-\frac{\alpha}{2} - \frac{2}{3b} + \frac{8}{b} + 2 \beta b \overset{\eqref{[Equ. (68), Y20c]}}{<} -\frac{\alpha}{2} + \frac{22}{3}(\frac{1}{b}) + \frac{\alpha}{48} \overset{\eqref{estimate 62}}{<}  - \frac{\alpha}{48}.
\end{equation*} 
Applying \eqref{estimate 91} and \eqref{estimate 354} to \eqref{estimate 77} gives us for all $t \in [0, T_{L}]$ 
\begin{equation}\label{estimate 355}
\lVert \mathcal{R} (((\Upsilon_{1}^{-1} \Upsilon_{2} \Theta_{q} e^{2}) \ast_{x} \phi_{l} )\ast_{t} \varphi_{l}  - \Upsilon_{1}^{-1} \Upsilon_{2} \Theta_{q+1} e^{2}  ) \rVert_{C_{t}L_{x}^{p^{\ast}}}  \ll c_{R} M_{0}(t) \delta_{q+2}. 
\end{equation} 
On the other hand, from \cite[Equ. (179a) and (188)]{Y20c} we can see that 
\begin{align}
&\lVert R_{\text{lin}} - \mathcal{R} (((\Upsilon_{1}^{-1} \Upsilon_{2} \Theta_{q} e^{2}) \ast_{x} \phi_{l} )\ast_{t} \varphi_{l}  - \Upsilon_{1}^{-1} \Upsilon_{2} \Theta_{q+1} e^{2}) \rVert_{C_{t}L_{x}^{p^{\ast}}} \nonumber \\
& \hspace{20mm} \lesssim M_{0}(t) \delta_{q+2} [m_{L} \lambda_{q+1}^{- \frac{275 \alpha}{8}} + m_{L}^{4} \lambda_{q+1}^{ \frac{ - 273 \alpha - 8 + 64 \eta}{8}}] 
\end{align} 
so that, because $\eta \leq \frac{1}{8}$ due to \eqref{[Equ. (65), Y20c]}, together with \eqref{estimate 355} we can conclude that   
\begin{equation}\label{estimate 133} 
\lVert R_{\text{lin}} \rVert_{C_{t}L_{x}^{p^{\ast}}}  \leq (2\pi)^{-2 ( \frac{p^{\ast} -1}{p^{\ast}})}c_{R} M_{0}(t) \delta_{q+2}
\end{equation} 
by taking $a \in 10 \mathbb{N}$ sufficiently large. By \cite[Equ. (131), (189)-(191)]{Y20c} we have \eqref{estimate 132}. Thus, together with \eqref{estimate 133}, we see that $\lVert \mathring{R}_{q+1} \rVert_{C_{t}L_{x}^{1}} \leq c_{R} M_{0}(t) \delta_{q+2}$ so that \eqref{[Equ. (153c), Y20c]} at level $q+1$ has been proven. The rest of the arguments are similar to those of proof of Proposition \ref{Proposition 4.8 for n=2}.  

Next, we consider the case $n = 3$ so that $m \in (\frac{13}{20}, \frac{5}{4})$ by \eqref{4}. For notations and preliminaries  throughout this subsection, we refer again to Subsection \ref{Preliminaries needed for convex integration in 3D case and more}. 
\begin{proposition}\label{Proposition 5.6 for n=3}
Fix $\theta^{\text{in}} \in H^{2}(\mathbb{T}^{3})$ that is deterministic and mean-zero. Let 
\begin{equation}\label{estimate 111}
v_{0}(t,x) \triangleq \frac{m_{L} e^{2L t + L}}{(2\pi)^{\frac{3}{2}} } 
\begin{pmatrix}
\sin(x^{3}) & 0 & 0 
\end{pmatrix}^{T}. 
\end{equation}
Then there exists a unique solution $\Theta_{0} \in L_{\omega}^{\infty} L_{t}^{\infty} H_{x}^{2} \cap L_{\omega}^{\infty} L_{t}^{2} H_{x}^{3}$ to \eqref{estimate 109}. It follows that together with 
\begin{align}
\mathring{R}_{0}(t,x) \triangleq& \frac{m_{L} (2L + \frac{1}{2}) e^{2L t + L}}{(2\pi)^{\frac{3}{2}}} 
\begin{pmatrix}
0 & 0 & -\cos(x^{3}) \\
0 & 0 & 0 \\
-\cos(x^{3}) & 0 & 0
\end{pmatrix} \nonumber\\
& \hspace{20mm} 
+ \mathcal{R} (-\Delta)^{m} v_{0} - \mathcal{R} (\Upsilon_{1}^{-1} \Upsilon_{2} \Theta_{0} e^{3}), \label{estimate 112}
\end{align} 
$(v_{0}, \Theta_{0})$ satisfy  \eqref{[Equ. (149), Y20c]} at level $q = 0$. Moreover, \eqref{[Equ. (153), Y20c]} at level $q = 0$ is satisfied provided 
\begin{subequations}\label{[Equ. (129), Y20a]} 
\begin{align} 
& \sqrt{3} \lVert \theta^{\text{in}} \rVert_{L_{x}^{2}} \leq L^{\frac{1}{4}} e^{L - (\frac{3}{2}) L^{\frac{1}{4}}}, \label{estimate 95} \\
&18 (2\pi)^{\frac{3}{2}} \sqrt{3} < 2 (2\pi)^{\frac{3}{2}} \sqrt{3} a^{2\beta b} \leq \frac{c_{R} e^{L}}{L^{\frac{1}{4}} (2L + 26) e^{\frac{1}{2} L^{\frac{1}{4}}}}, \text{ and } L \leq \frac{(2\pi)^{\frac{3}{2}} a^{4} -2}{2}, \label{estimate 136}
\end{align} 
\end{subequations} 
where the inequality $9 < a^{2 \beta b}$ in \eqref{estimate 136} is assumed for the justification of second inequality of \eqref{[Equ. (153a) and (153b), Y20c]}. Furthermore, $v_{0}(0,x)$ and $\mathring{R}_{0}(0,x)$ are both deterministic. 
\end{proposition}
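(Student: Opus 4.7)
The plan is to mirror the structure of the proof of Proposition \ref{Proposition 5.6 for n=2}, adapting each step to the 3D setting. First, I would directly compute from \eqref{estimate 111} that $\lVert v_0(t) \rVert_{L^2_x} = m_L M_0(t)^{1/2}/\sqrt{2}$, together with $\lVert v_0 \rVert_{C^1_{t,x}} \leq m_L M_0(t)^{1/2} \lambda_0^4$ under the third inequality in \eqref{estimate 136}, verifying \eqref{[Equ. (153a) and (153b), Y20c]} at level $q=0$. Note that $v_0$ is divergence-free and mean-zero (since $\int_{\mathbb{T}^3} \sin(x^3)\, dx = 0$), so $(-\Delta)^m v_0$ is mean-zero and $\mathcal{R}(-\Delta)^m v_0$ is trace-free and symmetric by Lemma \ref{divergence inverse operator}.

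Next, existence and uniqueness of $\Theta_0$ solving \eqref{estimate 109} with $v = v_0$ follows from standard linear parabolic theory, since $v_0$ and $\Upsilon_1$ are smooth and bounded on $[0,T_L]$. The $L^p$-bound \eqref{estimate 70} is obtained by testing \eqref{estimate 109} with $|\Theta_0|^{p-2}\Theta_0$: since $v_0$ is divergence-free, the nonlinear term vanishes, leaving $\tfrac{1}{p}\partial_t \lVert \Theta_0 \rVert_{L^p_x}^p + \tfrac{1}{2}\lVert \Theta_0 \rVert_{L^p_x}^p \leq 0$, which gives the decay $\lVert \Theta_0(t)\rVert_{L^p_x} \leq e^{-t/2} \lVert \theta^{\text{in}}\rVert_{L^p_x}$. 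Taking $p=2$ and adding the gradient term provides the energy identity in \eqref{estimate 70}. Then a standard bootstrap using $\Delta v_0 = -v_0$, \eqref{[Equ. (151) and (152), Y20c]}, and repeated energy estimates yields $\Theta_0 \in L^\infty_\omega L^\infty_t H^2_x \cap L^\infty_\omega L^\infty_t L^2_t H^3_x$. Setting $\theta_0 = \Upsilon_2 \Theta_0$, It\^o's product formula on $\theta_0$ yields \eqref{estimate 86} at level $q=0$. Since $\theta^{\text{in}}$ is mean-zero, so is $\Theta_0(t)$ for every $t$, hence $\mathcal{R}(\Upsilon_1^{-1}\Upsilon_2 \Theta_0 e^3)$ is trace-free and symmetric, and consequently $\mathring{R}_0$ is too. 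One then checks that $(v_0,\Theta_0)$ satisfies \eqref{estimate 71} at level $q=0$ with $p_0 \equiv 0$ using $\partial_t v_0 = 2L v_0$, $(-\Delta)^m v_0 = \mathcal{R}$-preimage absorbed into $\mathring{R}_0$, and that $v_0 \otimes v_0 = 0$ in dimension 3 because $v_0$ has only a first component depending on $x^3$ (so the nonlinearity $\Upsilon_1 \mathrm{div}(v_0\otimes v_0)$ vanishes identically).

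The main obstacle is verifying \eqref{[Equ. (153c), Y20c]} at $q=0$, i.e.\ $\lVert \mathring{R}_0 \rVert_{C_t L^1_x} \leq c_R M_0(t)\delta_1$. Following the 2D template, by H\"older's inequality and Lemma \ref{divergence inverse operator},
\begin{equation*}
\lVert \mathcal{R}(\Upsilon_1^{-1}\Upsilon_2 \Theta_0 e^3)\rVert_{L^1_x} \lesssim e^{2L^{1/4}} \lVert \Theta_0\rVert_{L^2_x} \overset{\eqref{estimate 70}}{\leq} e^{2L^{1/4}} \lVert \theta^{\text{in}}\rVert_{L^2_x},
\end{equation*}
which by \eqref{estimate 95} is controlled by $m_L M_0(t)^{1/2}$ up to an absolute constant. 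The explicit part $\frac{m_L(2L+\tfrac{1}{2})e^{2Lt+L}}{(2\pi)^{3/2}}\begin{pmatrix} 0&0&-\cos(x^3)\\ 0&0&0\\ -\cos(x^3)&0&0\end{pmatrix}$ is bounded in $L^1_x$ by a constant multiple of $m_L M_0(t)^{1/2}(2L+\tfrac12)$, and $\mathcal{R}(-\Delta)^m v_0$ contributes $O(m_L M_0(t)^{1/2})$. Summing these three contributions gives a bound of the form $C m_L M_0(t)^{1/2}(2L + 26)$, which by \eqref{estimate 136} is at most $c_R M_0(t) \delta_1$ after recalling $m_L = \sqrt{3} L^{1/4} e^{L^{1/4}/2}$ and $M_0(t)^{1/2} \geq e^L$. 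Finally, $v_0(0,x)$ is manifestly deterministic and, since $\Upsilon_k(0) = 1$ and $\theta^{\text{in}}$ is deterministic, $\Theta_0(0,x) = \theta^{\text{in}}(x)$ is deterministic, making $\mathring{R}_0(0,x)$ deterministic as well.
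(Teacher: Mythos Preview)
Your proposal is correct and follows essentially the same route as the paper's proof, which also verifies \eqref{[Equ. (153a) and (153b), Y20c]} directly from \eqref{estimate 111}, derives \eqref{estimate 70} by the $L^p$-test, bootstraps to $H^2_x\cap L^2_tH^3_x$ using $\Delta v_0=-v_0$, checks \eqref{estimate 71} with $p_0\equiv 0$, and bounds $\lVert\mathring{R}_0\rVert_{C_tL^1_x}$ by estimating $\mathcal{R}(\Upsilon_1^{-1}\Upsilon_2\Theta_0 e^3)$ via \eqref{estimate 95} and the remaining pieces via \eqref{estimate 136}. One small wording slip: $v_0\otimes v_0$ is not the zero matrix (its $(1,1)$ entry is $(v_0^1)^2$); what vanishes is $\operatorname{div}(v_0\otimes v_0)=(v_0\cdot\nabla)v_0$, since $v_0^1$ depends only on $x^3$ and $v_0^2=v_0^3=0$.
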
 

\begin{proof}[Proof of Proposition \ref{Proposition 5.6 for n=3}]
For all $t \in [0, T_{L}]$ we can deduce the same estimates in \eqref{[Equ. (157), Y20c]} by \eqref{[Equ. (129), Y20a]} (see \cite[Equ. (130)]{Y20a}). Identically to the proof of Proposition \ref{Proposition 5.6 for n=2}, we see that $\theta_{0}$ and $\Theta_{0} = \Upsilon_{2}^{-1} \theta_{0}$ satisfy \eqref{estimate 86}-\eqref{estimate 70}. Using \eqref{estimate 70}, \eqref{[Equ. (157), Y20c]}, and \eqref{[Equ. (151) and (152), Y20c]}, as well as the fact that $\Delta v_{0} = -v_{0}$, one can bootstrap from \eqref{estimate 70} to verify that $\Theta_{0} \in L_{\omega}^{\infty} L_{t}^{\infty} H_{x}^{2} \cap L_{\omega}^{\infty} L_{t}^{2} H_{x}^{3}$ again. Moreover, because $v_{0}(t)$ and $\Theta_{0}(t)$ are mean-zero for all $t \geq 0$, we see that $\mathring{R}_{0}$ is also trace-free and symmetric by Lemma \ref{divergence inverse operator}. It can be immediately verified that \eqref{estimate 71}  holds  if $p_{0} \equiv 0$. To verify the bound on $\lVert\mathring{R}_{0}\rVert_{L_{x}^{1}}$ in \eqref{[Equ. (153c), Y20c]}, we compute using Lemma \ref{divergence inverse operator} and the fact that $\Theta_{0}(t)$ is mean-zero for all $t \geq 0$, 
\begin{align}
\lVert \mathcal{R} ( \Upsilon_{1}^{-1} \Upsilon_{2} \Theta_{0} e^{3})(t) \rVert_{L_{x}^{1}} \overset{\eqref{[Equ. (151) and (152), Y20c]}}{\leq} 6 (2\pi)^{\frac{3}{2}} \lVert \theta^{\text{in}} \rVert_{L_{x}^{2}} e^{2L^{\frac{1}{4}}}  \overset{\eqref{[Equ. (129), Y20a]}}{\leq} (2\pi)^{\frac{3}{2}} 2 m_{L} e^{2L t + L}.  \label{estimate 134}
\end{align} 
It can be seen from the first inequality in \cite[Equ. (131)]{Y20a} that 
\begin{equation}\label{estimate 135}
\lVert \mathring{R}_{0}(t) + \mathcal{R} (\Upsilon_{1}^{-1} \Upsilon_{2} \Theta_{0} e^{3})(t) \rVert_{L_{x}^{1}} \leq m_{L} ( 2 L + \frac{1}{2}) e^{2L t + L} 8 (2\pi)^{\frac{1}{2}} + (2\pi)^{\frac{3}{2}} 48  \lVert v_{0} \rVert_{L_{x}^{2}}
\end{equation} 
and hence \eqref{[Equ. (157), Y20c]}, \eqref{estimate 134}-\eqref{estimate 135} verify that  
\begin{equation}
\lVert \mathring{R}_{0} \rVert_{C_{t}L_{x}^{1}} \leq m_{L} M_{0}(t)^{\frac{1}{2}}[ (2 L + \frac{1}{2}) 8 (2\pi)^{\frac{1}{2}} + (2\pi)^{\frac{3}{2}} 48 + 2 (2\pi)^{\frac{3}{2}}]  \overset{\eqref{[Equ. (129), Y20a]}}{\leq} M_{0}(t) c_{R} \delta_{1}.
\end{equation} 
\end{proof}

\begin{proposition}\label{Proposition 5.7 for n=3}
Fix $\theta^{\text{in}} \in H^{2}(\mathbb{T}^{3})$ that is deterministic and mean-zero from the hypothesis of Proposition \ref{Proposition 5.6 for n=3}. Let $L$ satisfy \eqref{estimate 95} and 
\begin{equation}\label{[Equ. (132), Y20a]} 
18 (2\pi)^{\frac{3}{2}} \sqrt{3} < \frac{c_{R} e^{L}}{L^{\frac{1}{4}} (2L + 26) e^{\frac{1}{2} L^{\frac{1}{4}}}},  
\end{equation} 
and suppose that $(v_{q}, \Theta_{q}, \mathring{R}_{q})$ are $(\mathcal{F}_{t})_{t\geq 0}$-adapted processes that solve \eqref{[Equ. (149), Y20c]} and satisfy \eqref{[Equ. (153), Y20c]}. Then there exist a choice of parameters $a, b,$ and $\beta$ such that \eqref{estimate 136} is fulfilled and an $(\mathcal{F}_{t})_{t\geq 0}$-adapted processes $(v_{q+1}, \Theta_{q+1}, \mathring{R}_{q+1})$ that satisfy \eqref{[Equ. (149), Y20c]}, \eqref{[Equ. (153), Y20c]} at level $q+1$, and \eqref{[Equ. (160), Y20c]}-\eqref{estimate 75}. Finally, if $v_{q}(0,x)$ and $\mathring{R}_{q}(0,x)$ are deterministic, then so are $v_{q+1}(0,x)$ and $\mathring{R}_{q+1}(0,x)$. 
\end{proposition}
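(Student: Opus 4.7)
The plan is to combine the convex integration machinery from the proof of Proposition \ref{Proposition 4.8 for n=3} (based on 3D Mikado flows from Subsection \ref{Preliminaries needed for convex integration in 3D case and more}) with the multiplicative-noise modifications already seen in the proof of Proposition \ref{Proposition 5.7 for n=2}. Concretely, I would fix $\alpha = \frac{5-4m}{480}$ as in \eqref{[Equ. (52), Y20a]}, keep $b, r_{\lVert}, r_{\bot}, \mu$ as in \eqref{estimate 62} and \eqref{[Equ. (62), Y20a]}, and choose $l$ as in \eqref{[Equ. (69), Y20c]}. The additional requirement from multiplicative noise is that $a \in 10\mathbb{N}$ be large enough so that $m_{L} \leq l^{-1}$ and $m_{L} \leq M_{0}(t)^{\frac{1}{2}}$, analogously to \eqref{[Equ. (161), Y20c]}; this is feasible since $m_L$ depends only on $L$. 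The hypotheses \eqref{estimate 95}--\eqref{[Equ. (132), Y20a]} then guarantee \eqref{estimate 136} upon taking $\beta$ small.

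Next I would mollify $v_q, \Theta_q, \mathring{R}_q$ as in \eqref{[Equ. (71), Y20c]} and $\Upsilon_k$ as in \eqref{estimate 96}, producing a commutator error of the form \eqref{estimate 147} with the extra $\Upsilon_1$ weight. For the perturbation, I would define the modified amplitude $\bar{a}_\zeta \triangleq \Upsilon_{1,l}^{-\frac{1}{2}} a_\zeta$ exactly as in \eqref{[Equ. (166), Y20c]}, with $a_\zeta$ given by \eqref{estimate 344} (the 3D version using $\gamma_\zeta$ from Lemma \ref{[Lem. B.1, HZZ19]}), and define $w_{q+1}^{(p)}, w_{q+1}^{(c)}$ as in \eqref{[Equ. (74a) and (74b), Y20a]} but with $a_\zeta$ replaced by $\bar a_\zeta$, while keeping $w_{q+1}^{(t)}$ as in \eqref{[Equ. (74c), Y20a]}. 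The estimates of Subsection \ref{Preliminaries needed for convex integration in 3D case and more} together with $\lVert \Upsilon_{1,l}^{-1/2}\rVert_{C_t} \leq m_L$ upgrade \eqref{[Equ. (77) and (78a), Y20a]}--\eqref{[Equ. (85), Y20a]} by a factor of $m_L$, so the compatibility $m_L \leq M_0(t)^{1/2}$ ensures the inductive bounds \eqref{[Equ. (153a) and (153b), Y20c]} at level $q+1$ and \eqref{[Equ. (160), Y20c]}.

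With $v_{q+1}$ in hand, I would solve the linear transport--diffusion equation \eqref{estimate 72} at level $q+1$ for $\Theta_{q+1}$ starting from $\theta^{\text{in}}$; standard $L^p$-testing with $|\Theta_{q+1}|^{p-2}\Theta_{q+1}$ and the cancellation of the transport term gives \eqref{estimate 70}, while the identity $\theta_{q+1} = \Upsilon_2 \Theta_{q+1}$ and It$\hat{\mathrm{o}}$'s formula yield \eqref{estimate 86}. For the Cauchy bound \eqref{estimate 75}, I would mimic \eqref{estimate 80}--\eqref{estimate 23}: subtract equations at levels $q+1$ and $q$, test with $\Theta_{q+1}-\Theta_q$, and use the sup-bound $\lVert\Theta_q\rVert_{L^\infty} \leq e^{-t/2}\lVert\theta^{\text{in}}\rVert_{L^\infty}$ from \eqref{estimate 70} to absorb the dissipation.

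The main obstacle, as in every step, is the Reynolds stress bound, specifically the new term $\mathcal{R}(((\Upsilon_1^{-1}\Upsilon_2\Theta_q e^3)\ast_x\phi_l)\ast_t\varphi_l - \Upsilon_1^{-1}\Upsilon_2\Theta_{q+1}e^3)$ appearing in $R_{\text{lin}}$. Following \eqref{estimate 77}--\eqref{estimate 355}, I would split this into $V$ (difference of $\Theta_{q+1}$ and $\Theta_q$) and $VI$ (mollification error). For $V$, I would derive the analogue of \eqref{estimate 81} using Lemma \ref{divergence inverse operator}, control $\int_0^t \lVert\Theta_q\rVert_{\dot W^{1,\infty}}dr$ via the $H^3$-estimate \eqref{estimate 82} together with the 3D Sobolev embedding $H^3(\mathbb{T}^3)\hookrightarrow W^{1,\infty}(\mathbb{T}^3)$ (which still holds since $3 > 1+\tfrac{3}{2}$), and estimate $\lVert v_{q+1}-v_q\rVert_{C_tL_x^{p^*}}$ via the 3D bounds \eqref{estimate 123}--\eqref{estimate 87}, where now $p^*$ is taken as in \eqref{[Equ. (93), Y20a]}. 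The bound $2\beta b + \frac{-65\alpha+6-12m}{6} + \frac{4}{b} < 0$ in \eqref{estimate 131}, already used in the additive-noise 3D case, together with the compatibility $m_L \leq M_0(t)^{1/2}$ makes $V \ll c_R M_0(t)\delta_{q+2}$. For $VI$, I would use \eqref{estimate 353}--\eqref{estimate 354}, valid in dimension $n=3$ by the observation after \eqref{estimate 82}. Having controlled this new ingredient, the remaining pieces $R_{\text{cor}}, R_{\text{osc}}, R_{\text{com1}}, R_{\text{com2}}$ can be estimated exactly as in \cite[Equ. (100), (103)--(106)]{Y20a} with the additional $\Upsilon_{1,l}$ or $\Upsilon_1-\Upsilon_{1,l}$ factors treated as in \eqref{[Equ. (179c), Y20c]}--\eqref{[Equ. (179f), Y20c]} of the 2D multiplicative case, and summing yields \eqref{[Equ. (153c), Y20c]} at level $q+1$. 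Measurability and determinism of $v_{q+1}(0,x), \mathring R_{q+1}(0,x)$ follow as in all previous propositions.
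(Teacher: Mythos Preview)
Your proposal is correct and follows essentially the same approach as the paper's own proof: choose the 3D parameters $\alpha, b, r_{\lVert}, r_{\bot}, \mu, l$ as in \eqref{[Equ. (52), Y20a]}, \eqref{estimate 62}, \eqref{[Equ. (62), Y20a]}, \eqref{[Equ. (69), Y20c]}, define the modified amplitude $\bar{a}_\zeta = \Upsilon_{1,l}^{-1/2} a_\zeta$ and the perturbations $w_{q+1}^{(p)}, w_{q+1}^{(c)}$ with $\bar{a}_\zeta$ (keeping $w_{q+1}^{(t)}$ unchanged), derive the Cauchy bound \eqref{estimate 75} exactly via \eqref{estimate 80}--\eqref{estimate 23}, and split the new temperature term in $R_{\text{lin}}$ into the $\Theta_{q+1}-\Theta_q$ piece and the mollification piece, handled respectively by the $H^3(\mathbb{T}^3)\hookrightarrow W^{1,\infty}(\mathbb{T}^3)$ estimate \eqref{estimate 82}--\eqref{estimate 83} and by \eqref{estimate 353}--\eqref{estimate 354}. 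Two cosmetic points: in the 3D setting the paper takes $a$ so that $a^{(25-20m)/24}\in\mathbb{N}$ (needed for the periodicity of the intermittent jets) rather than $a\in 10\mathbb{N}$, and the $L_x^{p^\ast}$-bound on $v_{q+1}-v_l$ is redone from scratch in \eqref{estimate 100} with the $m_L$ factors rather than literally quoted from \eqref{estimate 123}, but your tracking of $m_L$ via \eqref{[Equ. (161), Y20c]} covers this.
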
  

\begin{proof}[Proof of Theorem \ref{Theorem 2.3} if $n=3$ assuming Proposition \ref{Proposition 5.7 for n=3}]
Fix $\theta^{\text{in}} \in H^{2}(\mathbb{T}^{3})$ that is deterministic and mean-zero from the hypothesis of Proposition \ref{Proposition 5.6 for n=3}, any $T > 0, K > 1$, and $\kappa \in (0,1)$. Then we take $L$ that satisfies \eqref{[Equ. (132), Y20a]} and enlarge it if necessary to satisfy \eqref{[Equ. (208), Y20c]}. Starting from $(v_{0}, \Theta_{0}, \mathring{R}_{0})$ in Proposition \ref{Proposition 5.6 for n=3}, Proposition \ref{Proposition 5.7 for n=3} inductively gives us $(v_{q}, \Theta_{q}, \mathring{R}_{q})$ that satisfies \eqref{[Equ. (149), Y20c]}, \eqref{[Equ. (153), Y20c]}, and \eqref{[Equ. (160), Y20c]}-\eqref{estimate 75}. Identically to the proof of Theorem \ref{Theorem 2.3} in case $n=2$, we can deduce the limiting solution $\lim_{q\to \infty} v_{q} \triangleq v \in C_{T_{L}} \dot{H}_{x}^{\gamma}$ and $\lim_{q\to \infty} \Theta_{q} \triangleq \Theta \in \cap_{p \in [1,\infty)} C_{T_{L}} L_{x}^{p} \cap L_{T_{L}}^{2} \dot{H}_{x}^{1}$ which are both $(\mathcal{F}_{t})_{t\geq 0}$-adapted. It follows that $(v,\Theta)$ solves  \eqref{[Equ. (148), Y20c]}, that $\lVert v(t) - v_{0}(t) \rVert_{L_{x}^{2}} \leq \frac{m_{L}}{2} M_{0}(t)^{\frac{1}{2}}$ for all $t \in [0, T_{L}]$, and that \eqref{[Equ. (209), Y20c]} holds. Then \eqref{[Equ. (157), Y20c]}, \eqref{[Equ. (209), Y20c]}, and \eqref{[Equ. (208), Y20c]} lead to \eqref{[Equ. (210), Y20c]} on a set $\{T_{L} \geq T \}$. At last, identically to the proof of Theorem \ref{Theorem 2.3} in case $n = 2$, we can deduce \eqref{6}, that $u^{\text{in}}$ is deterministic, $(u,\theta)$ are both $(\mathcal{F}_{t})_{t\geq 0}$-adapted, and that  \eqref{[Equ. (6), Y20c]} holds. 
\end{proof} 

\subsection{Convex integration to prove Proposition \ref{Proposition 5.7 for n=3}}
\subsubsection{Choice of parameters}
We fix $L$ sufficiently large so that it satisfies \eqref{estimate 95} and \eqref{[Equ. (132), Y20a]}. We take same $\alpha$ from \eqref{[Equ. (52), Y20a]}, $l$ from \eqref{[Equ. (69), Y20c]}, $b$ from \eqref{estimate 62}, and $\beta$ sufficiently small so that at least \eqref{[Equ. (68), Y20c]} holds, Considering the requirement from \eqref{estimate 136}, $L \leq \frac{ (2\pi)^{\frac{3}{2}} a^{4} -2}{2}$ is satisfied by taking $a > 0$ sufficiently large while the other two inequalities in \eqref{estimate 136} can be achieved by taking $\beta > 0$ sufficiently small. We also use same definitions of $r_{\lVert}, r_{\bot}$, and $\mu$ in \eqref{[Equ. (62), Y20a]}. With such $\alpha, L$, and $b$ fixed, we take $a > 0$ such that $a^{\frac{25 - 20m}{24}} \in \mathbb{N}$, which will be needed in deriving \eqref{[Equ. (143), Y20a]}, as large and $\beta > 0$ as small as needed.

\subsubsection{Mollification}
Identical mollifications to \eqref{[Equ. (71), Y20c]} and \eqref{estimate 96} with the only exception being $\phi_{\epsilon} (\cdot) \triangleq \frac{1}{\epsilon^{3}} \phi(\frac{\cdot}{\epsilon})$ lead us to 
\begin{equation}\label{[Equ. (138), Y20a]}
 \partial_{t} v_{l} + \frac{1}{2} v_{l} + (-\Delta)^{m} v_{l} + \Upsilon_{1,l}\text{div} (v_{l} \otimes v_{l}) + \nabla p_{l}  =  \text{div}( \mathring{R}_{l}  + R_{\text{com1}}) + ((\Upsilon_{1}^{-1} \Upsilon_{2} \Theta_{q} e^{3}) \ast_{x} \phi_{l})\ast_{t} \varphi_{l} 
\end{equation}
with $R_{\text{com1}}$ identical to that in \eqref{estimate 147} while 
\begin{equation}\label{[Equ. (164a), Y20a]} 
p_{l} \triangleq (p_{q} \ast_{x} \phi_{l}) \ast_{t} \varphi_{l} - \frac{1}{3} (\Upsilon_{1,l} \lvert v_{l} \rvert^{2} - ((\Upsilon_{1} \lvert v_{q} \rvert^{2} )\ast_{x} \phi_{l} ) \ast_{t} \varphi_{l}).
\end{equation}
We can verify the same estimates in \eqref{[Equ. (165a) and (165b), Y20c]}-\eqref{[Equ. (165c), Y20c]} (see \cite[Equ. (140)]{Y20a}). 

\subsubsection{Perturbation}

We proceed with same definition of $\chi$ in \eqref{[Equ. (75), Y20c]} and $\rho$ in \eqref{[Equ. (76), Y20c]} so that  \eqref{[Equ. (77), Y20c]} follows, only with $M_{0}(t)$ defined now by \eqref{[Equ. (150), Y20c]}. We define the modified amplitude function $\bar{a}_{\zeta}$ identically to \eqref{[Equ. (166), Y20c]} so that it continues to satisfy \eqref{[Equ. (168), Y20c]}. Additionally, we can estimate for any $N \geq 0$ and $k \in \{0,1,2\}$, 
\begin{equation}\label{[Equ. (143), Y20a]}
\lVert \bar{a}_{\zeta} \rVert_{C_{t} C_{x}^{N}} \overset{\eqref{[Equ. (73), Y20a]}}{\leq} m_{L} c_{R}^{\frac{1}{4}} \delta_{q+1}^{\frac{1}{2}} M_{0}(t)^{\frac{1}{2}} l^{-2-5N},  \lVert \bar{a}_{\zeta} \rVert_{C_{t}^{1}C_{x}^{k}} \overset{\eqref{[Equ. (73), Y20a]}\eqref{[Equ. (151) and (152), Y20c]} }{\leq} m_{L} c_{R}^{\frac{1}{8}} \delta_{q+1}^{\frac{1}{2}} M_{0}(t)^{\frac{1}{2}} l^{-2 - (k+1) 5}  
\end{equation}
(see \cite[Equ. (143)]{Y20a}). We define $w_{q+1}^{(p)}$ and $w_{q+1}^{(c)}$ identically to \eqref{[Equ. (74a) and (74b), Y20a]}, only with $\bar{a}_{\zeta}$ instead of $a_{\zeta}$ while we continue to use the same definition of $w_{q+1}^{(t)}$ in \eqref{[Equ. (74c), Y20a]}. Identically to \eqref{[Equ. (85), Y20c]}, we define $w_{q+1}$ and $v_{q+1}$ which are both divergence-free. For all $t \in [0, T_{L}]$ and $p \in (1,\infty)$, by relying on \cite[Lem. 7.4]{BV19b} we can show that they satisfy  
\begin{subequations}\label{[Equ. (146), Y20a]}
\begin{align}
& \lVert w_{q+1}^{(p)} \rVert_{C_{t}L_{x}^{2}} \leq \frac{1}{2} m_{L} M_{0}(t)^{\frac{1}{2}} \delta_{q+1}^{\frac{1}{2}}, \hspace{3mm} \lVert w_{q+1}^{(p)} \rVert_{C_{t}L_{x}^{p}} \overset{\eqref{[Equ. (143), Y20a]} \eqref{[Equ. (174c), Y20a]} }{\lesssim} m_{L} M_{0}(t)^{\frac{1}{2}} \delta_{q+1}^{\frac{1}{2}} l^{-2} r_{\bot}^{\frac{2}{p} - 1} r_{\lVert}^{\frac{1}{p} - \frac{1}{2}}, \label{[Equ. (146a) and (146b), Y20a]}\\
& \lVert w_{q+1}^{(c)} \rVert_{C_{t}L_{x}^{p}} \overset{\eqref{[Equ. (143), Y20a]} \eqref{[Equ. (174c), Y20a]}}{\lesssim} m_{L} \delta_{q+1}^{\frac{1}{2}} M_{0}(t)^{\frac{1}{2}} l^{-12} r_{\bot}^{\frac{2}{p}} r_{\lVert}^{\frac{1}{p} - \frac{3}{2}}  \label{[Equ. (146c), Y20a]}
\end{align}
\end{subequations} 
(see \cite[Equ. (146)]{Y20a})
while the bound \eqref{[Equ. (78b) and (78c), Y20a]} on $\lVert w_{q+1}^{(t)} \rVert_{C_{t}L_{x}^{p}}$ remains valid. These lead to 
\begin{equation}\label{[Equ. (147), Y20a]}
 \lVert w_{q+1} \rVert_{C_{t}L_{x}^{2}} \overset{\eqref{[Equ. (85), Y20c]}}{\leq} \lVert w_{q+1}^{(p)} \rVert_{C_{t}L_{x}^{2}} + \lVert w_{q+1}^{(c)} \rVert_{C_{t}L_{x}^{2}} + \lVert w_{q+1}^{(t)} \rVert_{C_{t}L_{x}^{2}}   \overset{\eqref{[Equ. (70), Y20c]} \eqref{[Equ. (78b) and (78c), Y20a]} \eqref{[Equ. (146), Y20a]}  \eqref{[Equ. (151) and (152), Y20c]} }{\leq} \frac{3m_{L} M_{0}(t)^{\frac{1}{2}} \delta_{q+1}^{\frac{1}{2}}}{4}
\end{equation} 
which, together with \eqref{[Equ. (85), Y20c]}, \eqref{[Equ. (165a) and (165b), Y20c]}, and \eqref{[Equ. (147), Y20a]}, verifies the first inequality of \eqref{[Equ. (153a) and (153b), Y20c]} at level $q+1$ and \eqref{[Equ. (160), Y20c]}  (see \cite[Equ. (147)]{Y20a}). Moreover, we can show that for all $t \in [0, T_{L}]$ 
\begin{equation}\label{[Equ. (148), Y20a]}
\lVert w_{q+1}^{(p)} \rVert_{C_{t,x}^{1}} \overset{\eqref{[Equ. (143), Y20a]}}{\lesssim} m_{L} M_{0}(t)^{\frac{1}{2}} l^{-7} r_{\bot}^{-1} r_{\lVert}^{-\frac{1}{2}} \lambda_{q+1}^{2m}, \hspace{5mm}  \lVert w_{q+1}^{(c)} \rVert_{C_{t,x}^{1}}  \overset{\eqref{[Equ. (143), Y20a]}}{\lesssim} m_{L} M_{0}(t)^{\frac{1}{2}} l^{-17} r_{\lVert}^{-\frac{3}{2}} \lambda_{q+1}^{2m} 
\end{equation} 
(see \cite[Equ. (148)]{Y20a}). Along with the bound on $w_{q+1}^{(t)}$ in \eqref{[Equ. (85), Y20a]} that remains valid, one can now verify the second inequality in \eqref{[Equ. (153a) and (153b), Y20c]} at level $q+1$ via \eqref{[Equ. (85), Y20c]}.  At last, identically to the proof of Proposition \ref{Proposition 5.7 for n=2}, we can deduce that $\Theta_{q+1}$ satisfies \eqref{estimate 70} at level $q+1$, while $\theta_{q+1} = \Upsilon_{2}\Theta_{q+1}$ satisfies \eqref{estimate 86} at level $q+1$.  The proof of the Cauchy property \eqref{estimate 75} also follows from identical computations in \eqref{estimate 80}-\eqref{estimate 23}.  

\subsubsection{Reynolds stress} 
Identically to \eqref{estimate 363}, along with $p_{l}$ from \eqref{[Equ. (164a), Y20a]} and $R_{\text{com1}}$ from \eqref{estimate 147}, due to \eqref{estimate 71}, \eqref{[Equ. (85), Y20c]}, and \eqref{[Equ. (138), Y20a]}, we can define $\mathring{R}_{q+1}$ and $p_{q+1}$ with 
\begin{subequations}\label{[Equ. (152), Y20a]}
\begin{align}
& R_{\text{lin}} \triangleq \mathcal{R} ( \frac{1}{2} w_{q+1} + (-\Delta)^{m} w_{q+1} + \partial_{t} (w_{q+1}^{(p)} + w_{q+1}^{(c)})) + \Upsilon_{1,l} (v_{l} \mathring{\otimes} w_{q+1} + w_{q+1} \mathring{\otimes}v_{l}) \nonumber\\
& \hspace{30mm} + \mathcal{R} ( ((\Upsilon_{1}^{-1} \Upsilon_{2} \Theta_{q} e^{3} )\ast_{x} \phi_{l}) \ast_{t} \varphi_{l}  -\Upsilon_{1}^{-1} \Upsilon_{2} \Theta_{q+1} e^{3} ), \label{[Equ. (152a), Y20a]}\\
& p_{\text{lin}} \triangleq \Upsilon_{1,l} (\frac{2}{3}) (v_{l} \cdot w_{q+1}), \\
& R_{\text{cor}} \triangleq \Upsilon_{1,l}((w_{q+1}^{(c)} + w_{q+1}^{(t)}) \mathring{\otimes} w_{q+1} + w_{q+1}^{(p)} \mathring{\otimes} (w_{q+1}^{(c)} + w_{q+1}^{(t)})), \\
& p_{\text{cor}} \triangleq \frac{\Upsilon_{1,l}}{3} ((w_{q+1}^{(c)} + w_{q+1}^{(t)}) \cdot w_{q+1} + w_{q+1}^{(p)} \cdot (w_{q+1}^{(c)} + w_{q+1}^{(t)})) , \\
&R_{\text{osc}} \triangleq \sum_{\zeta \in \Lambda} \mathcal{R} (\nabla a_{\zeta}^{2} \mathbb{P}_{\neq 0} (W_{\zeta} \otimes W_{\zeta})) -  \mu^{-1} \sum_{\zeta \in \Lambda} \mathcal{R} (\partial_{t} a_{\zeta}^{2} \phi_{\zeta}^{2} \psi_{\zeta}^{2} \zeta), \\
&p_{\text{osc}} \triangleq \rho + \Delta^{-1} \text{div}[ \mu^{-1} \sum_{\zeta \in \Lambda } \mathbb{P}_{\neq 0} \partial_{t} (a_{\zeta}^{2} \phi_{\zeta}^{2} \psi_{\zeta}^{2} \zeta)], \\
& R_{\text{com2}} \triangleq (\Upsilon_{1} - \Upsilon_{1,l}) (v_{q+1} \mathring{\otimes} v_{q+1}), \\
& p_{\text{com2}}  \triangleq \frac{ \Upsilon_{1} - \Upsilon_{1,l}}{3} \lvert v_{q+1} \rvert^{2}, 
\end{align}
\end{subequations}
We use the same $p^{\ast}$ in \eqref{[Equ. (93), Y20a]}. We can split $\mathcal{R} (((\Upsilon_{1}^{-1} \Upsilon_{2} \Theta_{q} e^{3} )\ast_{x} \phi_{l}) \ast_{t} \varphi_{l} -\Upsilon_{1}^{-1} \Upsilon_{2} \Theta_{q+1} e^{3})$ within \eqref{[Equ. (152a), Y20a]} identically to \eqref{estimate 98} and retain the same estimates \eqref{estimate 79}-\eqref{estimate 82}, and even \eqref{estimate 83} which used $H^{3}(\mathbb{T}^{2}) \hookrightarrow W^{1,\infty} (\mathbb{T}^{2})$ because $H^{3}(\mathbb{T}^{3}) \hookrightarrow W^{1,\infty} (\mathbb{T}^{3})$ is valid; i.e., 
\begin{equation}\label{estimate 357}
\lVert \mathcal{R}([( \Upsilon_{1}^{-1} \Upsilon_{2} \Theta_{q} e^{3}) \ast_{x} \phi_{l} ]\ast_{t} \varphi_{l} - \Upsilon_{1}^{-1} \Upsilon_{2} \Theta_{q+1} e^{3}) \rVert_{C_{t}L_{x}^{p^{\ast}}} \leq VII + VIII
\end{equation}
where 
\begin{subequations}\label{estimate 137}
\begin{align}
&VII \triangleq  \lVert \mathcal{R} (\Upsilon_{1}^{-1} \Upsilon_{2} \Theta_{q}e^{3} - \Upsilon_{1}^{-1} \Upsilon_{2}\Theta_{q+1}e^{3}) \rVert_{C_{t}L_{x}^{p^{\ast}}}, \\
&VIII \triangleq \lVert \mathcal{R} (  
[(\Upsilon_{1}^{-1} \Upsilon_{2} \Theta_{q} e^{3})\ast_{x} \phi_{l} ] \ast_{t} \varphi_{l} - \Upsilon_{1}^{-1} \Upsilon_{2} \Theta_{q} e^{3} ) \rVert_{C_{t}L_{x}^{p^{\ast}}}.
\end{align}  
\end{subequations} 
To deal with $\lVert v_{q+1} - v_{q} \rVert_{C_{t}L_{x}^{p^{\ast}}}$ in \eqref{estimate 81}, we split it identically to \eqref{estimate 51}: $\lVert v_{q+1} - v_{q} \rVert_{C_{t}L_{x}^{p^{\ast}}} \leq  VII_{1} + VII_{2}$ where $VII_{1} \triangleq \lVert v_{l} - v_{q} \rVert_{C_{t}L_{x}^{p^{\ast}}}$ and $VII_{2} \triangleq \lVert v_{q+1} - v_{l} \rVert_{C_{t}L_{x}^{p^{\ast}}}$. Now $VII_{1}$ can be bounded identically to \eqref{estimate 364} while we carefully estimate $VII_{2}$ as follows: for all $t \in [0, T_{L}]$  
\begin{align}
VII_{2} \overset{\eqref{[Equ. (85), Y20c]}}{\leq}& \lVert w_{q+1}^{(p)}  \rVert_{C_{t}L_{x}^{p^{\ast}}} + \lVert w_{q+1}^{(c)}  \rVert_{C_{t}L_{x}^{p^{\ast}}} + \lVert w_{q+1}^{(t)} \rVert_{C_{t}L_{x}^{p^{\ast}}}  \label{estimate 100}\\
\overset{\eqref{[Equ. (146), Y20a]} \eqref{[Equ. (78b) and (78c), Y20a]} }{\lesssim}& m_{L} M_{0}(t)^{\frac{1}{2}} \delta_{q+1}^{\frac{1}{2}} l^{-2} r_{\bot}^{\frac{2}{p^{\ast}} - 1} r_{\lVert}^{\frac{1}{p^{\ast}} - \frac{1}{2}} + m_{L} \delta_{q+1}^{\frac{1}{2}} M_{0}(t)^{\frac{1}{2}} l^{-12} r_{\bot}^{\frac{2}{p^{\ast}}} r_{\lVert}^{\frac{1}{p^{\ast}} - \frac{3}{2}} \nonumber\\
&+ \delta_{q+1}M_{0}(t) l^{-4} r_{\bot}^{\frac{2}{p^{\ast}} - 1} r_{\lVert}^{\frac{1}{p^{\ast}} - 2} \lambda_{q+1}^{1-2m}  \nonumber\\
\overset{\eqref{[Equ. (62), Y20a]} \eqref{[Equ. (70), Y20c]}}{\lesssim}&  m_{L} M_{0}(t)^{\frac{1}{2}} \lambda_{q+1}^{\frac{ - 122 \alpha + 6 - 24m}{12}} + m_{L} M_{0}(t)^{\frac{1}{2}} \lambda_{q+1}^{\frac{236 \alpha - 1 - 28m}{24}} + M_{0}(t) \lambda_{q+1}^{\frac{ - 36 m + 9 - 148 \alpha}{24}} \overset{\eqref{[Equ. (161), Y20c]}}{\lesssim} M_{0}(t) \lambda_{q+1}^{\frac{-36 m + 9 - 148 \alpha}{24}} \nonumber 
\end{align} 
where we used that 
\begin{align*}
& 4 \alpha + (\frac{1-20m}{24}) (\frac{2}{p^{\ast}} - 1) + (\frac{13-20m}{12}) (\frac{1}{p^{\ast}} - \frac{1}{2}) \overset{\eqref{[Equ. (93), Y20a]}}{=}  \frac{ - 122 \alpha + 6 - 24m}{12}, \\
& 24 \alpha + (\frac{1-20m}{24}) (\frac{2}{p^{\ast}}) + (\frac{13-20m}{12}) (\frac{1}{p^{\ast}} - \frac{3}{2}) 
\overset{\eqref{[Equ. (93), Y20a]}}{=} \frac{236 \alpha - 1 - 28m}{24},  \\
& 8 \alpha + (\frac{1-20m}{24}) ( \frac{2}{p^{\ast}} - 1) + (\frac{13 - 20m}{12}) (\frac{1}{p^{\ast}} - 2) + 1 - 2m \overset{\eqref{[Equ. (93), Y20a]}}{=} \frac{ -36m + 9 - 148 \alpha}{24}.
\end{align*}
Therefore, \eqref{estimate 364} and \eqref{estimate 100} give
\begin{equation}\label{estimate 101} 
\lVert v_{q+1} - v_{q} \rVert_{C_{t}L_{x}^{p^{\ast}}} \lesssim m_{L} M_{0}(t)^{\frac{1}{2}} \lambda_{q+1}^{-\alpha} +  M_{0}(t) \lambda_{q+1}^{\frac{-36m + 9 - 148 \alpha}{24}} \lesssim m_{L} M_{0}(t) \lambda_{q+1}^{-\alpha}. 
\end{equation}
Applying \eqref{estimate 101} and \eqref{estimate 83} to \eqref{estimate 81} gives 
\begin{equation}\label{estimate 102} 
\lVert \Theta_{q+1} - \Theta_{q} \rVert_{C_{t}L_{x}^{p^{\ast}}} \overset{\eqref{estimate 81}\eqref{estimate 101}\eqref{estimate 83}}{\lesssim} e^{2L^{\frac{1}{4}}} m_{L}^{2} M_{0}(t)^{\frac{3}{2}} \lambda_{q+1}^{-\alpha} \sqrt{t} \lVert \theta^{\text{in}} \rVert_{H_{x}^{2}}  \lambda_{q}^{4}.
\end{equation} 
From \eqref{estimate 102} we can now deduce by taking $a > 0$ sufficiently large 
\begin{align}
VII \overset{\eqref{estimate 137}\eqref{[Equ. (151) and (152), Y20c]}}{\lesssim}& e^{2L^{\frac{1}{4}}}  \lVert \Theta_{q+1} - \Theta_{q} \rVert_{C_{t}L_{x}^{p^{\ast}}} \nonumber\\
\overset{\eqref{estimate 102}}{\lesssim}& c_{R} \delta_{q+2} M_{0}(t) [ M_{0}(t)^{\frac{1}{2}} e^{4L^{\frac{1}{4}}} m_{L}^{2} \sqrt{t} \lVert \theta^{\text{in}} \rVert_{H_{x}^{2}} a^{b^{q+1} [ - \frac{35\alpha}{48} ]}] \ll c_{R} M_{0}(t) \delta_{q+2}  \label{estimate 356}
\end{align}  
where we used \eqref{estimate 130}. Applying \eqref{estimate 356} and \eqref{estimate 354} that remains valid in case $n = 3$ to \eqref{estimate 357} gives us for all $t \in [0, T_{L}]$ 
\begin{equation}\label{estimate 358}
\lVert \mathcal{R}([( \Upsilon_{1}^{-1} \Upsilon_{2} \Theta_{q} e^{3}) \ast_{x} \phi_{l} ]\ast_{t} \varphi_{l} - \Upsilon_{1}^{-1} \Upsilon_{2} \Theta_{q+1} e^{3}) \rVert_{C_{t}L_{x}^{p^{\ast}}} \ll c_{R} M_{0}(t) \delta_{q+2}. 
\end{equation}
From \cite[Equ. (152a), (153)-(154)]{Y20a} we have 
\begin{align}
& \lVert R_{\text{lin}} -  \mathcal{R}([( \Upsilon_{1}^{-1} \Upsilon_{2} \Theta_{q} e^{3}) \ast_{x} \phi_{l} ]\ast_{t} \varphi_{l} - \Upsilon_{1}^{-1} \Upsilon_{2} \Theta_{q+1} e^{3})  \rVert_{C_{t}L_{x}^{p^{\ast}}} \nonumber\\
\lesssim& c_{R}\delta_{q+2} M_{0}(t) [ m_{L} \lambda_{q+1}^{- \frac{61 \alpha}{6} + \frac{\alpha}{48}} + \lambda_{q+1}^{\frac{12 m - 15 - 148 \alpha}{24} + \frac{\alpha}{48}} + m_{L} \lambda_{q+1}^{ - \frac{\alpha}{6} + \frac{\alpha}{48}} \nonumber\\
& \hspace{30mm} + m_{L} \lambda_{q+1}^{\frac{59 \alpha - 12m}{6} + \frac{\alpha}{48}} + m_{L}^{4} \lambda_{q+1}^{\frac{ - 119 \alpha - 24m + 12}{12} + \frac{\alpha}{48}}] \ll c_{R} \delta_{q+2} M_{0}(t). \label{estimate 104}
\end{align} 
Consequently,  due to \eqref{estimate 358}-\eqref{estimate 104} we obtain 
\begin{equation}\label{estimate 105}
\lVert R_{\text{lin}} \rVert_{C_{t}L_{x}^{p^{\ast}}} \leq  \frac{ (2\pi)^{-3 (\frac{p^{\ast} -1}{p^{\ast}})}}{5} c_{R} M_{0}(t) \delta_{q+2}. 
\end{equation}  
By \cite[Equ. (103)-(104), (157)-(159)]{Y20a} we have \eqref{estimate 381}, which, along with \eqref{estimate 105},  allows us to conclude that $\lVert \mathring{R}_{q+1} \rVert_{C_{t}L_{x}^{1}} \leq c_{R} M_{0}(t) \delta_{q+2}$, verifying \eqref{[Equ. (153c), Y20c]} at level $q+1$. The rest of the arguments are similar to that of proof of Proposition \ref{Proposition 4.8 for n=2}. 
 
\section{Appendix}
\subsection{Preliminaries needed for convex integration in 2D case and more}\label{Preliminaries needed for convex integration in 2D case and more} 
We describe the 2D intermittent stationary flows introduced in \cite{CDS12} and extended in \cite{LQ20}. We let 
\begin{equation}\label{[Equ. (10), Y20c]}
\Lambda^{+} \triangleq \{ \frac{1}{5} (3e^{1} \pm 4e^{2}), \frac{1}{5} (4e^{1} \pm 3e^{2}) \} \hspace{2mm}  \text{ and } \hspace{2mm}  \Lambda^{-} \triangleq \{\frac{1}{5} (-3e^{1}\mp 4e^{2}), \frac{1}{5} (-4 e^{1} \mp 3e^{2}) \}, 
\end{equation} 
i.e. $\Lambda^{-} = -\Lambda^{+}$, and $\Lambda \triangleq \Lambda^{+} \cup \Lambda^{-}$. It follows immediately that $\Lambda \subset \mathbb{S}^{1} \cap \mathbb{Q}^{2}$, $5 \Lambda \subset \mathbb{Z}^{2}$, and  
\begin{equation}\label{[Equ. (11), Y20c]}
\min_{\zeta, \zeta' \in \Lambda: \hspace{0.5mm} \zeta \neq - \zeta'} \lvert \zeta + \zeta' \rvert \geq \frac{\sqrt{2}}{5}.
\end{equation} 
For all $\zeta \in \Lambda$ and any $\lambda \in 5 \mathbb{N}$, we define $b_{\zeta}$ and its potential $\psi_{\zeta}$ as 
\begin{equation}\label{[Equ. (12), Y20c]}
b_{\zeta}(x) \triangleq b_{\zeta, \lambda} (x) \triangleq i \zeta^{\bot} e^{i \lambda \zeta \cdot x}, \hspace{3mm} \psi_{\zeta} (x) \triangleq \psi_{\zeta, \lambda} (x) \triangleq \frac{1}{\lambda} e^{i \lambda \zeta \cdot x}. 
\end{equation} 
It follows that for all $N \in \mathbb{N}_{0}$, 
\begin{subequations}
\begin{align}
& b_{\zeta}(x)  = \nabla^{\bot} \psi_{\zeta}(x), \hspace{3mm} \nabla\cdot b_{\zeta}(x) = 0, \hspace{3mm} \nabla^{\bot} \cdot b_{\zeta}(x) = \Delta \psi_{\zeta}(x) = -\lambda^{2} \psi_{\zeta}(x), \label{[Equ. (13a), Y20c]}\\
& \overline{b_{\zeta}}(x) = b_{-\zeta}(x), \hspace{1mm} \overline{\psi_{\zeta}}(x) = \psi_{-\zeta}(x), \hspace{1mm}  \lVert b_{\zeta} \rVert_{C_{x}^{N}} \overset{\eqref{C-t,x} }{\leq}(N+1) \lambda^{N}, \hspace{1mm} \lVert \psi_{\zeta} \rVert_{C_{x}^{N}} \overset{\eqref{C-t,x} }{\leq} (N+1) \lambda^{N-1}.   \label{[Equ. (13b), Y20c]} 
\end{align}
\end{subequations} 

\begin{lemma}\label{[Lemma 4.1, LQ20]}
\rm{(\cite[Lem. 4.1]{LQ20})} Denote by $\mathcal{M}$ the linear space of $2\times 2$ symmetric trace-free matrices. Then there exists a set of positive smooth functions $\{\gamma_{\zeta} \in C^{\infty} (\mathcal{M}): \hspace{0.5mm} \zeta \in \Lambda \}$ such that for each $\mathring{R} \in \mathcal{M}$, 
\begin{equation}
\gamma_{-\zeta} (\mathring{R}) = \gamma_{\zeta} (\mathring{R}), \hspace{3mm} \mathring{R} = \sum_{\zeta \in \Lambda} (\gamma_{\zeta} (\mathring{R} ))^{2} (\zeta \mathring{\otimes} \zeta), \hspace{3mm} \gamma_{\zeta} (\mathring{R}) \lesssim (1+ \lvert \mathring{R} \rvert)^{\frac{1}{2}}. 
\end{equation}  
\end{lemma}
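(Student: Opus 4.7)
The proof is direct linear algebra in the two-dimensional space $\mathcal{M}$. First, I would compute the four matrices $\{\zeta \mathring{\otimes} \zeta : \zeta \in \Lambda^{+}\}$ via the identity $\zeta \mathring{\otimes} \zeta = \zeta \otimes \zeta - \tfrac{1}{2} I$, valid for $\zeta \in \mathbb{S}^{1}$. A direct calculation at the four elements of $\Lambda^{+}$ reveals the key structural fact: they split into two antipodal pairs. Indeed, one finds $\tfrac{1}{5}(3,4) \mathring{\otimes} \tfrac{1}{5}(3,4) = - \tfrac{1}{5}(4,-3) \mathring{\otimes} \tfrac{1}{5}(4,-3)$, and analogously for the remaining pair; denote the two resulting distinct matrices by $A$ and $B$. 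A quick check shows $A + B \ne 0$ and $A - B \ne 0$, so $\{A, B\}$ is a basis of $\mathcal{M}$.

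Next I would collapse the identity to a scalar problem. Since $(-\zeta) \mathring{\otimes} (-\zeta) = \zeta \mathring{\otimes} \zeta$, the imposed symmetry $\gamma_{-\zeta} = \gamma_{\zeta}$ allows me to rewrite
\begin{equation*}
\sum_{\zeta \in \Lambda} \gamma_{\zeta}(\mathring{R})^{2} \zeta \mathring{\otimes} \zeta
= 2 \bigl[ (\gamma_{\zeta_{1}}^{2} - \gamma_{\zeta_{4}}^{2}) A + (\gamma_{\zeta_{2}}^{2} - \gamma_{\zeta_{3}}^{2}) B \bigr],
\end{equation*}
where $\zeta_{1}, \zeta_{4} \in \Lambda^{+}$ are paired so that $\zeta_{1} \mathring{\otimes} \zeta_{1} = A = - \zeta_{4} \mathring{\otimes} \zeta_{4}$ (and similarly for $\zeta_{2}, \zeta_{3}$). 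Expanding $\mathring{R} = \alpha(\mathring{R}) A + \beta(\mathring{R}) B$ in the basis $\{A, B\}$ produces linear functionals $\alpha, \beta \in \mathcal{M}^{\ast}$ satisfying $\lvert \alpha(\mathring{R}) \rvert + \lvert \beta(\mathring{R}) \rvert \lesssim \lvert \mathring{R} \rvert$, and reduces the identity to the two scalar equations $\gamma_{\zeta_{1}}^{2} - \gamma_{\zeta_{4}}^{2} = \alpha(\mathring{R})/2$ and $\gamma_{\zeta_{2}}^{2} - \gamma_{\zeta_{3}}^{2} = \beta(\mathring{R})/2$.

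These are solved by an explicit shifted ansatz. Take any constant $K > 0$ with $K \sqrt{1 + \lvert \mathring{R} \rvert^{2}} > \max(\lvert \alpha(\mathring{R}) \rvert, \lvert \beta(\mathring{R}) \rvert)/4$ for all $\mathring{R} \in \mathcal{M}$ (possible by the linearity of $\alpha, \beta$) and set
\begin{equation*}
\gamma_{\zeta_{1}}(\mathring{R})^{2} \triangleq K \sqrt{1 + \lvert \mathring{R} \rvert^{2}} + \tfrac{\alpha(\mathring{R})}{4}, \qquad \gamma_{\zeta_{4}}(\mathring{R})^{2} \triangleq K \sqrt{1 + \lvert \mathring{R} \rvert^{2}} - \tfrac{\alpha(\mathring{R})}{4},
\end{equation*}
and symmetrically for $\gamma_{\zeta_{2}}, \gamma_{\zeta_{3}}$ in terms of $\beta$; extend to $\Lambda^{-}$ by $\gamma_{-\zeta} \triangleq \gamma_{\zeta}$. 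Each expression on the right is strictly positive and smooth in $\mathring{R}$, so its positive square root is smooth, yielding $\gamma_{\zeta} \in C^{\infty}(\mathcal{M})$; the decomposition identity holds by construction, and $\gamma_{\zeta}(\mathring{R})^{2} \leq K \sqrt{1 + \lvert \mathring{R} \rvert^{2}} + C \lvert \mathring{R} \rvert \lesssim 1 + \lvert \mathring{R} \rvert$ delivers the bound $\gamma_{\zeta}(\mathring{R}) \lesssim (1 + \lvert \mathring{R} \rvert)^{1/2}$.

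There is no substantive obstacle; the only subtlety is noticing that after the symmetrization $\gamma_{-\zeta} = \gamma_{\zeta}$ the eight-element set $\Lambda$ collapses to only two effective scalar degrees of freedom, which exactly matches $\dim \mathcal{M} = 2$, so the linear system is solvable up to the free shift $K$, and any choice of $K$ of order $O(\sqrt{1 + \lvert \mathring{R} \rvert^{2}})$ yields the claimed bound.
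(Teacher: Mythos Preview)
Your argument is correct. The paper itself does not supply a proof of this lemma; it is simply quoted from \cite[Lem.~4.1]{LQ20} as a known input to the convex integration scheme. Your construction therefore cannot be compared against a proof in the paper, but it does stand on its own: the key observation that the four matrices $\zeta\mathring{\otimes}\zeta$ for $\zeta\in\Lambda^{+}$ fall into two antipodal pairs is correct (one checks directly that $\tfrac{1}{5}(3,4)\mathring{\otimes}\tfrac{1}{5}(3,4)=-\tfrac{1}{5}(4,-3)\mathring{\otimes}\tfrac{1}{5}(4,-3)$ and similarly for the other pair), the resulting matrices $A,B$ are linearly independent in the two-dimensional space $\mathcal{M}$, and your shifted ansatz $\gamma_{\zeta}^{2}=K\sqrt{1+\lvert\mathring{R}\rvert^{2}}\pm\tfrac{1}{4}\alpha(\mathring{R})$ with $K$ chosen large relative to the operator norms of the coordinate functionals $\alpha,\beta$ produces strictly positive smooth functions with the required decomposition and growth bound. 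The only remark is that your explicit construction is somewhat sharper than what the lemma as used in the paper actually requires: in the convex integration the functions $\gamma_{\zeta}$ are only ever evaluated on the ball $B_{1/2}(0)\subset\mathcal{M}$ (cf.\ \eqref{[Equ. (77), Y20c]} and \eqref{[Equ. (15), Y20c]}), so global smoothness and the global bound $(1+\lvert\mathring{R}\rvert)^{1/2}$, while true, are more than is needed downstream.
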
 
 
For convenience we set $\lvert \Lambda \rvert$ to be the cardinality of the set $\Lambda$ and 
\begin{equation}\label{[Equ. (15), Y20c]} 
C_{\Lambda} \triangleq 2 \sqrt{12} (4 \pi^{2} + 1)^{\frac{1}{2}} \lvert \Lambda \rvert \hspace{1mm}  \text{ and } \hspace{1mm}  M \triangleq C_{\Lambda} \sup_{\zeta \in \Lambda} ( \lVert \gamma_{\zeta} \rVert_{C(B_{\frac{1}{2}} (0))} + \lVert \nabla \gamma_{\zeta} \rVert_{C(B_{\frac{1}{2}} (0))}). 
\end{equation} 
We consider a 2D Dirichlet kernel for $r \in \mathbb{N}$
\begin{equation}\label{[Equ. (16), Y20c]} 
D_{r}(x) \triangleq \frac{1}{2r+1} \sum_{k \in \Omega_{r}} e^{ik\cdot x} \hspace{1mm} \text{ where } \hspace{1mm}
\Omega_{r} \triangleq \{k = 
\begin{pmatrix}
k^{1} & k^{2}
\end{pmatrix}^{T}: \hspace{0.5mm} k^{i} \in \mathbb{Z} \cap [-r, r] \text{ for } i = 1,2 \}
\end{equation}
that satisfies $\lVert D_{r} \rVert_{L_{x}^{p}} \lesssim r^{1- \frac{2}{p}}$ and $\lVert D_{r} \rVert_{L_{x}^{2}} = 2 \pi$ for all $p \in (1, \infty]$. We introduce $\sigma$ to parametrize the spacing between frequencies, and $\mu$ that measures the amount of temporal oscillation in the building blocks. These parameters must satisfy 
\begin{equation}\label{[Equ. (18), Y20c]}
1 \ll r \ll \mu \ll \sigma^{-1} \ll \lambda, \hspace{2mm} r \in \mathbb{N}, \hspace{2mm} \text{ and } \hspace{2mm} \lambda, \lambda \sigma \in 5 \mathbb{N}.
\end{equation}
Next, we define the directed-rescaled Dirichlet kernel by 
\begin{equation}\label{[Equ. (19), Y20c]}
\eta_{\zeta} (t,x) \triangleq \eta_{\zeta, \lambda, \sigma, r, \mu} (t,x) \triangleq 
\begin{cases}
D_{r} (\lambda \sigma (\zeta \cdot x + \mu t), \lambda \sigma \zeta^{\bot} \cdot x) & \text{ if } \zeta \in \Lambda^{+},\\
\eta_{-\zeta, \lambda, \sigma, r, \mu} (t,x) & \text{ if } \zeta \in \Lambda^{-}, 
\end{cases}
\end{equation} 
so that for all $\zeta \in \Lambda^{\pm}$ and $p \in (1, \infty]$, 
\begin{equation}\label{[Equ. (20a) and (20b), Y20c]}
\frac{1}{\mu} \partial_{t} \eta_{\zeta} (t,x) = \pm (\zeta\cdot\nabla) \eta_{\zeta} (t,x), \hspace{2mm}  \fint_{\mathbb{T}^{2}} \eta_{\zeta}^{2} (t,x) dx = 1, \hspace{2mm} \text{ and } \hspace{2mm}  \lVert \eta_{\zeta} \rVert_{L_{t}^{\infty} L_{x}^{p}} \lesssim r^{1- \frac{2}{p}}. 
\end{equation} 
Finally, we define the intermittent 2D stationary flow as 
\begin{equation}\label{[Equ. (21), Y20c]}
\mathbb{W}_{\zeta} (t,x) \triangleq \mathbb{W}_{\zeta, \lambda, \sigma, r, \mu} (t,x) \triangleq \eta_{\zeta,\lambda,\sigma,r,\mu} (t,x) b_{\zeta,\lambda}(x). 
\end{equation} 
\begin{lemma}\label{[Lem. 4.3, LQ20]}
\rm{ (\cite[Lem. 4.3]{LQ20}; cf. \cite[Pro. 3.5]{BV19a})} Define $\eta_{\zeta}$ and $\mathbb{W}_{\zeta}$ respectively by \eqref{[Equ. (19), Y20c]} and \eqref{[Equ. (21), Y20c]}, and assume \eqref{[Equ. (18), Y20c]}. Then for any $p \in (1,\infty]$, $k$, $N \in \{ 0, 1, 2, 3\}$, 
\begin{equation}\label{[Equ. (24a) and (24b), Y20c]}
 \lVert \nabla^{N} \partial_{t}^{k} \mathbb{W}_{\zeta} \rVert_{L_{t}^{\infty} L_{x}^{p}} \lesssim_{N, k, p} \lambda^{N} (\lambda \sigma r \mu)^{k} r^{1- \frac{2}{p}}, \hspace{3mm}  \lVert \nabla^{N} \partial_{t}^{k} \eta_{\zeta} \rVert_{L_{t}^{\infty} L_{x}^{p}} \lesssim_{N, k, p} (\lambda \sigma r)^{N} (\lambda \sigma r \mu)^{k} r^{1- \frac{2}{p}}. 
\end{equation} 
\end{lemma}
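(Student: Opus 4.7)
The plan is to reduce both estimates to a Bernstein-type bound on derivatives of the Dirichlet kernel $D_r$, combined with the chain rule, the transport identity in \eqref{[Equ. (20a) and (20b), Y20c]}, and the Leibniz rule. The payoff of the whole setup is that $\eta_\zeta$ only oscillates at spatial scales $\lesssim \lambda \sigma r \ll \lambda$, while $b_\zeta$ carries the dominant spatial frequency $\lambda$, so the two factors can be differentiated essentially independently.

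First I would record the building block: since $D_r$ is a trigonometric polynomial with frequencies in the cube $\Omega_r$, Bernstein's inequality gives $\lVert \nabla^{\alpha} D_r \rVert_{L^p(\mathbb{T}^2)} \lesssim_{\alpha} r^{\lvert \alpha \rvert} \lVert D_r \rVert_{L^p(\mathbb{T}^2)} \lesssim r^{\lvert \alpha \rvert + 1 - \tfrac{2}{p}}$, the last step using the stated $\lVert D_r \rVert_{L^p_x} \lesssim r^{1 - 2/p}$. Because $\lambda \sigma \in 5\mathbb{N}$ by \eqref{[Equ. (18), Y20c]}, the linear map $x \mapsto (\lambda\sigma(\zeta \cdot x + \mu t), \lambda\sigma \zeta^\bot \cdot x)$ descends to an automorphism of $\mathbb{T}^2$, and a change of variables shows that the $L^p_x$-profile of $D_r$ is preserved under composition with this map.

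Next, for $\eta_\zeta$ with $\zeta \in \Lambda^+$, each application of $\nabla_x$ pulls out a factor $\lambda \sigma$ by the chain rule and acts as a first-order derivative on $D_r$. For time derivatives I would use the transport identity $\partial_t \eta_\zeta = \pm \mu (\zeta \cdot \nabla) \eta_\zeta$ from \eqref{[Equ. (20a) and (20b), Y20c]} to convert $\partial_t^k$ into $(\pm \mu)^k (\zeta \cdot \nabla)^k$. Iterating gives
\begin{equation*}
\lVert \nabla^N \partial_t^k \eta_\zeta \rVert_{L_t^\infty L_x^p} \lesssim_{N,k} (\lambda\sigma)^{N+k} \mu^k \lVert \nabla^{N+k} D_r \rVert_{L^p_x} \lesssim_{N,k,p} (\lambda\sigma r)^N (\lambda\sigma r \mu)^k r^{1 - \tfrac{2}{p}},
\end{equation*}
which is the second bound; the case $\zeta \in \Lambda^-$ is identical.

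Finally, for $\mathbb{W}_\zeta = \eta_\zeta b_\zeta$, I would apply the Leibniz rule. Since $b_\zeta$ is time-independent, all $k$ time derivatives fall on $\eta_\zeta$ and contribute $(\lambda \sigma r \mu)^k$ by the previous step. A distribution of $N_1$ spatial derivatives on $b_\zeta$ and $N - N_1$ on $\eta_\zeta$ is bounded, via \eqref{[Equ. (13b), Y20c]} and the $\eta_\zeta$ estimate, by $\lambda^{N_1} (\lambda\sigma r)^{N - N_1} (\lambda\sigma r \mu)^k r^{1 - 2/p}$. Because \eqref{[Equ. (18), Y20c]} enforces $\sigma r \ll 1$, we have $\lambda\sigma r \ll \lambda$, so the $N_1 = N$ term dominates every other summand, yielding $\lambda^N (\lambda\sigma r \mu)^k r^{1 - 2/p}$ as required. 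The one point that requires care rather than pure bookkeeping is the change-of-variables step establishing that $\eta_\zeta$ inherits the $L^p$-profile of $D_r$ on $\mathbb{T}^2$; once the integrality conditions in \eqref{[Equ. (18), Y20c]} are invoked, the rest of the argument is a mechanical application of the chain and Leibniz rules.
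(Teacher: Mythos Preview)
The paper does not provide its own proof of this lemma; it is stated as a citation of \cite[Lem.~4.3]{LQ20} (cf.\ \cite[Pro.~3.5]{BV19a}) and used as a black box. Your proposal is a correct reconstruction of the standard argument behind such estimates: Bernstein on the trigonometric polynomial $D_r$, the $L^p$-preserving change of variables (which indeed goes through because $\lambda\sigma \in 5\mathbb{N}$ and $5\Lambda \subset \mathbb{Z}^2$ force the linear map to have integer matrix, hence it is a finite-degree cover of $\mathbb{T}^2$ and the Jacobian and multiplicity cancel), the transport identity \eqref{[Equ. (20a) and (20b), Y20c]} to trade $\partial_t$ for $\mu(\zeta\cdot\nabla)$, and the Leibniz rule together with $\sigma r \ll 1$ from \eqref{[Equ. (18), Y20c]} to see that all spatial derivatives landing on $b_\zeta$ gives the dominant term $\lambda^N$. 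There is nothing in the paper to compare your argument against, and your sketch would serve as a self-contained proof.
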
 
We also used the following result often: 
\begin{lemma}\label{[Def. 9, Lem. 10, CDS12]}
\rm{ (\cite[Def. 9, Lem. 10]{CDS12}, also \cite[Def. 7.1, Lem. 7.2 and 7.3]{LQ20})} For $f \in C(\mathbb{T}^{2})$, set $\mathcal{R} f \triangleq \nabla g + (\nabla g)^{T} - (\nabla\cdot g) \text{Id}$, where $\Delta g = f - \fint_{\mathbb{T}^{2}} fdx$ and $\fint_{\mathbb{T}^{2}} g  dx= 0$. Then, for any $f \in C(\mathbb{T}^{2})$ such that $\fint_{\mathbb{T}^{2}} f dx = 0$, $\mathcal{R}f(x)$ is a trace-free symmetric matrix for all $x \in \mathbb{T}^{2}$.  Moreover, $\nabla\cdot \mathcal{R} f = f$ and $\fint_{\mathbb{T}^{2}} \mathcal{R} f(x) dx = 0$. Finally, for all $p \in (1,\infty)$, $\lVert \mathcal{R} \rVert_{L_{x}^{p} \mapsto W_{x}^{1,p}} \lesssim 1, \lVert \mathcal{R} \rVert_{C_{x} \mapsto C_{x}} \lesssim 1, \lVert \mathcal{R}  f \rVert_{L_{x}^{p}} \lesssim \lVert (-\Delta)^{-\frac{1}{2}} f \rVert_{L_{x}^{p}}$.  
\end{lemma}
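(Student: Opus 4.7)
The plan is first to check the pointwise algebraic properties (symmetry, trace-freeness) directly from the formula, then to verify the two integral-type identities ($\nabla\cdot \mathcal{R}f=f$ and mean-zero), and finally to establish the three continuity estimates by combining standard elliptic regularity with Calder\'on-Zygmund theory on the torus.

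For the algebraic part, symmetry of $\mathcal{R}f = \nabla g + (\nabla g)^{T} - (\nabla\cdot g)\mathrm{Id}$ is immediate from the formula, and tracing yields $\mathrm{tr}(\mathcal{R}f) = 2(\nabla\cdot g) - 2(\nabla\cdot g) = 0$. For the divergence identity I would compute entry-wise $\partial_{j}(\mathcal{R}f)_{ij} = \partial_{j}\partial_{j} g^{i} + \partial_{j} \partial_{i} g^{j} - \partial_{i}\partial_{j} g^{j} = \Delta g^{i} = f^{i} - \fint_{\mathbb{T}^{2}} f^{i} dx$, so that when $\fint_{\mathbb{T}^{2}} f\, dx = 0$ we obtain $\nabla\cdot\mathcal{R}f = f$. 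The mean-zero property $\fint_{\mathbb{T}^{2}} \mathcal{R}f\, dx = 0$ then holds because each entry of $\mathcal{R}f$ is a pure partial derivative of a smooth periodic function on $\mathbb{T}^{2}$.

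For the analytic bounds, the key observation is that $\mathcal{R}$ is, modulo constants, a matrix-valued Fourier multiplier of order $-1$ acting on mean-zero functions, arising from $g = \Delta^{-1}(f-\fint_{\mathbb{T}^{2}} f\, dx)$. The estimate $\lVert \mathcal{R} \rVert_{L_{x}^{p} \mapsto W_{x}^{1,p}} \lesssim 1$ would then follow from the Calder\'on-Zygmund bound $\lVert g \rVert_{W_{x}^{2,p}} \lesssim \lVert f \rVert_{L_{x}^{p}}$ valid for $p \in (1,\infty)$. The estimate $\lVert \mathcal{R}f \rVert_{L_{x}^{p}} \lesssim \lVert (-\Delta)^{-\frac{1}{2}}f \rVert_{L_{x}^{p}}$ would come from factoring $\mathcal{R} = T\circ(-\Delta)^{-\frac{1}{2}}$, where $T$ assembles zero-order Riesz-type multipliers with Fourier symbols of the form $ik_{j}/\lvert k \rvert$ (together with their symmetrizations), and these are $L^{p}$-bounded for $p\in(1,\infty)$ by the classical Mihlin-H\"ormander multiplier theorem on the torus.

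The main obstacle I anticipate is the $C_{x}\to C_{x}$ estimate, since Calder\'on-Zygmund operators fail to be $L^{\infty}$-bounded in general. The saving feature here is dimension two together with the order $-1$ of $\mathcal{R}$: the convolution kernel (gradient of the torus Green's function $G(x)\sim \tfrac{1}{2\pi}\log\lvert x \rvert$) behaves like $\lvert x \rvert^{-1}$ near the origin, which is locally integrable on $\mathbb{T}^{2}$ since $\int_{\lvert x \rvert<1}\lvert x \rvert^{-1}\, dx = 2\pi < \infty$. Young's convolution inequality then delivers $\lVert \mathcal{R}f \rVert_{L_{x}^{\infty}} \lesssim \lVert f \rVert_{L_{x}^{\infty}}$, and continuity is preserved by dominated convergence applied to the convolution. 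This argument is special to two dimensions and to the minus-one order of $\mathcal{R}$, and for the analogous three-dimensional operator used in Lemma \ref{divergence inverse operator} a structurally different but parallel verification is required.
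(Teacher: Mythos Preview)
The paper does not supply a proof of this lemma; it is quoted verbatim from \cite[Def.~9, Lem.~10]{CDS12} and \cite[Def.~7.1, Lem.~7.2--7.3]{LQ20} and used as a black box. Your proposal is essentially the standard argument one finds in those references: the algebraic checks (symmetry, trace-free in dimension two, divergence via $\Delta g^{i}=f^{i}$, mean-zero via periodicity) are all correct, and the analytic bounds follow exactly as you outline---elliptic $W^{2,p}$ regularity for the first, Riesz-transform boundedness for the third, and the local integrability of $\nabla G \sim \lvert x\rvert^{-1}$ on $\mathbb{T}^{2}$ combined with Young's inequality for the $C_{x}\to C_{x}$ estimate. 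Your observation that the supremum bound hinges on the order-$(-1)$ nature of $\mathcal{R}$ together with the dimension being two is the genuine point, and you have it right.
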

 
 \subsection{Preliminaries needed for convex integration in 3D case and more}\label{Preliminaries needed for convex integration in 3D case and more} 
 We describe 3D intermittent jets from \cite[App. B]{HZZ19}, originally from \cite[Sec. 7.4]{BV19b} (also \cite[Sec. 4]{BCV18}). 
\begin{lemma}\label{[Lem. B.1, HZZ19]}
\rm{(\cite[Lem. 6.6]{BV19b})}
Let $\overline{B_{\frac{1}{2}}(\text{Id})}$ denote the closed ball of radius $\frac{1}{2}$ around an identity matrix in the space of $3\times 3$ symmetric matrices. Then there exists $\Lambda \subset \mathbb{S}^{2} \cap \mathbb{Q}^{3}$ such that for each $\zeta \in \Lambda$, there exist $C^{\infty}$ functions $\gamma_{\zeta}: \hspace{0.5mm}  B_{\frac{1}{2}} (\text{Id}) \mapsto \mathbb{R}$ which obey $R = \sum_{\zeta \in \Lambda} \gamma_{\zeta}^{2} (R) (\zeta \otimes \zeta)$ for every symmetric matrix $R$ that satisfies $\lvert R - \text{Id} \rvert \leq \frac{1}{2}$. 
\end{lemma}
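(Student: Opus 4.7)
The plan is a direct linear-algebra construction followed by a square-root extraction; no implicit function theorem or partition of unity is needed.

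First, I choose $\Lambda \subset \mathbb{S}^{2} \cap \mathbb{Q}^{3}$ large enough that the rank-one symmetric tensors $\{\zeta \otimes \zeta : \zeta \in \Lambda\}$ span the six-dimensional space $\mathrm{Sym}(3)$ of symmetric $3 \times 3$ matrices, and such that $\mathrm{Id}$ admits a strictly positive decomposition $\mathrm{Id} = \sum_{\zeta \in \Lambda} c_{\zeta}(\zeta \otimes \zeta)$ with each $c_{\zeta} > 0$. Both conditions can be met simultaneously: rational unit vectors are dense in $\mathbb{S}^{2}$, and $\mathrm{Id}$ lies in the interior of the convex cone generated by all rank-one tensors $\zeta \otimes \zeta$. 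An explicit option is to use rational approximations of $e^{i}$ together with rational approximations of $(e^{i} \pm e^{j})/\sqrt{2}$ for $i \neq j$, whose natural symmetric decomposition of $\mathrm{Id}$ has manifestly positive coefficients.

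Second, the linear map $P: \mathbb{R}^{\Lambda} \to \mathrm{Sym}(3)$ sending $(x_{\zeta})_{\zeta} \mapsto \sum_{\zeta} x_{\zeta}(\zeta \otimes \zeta)$ is surjective by the spanning condition. Fixing any bounded linear right inverse $S_{0}$ of $P$, I form the affine section $S(R) \triangleq (c_{\zeta})_{\zeta} + S_{0}(R - \mathrm{Id})$, which satisfies $P(S(R)) = R$ and $S(\mathrm{Id}) = (c_{\zeta})_{\zeta}$. Componentwise this yields $[S(R)]_{\zeta} \geq c_{\zeta} - \|S_{0}\| \, |R - \mathrm{Id}|$. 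By enlarging $\Lambda$ (or rescaling) so that $\min_{\zeta \in \Lambda} c_{\zeta}$ is large relative to $\|S_{0}\|$, I can arrange $[S(R)]_{\zeta} \geq 1$ uniformly for all $R \in \overline{B_{1/2}(\mathrm{Id})}$ and all $\zeta \in \Lambda$.

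Third, I set $\gamma_{\zeta}(R) \triangleq \sqrt{[S(R)]_{\zeta}}$. Since $R \mapsto [S(R)]_{\zeta}$ is affine with image contained in $[1,\infty)$, and $\sqrt{\,\cdot\,}$ is $C^{\infty}$ on $(0,\infty)$, each $\gamma_{\zeta}$ is $C^{\infty}$ on a neighborhood of $\overline{B_{1/2}(\mathrm{Id})}$; and by construction $\sum_{\zeta \in \Lambda} \gamma_{\zeta}^{2}(R)(\zeta \otimes \zeta) = P(S(R)) = R$, giving the desired identity.

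The principal obstacle is the positivity calibration in the second step: one must ensure that the strict positivity of the reference decomposition of $\mathrm{Id}$ dominates $\|S_{0}\|/2$ so that positivity of $[S(R)]_{\zeta}$ persists over the entire ball of radius $1/2$, not merely infinitesimally near $\mathrm{Id}$. Once this quantitative trade-off is secured by a sufficient choice of $\Lambda$, surjectivity of $P$, smoothness of the square root, and verification of the sum formula are all immediate.
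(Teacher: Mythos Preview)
The paper does not prove this lemma itself; it is cited from \cite{BV19b} without argument, so there is no in-paper proof to compare against.  Your three-step plan --- choose rational directions whose tensor squares span $\mathrm{Sym}(3)$ and admit a strictly positive decomposition of $\mathrm{Id}$, take an affine right-inverse section $S$, and set $\gamma_\zeta=\sqrt{[S(R)]_\zeta}$ --- is exactly the standard construction and is structurally correct.

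The soft spot is the calibration sentence ``by enlarging $\Lambda$ (or rescaling) so that $\min_\zeta c_\zeta$ is large relative to $\|S_0\|$''.  Enlarging $\Lambda$ generally pushes the wrong way: the coefficients are constrained by $\sum_{\zeta\in\Lambda} c_\zeta=\mathrm{tr}\,\mathrm{Id}=3$, so adding directions drives $\min_\zeta c_\zeta$ down; for $N$ roughly isotropic directions one has $\min_\zeta c_\zeta\sim N^{-1}$ while the pseudoinverse norm behaves like $\|S_0\|\sim N^{-1/2}$, and the ratio tends to $0$.  There is also no evident ``rescaling'' of $S_0$ that preserves the right-inverse property.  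Your own explicit suggestion (rational perturbations of $e^i$ and $(e^i\pm e^j)/\sqrt2$) illustrates the issue: with the natural section one finds the positivity window for the diagonal and off-diagonal coefficients cannot simultaneously cover radius $\tfrac12$.  What the original references actually do is fix one explicit $\Lambda$ and verify the radius $\tfrac12$ by direct computation; alternatively, your argument already delivers \emph{some} radius $r_0>0$, and the specific value $\tfrac12$ is conventional --- any fixed $r_0$ propagates through the convex-integration scheme after adjusting the cutoff $\chi$ in \eqref{[Equ. (75), Y20c]} and the bound \eqref{[Equ. (77), Y20c]} accordingly.  With either of those two fixes in place, your proof is complete.
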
 
Define a constant 
\begin{equation}\label{[Equ. (163), Y20a]}
M \triangleq C_{\Lambda} \sup_{\zeta \in \Lambda}(\lVert \gamma_{\zeta} \rVert_{C^{0}} + \lVert \nabla \gamma_{\zeta} \rVert_{C^{0}}) \text{ where } C_{\Lambda} \triangleq 8 \lvert \Lambda \rvert (1+ 8\pi^{3})^{\frac{1}{2}}. 
\end{equation}
For every $\zeta \in \Lambda$, let $A_{\zeta} \in \mathbb{S}^{2} \cap \mathbb{Q}^{3}$ be an orthogonal vector to $\zeta$. It follows that for each $\zeta \in \Lambda$, $\{\zeta, A_{\zeta}, \zeta \times A_{\zeta} \} \subset \mathbb{S}^{2} \cap \mathbb{Q}^{3}$ forms an orthonormal basis for $\mathbb{R}^{3}$. Furthermore, we denote by $n_{\ast}$ the smallest natural number such that $\{n_{\ast}, \zeta, n_{\ast}A_{\zeta}, n_{\ast} \zeta \times A_{\zeta} \} \subset \mathbb{Z}^{3}$ for every $\zeta \in \Lambda$. Now let $\Phi: \hspace{0.5mm}  \mathbb{R}^{2} \mapsto \mathbb{R}^{2}$ be a smooth function with support contained in a ball of radius one. We normalize $\Phi$ so that $\phi \triangleq - \Delta \Phi$ obeys 
\begin{equation}\label{[Equ. (B.1), HZZ19]}
 \int_{\mathbb{R}^{2}} \phi^{2} ( x_{1}, x_{2}) dx_{1} dx_{2} = 4\pi^{2}. 
\end{equation} 
It follows that $\phi$ has mean zero. We define $\psi: \hspace{0.5mm}  \mathbb{R} \mapsto \mathbb{R}$ to be a smooth, mean-zero function with support in the ball of radius one such that $\int_{\mathbb{R}}\psi^{2} (x_{3}) dx_{3} = 2\pi$. Define 
\begin{equation}\label{[Equ. (B.2b), HZZ19]}
\phi_{r_{\bot}} (x_{1}, x_{2}) \triangleq \phi( \frac{x_{1}}{r_{\bot}}, \frac{x_{2}}{r_{\bot}})r_{\bot}^{-1}, \hspace{1mm} \Phi_{r_{\bot}} (x_{1}, x_{2}) \triangleq \Phi( \frac{x_{1}}{r_{\bot}}, \frac{x_{2}}{r_{\bot}})_{\bot}^{-1}\hspace{1mm}  \text{ and } \hspace{1mm} \psi_{r_{\lVert}} (x_{3}) \triangleq  \psi( \frac{x_{3}}{r_{\lVert }})r_{\lVert }^{-\frac{1}{2}}
\end{equation} 
so that $\phi_{r_{\bot}} = - r_{\bot}^{2} \Delta \Phi_{r_{\bot}}$ in which we will assume $r_{\bot}, r_{\lVert} > 0$ to satisfy 
\begin{equation}\label{[Equ. (B.2a), HZZ19]}
r_{\bot} \ll r_{\lVert} \ll 1 \text{ and } r_{\bot}^{-1} \ll \lambda_{q+1}.
\end{equation} 
By an abuse of notation, we periodize $\phi_{r_{\bot}}, \Phi_{r_{\bot}}$ and $\psi_{r_{\lVert}}$ so that they are treated as functions defined on $\mathbb{T}^{2}, \mathbb{T}^{2}$, and $\mathbb{T}$, respectively. For a large real number $\lambda$ such that $\lambda r_{\bot} \in \mathbb{N}$, and a large time oscillation parameter $\mu > 0$, for every $\zeta \in \Lambda$ we introduce 
\begin{subequations}\label{[Equ. (167), Y20a]}
\begin{align}
& \psi_{\zeta} (t,x) \triangleq \psi_{\zeta, r_{\bot}, r_{\lVert}, \lambda, \mu} (t,x) \triangleq \psi_{r_{\lVert}} (n_{\ast} r_{\bot} \lambda(x \cdot \zeta + \mu t)), \\
& \Phi_{\zeta} (x) \triangleq \Phi_{\zeta, r_{\bot}, \lambda} (x) \triangleq \Phi_{r_{\bot}} (n_{\ast} r_{\bot} \lambda (x - a_{\zeta}) \cdot A_{\zeta}, n_{\ast} r_{\bot} \lambda (x- a_{\zeta}) \cdot (\zeta \times A_{\zeta})), \\
& \phi_{\zeta} (x) \triangleq \phi_{\zeta, r_{\bot}, \lambda} (x) \triangleq \phi_{r_{\bot}} (n_{\ast} r_{\bot} \lambda (x- a_{\zeta}) \cdot A_{\zeta}, n_{\ast} r_{\bot} \lambda (x- a_{\zeta}) \cdot (\zeta \times A_{\zeta} )), 
\end{align} 
\end{subequations}
where $a_{\zeta} \in \mathbb{R}^{3}$ are shifts which ensure that the functions $\{ \Phi_{\zeta}\}_{\zeta \in \Lambda}$ have mutually disjoint support. We can now define intermittent jets $W_{\zeta}: \hspace{0.5mm}  \mathbb{T}^{3} \times \mathbb{R} \mapsto \mathbb{R}^{3}$ by 
\begin{equation}\label{[Equ. (168), Y20a]}
W_{\zeta} (t,x) \triangleq W_{\zeta, r_{\bot}, r_{\lVert}, \lambda, \mu} (t,x) \triangleq \zeta \psi_{\zeta} (t,x) \phi_{\zeta} (x). 
\end{equation} 
It follows that $W_{\zeta}$ is mean-zero, it is $(\mathbb{T}/r_{\bot}\lambda)^{3}$-periodic, and 
\begin{equation}\label{[Equ. (B.4), HZZ19]}
W_{\zeta} \otimes W_{\zeta'} = 0 \hspace{3mm} \forall \hspace{1mm} \zeta, \zeta' \in \Lambda \text{ such that } \zeta \neq \zeta'.
\end{equation} 
Due to \eqref{[Equ. (B.1), HZZ19]}-\eqref{[Equ. (B.2b), HZZ19]} we also have $\fint_{\mathbb{T}^{3}} W_{\zeta} (t,x) \otimes W_{\zeta} (t,x) dx = \zeta \otimes \zeta$. Lemma \ref{[Lem. B.1, HZZ19]} and \eqref{[Equ. (B.4), HZZ19]} imply $\sum_{\zeta \in \Lambda} \gamma_{\zeta}^{2}(R) \fint_{\mathbb{T}^{3}} W_{\zeta} (t,x) \otimes W_{\zeta} (t,x) dx = R$.  We also define 
\begin{equation}\label{[Equ. (173), Y20a]}
W_{\zeta}^{(c)} \triangleq \frac{ \nabla \psi_{\zeta} }{n_{\ast}^{2} \lambda^{2}} \times \text{curl} (\Phi_{\zeta} \zeta) = \text{curl curl} V_{\zeta} - W_{\zeta} \text{ with } V_{\zeta} (t,x) \triangleq \frac{ \zeta \psi_{\zeta} (t,x)}{n_{\ast}^{2} \lambda^{2}} \Phi_{\zeta} (x), 
\end{equation} 
from which it follows that $\text{div} (W_{\zeta} + W_{\zeta}^{(c)}) = 0$. Finally, for all $N, M \geq 0$ and $p \in [1, \infty]$, 
\begin{subequations}\label{[Equ. (174), Y20a]}
\begin{align}
& \lVert \nabla^{N} \partial_{t}^{M} \psi_{\zeta} \rVert_{L^{p}} \lesssim r_{\lVert}^{\frac{1}{p} - \frac{1}{2}} \left( \frac{r_{\bot} \lambda}{r_{\lVert}} \right)^{N} \left( \frac{ r_{\bot} \lambda \mu}{r_{\lVert}}\right)^{M}, \hspace{4mm} 
 \lVert \nabla^{N} \phi_{\zeta} \rVert_{L^{p}} + \lVert \nabla^{N} \Phi_{\zeta} \rVert_{L^{p}} \lesssim r_{\bot}^{\frac{2}{p} - 1} \lambda^{N}, \label{[Equ. (174a) and (174b), Y20a]}\\
& \lVert \nabla^{N} \partial_{t}^{M} W_{\zeta} \rVert_{L^{p}} + \frac{r_{\lVert}}{r_{\bot}} \lVert \nabla^{N} \partial_{t}^{M} W_{\zeta}^{(c)} \rVert_{L^{p}} + \lambda^{2} \lVert \nabla^{N} \partial_{t}^{M} V_{\zeta} \rVert_{L^{p}} \lesssim r_{\bot}^{\frac{2}{p} - 1} r_{\lVert}^{\frac{1}{p} - \frac{1}{2}} \lambda^{N} \left( \frac{r_{\bot} \lambda \mu}{r_{\lVert} } \right)^{M}, \label{[Equ. (174c), Y20a]}
\end{align} 
\end{subequations} 
where the implicit constants are independent of $\lambda, r_{\bot}, r_{\lVert}$, and $\mu$. We also relied on the following result. 
\begin{lemma}\label{divergence inverse operator}
\rm{(\cite[Equ. (5.34)]{BV19b})} For any $v \in C^{\infty}(\mathbb{T}^{3})$ that has mean zero, define 
\begin{equation}\label{estimate 336}
(\mathcal{R}v)_{kl} \triangleq ( \partial_{k}\Delta^{-1} v^{l} + \partial_{l} \Delta^{-1} v^{k}) - \frac{1}{2} (\delta_{kl} + \partial_{k} \partial_{l} \Delta^{-1}) \text{div} \Delta^{-1} v
\end{equation} 
for $k, l \in \{1,2,3\}$. Then $\mathcal{R} v(x)$ is a symmetric trace-free matrix for each $x \in \mathbb{T}^{3}$, that  satisfies $\text{div} (\mathcal{R} v) = v$. Moreover, $\mathcal{R}$ satisfies the classical Calder$\acute{\mathrm{o}}$n-Zygmund and Schauder estimates: $\lVert (-\Delta)^{\frac{1}{2}} \mathcal{R} \rVert_{L_{x}^{p} \mapsto L_{x}^{p}} + \lVert \mathcal{R} \rVert_{L_{x}^{p} \mapsto L_{x}^{p}}  + \lVert \mathcal{R} \rVert_{C_{x} \mapsto C_{x}} \lesssim 1$ for all $p \in (1, \infty)$. 
\end{lemma}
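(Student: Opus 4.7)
The plan is to verify each of the four claimed properties directly from the explicit formula \eqref{estimate 336}, exploiting the fact that every operator on the right hand side is a Fourier multiplier on the space of mean-zero functions, where $\Delta^{-1}$ is unambiguously defined; the mean-zero hypothesis on $v$ is precisely what ensures $\Delta^{-1}v$ makes sense on $\mathbb{T}^{3}$. Symmetry under the swap $k \leftrightarrow l$ is immediate by inspection of \eqref{estimate 336}: the bracket $\partial_{k}\Delta^{-1}v^{l} + \partial_{l}\Delta^{-1}v^{k}$ is manifestly symmetric, and so are the factors $\delta_{kl}$ and $\partial_{k}\partial_{l}\Delta^{-1}$.

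For the trace-free property I would sum the diagonal entries, using $\sum_{k=1}^{3}\delta_{kk} = 3$ together with $\sum_{k=1}^{3}\partial_{k}^{2} = \Delta$, to obtain
\begin{equation*}
\sum_{k=1}^{3}(\mathcal{R}v)_{kk} = 2\,\text{div}\,\Delta^{-1}v - \tfrac{1}{2}(3 + \Delta\Delta^{-1})\,\text{div}\,\Delta^{-1}v = 2\,\text{div}\,\Delta^{-1}v - 2\,\text{div}\,\Delta^{-1}v = 0.
\end{equation*}
For the divergence identity, applying $\partial_{k}$ and summing in $k$ yields
\begin{equation*}
\sum_{k=1}^{3}\partial_{k}(\mathcal{R}v)_{kl} = \Delta\Delta^{-1}v^{l} + \partial_{l}\,\text{div}\,\Delta^{-1}v - \tfrac{1}{2}\partial_{l}\,\text{div}\,\Delta^{-1}v - \tfrac{1}{2}\partial_{l}\Delta\Delta^{-1}\,\text{div}\,\Delta^{-1}v = v^{l},
\end{equation*}
where the last three terms cancel using $\Delta\Delta^{-1} = \mathrm{Id}$ on mean-zero functions and the first term reduces to $v^{l}$ for the same reason.

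For the boundedness claims, observe that each of the four summands on the right hand side of \eqref{estimate 336} is a Fourier multiplier that is homogeneous of order $-1$ in the frequency variable. Composing with $(-\Delta)^{1/2}$ therefore produces Fourier multipliers of order $0$, concretely finite linear combinations of Riesz transforms and their compositions, whose $L^{p}$-boundedness for $p \in (1,\infty)$ is the classical Calder\'on--Zygmund theorem. The $L^{p} \to L^{p}$ bound on $\mathcal{R}$ itself and the $C_{x} \to C_{x}$ bound then follow from the Schauder-type estimates for the same class of operators; alternatively, since the associated convolution kernels on $\mathbb{T}^{3}$ have at worst an $|x|^{-2}$ singularity, which is locally integrable in three dimensions, a direct Young-type inequality also handles the sup-norm bound.

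No substantial obstacle is expected: the symmetric, trace-free, and divergence portions are short bookkeeping computations (sensitive to the spatial dimension $n = 3$ through the factor $3 = \sum_{k=1}^{3}\delta_{kk}$, which is exactly why the coefficient $\tfrac{1}{2}$ appears in the definition \eqref{estimate 336} so as to produce the cancellation $3 \cdot \tfrac{1}{2} + \tfrac{1}{2} = 2$ seen above), and the boundedness portion reduces to classical harmonic analysis. This is why the authors are content to cite \cite[Equ. (5.34)]{BV19b} without reproducing the proof here.
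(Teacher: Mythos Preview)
Your proposal is correct. The paper itself supplies no proof of this lemma; it is stated as a direct citation of \cite[Equ.~(5.34)]{BV19b} with no accompanying argument, exactly as you anticipated in your closing sentence. Your verifications of symmetry, trace-freeness, and the divergence identity are accurate (the coefficient $\tfrac{1}{2}$ is indeed dimension-specific so that $\tfrac{1}{2}(3+1)=2$ cancels the $2\,\text{div}\,\Delta^{-1}v$ term), and your reduction of the boundedness claims to Calder\'on--Zygmund theory for order-zero multipliers together with the local integrability of the $|x|^{-2}$ kernel in three dimensions is the standard route.
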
 

\subsection{Proof of Proposition \ref{Proposition 4.1}}\label{Subsection 6.1}
The proof of Proposition \ref{Proposition 4.1} will rely on the following extension of \cite[Lem. A.1]{HZZ19}: 
\begin{proposition}\label{Proposition 6.10}
\rm{(cf. \cite[Lem. A.1]{HZZ19})} Let $\{ (s_{l}, \xi_{l,1}, \xi_{l,2} ) \}_{l \in \mathbb{N}} \subset [0,\infty) \times L_{\sigma}^{2} \times \mathring{L}^{2}$ be a family such that $\lim_{l\to\infty} \lVert (s_{l}, \xi_{l,1}, \xi_{l,2}) - (s, \xi_{1}^{\text{in}}, \xi_{2}^{\text{in}}) \rVert_{\mathbb{R} \times L_{\sigma}^{2} \times \mathring{L}^{2}} = 0$ and $\{P_{l}\}_{l \in \mathbb{N}}$ be a family of probability measures on $\Omega_{0}$ satisfying for all $l \in \mathbb{N}, P_{l} ( \{ (\xi_{1}, \xi_{2}) (t) = (\xi_{l,1}, \xi_{l,2}) \hspace{1mm} \forall \hspace{1mm} t \in [0, s_{l} ] \}) = 1$ and for some $\gamma > 0, \kappa > 0$, and any $T > 0$, 
\begin{align}
&\sup_{l\in\mathbb{N}} \mathbb{E}^{P_{l}}[ \lVert \xi_{1} \rVert_{C([0,T]; L_{x}^{2})} + \sup_{r, t \in [0,T]: \hspace{0.5mm} r \neq t} \frac{ \lVert \xi_{1}(t) - \xi_{1}(r) \rVert_{H_{x}^{-3}}}{\lvert t-r \rvert^{\kappa}} + \lVert \xi_{1} \rVert_{L^{2} ([s_{l}, T]; \dot{H}_{x}^{\gamma} )}^{2} \nonumber\\
& \hspace{8mm} + \lVert \xi_{2} \rVert_{C([0,T]; L_{x}^{2})} + \sup_{r, t \in [0,T]: \hspace{0.5mm} r \neq t} \frac{ \lVert \xi_{2} (t) - \xi_{2}(r) \rVert_{H_{x}^{-n}}}{\lvert t-r \rvert^{\kappa}} + \lVert \xi_{2} \rVert_{L^{2} ([s_{l}, T]; \dot{H}_{x}^{1})}^{2}] < \infty.  \label{[Equ. (A.2), HZZ19]}
\end{align} 
Then $\{P_{l} \}_{l\in\mathbb{N}}$ is tight in 
\begin{equation}\label{estimate 1}
\mathbb{S} \triangleq C_{\text{loc}} ([0,\infty); H^{-3} (\mathbb{T}^{n})) \cap L_{\text{loc}}^{2} (0,\infty; L_{\sigma}^{2}) \times C_{\text{loc}} ([0,\infty); H^{-n} (\mathbb{T}^{n})) \cap L_{\text{loc}}^{2} (0,\infty; \mathring{L}^{2}). 
\end{equation}  
\end{proposition}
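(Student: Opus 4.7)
The plan is to reduce the tightness on $\mathbb{S}$ to tightness on each finite time interval $[0,T]$ via a standard diagonal/exhaustion argument, and then on each finite interval exhibit an explicit family of compact subsets of the path space having uniformly large $P_l$-measure via Markov's inequality. Because $\mathbb{S}$ is a product, the first and second coordinates can be handled by identical arguments with only the negative Sobolev exponent $-3$ replaced by $-n$; I describe only the component governing $\xi_1$.

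For fixed $T>0$ and $R>0$, I would introduce the set
\begin{equation*}
K_{T,R}^{1} \triangleq \{ f \in \Omega_{0,1}: \lVert f \rVert_{C([0,T]; L_{x}^{2})} + \sup_{\substack{r,t \in [0,T] \\ r \neq t}} \frac{\lVert f(t) - f(r) \rVert_{H_{x}^{-3}}}{\lvert t - r \rvert^{\kappa}} + \lVert f \rVert_{L^{2}([0,T]; \dot{H}_{x}^{\gamma})} \leq R \},
\end{equation*}
and argue that $K_{T,R}^{1}$ is relatively compact in $C([0,T]; H^{-3}(\mathbb{T}^{n})) \cap L^{2}([0,T]; L_{\sigma}^{2})$. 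For the $C([0,T]; H^{-3})$ topology, elements of $K_{T,R}^{1}$ are uniformly bounded in $L_{x}^{2}$ and uniformly $\kappa$-H\"older in $H_{x}^{-3}$; since the embedding $L_{x}^{2} \hookrightarrow H_{x}^{-3}$ is compact, Arzel\`a-Ascoli gives equicontinuity together with pointwise pre-compactness, yielding relative compactness in $C([0,T]; H^{-3})$. For the $L^{2}([0,T]; L_{\sigma}^{2})$ topology, the plan is to use the Lions-type interpolation $\lVert g \rVert_{L_{x}^{2}} \lesssim \lVert g \rVert_{\dot{H}_{x}^{\gamma}}^{\theta} \lVert g \rVert_{H_{x}^{-3}}^{1-\theta}$ with $\theta = 3/(3+\gamma)$ combined with the compact embedding $\dot{H}_{x}^{\gamma} \hookrightarrow L_{\sigma}^{2}$: the uniform $L^{2}_{t} \dot{H}_{x}^{\gamma}$-bound plus convergence in $C([0,T]; H_{x}^{-3})$ of any subsequence upgrades to convergence in $L^{2}([0,T]; L_{\sigma}^{2})$, which is the content of a standard Aubin-Lions lemma.

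Given relative compactness of $K_{T,R}^{1}$, Chebyshev's inequality applied to \eqref{[Equ. (A.2), HZZ19]} yields
\begin{equation*}
P_{l} ( \{ \xi_{1} \notin \overline{K_{T,R}^{1}} \} ) \leq \frac{C_{T}}{R} \quad \text{uniformly in } l \in \mathbb{N},
\end{equation*}
where $C_{T}$ is the constant from \eqref{[Equ. (A.2), HZZ19]}; the analogous bound holds for $\xi_{2}$ with an appropriate set $K_{T,R}^{2}$ defined using $H_{x}^{-n}$ and the $L^{2}_{t} \dot{H}_{x}^{1}$-bound. Setting $\mathcal{K}_{T,R} \triangleq \overline{K_{T,R}^{1}} \times \overline{K_{T,R}^{2}}$, I would conclude tightness on $[0,T]$. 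To pass to $\mathbb{S}$, I would use the standard diagonal device: for any $\epsilon > 0$, choose for each $j \in \mathbb{N}$ a radius $R_{j} = R_{j}(\epsilon)$ such that $P_{l}(\xi \notin \mathcal{K}_{j, R_{j}}) \leq \epsilon 2^{-j}$ for all $l$, and take $\mathcal{K}_{\epsilon} \triangleq \cap_{j \geq 1} \mathcal{K}_{j, R_{j}}$, which is compact in $\mathbb{S}$ since $\mathbb{S}$ carries the locally uniform/locally-$L^{2}$ topology induced by restrictions to intervals $[0,j]$, and $\sup_{l} P_{l}(\mathbb{S} \setminus \mathcal{K}_{\epsilon}) \leq \epsilon$.

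The main technical nuisance, rather than a genuine obstacle, will be verifying carefully that the topology on $\mathbb{S}$ declared in the paper agrees with the metrizable topology generated by the seminorms on $[0,j]$ so that a set compact in each restriction and nested as above is in fact compact in $\mathbb{S}$; this is essentially a Fr\'echet-space argument, and modulo this bookkeeping the proof reduces to the Aubin-Lions/Arzel\`a-Ascoli interpolation step described above. The initial-condition constraint $P_{l}(\{\xi(t) = \xi_{l,i} \text{ for } t \in [0, s_{l}]\}) = 1$ and the convergence of $(s_{l}, \xi_{l,1}, \xi_{l,2})$ play no role in tightness itself; they only enter later in identifying the limit, not here.
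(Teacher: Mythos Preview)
Your proposal has a genuine gap: the hypothesis \eqref{[Equ. (A.2), HZZ19]} controls $\lVert \xi_1 \rVert_{L^2([s_l,T];\dot H_x^\gamma)}^2$, not $\lVert \xi_1 \rVert_{L^2([0,T];\dot H_x^\gamma)}^2$. Under $P_l$ the path equals the fixed initial datum $\xi_{l,1}\in L_\sigma^2$ on $[0,s_l]$, and there is no reason $\xi_{l,1}\in\dot H^\gamma$ at all, so $\int_0^{s_l}\lVert \xi_1(r)\rVert_{\dot H^\gamma}^2\,dr$ may be infinite $P_l$-a.s. Consequently your Chebyshev step $P_l(\xi_1\notin K^1_{T,R})\le C_T/R$ fails: the set $K^1_{T,R}$ as you defined it, with the constraint $\lVert f\rVert_{L^2([0,T];\dot H^\gamma)}\le R$, need not carry any $P_l$-mass. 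Your closing remark that the initial-condition constraint ``plays no role in tightness itself'' is therefore exactly backwards.

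The paper handles this by baking the initial-condition structure into the candidate compact set. It sets $\Omega_q\triangleq\{\xi:\xi(t)=\xi_q\ \forall\,t\in[0,s_q]\}$ and defines
\[
K=\bigcup_{q\in\mathbb N}\ \bigcap_{k\ge k_0}\Bigl\{\xi\in\Omega_q:\ \text{(sup--}L^2,\ C^\kappa\text{--H\"older, and }\textstyle\int_{s_q}^k\lVert\xi_1\rVert_{\dot H^\gamma}^2+\int_{s_q}^k\lVert\xi_2\rVert_{\dot H^1}^2\text{ bounds)}\le R_k\Bigr\},
\]
so that $P_l$ sees only the $q=l$ piece and Chebyshev applies with the correct integration range $[s_l,k]$. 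Compactness of $\bar K$ in $\mathbb S$ is then proved sequentially: from any $\{\xi_w\}\subset K$ one extracts a subsequence converging in $C([0,k];H^{-3})\times C([0,k];H^{-n})$ via the compact embedding $L^\infty(0,k;L^2)\cap C^\kappa([0,k];H^{-n})\hookrightarrow C([0,k];H^{-n})$; the upgrade to $L^2([0,k];L^2)$ uses the $\dot H^\gamma$/$\dot H^1$ bounds on $[s_w,k]$ together with the fact that on $[0,s_w]$ the paths are the convergent constants $\xi_w\to\xi^{\text{in}}$ in $L^2$. Your Aubin--Lions/Arzel\`a--Ascoli ingredients are correct in spirit, but you must stratify by the index $q$ and integrate from $s_q$, not from $0$.
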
 

\begin{proof}[Proof of Proposition \ref{Proposition 6.10}]
We sketch its proof referring to \cite[Lem. A.1]{HZZ19} for details. We fix $\epsilon > 0$ and $k \in \mathbb{N}$ such that $k \geq k_{0} \triangleq \sup_{l \in \mathbb{N}} s_{l}$ and due to \eqref{[Equ. (A.2), HZZ19]} and Chebyshev's inequality we may choose $R_{k} > 0$ sufficiently large such that 
\begin{align}
&P_{l} ( \{ \xi \in \Omega_{0}: \hspace{0.5mm} \sup_{t \in [0,k]} \lVert \xi_{1}(t) \rVert_{L_{x}^{2}} + \sup_{r, t \in [0,k]: \hspace{0.5mm} r \neq t} \frac{ \lVert \xi_{1}(t) - \xi_{1} (r) \rVert_{H_{x}^{-3}}}{ \lvert t- r\rvert^{\kappa}} + \int_{s_{l}}^{k} \lVert \xi_{1}(r) \rVert_{\dot{H}_{x}^{\gamma}}^{2} dr\nonumber \\
&+ \sup_{t \in [0,k]} \lVert \xi_{2} (t) \rVert_{L_{x}^{2}} + \sup_{r, t \in [0, k]: \hspace{0.5mm} r \neq t} \frac{ \lVert \xi_{2}(t) - \xi_{2}(r) \rVert_{H_{x}^{-n}}}{\lvert t- r \rvert^{\kappa}} + \int_{s_{l}}^{k} \lVert \xi_{2}(r) \rVert_{\dot{H}_{x}^{1}}^{2} dr > R_{k} \}) \leq \frac{\epsilon}{2^{k}}. \label{[Equ. (A.2d), HZZ19]}
\end{align}
Then we define $\Omega_{l} \triangleq \{ \xi \in \Omega_{0}: \hspace{0.5mm} \xi(t) = \xi_{l} \hspace{1mm} \forall \hspace{1mm} t \in [0, s_{l}] \}$ and 
\begin{align}
K \triangleq& \cup_{q \in \mathbb{N}} \cap_{k \in \mathbb{N}: \hspace{0.5mm} k \geq k_{0}} \{ \xi \in \Omega_{q}: \hspace{0.5mm} \sup_{t \in [0,k]} \lVert \xi_{1}(t) \rVert_{L_{x}^{2}} + \sup_{r, t \in [0,k]: \hspace{0.5mm} r \neq t} \frac{ \lVert \xi_{1}(t) - \xi_{1}(r) \rVert_{H_{x}^{-3}}}{\lvert t-r \rvert^{\kappa}}  + \int_{s_{q}}^{k} \lVert \xi_{1}(r) \rVert_{\dot{H}_{x}^{\gamma}}^{2} dr \nonumber \\
&+ \sup_{t \in [0,k]} \lVert \xi_{2} (t) \rVert_{L_{x}^{2}} + \sup_{r, t \in [0,k]: \hspace{0.5mm} r \neq t} \frac{ \lVert \xi_{2}(t) - \xi_{2}(r) \rVert_{H_{x}^{-n}}}{\lvert t-r \rvert^{\kappa}} + \int_{s_{q}}^{k}\lVert \xi_{2}(r) \rVert_{\dot{H}_{x}^{1}}^{2} dr \leq R_{k} \}. \label{[Equ. (A.3), HZZ19]}
\end{align} 
We can compute $\sup_{l \in \mathbb{N}} P_{l} (\Omega_{0} \setminus \bar{K}) \leq \epsilon$ by relying on \eqref{[Equ. (A.2d), HZZ19]}. It now suffices to show that $\bar{K}$ is compact in $\mathbb{S}$. We take $\{\xi_{w} \}_{w \in \mathbb{N}} \subset K$ from \eqref{[Equ. (A.3), HZZ19]}. Suppose that for all $N \in \mathbb{N}, \xi_{w} \in \Omega_{N}$ for only finitely many $w \in \mathbb{N}$. Passing to a subsequence and relabeling, we can assume that $\xi_{w} \in \Omega_{w}$. Then, for all $k \geq k_{0}$, 
\begin{align*}
&\sup_{t \in [0,k]} \lVert \xi_{w,1} (t) \rVert_{L_{x}^{2}} + \sup_{r, t \in [0,k]: \hspace{0.5mm} r \neq t} \frac{ \lVert \xi_{w,1} (t) - \xi_{w,1} (r) \rVert_{H_{x}^{-3}}}{\lvert t-r \rvert^{\kappa}} \\
& + \sup_{t \in [0,k]} \lVert \xi_{w,2}(t) \rVert_{L_{x}^{2}} + \sup_{r, t \in [0,k]: \hspace{0.5mm} r \neq t} \frac{ \lVert \xi_{w,2} (t) - \xi_{w,2} (r)\rVert_{H_{x}^{-n}}}{\lvert t-r \rvert^{\kappa}} \leq R_{k}  
\end{align*} 
by \eqref{[Equ. (A.3), HZZ19]} and the assumption that $\xi_{w} \in \Omega_{w}$. Now for $n \in \{2,3\}$ 
\begin{align*}
L^{\infty} (0, k; L^{2}(\mathbb{T}^{n})) \cap C^{\kappa} ([0,k]; H^{-n} (\mathbb{T}^{n})) \hookrightarrow C([0,k]; H^{-n} (\mathbb{T}^{n})) 
\end{align*} 
is compact (cf. \cite[Cor. 2 on p. 82]{S86}, also \cite{BFH18}). Therefore, we can find a subsequence $\{\xi_{w_{l}}\} = \{\xi_{w_{l}, 1}, \xi_{w_{l}, 2} \}$ such that 
\begin{equation}\label{[Equ. (A.4), HZZ19]}
\lim_{l,q\to \infty} \sup_{t \in [0,k]} \lVert \xi_{w_{l}, 1} - \xi_{w_{q}, 1} \rVert_{H_{x}^{-3}} + \lVert \xi_{w_{l}, 2} - \xi_{w_{q}, 2} \rVert_{H_{x}^{-n}} = 0. 
\end{equation} 
It follows that for all $\delta > 0$, there exists $L \in \mathbb{N}$ such that $w_{l}, w_{q} \geq L$ implies 
\begin{align*}
\int_{0}^{k} \lVert \xi_{w_{l}, 1}(t) - \xi_{w_{q}, 1}(t) \rVert_{L_{x}^{2}}^{2} dt < \delta \text{ and } \int_{0}^{k} \lVert \xi_{w_{l}, 2}(t) - \xi_{w_{q}, 2}(t) \rVert_{L_{x}^{2}}^{2} dt < \delta 
\end{align*}  
identically to \cite[p. 45]{HZZ19}. The case in which there exists $N \in \mathbb{N}$ such that $\xi_{w} \in \Omega_{N}$ for infinitely many $w$ is simpler and we omit details. Therefore, we conclude that $\{ \xi_{w_{l}, 1}\}_{l}, \{\xi_{w_{l}, 2} \}_{l}$ are both Cauchy and thus $\bar{K}$ is compact. This completes the proof of Proposition \ref{Proposition 6.10}.  
\end{proof}
We now proceed with the proof of Proposition \ref{Proposition 4.1}.  
\begin{proof}[Proof of Proposition \ref{Proposition 4.1}]
The existence of a martingale solution can be deduced via analogous proofs to previous works (e.g., \cite{FR08, GRZ09, HZZ19, Y19}). Now we fix $\{P_{l} \}_{l \in \mathbb{N}} \subset \mathcal{C} ( s_{l}, \xi_{l}, \{C_{t,q} \}_{q \in \mathbb{N}, t \geq s_{l}} )$ where $\{(s_{l}, \xi_{l} ) \}_{l \in \mathbb{N}} \subset [0,\infty) \times L_{\sigma}^{2} \times \mathring{L}^{2}$ satisfies $\lim_{l\to\infty} \lVert (s_{l}, \xi_{l})- (s, \xi^{\text{in}}) \rVert_{\mathbb{R} \times L_{\sigma}^{2} \times \mathring{L}^{2}} = 0$ and show that it is tight in $\mathbb{S}$ from \eqref{estimate 1}. First, by (M1) of Definition \ref{Definition 4.1}, for all $l \in \mathbb{N}$, $P_{l} ( \{ \xi(t) = \xi_{l} \hspace{1mm} \forall \hspace{1mm} t \in [0, s_{l}] \}) = 1$. Second, we define 
\begin{equation}\label{estimate 300}
F_{1}(\xi) \triangleq - \mathbb{P} \text{div} (\xi_{1} \otimes \xi_{1}) - (-\Delta)^{m}\xi_{1} + \mathbb{P} \xi_{2} e^{n} \text{ and } F_{2}(\xi) \triangleq - \text{div} (\xi_{1}\xi_{2}) + \Delta \xi_{2}.
\end{equation} 
By (M2) of Definition \ref{Definition 4.1}, we know that for all $n \in \mathbb{N}$ and $t \in [s_{l}, \infty)$, $P_{l}$-a.s., 
\begin{equation}\label{[Equ. (176), Y20a]}
\xi_{1}(t) = \xi_{l,1} + \int_{s_{l}}^{t} F_{1} (\xi(\lambda)) d\lambda + M_{1, t, s_{l}}^{\xi} \text{ and } \xi_{2} (t) = \xi_{l,2} + \int_{s_{l}}^{t} F_{2} (\xi(\lambda)) d\lambda + M_{2, t, s_{l}}^{\xi},  
\end{equation}
where the mapping $t \mapsto M_{k, t, s_{l}}^{\xi, i} \triangleq \langle M_{k, t, s_{l}}^{\xi}, \psi_{i}^{k} \rangle$ for both $k \in \{1,2\}$, $\psi_{i} = (\psi_{i}^{1}, \psi_{i}^{2}) \in C^{\infty} (\mathbb{T}^{n}) \cap L_{\sigma}^{2} \times C^{\infty} (\mathbb{T}^{n}) \cap \mathring{L}^{2}$, and $\xi \in \Omega_{0}$ is a continuous, square-integrable $(\mathcal{B}_{t})_{t \geq s_{l}}$-martinalge under $P_{l}$ and 
\begin{equation}\label{estimate 301}
\langle \langle M_{k, t, s_{l}}^{\xi, i} \rangle \rangle = \int_{s_{l}}^{t} \lVert G_{k} (\xi_{k} (r))^{\ast} \psi_{i}^{k} \rVert_{U_{k}}^{2} dr.   
\end{equation} 
Similarly to \cite[Equ. (178)]{Y20a} we can deduce for any $\alpha \in (0, \frac{1}{2})$ by taking $p > \frac{1}{1-2\alpha}$,  
\begin{equation}\label{estimate 3} 
\mathbb{E}^{P_{l}} [ \sup_{r, t \in [s_{l},T]: \hspace{0.5mm} r \neq t} \frac{ \lVert M_{k, t, s_{l}}^{\xi} - M_{k, r, s_{l}}^{\xi} \rVert_{L_{x}^{2}}}{\lvert t- r \rvert^{\alpha}} ] \lesssim_{p} C_{T,p} (1+ \lVert \xi_{l,1} \rVert_{L_{x}^{2}}^{2p} + \lVert \xi_{l,2} \rVert_{L_{x}^{2}}^{2p}), \hspace{5mm} k \in \{1,2\}, 
\end{equation} 
by Kolmogorov's test (e.g., \cite[The. 3.3]{DZ14}) and consequently for all $\kappa \in (0, \frac{1}{2})$ 
\begin{equation}\label{[Equ. (179), Y20a]}
\sup_{l \in \mathbb{N}} \mathbb{E}^{P_{l}} [ \sup_{r, t \in [0,T]: \hspace{0.5mm} r \neq t} \frac{ \lVert \xi_{1}(t) - \xi_{1}(r) \rVert_{H_{x}^{-3}}}{\lvert t-r \rvert^{\kappa}} ] < \infty. 
\end{equation} 
Let us elaborate in the case of $\xi_{2}$. The case $n = 3$ can be handled  similarly as $\xi_{1}$: 
\begin{align}
& \mathbb{E}^{P_{l}} [ \sup_{r, t \in [s_{l}, T]: \hspace{0.5mm} r \neq t} \frac{ \lVert \int_{r}^{t} F_{2} (\xi(l)) dl \rVert_{H_{x}^{-3}}^{p}}{\lvert t-r \rvert^{p-1}} ] \lesssim \mathbb{E}^{P_{l}} [ \int_{s_{l}}^{T} (1+ \lVert \xi_{1} \rVert_{L_{x}^{2}}^{2} + \lVert \xi_{2} \rVert_{L_{x}^{2}}^{2})^{p} d \lambda ] \nonumber\\
\lesssim_{p}& T \mathbb{E}^{P_{l}} [ \sup_{\lambda \in [s_{l}, T]} 1+ \lVert \xi_{1} (\lambda) \rVert_{L_{x}^{2}}^{2p} + \lVert \xi_{2} (\lambda) \rVert_{L_{x}^{2}}^{2p}]  \lesssim_{p} T C_{T,p} (1+ \lVert \xi_{l} \rVert_{L_{x}^{2}}^{2p})  \label{estimate 116}
\end{align} 
by (M3) where the implicit constant is independent of $l$. In case $n= 2$, we compute 
\begin{align}
&\mathbb{E}^{P_{l}} [ \sup_{r, t \in [s_{l}, T]: \hspace{0.5mm} r \neq t} \frac{ \lVert \int_{r}^{t} F_{2} (\xi(\lambda)) d \lambda  \rVert_{H_{x}^{-2}}^{\frac{3}{2}}}{\lvert t-r \rvert^{\frac{1}{2}}}] \label{estimate 117} \\
\lesssim_{T}& \mathbb{E}^{P_{l}}[ \sup_{\lambda \in [s_{l}, T]} \lVert \xi_{2}(\lambda) \rVert_{L_{x}^{2}}^{6} + \int_{s_{l}}^{T} \lVert \xi_{1} \rVert_{\dot{H}_{x}^{\gamma}}^{2} d \lambda  + \sup_{\lambda \in [s_{l}, T]}\lVert \xi_{2}(\lambda) \rVert_{L_{x}^{2}}^{\frac{3}{2}}]  \lesssim_{T} C_{T,3} (1+ \lVert \xi_{l} \rVert_{L_{x}^{2}}^{6}).  \nonumber 
\end{align}  
Thus, we can first split 
\begin{align}
& \sup_{l \in \mathbb{N}} \mathbb{E}^{P_{l}}[ \sup_{r, t \in [0,T]: \hspace{0.5mm} r \neq t} \frac{ \lVert \xi_{2}(t) - \xi_{2} (r) \rVert_{H_{x}^{-n}}}{ \lvert t-r \rvert^{\kappa}} ] \nonumber\\
\overset{\eqref{[Equ. (176), Y20a]}}{\leq}& \sup_{l \in \mathbb{N}} \mathbb{E}^{P_{l}} [ \sup_{r, t \in [s_{l},T]: \hspace{0.5mm} r \neq t} \frac{ \lVert \int_{r}^{t} F_{2} (\xi(\lambda)) d \lambda \rVert_{H_{x}^{-n}}}{\lvert t-r \rvert^{\kappa}} + \frac{ \lVert M_{2, t, s_{l}}^{\xi} - M_{2, r, s_{l}}^{\xi} \rVert_{H_{x}^{-n}}}{\lvert t-r \rvert^{\kappa}} ] \label{estimate 118}
\end{align} 
and rely on \eqref{estimate 116}, \eqref{estimate 117}, and \eqref{estimate 3} to deduce for all $\kappa \in (0, \frac{1}{3})$ 
\begin{equation*}
\sup_{l \in \mathbb{N}} \mathbb{E}^{P_{l}} [ \sup_{r, t \in [0,T]: \hspace{0.5mm} r \neq t} \frac{ \lVert \xi_{2}(t) - \xi_{2}(r) \rVert_{H_{x}^{-n}}}{\lvert t-r \rvert^{\kappa}} ] < \infty. 
\end{equation*}
Together with \eqref{estimate 24} at $q=1$, we now conclude \eqref{[Equ. (A.2), HZZ19]}. Thus, by Proposition \ref{Proposition 6.10} we see that $\{P_{l}\}_{l \in \mathbb{N}}$ is tight in $\mathbb{S}$ of \eqref{estimate 1}. We deduce by Prokhorov's theorem (e.g., \cite[The. 2.3]{DZ14}) and Skorokhod's theorem (e.g., \cite[The. 2.4]{DZ14}) that there exists $(\tilde{\Omega}, \tilde{\mathcal{F}}, \tilde{P})$ and $\mathbb{S}$-valued random variables $\{ \tilde{\xi}_{l} \}_{l\in\mathbb{N}}$ and $\tilde{\xi}$ such that 
\begin{equation}\label{[Equ. (180), Y20a]}
\mathcal{L} (\tilde{\xi}_{l}) = P_{l} \hspace{1mm} \forall \hspace{1mm} l \in \mathbb{N}, \hspace{3mm} \tilde{\xi}_{l} \to \tilde{\xi} \text{ in } \mathbb{S} \hspace{1mm} \tilde{P}\text{-a.s. and } \mathcal{L} (\tilde{\xi}) = P. 
\end{equation} 
It follows that $P ( \{ \xi(t) = \xi^{\text{in}} \hspace{1mm} \forall \hspace{1mm} t \in [0, s] \})$ and for every $\psi_{i} = (\psi_{i}^{1}, \psi_{i}^{2}) \in C^{\infty} (\mathbb{T}^{n}) \cap L_{\sigma}^{2} \times C^{\infty} (\mathbb{T}^{n}) \cap \mathring{L}^{2}$, and $t \geq s$, $\tilde{P}$-a.s., for both $k \in \{1,2\}$, 
\begin{equation}\label{[Equ. (182), Y20a]}
\langle \tilde{\xi}_{l,k} (t), \psi_{i}^{k} \rangle \to \langle \tilde{\xi}_{k} (t), \psi_{i}^{k} \rangle, \hspace{3mm} \int_{s_{l}}^{t} \langle F_{k} (\tilde{\xi}_{l}(\lambda)), \psi_{i}^{k} \rangle d\lambda \to \int_{s}^{t} \langle F_{k} (\tilde{\xi}(\lambda)), \psi_{i}^{k} \rangle d\lambda  
\end{equation} 
as $l\to\infty$. Next, for every $t > r \geq s, p \in (1,\infty)$, and $g$ that is $\mathbb{R}$-valued, $\mathcal{B}_{r}$-measurable and continuous on $\mathbb{S}$, for both $k \in \{1,2\}$, one can verify using \eqref{[Equ. (180), Y20a]} and  \eqref{[Equ. (182), Y20a]} 
\begin{equation}
\sup_{l\in\mathbb{N}} \mathbb{E}^{\tilde{P}} [ \lvert M_{k, t, s_{l}}^{\tilde{\xi}_{l}, i} \rvert^{2p}] \lesssim_{p,t} 1, \hspace{1mm} \lim_{l\to\infty} \mathbb{E}^{\tilde{P}} [ \lvert M_{k, t, s_{l}}^{\tilde{\xi}_{l}, i} - M_{k, t, s}^{\tilde{\xi}, i} \rvert ] = 0, \hspace{1mm} \mathbb{E}^{P} [ ( M_{k, t, s}^{\xi, i} - M_{k, r, s}^{\xi, i} ) g(\xi) ] =0 
\end{equation} 
which implies that the mapping $t \mapsto M_{k, t, s}^{i}$ is a $(\mathcal{B}_{t})_{t\geq s}$-martingale under $P$, 
\begin{equation}
\lim_{l\to\infty} \mathbb{E}^{\tilde{P}} [ \lvert M_{k, t, s_{l}}^{\tilde{\xi}_{l}, i} - M_{k, t, s}^{\tilde{\xi}, i} \rvert^{2}] = 0 \text{ and } \langle \langle M_{k, t, s}^{\xi, i} \rangle \rangle \overset{\eqref{[Equ. (12), Y20a]} }{=} \int_{s}^{t} \lVert G_{k} (\xi_{k} (\lambda ))^{\ast} \psi_{i}^{k} \rVert_{U_{k}}^{2} d \lambda 
\end{equation} 
so that $M_{k, t, s}^{\xi, i}$ is square-integrable and (M2) is proven. Finally, the proof of (M3) follows from defining  
\begin{equation}\label{estimate 303} 
R(t,s, \xi) \triangleq \sup_{r \in [0,t]} \lVert \xi_{1}(r) \rVert_{L_{x}^{2}}^{2q} + \int_{s}^{t} \lVert \xi_{1}(r) \rVert_{\dot{H}_{x}^{\gamma}}^{2} dr+ \sup_{r \in [0,t]} \lVert \xi_{2}(r) \rVert_{L_{x}^{2}}^{2q} + \int_{s}^{t} \lVert \xi_{2} (r) \rVert_{\dot{H}_{x}^{1}}^{2} dr 
\end{equation} 
and relying on the fact that the mapping $\xi \mapsto R(t,s, \xi)$ is lower semicontinuous on $\mathbb{S}$. This completes the proof of Proposition \ref{Proposition 4.1}. 
\end{proof} 
 
\subsection{Proof of Proposition \ref{Proposition 4.5}}\label{Subsection 6.4}
For $C_{S} > 0$ from \eqref{[Equ. (31) and (32), Y20a]}, $L > 1$, and $\delta \in (0, \frac{1}{12})$, we define 
\begin{align}
T_{L} \triangleq& \inf\{t \geq 0: \hspace{0.5mm} C_{S} \max_{k=1,2}  \lVert z_{k} (t) \rVert_{\dot{H}_{x}^{\frac{n+2+\sigma}{2}}}  \geq L^{\frac{1}{4}} \} \nonumber \\
& \wedge \inf\{t \geq 0: \hspace{0.5mm} C_{S} \max_{k=1,2}  \lVert z_{k} \rVert_{C_{t}^{\frac{1}{2} - 2 \delta} \dot{H}_{x}^{\frac{n+\sigma}{2}}}  \geq L^{\frac{1}{2}} \} \wedge L. \label{[Equ. (33), Y20a]}
\end{align} 
Due to Proposition \ref{Proposition 4.4}, we see that $\textbf{P}$-a.s. $T_{L} > 0$ and $T_{L} \nearrow + \infty$ as $L \nearrow + \infty$. The stopping time $\mathfrak{t}$ in the statement of Theorem \ref{Theorem 2.1} is actually $T_{L}$ for $L > 0$ sufficiently large and thus by Theorem \ref{Theorem 2.1} there exist processes $(u,\theta)$ that is a weak solution on $[0, T_{L}]$ such that \eqref{estimate 17} holds. Hence, we see that $(u,\theta) (\cdot \wedge T_{L}) \in \Omega_{0}$, By \eqref{[Equ. (30), Y20a]}, \eqref{[Equ. (40a), Y20c]}, \eqref{3}, and \eqref{estimate 20}, we deduce 
\begin{equation}\label{[Equ. (211), Y20d]} 
Z_{1}^{(u,\theta)} (t) = z_{1}(t) \text{ and } Z_{2}^{(u,\theta)} (t) = z_{2}(t) \hspace{1mm} \forall \hspace{1mm} t \in [0, T_{L}] \hspace{1mm} \textbf{P}\text{-almost surely}. 
\end{equation}
By Proposition \ref{Proposition 4.4} we know that $z_{1}, z_{2} \in C_{T} \dot{H}_{x}^{\frac{n+2 + \sigma}{2}} \cap C_{\text{loc}}^{\frac{1}{2} - \delta} \dot{H}_{x}^{\frac{n+\sigma}{2}}$ $\textbf{P}$-a.s. and thus the trajectory 
\begin{equation*}
t \mapsto \lVert z_{k} (t) \rVert_{\dot{H}_{x}^{\frac{n+2+\sigma}{2}}} \text{ and } t \mapsto \lVert z_{k} \rVert_{C_{t}^{\frac{1}{2} - 2 \delta} \dot{H}_{x}^{\frac{n+\sigma}{2}}} \text{ for both } k \in \{1,2\} 
\end{equation*}  
is $\textbf{P}$-a.s. continuous. It follows from \eqref{[Equ. (31) and (32), Y20a]}-\eqref{[Equ. (211), Y20d]} that 
\begin{equation}\label{[Equ. (212), Y20d]}
\tau_{L} (u,\theta) = T_{L} \hspace{3mm} \textbf{P}\text{-almost surely}. 
\end{equation} 
Next, we verify that $P$ is a martingale solution to \eqref{3} on $[0, T_{L}]$. The verification of (M1) follows from \eqref{[Equ. (11), Y20a]} and \eqref{estimate 17}. The verification of (M3) follows from \eqref{[Equ. (31) and (32), Y20a]}, \eqref{[Equ. (211), Y20d]}, \eqref{[Equ. (59), Y20c]}, and \eqref{estimate 320}, and by choosing $C_{t,q}$ in Definitions \ref{Definition 4.1}-\ref{Definition 4.2} depending on $C_{L,1}$ and $C_{L,2}$ from \eqref{[Equ. (59), Y20c]} and \eqref{estimate 320}, respectively. Finally, in order to verify (M2), we let $s \leq t$ and $g$ be bounded, $\mathbb{R}$-valued, $\mathcal{B}_{s}$-measurable, and continuous on $\Omega_{0}$. By Theorem \ref{Theorem 2.1} we know that $(u,\theta)( \cdot \wedge T_{L})$ is $(\mathcal{F}_{t})_{t\geq 0}$-adapted so that $g((u,\theta) (\cdot\wedge \tau_{L} (u,\theta)))$ is $\mathcal{F}_{s}$-measurable by \eqref{[Equ. (212), Y20d]}. Then, for $\psi_{i} = (\psi_{i}^{1}, \psi_{i}^{2}) \in C^{\infty} (\mathbb{T}^{n}) \cap L_{\sigma}^{2} \times C^{\infty} (\mathbb{T}^{n}) \cap \mathring{L}^{2}$, $M_{k, t \wedge \tau_{L} (u,\theta), 0}^{(u,\theta), i}$ is an $(\mathcal{F}_{t})_{t \geq 0}$-martingale such that $\langle \langle M_{k, t \wedge \tau_{L}(u,\theta), 0}^{(u,\theta), i} \rangle \rangle = (t \wedge \tau_{L} (u,\theta)) \lVert G_{k} \psi_{i}^{k}\rVert_{L_{x}^{2}}^{2}$ under $\textbf{P}$ which implies that $M_{k, t \wedge \tau_{L}, 0}^{i}$ is a $(\mathcal{B}_{t})_{t \geq 0}$-martingale under $P$ and $(M_{k, t \wedge \tau_{L} (u,\theta), 0}^{(u,\theta), i})^{2} - (t \wedge \tau_{L} (u,\theta)) \lVert G_{k} \psi_{i}^{k} \rVert_{L_{x}^{2}}^{2}$ is a $(\mathcal{F}_{t})_{t\geq 0}$-martingale under $\textbf{P}$. This leads to $(M_{k, t \wedge \tau_{L}, 0}^{i})^{2} - (t \wedge \tau_{L}) \lVert G_{k} \psi_{i}^{k} \rVert_{L_{x}^{2}}^{2}$ being a $(\mathcal{B}_{t})_{t \geq 0}$-martingale under $P$ so that $\langle \langle M_{k, t \wedge \tau_{L}, 0}^{i} \rangle \rangle = (t \wedge \tau_{L}) \lVert G_{k} \psi_{i}^{k} \rVert_{L_{x}^{2}}^{2} = \int_{0}^{t \wedge \tau_{L}} \lVert G_{k} \psi_{i}^{k} \rVert_{L_{x}^{2}}^{2} dr$, successfully verifying (M2).
 
\subsection{Proof of Proposition \ref{Proposition 4.6}}\label{Subsection 6.5} 
Because $\tau_{L}$ is a $(\mathcal{B}_{t})_{t\geq 0}$-stopping time that is bounded by $L$ due to \eqref{[Equ. (31) and (32), Y20a]} while $P$ is a martingale solution to \eqref{3} on $[0, \tau_{L}]$ due to Proposition \ref{Proposition 4.5}, we see that Lemma \ref{Lemma 4.3} completes the proof once we verify \eqref{[Equ. (22), Y20a]}. First, it follows from \eqref{[Equ. (211), Y20d]}-\eqref{[Equ. (212), Y20d]} that there exists a $P$-measurable set $\mathcal{N}\subset \Omega_{0}$ such that $P(\mathcal{N}) = 0$ and for both $k \in \{1,2\}$, for any $T > 0$,
\begin{equation}\label{[Equ. (213), Y20d]}
Z_{k}^{\omega} ( \cdot \wedge \tau_{L}(\omega)) \in C_{T} \dot{H}_{x}^{\frac{n+2+\sigma}{2}} \cap C_{\text{loc}}^{\frac{1}{2} - \delta} \dot{H}_{x}^{\frac{n+\sigma}{2}} \hspace{3mm} \forall \hspace{1mm} \omega \in \Omega_{0} \setminus \mathcal{N}. 
\end{equation} 
For every $\omega' \in \Omega_{0}$ and $\omega \in\Omega_{0} \setminus \mathcal{N}$ we define 
\begin{subequations}\label{estimate 321}
\begin{align}
\mathbb{Z}_{1, \tau_{L}(\omega)}^{\omega'} (t) \triangleq& M_{1,t,0}^{\omega'} - e^{- (t - t \wedge \tau_{L}(\omega))(-\Delta)^{m}} M_{1, t \wedge \tau_{L}(\omega), 0}^{\omega'}  \nonumber\\
& \hspace{15mm}  - \int_{t \wedge \tau_{L} (\omega)}^{t} \mathbb{P} (-\Delta)^{m} e^{- (t-s) (-\Delta)^{m}} M_{1,s,0}^{\omega'} ds, \\
\mathbb{Z}_{2,\tau_{L}(\omega)}^{\omega'} (t) \triangleq& M_{2,t,0}^{\omega'} - e^{(t- t\wedge \tau_{L}(\omega)) \Delta} M_{2,t\wedge \tau_{L}(\omega), 0}^{\omega'} + \int_{t \wedge \tau_{L}(\omega)}^{t} \Delta e^{(t-s) \Delta} M_{2, s,0}^{\omega'} ds,
\end{align}
\end{subequations} 
so that due to $\nabla\cdot M_{1, t, 0}^{\omega} = 0$ from \eqref{estimate 141}, 
\begin{subequations}\label{[Equ. (214), Y20d]}
\begin{align}
\mathbb{Z}_{1, \tau_{L}(\omega)}^{\omega'} (t) =& M_{1, t,0}^{\omega'} - M_{1, t \wedge \tau_{L}(\omega), 0}^{\omega'} \nonumber\\
& \hspace{15mm}  - \int_{t \wedge \tau_{L}(\omega)}^{t} \mathbb{P} (-\Delta)^{m} e^{- (t-s) (-\Delta)^{m}} ( M_{1, s,0}^{\omega'} - M_{1, s \wedge \tau_{L}(\omega), 0}^{\omega'}) ds, \\
\mathbb{Z}_{2, \tau_{L}(\omega)}^{\omega'} (t) =& M_{2,t,0}^{\omega'} - M_{2, t\wedge \tau_{L}(\omega), 0}^{\omega'}  +\int_{t \wedge \tau_{L}(\omega)}^{t} \Delta e^{(t-s) \Delta}(M_{2,s,0}^{\omega'} - M_{2, s \wedge \tau_{L}(\omega), 0}^{\omega'}) ds. 
\end{align}
\end{subequations} 
Due to \eqref{[Equ. (40b), Y20c]}, this leads us to 
\begin{subequations}\label{[Equ. (215), Y20d]}
\begin{align}
Z_{1}^{\omega'}(t) - Z_{1}^{\omega'}(t \wedge \tau_{L} (\omega)) =& \mathbb{Z}_{1, \tau_{L} (\omega)}^{\omega' } (t) + (e^{ -(t- t \wedge \tau_{L} (\omega)) (-\Delta)^{m}} - \text{Id}) Z_{1}^{\omega'} (t \wedge \tau_{L}(\omega)), \\
Z_{2}^{\omega'}(t) - Z_{2}^{\omega'}(t  \wedge\tau_{L}(\omega)) =& \mathbb{Z}_{2, \tau_{L} (\omega)}^{\omega'}(t) + (e^{(t- t \wedge \tau_{L}(\omega))\Delta} - \text{Id}) Z_{2}^{\omega'} (t \wedge \tau_{L}(\omega)). 
\end{align}
\end{subequations} 
It follows from \eqref{[Equ. (214), Y20d]} that $\mathbb{Z}_{k, \tau_{L} (\omega)}^{\omega'}$ is $\mathcal{B}^{\tau_{L} (\omega)}$-measurable for both $k \in \{1,2\}$ and from \eqref{[Equ. (215), Y20d]} that 
\begin{align}
& Q_{\omega} ( \{ \omega' \in \Omega_{0}: \hspace{0.5mm} Z_{k}^{\omega'} (\cdot) \in C_{T} \dot{H}_{x}^{\frac{n+2+ \sigma}{2}} \cap C_{\text{loc}}^{\frac{1}{2} - \delta} \dot{H}_{x}^{\frac{n+\sigma}{2}} \text{ for both } k \in \{1,2\} ) \label{[Equ. (216), Y20d]}\\
=& \delta_{\omega} ( \{ \omega' \in \Omega_{0}: \hspace{0.5mm} Z_{k}^{\omega'} ( \cdot \wedge \tau_{L}(\omega)) \in C_{T} \dot{H}_{x}^{\frac{n+2+\sigma}{2}} \cap C_{\text{loc}}^{\frac{1}{2} - \delta} \dot{H}_{x}^{\frac{n+\sigma}{2}} \text{ for both } k \in \{1,2\} \}) \nonumber \\
& \otimes_{\tau_{L}(\omega)}  R_{\tau_{L}(\omega), \xi(\tau_{L}(\omega), \omega)} ( \{ \omega' \in \Omega_{0}: \hspace{0.5mm} \mathbb{Z}_{k, \tau_{L} (\omega)}^{\omega'} (\cdot) \in C_{T} \dot{H}_{x}^{\frac{n+2+\sigma}{2}} \cap C_{\text{loc}}^{\frac{1}{2} - \delta} \dot{H}_{x}^{\frac{n+\sigma}{2}} \text{ for both } k = 1, 2 \} ),  \nonumber 
\end{align} 
where for all $\omega \in \Omega \setminus\mathcal{N}$, 
\begin{align*}
\delta_{\omega} ( \{ \omega' \in \Omega_{0}: \hspace{0.5mm} Z_{k}^{\omega'} ( \cdot \wedge \tau_{L}(\omega)) \in C H_{x}^{\frac{n+2+\sigma}{2}} \cap C_{\text{loc}}^{\frac{1}{2} - \delta} H_{x}^{\frac{n+\sigma}{2}} \text{ for both } k \in \{1,2\} \}) \overset{\eqref{[Equ. (213), Y20d]}}{=} 1. 
\end{align*} 
We can also write  
\begin{subequations}\label{[Equ. (217), Y20d]}
\begin{align}
& \int_{0}^{t} \mathbb{P} e^{-(t-s) (-\Delta)^{m}} d(M_{1,s,0}^{\omega'} - M_{1, s\wedge \tau_{L}(\omega), 0}^{\omega'}) \overset{\eqref{[Equ. (214), Y20d]}}{=} \mathbb{Z}_{1, \tau_{L}(\omega)}^{\omega'} (t), \\
& \int_{0}^{t} e^{(t-s) \Delta} d(M_{2,s,0}^{\omega'} - M_{2, s \wedge \tau_{L}(\omega), 0}^{\omega'}) \overset{\eqref{[Equ. (214), Y20d]}}{=} \mathbb{Z}_{2, \tau_{L}(\omega)}^{\omega'} (t). 
\end{align}
\end{subequations} 
As we deduced \eqref{estimate 16} from \eqref{estimate 20}, \eqref{[Equ. (217), Y20d]} and the fact that the process $\omega' \mapsto M_{k, \cdot, 0}^{\omega'} - M_{k, \cdot \wedge \tau_{L}(\omega), 0}^{\omega'}$ is a $G_{k}G_{k}^{\ast}$-Wiener process for both $k \in \{1,2\}$  imply under our hypothesis \eqref{5} that 
\begin{equation*}
R_{\tau_{L} (\omega), \xi(\tau_{L}(\omega), \omega)} (\{ \omega' \in \Omega_{0}:\hspace{0.5mm}  \mathbb{Z}_{k, \tau_{L}(\omega)}^{\omega'} (\cdot) \in C_{T} \dot{H}_{x}^{\frac{n+2+\sigma}{2}} \cap C_{\text{loc}}^{\frac{1}{2} - \delta} \dot{H}_{x}^{\frac{n+\sigma}{2}} \text{ for both } k \in \{1,2\} \}) = 1. 
\end{equation*} 
Thus, by \eqref{[Equ. (216), Y20d]}, for all $\omega\in \Omega_{0} \setminus \mathcal{N}$, 
\begin{equation*}
Q_{\omega} ( \{ \omega' \in \Omega_{0}: \hspace{0.5mm} Z_{k}^{\omega'} (\cdot) \in C_{T}\dot{H}_{x}^{\frac{n+2+ \sigma}{2}} \cap C_{\text{loc}}^{\frac{1}{2} - \delta} \dot{H}_{x}^{\frac{n+\sigma}{2}} \text{ for both } k \in \{1,2\} ) = 1; 
\end{equation*} 
i.e., for all $\omega \in \Omega_{0} \setminus \mathcal{N}$, there exists a measurable set $N_{\omega}$ such that $Q_{\omega} (N_{\omega}) = 0$ and for all $\omega' \in \Omega_{0} \setminus N_{\omega}$, the mapping $t \mapsto Z_{k}^{\omega'} (t)$ for both $k \in \{1,2\}$ lies in $C_{T}\dot{H}_{x}^{\frac{n+2+\sigma}{2}} \cap C_{\text{loc}}^{\frac{1}{2} - \delta} \dot{H}_{x}^{\frac{n+\sigma}{2}}$. This implies by \eqref{[Equ. (31) and (32), Y20a]} that for all $\omega \in \Omega_{0} \setminus \mathcal{N}$ 
\begin{equation}\label{[Equ. (218), Y20d]}
\tau_{L} (\omega') = \bar{\tau}_{L} (\omega') \hspace{1mm} \forall \hspace{1mm} \omega' \in \Omega_{0} \setminus N_{\omega} 
\end{equation} 
if we define 
\begin{align}
\bar{\tau}_{L}(\omega') \triangleq& \inf\{t \geq 0: \hspace{0.5mm} C_{S} \max_{k=1,2} \lVert Z_{k}^{\omega'}(t) \rVert_{\dot{H}_{x}^{\frac{n+ 2 + \sigma}{2}}} \geq L^{\frac{1}{4}} \} \nonumber \\
&\wedge \inf\{t \geq 0: \hspace{0.5mm} C_{S} \max_{k=1,2} \lVert Z_{k}^{\omega'} \rVert_{C_{t}^{\frac{1}{2} - 2 \delta} \dot{H}_{x}^{\frac{n+\sigma}{2}}} \geq L^{\frac{1}{2}} \} \wedge L. \label{[Equ. (219), Y20d]}
\end{align}  
By identical arguments to \cite{HZZ19}, this gives for all $\omega \in \Omega_{0} \setminus \mathcal{N}$, 
\begin{equation}\label{[Equ. (221), Y20d]} 
Q_{\omega} (\{ \omega' \in \Omega_{0}: \hspace{0.5mm} \tau_{L}(\omega') = \tau_{L}(\omega) \}) = 1.
\end{equation}  
  
\section*{Acknowledgements}
The author expresses deep gratitude to Prof. David Ullrich for very valuable discussions concerning fractional Laplacian and the referees who gave valuable suggestions for revisions that improved this manuscript significantly.

\end{document}